\newtheorem{thm}{Theorem}[section]
\newtheorem{lem}[thm]{Lemma}
\newtheorem{cor}[thm]{Corollary}
\newtheorem{prop}[thm]{Proposition}
\theoremstyle{definition}
\newtheorem{defn}[thm]{Definition}
\newtheorem{rem}[thm]{Remark}
\newtheorem{rems}[thm]{Remarks}
\numberwithin{equation}{thm}
\newtheorem{theorem}{Theorem}
\newcommand{\C}{\mathbb{C}}
\newcommand{\Z}{\mathbb{Z}}
\newcommand{\Ind}{\text{Ind}\,}
\newcommand{\Lg}{\mathfrak{g}}
           \newcommand{\Ext}{\text{\rm{Ext}}\,}
\newcommand{\Hom}{\mbox{Hom}\,}
\newcommand{\ch}{\mbox{char}}
\newcommand{\IndBU}{\mbox{Ind}_{\mathbb{B}}^{\mathbb{U}}\,}
\newcommand{\IndBG}{\text{Ind}_B^G\,}
\newcommand{\RIndBG}{\mbox{RInd}_B^G\,}
\newcommand{\Tlammu}{\mbox{T}_\lambda^\mu\,}
\newcommand{\Tmulam}{\mbox{T}_\mu^\lambda\,}
\newcommand{\RIndBU}{\mbox{RInd}_{\mathbb{B}}^{\mathbb{U}}\,}
\newcommand{\IndpB}{\mbox{Ind}_{\mathbb{p}}^{\mathbb{B}}\,}
\newcommand{\g}{\mathfrak{g}}
\newcommand{\lh}{\mathfrak{h}}
\newcommand{\RInd}{\mbox{RInd}}
\newcommand{\RHom}{\mbox{RHom}\,}
\newcommand{\ExtB}{\mbox{Ext}_{\mathbb{B}}}
\newcommand{\ExtU}{\mbox{Ext}_{\mathbb{U}}}
\newcommand{\vupperone}{\mbox{V}^{[1]}}
\newcommand{\lr}{\longrightarrow}
\newcommand{\bB}{\mathbb{B}}
\newcommand{\bU}{\mathbb{U}}
\newcommand{\Itmu}{I_{\mu}^{[1]}}
\newcommand{\kB}{\mbox{k}_{\mathbb{B}}\,}
\newcommand{\blist}{\begin{list}{\rom{(\roman{enumi})}}{\setlength{\leftmarg
in}{0em} \setlength{\itemindent}{7ex}
\setlength{\labelsep}{2ex}\setlength{\listparindent}{\parindent}
\usecounter{enumi}}}
\newcommand{\elist}{\end{list}}
\newcommand{\rarrow}{\rightarrow}
\newcommand{\calC}{{\mathcal{C}}}
\newcommand{\perv}{\mathcal{P}\!{\it erv}}
\begin{document}

\begin{abstract}
A key result in a 2004 paper by S. Arkhipov, R. Bezrukavnikov, and
V. Ginzburg \cite{ABG} compares the bounded derived category
$D^{b}block(\mathbb{U})$ of modules for the principal block of a
Lusztig quantum enveloping algebra $\mathbb{U}$ at an $\ell$th root
of unity with a subcategory $D_{triv}(\mathbb{B})$ of the
%bounded
derived category of integrable type 1 modules for a Borel part $\mathbb{B}%
\subset\mathbb{U}.$ Specifically, according to this ``Induction
Theorem'' \cite[Theorem 3.5.5]{ABG} the right derived functor of
induction $\mbox{Ind}_{\mathbb{B}}^{\mathbb{U}}\,$ yields an
equivalence of categories
$\mbox{RInd}_{\mathbb{B}}^{\mathbb{U}}\,:D_{triv}(\mathbb{B})\overset{\sim
}{\rightarrow} D^{b}block(\mathbb{U})$ (under appropriate hypotheses
on $\ell$).
%(see *** below for full details).
The authors of \cite{ABG} suggest a similar result holds for
algebraic groups in positive characteristic $p$, and  this paper
provides a statement with proof for such a modular induction
theorem. Our argument uses the philosophy of \cite{ABG} as well as
new ingredients. A secondary goal of this paper has been to put the
original characteristic zero quantum result on firmer ground, and we
provide arguments as needed to give a complete proof of that result
also. Finally, using the modular result, we have been able in
\cite{HKS} to introduce truncation functors, associated to finite
weight posets, which effectively commute with the modular induction
equivalence, assuming $p>2h-2$, with $h$ the Coxeter number.
%Don't forget to include relevant/analogous restrictions on $p.$
%Is the latter inequality correct? Why not $\geq?$ Check.
This enables interpreting the equivalence at the level of derived
categories of modules for suitable finite dimensional
quasi-hereditary algebras. We expect similar results to hold in the
original quantum setting, assuming $\ell>2h-2$.
\end{abstract}

%\begin{frontmatter}

\title{Remarks on the ABG Induction Theorem}
%\\DRAFT as of February 28, 2016}

\author{Terrell L. Hodge}\footnote{Corresponding author: Terrell L. Hodge}
\address{Department of Mathematics, Western Michigan University, Kalamazaoo, MI 49008}
\email{terrell.hodge@wmich.edu}

\author{Paramasamy Karuppuchamy}
\address{Department of Mathematics, University of Toledo, Toledo, OH  43606}
\email{paramasamy.karuppuchamy@utoledo.edu}

\author{Leonard L. Scott}
\address{Department of Mathematics, University of Virginia, Charlottesville, VA  22903}
\email{lls2l@virginia.edu}

%\date{June 29, 2010}
\date{\today}

\subjclass[2000]{Primary 17B55, 20G; Secondary 17B50}%Len suggested using these; are the same ones as in newest Graded QHA II paper with BP
\thanks{Research supported in part by NSF grant DMS-1001900 and Simons Foundation Collaborative
Research award $\#$359363.}
% added Len 6-1815 and experimented with placement in file. This thanks will probably need to be removed from the last section, now.

\maketitle
%moved to after abstract Len 6-18-15 on suggestion of pdfLatex compiler

%\begin{frontmatter}

%\footnote{Corresponding author}

%\date{July 12, 2010}

%\end{frontmatter}

%\bigskip
% commented out Len 18-06-15
\bigskip

% back in 02-01-16

\section{Introduction}

%\subsection{Notation}

\ \ \ \ If $G$ is a semisimple algebraic group and $B$ a Borel
subgroup, it is well known that the category of rational $G$-modules
fully embeds via the restriction functor into the category of
rational $B$-modules. Explicitly describing the objects in the image
of restriction is a difficult problem, unsolved in general. However,
as we will see here, it is possible to make progress at the derived
category level. Our starting point is a result \cite[Theorem
3.5.5]{ABG} by S. Arkhipov, R. Bezrukavnikov, and V. Ginzburg in the
world of quantum groups. The result establishes a natural
equivalence between the bounded derived category of modules for the
principal block of a Lusztig quantum enveloping  algebra at a root
of unity with an {\em explicit} subcategory of the bounded derived
category of integrable modules for a Borel part of this quantum
algebra. We will refer to this result as ``the induction theorem."
We begin this paper with its explicit statement.

Suppose $\mathbb{U}$ is a Lusztig quantum algebra, associated to a
root datum
$\mathfrak{R}=(\Pi,\mathbb{X},\Pi^{\vee},\mathbb{X}^{\vee}),$ and
specialized to a characteristic $0$ field $K$ with
$\ell^{\mbox{th}}$ root of unity $q\in K,$ as defined in section
\ref{sec:back}. In particular, we assume $q$ is defined by $q^{\ell}
= 1$ with $\ell$ odd, and not divisible by $3$ if the root system
corresponding to $\mathfrak{R}$ has a component of type $G_2.$ We
denote the root system in general by $R$. Moreover, we  assume $\ell
> h$, where $h$  is the Coxeter number of $R$, unless otherwise
noted.
%Move these assumptions/repeat these assumptions to/in Section \ref{sec:back}.
Suppose $\mathbb{B}=\mathbb{U}^{-}\otimes
_{K}\mathbb{U}^{0}\subset\mathbb{U}$ is a `Borel part' of
$\mathbb{U}$ arising from a triangular decomposition of $\mathbb{U}$
as in Section \ref{sec:back}. \pagebreak Denote by
$D^{b}block(\mathbb{U})$ the bounded derived category of the abelian
category of type
 1 integrable modules in the ``principal
block" of $\mathbb{U}$.\footnote{More precisely, the principal block
of $\mathbb{U}$ is the full subcategory of finite-dimensional
integrable type 1 $\mathbb{U}$-modules whose composition factors are
all ``linked" to the trivial module. Equivalently, the highest
weights of these composition factors all belong to the ``dot" orbit
of 0 under the affine Weyl group. A precise definition of the ``dot"
action is given in Section 2.} For $D^b(\mathbb{B})$ the usual
bounded derived category of the module
%be more precise
category for $\mathbb{B},$ let $D_{triv}(\mathbb{B})$ be the full
triangulated subcategory of
 $D^b(\mathbb{B})$  whose objects are
complexes representable by
\[
M=\cdots\rightarrow M_{i-1}\rightarrow M_{i}\rightarrow
M_{i+1}\rightarrow
\cdots\quad i\in\mathbb{Z}%
\]
so that for all $i\in\mathbb{Z},$

\begin{itemize}
\item[(i)] $M_{i}$ is an integrable $\mathbb{B}-$module;
%Add the defn somewhere

\item[(ii)] $M_{i}$ has a grading $M_{i} = \oplus_{\nu\in \mathbb{Y}}M_{i}(\nu)$ by the
root lattice $\mathbb{Y}$ of the root datum $\mathfrak{R};$

\item[(iii)] for any $m\in M_{i}(\nu)$ and $u\in\mathbb{U}^{0},$ $um =
\nu(u)\cdot m;$

\item[(iv)] the total cohomology module $H^{\bullet}(M) = \oplus
_{i\in\mathbb{Z}}H^{i}(M)$ has a finite composition series, all of
whose successive quotients are of the form
$K_{\mathbb{B}}(\ell\lambda),\lambda\in \mathbb{Y}.$
%Problem -- notationally, using $K$ vs. $k;$ also will want notation $K$ for
%elts of $\mathbb{U},$ ugh...decide how to switch this around --
\end{itemize}
Here $K_{\mathbb{B}}(\ell\lambda)$ denotes a  $1$-dimensional
$\mathbb{B}$-module associated to $\ell\lambda$. Further details on
notation may be found in Section \ref{sec:back}.
%Add comments or delete last line.

%Our main goal is to provide a proof of the following key result  in
%\cite{ABG}.
\begin{theorem}
\label{T:induct} [\emph{Induction Theorem}, Theorem 3.5.5 \cite{ABG}] For an
appropriately  defined induction functor $\mbox{Ind}_{\mathbb{B}}^{\mathbb{U}%
}\,$, its right derived functor $R\text{Ind}\,_{\mathbb{B}%
}^{\mathbb{U}}$ yields an \textbf{equivalence} of triangulated  categories
\[
D_{\mathrm{triv}}(\mathbb{B}) \overset{\sim}{\rightarrow} D^{b}\it{block}%
(\mathbb{U}).
\]

\end{theorem}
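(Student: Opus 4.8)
The plan is to establish the equivalence by the standard "generation + full faithfulness on generators" strategy for triangulated categories, exploiting the fact that the category $D_{\mathrm{triv}}(\mathbb{B})$ has been rigged (via conditions (i)--(iv)) precisely so that its "simple-like" objects are the one-dimensional modules $K_{\mathbb{B}}(\ell\lambda)$, $\lambda\in\mathbb{Y}$. First I would verify that $\RIndBU$, as the right derived functor of the left-exact induction functor $\IndBU$, is well-defined on $D_{\mathrm{triv}}(\mathbb{B})$ and lands in $D^b\mathit{block}(\mathbb{U})$: this amounts to computing $\RIndBU K_{\mathbb{B}}(\ell\lambda)$ and checking its cohomology lies in the principal block. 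The key input here is a Kempf-type vanishing / cohomology computation identifying $\RIndBU K_{\mathbb{B}}(\ell\lambda)$ with (a shift of) a costandard-type module $H^0$ or, after the linkage/translation analysis, with an object built from modules with highest weights in the dot-orbit of $0$; boundedness of the total complex follows from the finite composition series hypothesis (iv) together with the fact that each $K_{\mathbb{B}}(\ell\lambda)$ has bounded derived induction (cohomological dimension bounded by $\dim \mathbb{U}/\mathbb{B}$, i.e. the number of positive roots).

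Next I would check full faithfulness. Because both sides are generated (as triangulated categories, under shifts, cones, and direct summands) by the images of the $K_{\mathbb{B}}(\ell\lambda)$ — on the $\mathbb{B}$-side by construction, and on the $\mathbb{U}$-side because $\RIndBU$ of these objects generates $D^b\mathit{block}(\mathbb{U})$ — it suffices to show
\[
\RHom_{\mathbb{B}}\bigl(K_{\mathbb{B}}(\ell\lambda),K_{\mathbb{B}}(\ell\mu)[n]\bigr)
\;\xrightarrow{\ \sim\ }\;
\RHom_{\mathbb{U}}\bigl(\RIndBU K_{\mathbb{B}}(\ell\lambda),\RIndBU K_{\mathbb{B}}(\ell\mu)[n]\bigr)
\]
for all $\lambda,\mu\in\mathbb{Y}$ and all $n\in\mathbb{Z}$. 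The left-hand side is computed by the cohomology of the Borel part (a "quantum complete intersection" type computation: $\mathbb{B}$-cohomology of a trivial-up-to-twist module is an exterior/symmetric algebra on the negative roots, shifted by the weight), and the right-hand side is computed via the tensor identity and adjunction $\RHom_{\mathbb{U}}(\RIndBU X, Y)\cong \RHom_{\mathbb{B}}(X, \mathrm{res}\,Y)$ together with a Koszul-type resolution of the trivial $\mathbb{U}$-module restricted to $\mathbb{B}$ — this is exactly where the ABG philosophy enters, via the identification of $\mathrm{res}\,\RIndBU K_{\mathbb{B}}$ in terms of the small quantum group cohomology / the coordinate ring of the nilpotent cone. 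I would match the two graded vector spaces by comparing them to $\mathbb{U}$-cohomology (resp. Borel cohomology) computed by the relevant spectral sequences, checking the comparison map induced by the unit of adjunction is an isomorphism.

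The main obstacle I anticipate is the full faithfulness step, specifically controlling the higher $\Ext$ groups and verifying that the adjunction comparison map is an isomorphism rather than merely that source and target have the same dimension; this requires a genuine handle on $\mathrm{res}\circ\RIndBU$, for which one needs the graded/weight structure (conditions (ii)--(iii)) to run a weight-by-weight argument and to invoke the relevant cohomological vanishing theorems (valid under the hypothesis $\ell>h$, with the type $G_2$ caveat). A secondary technical point, flagged in the abstract as "putting the original result on firmer ground," is the careful construction of the induction functor $\IndBU$ and its derived functor on the relevant (non-bounded-below, but cohomologically finite) subcategory $D_{\mathrm{triv}}(\mathbb{B})$, and checking that $\RIndBU$ and the restriction functor restrict to an adjoint pair between $D_{\mathrm{triv}}(\mathbb{B})$ and $D^b\mathit{block}(\mathbb{U})$ — one must ensure the essential image of $\RIndBU$ is all of $D^b\mathit{block}(\mathbb{U})$, which follows once the generators are shown to generate and full faithfulness is in hand.
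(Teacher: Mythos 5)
Your overall skeleton --- generate both sides by the objects $k_{\mathbb{B}}(\ell\lambda)$, $\lambda\in\mathbb{Y}$, and reduce the equivalence to isomorphisms $\Ext^n_{\mathbb{B}}(k_{\mathbb{B}}(\ell\lambda),k_{\mathbb{B}}(\ell\mu))\to\Ext^n_{\mathbb{U}}(\RIndBU k_{\mathbb{B}}(\ell\lambda),\RIndBU k_{\mathbb{B}}(\ell\mu))$ induced by the functor --- is exactly the paper's framework (Lemma \ref{l:ntrianequiv} together with Corollary \ref{c:nBWcor}). But the mechanism you propose for the crucial full-faithfulness step is not the one used, and as stated it has a genuine gap. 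You propose to compute both sides explicitly (the $\mathbb{B}$-side as a symmetric-algebra-type cohomology, the $\mathbb{U}$-side via a Koszul resolution and the coordinate ring of the nilpotent cone) and then ``check the comparison map induced by the unit of adjunction is an isomorphism.'' Knowing that source and target have the same dimension does not tell you that the particular natural map is injective or surjective, and you offer no device for proving that it is; you correctly flag this as the main obstacle, but flagging it is not overcoming it. Moreover, the explicit identification of $\mathrm{res}\circ\RIndBU$ with nilpotent-cone data is the heavy geometric part of \cite{ABG} that this paper deliberately avoids, precisely because the analogous finite-global-dimension input fails in characteristic $p$ (see subsection \ref{subsec: summary}).

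What the paper actually does is: (Step 1) prove only the \emph{dimension} equality (\ref{e:nstep1}), not by computing either side, but by translation-functor/wall-crossing arguments (Lemma \ref{wallcrossingfunctors}, a categorification of Rickard's theorem whose part (ii) occupies Appendices A and B) reducing to the case $\lambda=0$, where adjunction plus the Borel--Weil--Bott-type Corollary \ref{c:qcohom} gives a literal isomorphism; (Step 2) upgrade dimension equality to an actual isomorphism induced by $\RIndBU$, but with target the twisted induced module $I_\mu^{[1]}$ rather than $k_{\mathbb{B}}(\ell\mu)$ --- this uses the parity vanishing of $\mathbb{b}$-cohomology (Corollary \ref{cor:Len2}), a $\mathbb{p}$-split surjection $\RIndBU(M)\to M$ (Lemma \ref{l:M}), and a descending induction on the height of $\lambda$ with a diagram chase showing the comparison map is injective, hence (by the dimension count) bijective; (Step 3) descend from $I_\mu^{[1]}$ back to $k_{\mathbb{B}}(\ell\mu)$ via the short exact sequence (\ref{e:kBses1}), another height induction, and the five lemma. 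If you want to repair your proposal, the missing idea is this interpolation through $I_\mu^{[1]}$ combined with the injectivity-plus-equal-dimensions trick; without it, the step from ``equal dimensions'' to ``the adjunction map is an isomorphism'' is unproved.
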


%Recall how to TEX arrow w/ sim sign above it
A precise definition for $\mbox{Ind}_{\mathbb{B}}^{\mathbb{U}}\,$
appears in Section \ref{sec:back}. It is an analog of induction
(right adjoint to restriction) in the theory of representations of
algebraic groups, and has similar properties. To ``induce" a module,
one applies induction in the sense of associative rings and
algebras, then passes to the largest type 1 integrable submodule.

\bigskip

The present paper contains,  as a secondary feature, a complete
proof of the above result, along the lines of \cite{ABG}, though
with some variations and a number of corrections. We are grateful to
Pramod Achar for alerting us to possible issues (first observed by
his collaborator, Simon Riche) in the proof of \cite[Lemma
4.1.1(ii)]{ABG}. The argument we eventually found (the proof of our
Lemma \ref{wallcrossingfunctors}(ii)) is quite substantial, spanning
two appendices and improving a theorem of Rickard \cite{R94}. Other
corrections we make are more minor, often rooted in inadequacies in
the quantum literature. The ``variations" mentioned often occur from
our desire to give a proof that ``carries over" to the
characteristic $p$ algebraic groups case. Indeed, the latter has
been the central aim of our work here.

    The existence of a modular analog of the induction theorem
 was suggested by the assertion \cite[p. 616]{ABG}:
\textquotedblleft An analogue of Theorem 3.5.5 holds also for the
principal block of complex representations of the algebraic group
$G(F)$ over an algebraically closed field of characteristic $p>0.$
Our proof of the theorem applies to the latter case as
well.\textquotedblright\ Replacing the term ``complex
representations" in the quote above with \textquotedblleft rational
representations\textquotedblright\ (likely intended) yields the
statement below, which this paper confirms is indeed a theorem.
However, while the philosophy and some ingredients of the proof we
present may be found in the \cite{ABG} treatment of the quantum
case, additional critical ingredients are also required. See, for
example, Corollary \ref{cor:Len2}(1) and Lemma \ref{l:YImuExts}.

 To
set the notation, $\it{block}(G)$ is the principal block of
finite-dimensional rational $G$-modules, and $D_{triv}(B)$ is
defined analogously to $D_{triv}(\mathbb{B})$ in the quantum case.
That is, rational $B$-modules replace (Type 1) integrable
$\mathbb{B}$ modules, the distribution algebra of a maximal split
torus $T \subseteq B$ is used for $\mathbb{U}^0$, and
$k_B(p\lambda)$, with $k$ as below, replaces
$K_{\mathbb{B}}(\ell\lambda)$ above (in the definition of
$D_{\mathrm{triv}}(\mathbb{B})$, which becomes
$D_{\mathrm{triv}}(B)$). As before $h$ denotes the Coxeter number of
the underlying root system, now regarded as associated to $G$.

\begin{theorem}
\label{T:Ginduct} Let $G$ be a semisimple algebraic group
over an algebraically closed field $k$ of positive
characteristic $p > h$. Let $B$ be a Borel subgroup of $G$. Then the
 functor $R\text{Ind}\,_{B}^{G}$ yields an \textbf{equivalence} of
triangulated categories
\[
D_{\mathrm{triv}}(B)\rightarrow D^{b}\it{block}(G).
\]
\end{theorem}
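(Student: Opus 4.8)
The plan is to mimic the ABG strategy for the quantum case (Theorem~\ref{T:induct}), replacing quantum ingredients by their modular analogues and supplying the extra arguments the characteristic $p$ setting demands. First I would check that $R\text{Ind}\,_B^G$ actually sends $D_{\mathrm{triv}}(B)$ into $D^b\it{block}(G)$: this reduces to understanding $R^i\text{Ind}\,_B^G k_B(p\lambda)$ for $\lambda$ in the root lattice. Writing $k_B(p\lambda)=k_B(\lambda)^{(1)}\otimes\cdots$ via a Frobenius-twist/tensor identity (Kempf vanishing together with the structure of $p$-times-a-weight one dimensional $B$-modules) one identifies these cohomology groups with Frobenius twists of $H^\bullet(G/B,\mathcal L(\lambda))$, whose composition factors are linked to the trivial module, so they lie in $\it{block}(G)$; applying the usual way-out/finiteness arguments extends this from a single module to all of $D_{\mathrm{triv}}(B)$. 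So the functor is well defined at the level stated.

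Next I would establish that $R\text{Ind}\,_B^G$ is \emph{fully faithful} on $D_{\mathrm{triv}}(B)$. The key point is to compute morphism spaces between the generators $k_B(p\lambda)[i]$ of $D_{\mathrm{triv}}(B)$ on one side and, on the other side, between the objects $R\text{Ind}\,_B^G k_B(p\lambda)$ — which should be recognizable as (shifts of) the ``big projective'' or baby Verma-type objects, or more precisely as the images $\Itlambda$-type objects appearing in the excerpt's later notation — inside $D^b\it{block}(G)$. Concretely one wants a natural isomorphism
\[
\Hom_{D^b(B)}\bigl(k_B(p\lambda),k_B(p\mu)[i]\bigr)\;\xrightarrow{\ \sim\ }\;\Hom_{D^b\it{block}(G)}\bigl(R\text{Ind}\,_B^G k_B(p\lambda),\,R\text{Ind}\,_B^G k_B(p\mu)[i]\bigr),
\]
and this is where the new modular input enters: one needs vanishing and dimension statements for $\Ext$-groups between the relevant induced objects, which the excerpt flags as Corollary~\ref{cor:Len2}(1) and Lemma~\ref{l:YImuExts}. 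I would prove these by a combination of the linkage principle, translation (wall-crossing) functors — whose exactness/adjointness properties are recorded in Lemma~\ref{wallcrossingfunctors} — and an induction on the length of Weyl group elements, reducing general $\lambda,\mu$ to the case handled directly. Because $D_{\mathrm{triv}}(B)$ is generated (as a triangulated category, with the stated finiteness on total cohomology) by the $k_B(p\lambda)$, full faithfulness on generators propagates to the whole category by a standard dévissage.

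Finally, for \emph{essential surjectivity} (equivalently, that the image is all of $D^b\it{block}(G)$), it suffices to show the image is a triangulated subcategory containing a generating set of $D^b\it{block}(G)$. I would show the baby Verma modules $Z(\lambda)$ (or the simple modules $L(\lambda)$, or the indecomposable tilting modules) in the principal block are each in the essential image: $Z(\lambda)=\text{Ind}\,$ of a one-dimensional $B$-module up to the relevant infinitesimal subgroup, and chasing this through $R\text{Ind}\,_B^G$ identifies $R\text{Ind}\,_B^G k_B(p\lambda\text{-shifted})$ with an object having a $Z(\cdot)$-filtration with controllable graded pieces; then an upper-triangularity argument on the poset of weights shows every $Z(\lambda)$, hence every object of $D^b\it{block}(G)$, is hit. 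The main obstacle I expect is precisely the full-faithfulness $\Ext$-computation — the modular analogue of \cite[Lemma~4.1.1(ii)]{ABG} — since the excerpt itself signals that the honest proof here is substantial (spanning two appendices and sharpening a theorem of Rickard \cite{R94}); the subtlety is that, unlike in the quantum case, one cannot appeal to a clean quantum Frobenius/steinberg tensor picture and must instead control higher $\Ext$'s between induced objects directly, using $p>h$ to guarantee the linkage and translation machinery behaves as in characteristic zero.
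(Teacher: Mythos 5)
Your proposal matches the paper's strategy: reduce, via the general triangulated-equivalence criterion (Lemma \ref{l:ntrianequiv}), to generation plus $\Ext$-isomorphisms on the generators $k_{\mathbb{B}}(\ell\lambda)$, where the $\Ext$-comparison is carried out exactly as you describe --- dimension equalities from wall-crossing (Lemma \ref{wallcrossingfunctors}, whose part (ii) occupies Appendices A and B), passage through the twisted induced modules $I_{\mu}^{[1]}$ (Lemma \ref{l:YImuExts}, resting on the parity vanishing of Corollary \ref{cor:Len2}(1)), and a downward height induction. The one detail to adjust is the generation (``essential surjectivity'') step: the paper obtains it from the composition-factor structure of the cohomology groups $H^i(\ell\lambda)$ via strong linkage and upper-triangularity on the simple modules (Corollaries \ref{c:qcohom} and \ref{c:nBWcor}(ii)), not from baby Verma filtrations (which live over $G_1T$ and would not literally apply), but your hedged alternative of dévissage against the simples is precisely what is done.
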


%Do we want Borel subgroup $B$, or Borel part of $\Dist(G)$?

Generally, we use the ``quantum case" to refer to the context of
Theorem 1, and the ``algebraic groups case" (or ``positive
characteristic case," or ``modular case) when referring  to the
context of Theorem 2. Of course, some discussions in a given "case"
do not require the full hypotheses of these theorems. (We sometimes
keep track of such situations.)

Theorem 2 is a starting point for yet another result, proved in
\cite{HKS}. It shows, for $p > 2h-2$, that $\mbox{RInd}_B^G$ in
Theorem 2 induces an equivalence between certain natural full
triangulated subcategories
$$
D_{triv}(\mbox{Dist}(B)_{\Lambda_{m}}) \rarrow
D^b(\it{block}(G)_{\Gamma_{m}}),
$$
depending on $p$ and indexed by an integer $m > 0$. $\Lambda_m$ is a
finite subposet in a variation of van der Kallen's ``excellent
order" on weights \cite{vdK1}, and $\Gamma_m$ is a finite subposet
of dominant weights in the usual dominance order. The arguments
$\mbox{Dist}(B)_{\Lambda_{m}}$ and $\it{block}(G)_{\Gamma_{m}}$ of
the constructions in the display refer to finite-dimensional
quasi-hereditary algebra quotients of the distribution algebra
$\mbox{Dist}(B)$ and $\mbox{Dist}(G)$, respectively, the latter
associated to $\mbox{G}$--a mild abuse of notation.

For further details, see \cite{HKS}. Collectively, these more ``finite" equivalences
can be used to reconstruct the full equivalence given by $\mbox{RInd}_B^G$ in Theorem 2, thereby deepening our understanding of it.

\medskip
This paper is organized as follows. Section \ref{sec:back} collects
notation and some needed background material. Section
\ref{sec:inducproof} proves Theorems \ref{T:induct} and
\ref{T:Ginduct}. The statements above these theorems contain the
start of a dictionary for going back and forth between the
characteristic 0 quantum root of unity case and the positive
characteristic algebraic group case. Indeed, there is nothing to
stop us from using the same names as in
Theorem \ref{T:induct} for parallel objects in
Theorem \ref{T:Ginduct}, putting $\mathbb{U}=\mbox{Dist}(G)$,
$\mathbb{B}= \mbox{Dist}(B)$, $\it{block}(\mathbb{U}) = \it{block}(G), D_{triv}(\mathbb{B})= D_{triv}(B)$,
and even writing $\mbox{RInd}_{\mathbb{B}}^{\mathbb{U}}$ for $\mbox{RInd}_B^G$. We can then restate\\

 %$\begin{theorem}
 {\sc Theorem 2.1} (Cosmetic variation on Theorem 2) {\it Let $\mathbb{U}$ be the distribution algebra of a
 semisimple algebraic group over an algebraically closed field of prime characteristic $\ell = p > h$. Let
 $\mathbb{B}$ be the distribution algebra of a Borel subgroup. Then the induction functor
 $\rm{RInd}_{\mathbb{B}}^{\mathbb{U}}$ induces an equivalence}
 $$D_{triv}(\mathbb{B}) \rightarrow D^b \mbox{ block}(\mathbb{U}).$$
 %\end{theorem}
\bigskip
 In Section 3 we give a simultaneous proof of both Theorem \ref{T:induct} and the above version of
 Theorem \ref{T:Ginduct}.

 Some of the rationale for the overall approach is discussed in subsection \ref{subsec: summary}.
 Sections 4, 5, and 6
 present
 three appendices, labeled
 A, B, C, respectively.  The first two are used to prove Lemma \ref{wallcrossingfunctors}(ii), which
 restates
 \cite[Lem. 4.1.1(ii)]{ABG}, asserting that it holds in both the quantum and modular algebraic groups cases.
 The modular case, at least, is of
 independent interest
 of categorifying a theorem \cite[Thm. 2.1]{R94} of Rickard in the regular weight case, and the quantum case of
 the lemma may be
 viewed as giving an analogous quantum result. Also, Appendix C, independent of the
 rest of this paper, corrects the statement and proof of
\cite[Lem. 9.10.5]{ABG} as a service to the reader. Appendix C was
previously labeled and quoted as ``Appendix," in previous versions
of this paper.  Finally, a few acknowledgements and thanks are
collected in the final section.

    Theorem \ref{T:Ginduct} was first announced in \cite{HKS},
though the proof underwent several corrections after that, the last
in August, 2015, when a proof of Lemma
\ref{wallcrossingfunctors}(ii) was written down. This was done in
the modular case, and completed our proof of the modular induction
theorem. The proof actually also works in the quantum case, thus
proving \cite[Lem. 4.1.1(ii)]{ABG}, though we found it necessary to
work through some foundational issues regarding quantum induction
(Remarks \ref{r:Kempf}(d),(e)). We also found it necessary to fill
in other details in the quantum literature to complete our
simultaneous treatment of the quantum and modular induction
theorems. Another proof of the modular induction theorem, as part of
a larger geometric program, has recently been posted by Achar and
Riche \cite{AR}.
\bigskip

\section{Background} \label{sec:back} Generally, we follow Lusztig
\cite{L5}, \cite{L3} for basic material on quantum enveloping
algebras, and Andersen's paper \cite{A} for many additional results
on their representation theory, especially results on induced
representations that parallel those found in Jantzen \cite{J} in the
case of semisimple algebraic groups. These results on induced
representations have their origin in an earlier paper of
Andersen-Polo-Wen \cite{APW}, as supplemented by \cite{AW}. For the
study of (characteristic zero) quantum groups at $\ell^{th}$ roots
of unity with $\ell$ a prime power, the \cite{APW} paper is
generally sufficient, while the context of \cite{AW} allows all
values $\ell$ (orders of roots of unity) used in (the main results
of) this paper. (It does restrict $\ell$ to be odd, and not
divisible by 3 in case the root system has a component of type G2.)
The context of \cite{A} is even more general, though it references
an argument from \cite{AW}, and there are a number of references of
convenience (which could be avoided) to arguments in \cite{APW}.

We are interested in the semisimple algebraic groups case as much or
more so than in the quantum case, but focus now on giving  notation
below as befits the quantum case, where there is much less
uniformity in the literature than in the algebraic groups case. All
the notation and results have analogs in \cite{J}, however.   In
later parts of this paper, excluding the appendices, we will try to
treat both the algebraic groups and quantum cases simultaneously and
with the same notation.  Some of our quantum group notation has been
chosen to maintain consistency with these later
discussions.\footnote{In some cases, our notation differs from
\cite{ABG}. In particular we use a ``simply connected" set-up, which
allows modules with weights in $\mathbb{X}$ (defined below), rather
than the ``adjoint" set-up of \cite{ABG}, which restricts attention
to modules with weights in $\mathbb{Y}$ below (the root lattice).
This does not affect the triangulated categories (up to natural
equivalences) entering into the statements of Theorems 1, 2 and 2.1.
Also, we always induce from Borel subgroups associated to negative
roots, and correspondingly use a different (but more standard)
affine Weyl group ``dot" action, defined later in subsection
\ref{subsubsec:dot}.} Some important background on induction and
cohomology is given in subsection \ref{s:induction}, modifying and
completing a number of references given by \cite{ABG} to the
literature on quantum group representations. Many of the results we
discuss in that subsection are well-known in the algebraic groups
case, and we generally do not track their analogs there in detail.
Starting with subsection \ref{s:derived} and continuing in the rest
of Section 2 and all of Section 3, we use the ``uniform" notation
for both the quantum and positive characteristic cases, though some
differentiation of the two cases is sometimes required for proofs.
Appendices A and B, used to prove Lemma \ref{wallcrossingfunctors},
are given in algebraic groups notation, with the quantum case
treated in remarks. A compact quantum group reference written in the
spirit of comparing general results in the quantum and algebraic
groups cases may be found \cite[Appendix H]{J} in summary form.

\subsection{Quantum Enveloping Algebras and Algebraic Groups}
%include Triangular Decompositions
%[MORE MATERIAL TO BE ADDED HERE; ALSO, NEED TO CLEAN UP/IMPROVE THE LANGUAGE,DROP THE PRIME NOTATION, ETC.]
\subsubsection{Root Datum} \label{subsub:root datum}
Assume $\mathfrak{g}_{\C}$ is a complex semisimple Lie algebra of
rank $n,$ with Cartan matrix $C = (c_{ij})_{1\leq i,j\leq n},$ and
Killing form $\kappa:\g_{\mathbb{C}}\times \g_{\mathbb{C}} \to
\mathbb{C}.$  Then from a choice of Cartan subalgebra $\lh_{\C}
\subset \g_{\C}$ one obtains a root-datum realization $\mathfrak{R}
= (\Pi, \mathbb{X}, \Pi^{\vee},{\mathbb{X}}^{\vee})$ of $C$ from the
following data.
\begin{itemize}
\item $R\subset \lh_{\C}^*$ denotes the set of roots arising from the
Cartan decomposition $\g_{\C} = \lh_{\C} \oplus \bigoplus_{\alpha\in
R} \g_{\C,\alpha}$ into $\lh_{\C}^*$-weight spaces under the
restriction of the adjoint action of $\g_{\C}$ to $\lh_{\C},$ with
corresponding elements $t_{\alpha}\in \lh_{\C},$ $t_{\alpha}
\leftrightarrow \alpha \in R$ arising from the identification of
$\lh_{\C}$ with $\lh_{\C}^*$ obtained from the nondegeneracy of the
Killing form by setting, for any $\phi\in \lh_{\C}^* ,$ $t_{\phi}\in
\lh_{\C}$ to be the unique element such that  $\phi (h) =
\kappa(t_{\phi},h)$ ~for all $h\in \lh_{\C}.$
\item Take as the set of coroots
$R^{\vee}
:=\{\alpha^{\vee}:=\frac{2\alpha}{(\alpha,\alpha)}\,|\,\alpha\in
R\}.$ For
$h_{\alpha}:=\frac{2t_{\alpha}}{\kappa(t_{\alpha},t_{\alpha})},$
there is a correspondence $h_{\alpha}\leftrightarrow \alpha^{\vee}$
under the identification of $\lh_{\C}$ with $\lh_{\C}^*.$
\item Take $E =
E_{\mathbb{Q}}\otimes_{\mathbb{Q}}\mathbb{R},$ for $E_{\mathbb{Q}}$
the $\mathbb{Q}-$span of the roots $R$ in $\lh_{\C}^*.$ The rational
space $E_{\mathbb{Q}}$ has a nondegenerate bilinear form obtained
from restriction of $(\lambda,\mu) = (t_{\lambda},t_{\mu})$ on
$\lh_{\C}^*;$ this extends uniquely to a positive definite form
$(-,-)$ on $E,$ making $E$ into an $n$-dimensional Euclidean space.
Both $R$ and $R^{\vee}$ are root systems in $E;$ in particular they
both span $E.$  Observe that, for
$<\zeta,\eta>:=\frac{2(\zeta,\eta)}{(\eta,\eta)}\,\forall
\zeta,\eta\in E,$ one has  $<\beta,\alpha> = (\beta,\alpha^{\vee})$
for $\alpha,\beta\in R.$
\item The Weyl group $W$ is the subgroup  of
$GL(E)$ generated by reflections $s_{\alpha}(v) = v -
(v,\alpha^{\vee})\alpha, \alpha\in R.$
\item $\Pi  = \{\alpha_1,\dots, \alpha_n\}$ to be a set of simple roots (i.e., basis for $E$ such
that any $\alpha \in R$ satisfies $\alpha = \sum m_i\alpha_i\in
\oplus_{1\leq i\leq n}\mathbb{Z}\alpha_i$ with all $m_i\geq 0$ or
all $m_i\leq 0$). Then $W$ is generated by the $s_{i}:=s_{\alpha_i},
1\leq i\leq n.$
\item $\Pi^{\vee}:=\{\alpha_i^{\vee}\,|\,1\leq i\leq n\}$ to be the corresponding set of
simple coroots %(i.e., a $\mathbb{C}$-vector space basis for $\lh$),
\item ${\mathbb{X}}^{\vee} = \oplus_{i = 1}^{n}\Z\alpha^{\vee}_i$
coweight lattice
\item ${\mathbb{X}} = \{\lambda\in E\,|\, <\lambda,\alpha> =
(\lambda,\alpha^{\vee})\in \Z\quad \forall \alpha \in R\}=
\{\lambda\in E\,|\, (\lambda,\alpha_i^{\vee})\in \Z\quad \forall
\alpha_i \in \Pi\} \cong \Hom_{\Z}({\mathbb{X}}^{\vee},\Z)$ weight
lattice
\item $\mathbb{Y}$ is the subgroup of ${\mathbb{X}}$ generated by $R;$ $\mathbb{Y} = \bigoplus_{i=1}^n\Z\alpha_i$ root lattice
\item ${\mathbb{X}}^+:=\{\lambda\in {\mathbb{X}}\,|\, (\lambda, \alpha_i^{\vee})\geq 0 \quad \forall 1\leq i \leq n\}$ dominant
weights; for $\varpi_1,\dots,\varpi_n$ the dual basis defined by
$(\varpi_i,\alpha_j^{\vee}) = \delta_{i,j}$, we get ${\mathbb{X}} =
\bigoplus_{i = 1}^n\Z\varpi_i.$
\item In our notation the Cartan matrix $C = (c_{i,j})$ is given by $c_{i,j} =
(\alpha_j,\alpha_i^{\vee}), 1\leq i,j\leq n.$
\end{itemize}
Furthermore, for any $\alpha \in R,$ set $d_{\alpha} =
\frac{(\alpha,\alpha)}{2}, $ so then $d_{\alpha} \in \{1, 2, 3\}$
and $(\lambda,\alpha) = d_\alpha(\lambda,\alpha^{\vee}) =
d_{\alpha}<\lambda,\alpha>\in \mathbb{Z}$. Writing $d_{i}$ for
$d_{\alpha_i},$ and $D = diag(d_1,\dots, d_n),$ one has that $DC =
(d_{i}c_{i,j})$ is symmetric.

\subsubsection{Quantum Enveloping Algebra}
\label{subsub:QEAs}
  Our description of quantum enveloping algebras here follows
  Lusztig \cite{L3}, with similar notation, especially for
  generators. There are some differences in the notational names of algebras
  and subalgebras, and the labeling of relations. We make no
  distinction in the terms ``quantum enveloping algebra," ``quantum
  algebra," and ``quantum group."

 Take $v$ to be an indeterminate, and consider
the following expressions in the ring $\mathbb{Q}(v):$
\begin{eqnarray}
[n]_d &:= &\frac{v^{nd} - v^{-nd}}{v^d-v^{-d}}, \mbox{ for } d,n\in
\mathbb{N};
 \mbox{ when  }
d=1,
\mbox{ have }[n]:= [n]_1=\frac{v^n - v^{-n}}{v-v^{-1}}\notag\\
\end{eqnarray}
\begin{eqnarray}
 [n]_d!&:=&\Pi_{s=1}^{n}\frac{v^{d\cdot s} - v^{-d\cdot s}}{v^d -
v^{-d}} = \Pi_{s=1}^n [s]_d, \mbox{ for } n,d\in \mathbb{N}\notag\\
\left [\begin{smallmatrix}n\\t\end{smallmatrix}\right]_d&:=
&\Pi_{s=1}^{t}\frac{v^{d(n-s+1)} - v^{-d(n-s+1)}}{v^{ds} - v^{-ds}}
\mbox{ for }n\in \mathbb{Z}, d,t\in \mathbb{N};
 \mbox{ when  } d=1,
\mbox{ we set } [\begin{smallmatrix}n\\t\end{smallmatrix}]:=[\begin{smallmatrix}n\\t\end{smallmatrix}]_1.\notag \\
\end{eqnarray}
The simply connected quantum enveloping algebra\footnote{Over
$\mathbb{Q}(v)$ Lustzig \cite[p.90]{L3} and in \cite{L5} credits
this form to Drinfeld and Jimbo. Lusztig himself considers in these
papers more general rings as coefficients, especially
$\mathbb{Z}[v,v^{-1}]$. The $\prime$ notation, convenient for us
here (freeing the unprimed $\mathbb{U}$ for other uses) is not used
in \cite{L3}. Our usage of it is similar to that of \cite{L5},
suggesting the use of a quotient field, such as $\mathbb{Q}(v)$, in
the coefficient system.} $\mathbb{U}_v^{'} =
\mathbb{U}_v^{'}(\mathfrak{R})$ is the $\mathbb{Q}(v)$-algebra
generated by the symbols $E_i, F_i, K_i^{\pm 1}, 1\leq i \leq n,$
subject to the five sets of relations below, as found in
\cite[p.90]{L3}. We take this opportunity to warn the reader that we
will sometimes also need to refer to Lusztig's book \cite{L}, where
the symbols $K_i$ here (and in \cite{L3}) correspond to symbols
$\tilde{K}_i$ there.\footnote{In \cite{L} larger quantum algebras
are built, which we do not need. There are (new) elements $K_i$ in
these larger algebras, which serve as $d_i^{th}$ roots for the
elements $\tilde{K}_i$.}
\begin{itemize}
\item[(a1)] $K_iK_j = K_jK_i,~ K_iK_{i}^{-1} = 1 = K_i^{-1}K_i,~$
for $1\leq i, j\leq n, i\not=j;$
\item[(a2)] $K_iE_j=v^{d_ic_{i,j}}E_jK_i,$ and
 $K_iF_j=v^{-d_{i}c_{i,j}}F_jK_i,~$ for $1\leq i,j \leq n;$
%these are just for the $K_i^{d_i},$ not their inverses, eh
\item[(a3)] $E_iF_j - F_jE^{}_i=\delta_{i,j}\frac{K_i -
K_i^{-1}}{v^{d_i} - v^{-d_i}},~$ for $1\leq i \leq n.$
\item[(a4)] $\sum_{s+t = 1-c_{i,j}}(-1)^s \left [\begin{smallmatrix}{1-c_{i,j}}\\s\end{smallmatrix}\right
] E_i^sE_jE_i^t = 0,~$ for $1\leq i,j \leq n, \quad i\not= j; $

\item[(a5)] $\sum_{s+t = 1-c_{i,j}}(-1)^s \left [\begin{smallmatrix}{1-c_{i,j}}\\s\end{smallmatrix}\right
] F_i^sF_jF_i^t = 0,~$ for $1\leq i,j \leq n, \quad i\not= j; $
\end{itemize}
Note that it is also common to let $v_i = v^{d_i},$ so e.g., the
first part of (a2) can be rewritten as
%This  may not be right, given our choice of notation for the Cartan matrix -- check
$K_iE_jK_i^{-1}=v^{(\alpha_i,\alpha_j)}E_j$ for $1\leq i,j \leq n,$
and similarly for the second part of (a2).

The algebra $\mathbb{U}_v^{'}$ is also a Hopf algebra, with
comultiplication $\Delta$, antipode $S$, and counit $\epsilon$ given
as below [$ibid$]. These formulas hold for all indices $i$ with
$1\leq i \leq n.$
\begin{itemize}
\item[(b)] $\Delta E_i = E_i\otimes 1 \oplus K_i\otimes E_i,~$
  $\Delta F_i = F_i\otimes K_i^{-1} \oplus 1\otimes F_i,~$
\item[]      $\Delta K_i = K_i \otimes K_i,$
\item[(c1)] $S E_i = -K_i^{-1}E_i,~$  $SF_i = -F_iK_i,~$ $SK_i =
K_i^{-1},$
\item[(c2)] $\epsilon E_i = \epsilon F_i = 0,~$ $\epsilon K_i = 1.$
\end{itemize}

\medskip
We next give a brief discussion of the Lusztig integral form of this
Hopf algebra \cite{L3}.

Starting with $E_i, F_i, K_i \in \mathbb{U}_v,$ $1\leq i \leq n,$
and $s, t \in \mathbb{N}$, $c \in \mathbb{Z}$, define the divided
powers $E_i^{(s)}, F_i^{(s)}, \left [\begin{smallmatrix}K_i;
c\\t\end{smallmatrix}\right ]$ by
\begin{eqnarray}\label{e:divpow}
E_i^{(s)}&:= &\frac{E_i^s}{[s]_{d_i}!}, \notag\\
F_i^{(s)}&:= &\frac{F_i^s}{[s]_{d_i}!},\notag\\
\left [\begin{smallmatrix}K_{i}; c\\t\end{smallmatrix}\right ]&:=
&\Pi_{j=1}^{t} \frac{K_iv^{d_i(c-j+1)} -
K_i^{-1}v^{d_i(-c+j-1)}}{v^{d_ij} - v^{-d_i j}}\label{e:Ks}.
\end{eqnarray}

Each term on the left above with $s=0$ or $t=0$ is defined to be
$1$.

Set $\mathcal{Z} = \mathbb{Z}[v,v^{-1}].$ Keeping the hypotheses
that $\mathfrak{R}$ is the root datum for a semisimple complex Lie
algebra, with $\Pi^{\vee} = \{\alpha_1^{\vee},\dots,
\alpha_n^{\vee}\},$ then  the Lusztig integral form\footnote{This is
the form employed in \cite{A} (and also \cite[Appendix H]{J}). In
\cite[2.4]{ABG}, a version of Lusztig's integral form is given very
loosely, but apparently intended to be defined by an ``adjoint type"
version of the relations we use here. The latter relations, however,
appear to be consistent with the alternate ``simply connected"
development suggested \cite[Remark 2.6]{ABG}, a point of view we
have used throughout this paper.} $\mathbb{U}_{\mathcal{Z}} =
\mathbb{U}_{\mathcal{Z}}(\mathfrak{R})$ of $\mathbb{U}_v^{'}$ is the
$\mathcal{Z}$-subalgebra of $\mathbb{U}_{v}^{'}$ generated by
$E_i^{(s)}, F_i^{(s)}, K_i^{\pm 1} \text{and} \left
[\begin{smallmatrix}K_{i}; c\\t\end{smallmatrix}\right ], \text{for
all} i \text{ with} \quad 1\leq i \leq n, s,t\in \mathbb{N}, c\in
\mathbb{Z}$; equivalently (it turns out) $\mathbb{U}_{\mathcal{Z}}$
is generated by all $E_i^{(s)}, F_i^{(s)}, {K_i}^{\pm 1}$ and
\footnote{These additional expressions in the $K_i$ are
redundant--see the brief discussion \cite [p.3, bottom]{A}--but are
needed for the integral triangular decomposition.} $\left
[\begin{smallmatrix}K_{i}; 0\\t\end{smallmatrix}\right ],  \quad
1\leq i \leq n,\, s,t \in \mathbb{N}$. Corresponding to either set
of generators, both $\mathbb{U}'_{v}$ and $\mathbb{U}_{\mathcal{Z}}$
have (compatibly generated) triangular decompositions
$\mathbb{U}'_{v} = {\mathbb{U}'_{v}}^{-} \otimes
{\mathbb{U}'_{v}}^{0} \otimes{ \mathbb{U}'_{v}}^{+},$ resp.,
$\mathbb{U}_{\mathcal{Z}}= \mathbb{U}^{-}_{\mathcal{Z}} \otimes
\mathbb{U}_{\mathcal{Z}}^{0} \otimes \mathbb{U}_{\mathcal{Z}}^{+}.$
Either set of generators, with the relations (a1),$\ldots$, (a5),
define $\mathbb{U}_{v}^{'}$ over $\mathbb{Q}(v)$. These relations
are often sufficient to work with $\mathbb{U}_{\mathcal{Z}}$.
However, there are many additional useful relations \cite[\S 6]{L3}
on the elements $\left [\begin{smallmatrix}K_{i};
0\\t\end{smallmatrix}\right ]$ and their interactions with the
``divided powers" $E_i^{(s)}, F_i^{(s)}$. Also, there are analogs of
the latter elements for all positive roots. All of these elements
belong to $\mathbb{U}_{\mathcal{Z}}$, and may be used to define the
latter by generators and relations in its own right, and to
construct for it a monomial basis \cite{L3}.

    Finally, the $\mathcal{Z}$-algebra $\mathbb{U}_{\mathcal{Z}}$ is a Hopf
algebra, inheriting its Hopf algebra structure from
${\mathbb{U}'_{v}}$ \cite[8.11]{L3}. (All the Hopf algebras
mentioned in this paragraph have bijective antipodes, with clearly
invertible squares.)  Similar statements apply for
$\mathbb{U}_{\mathcal{Z}}^{0}$, ${{\mathbb{U}^{-}}_{\mathcal{Z}}}
\otimes \mathbb{U}_{\mathcal{Z}}^{0}$, and
$\mathbb{U}_{\mathcal{Z}}^{0} \otimes \mathbb{U}_{\mathcal{Z}}^{+}$
[$ibid$]. Also, the root of unity specializations discussed in the
next section inherit Hopf algebra structures from
$\mathbb{U}_{\mathcal{Z}}$, as do the ``small" quantum groups
$\mathbb{u}$,
$\mathbb{u}^{-},\mathbb{u}^{0},\mathbb{u}^{+}$[$ibid$].The
(``Frobenius") homomorphism discussed later in section
\ref{subsec:twisted} is a homomorphism of Hopf algebras.

\smallskip

\subsection{Quantum specializations at roots of unity--notation}
\label{subsec:specializations} For any commutative ring $K$ and
invertible element $q\in K,$ with unique accompanying `evaluation
morphism' $\epsilon_q:\mathcal{Z} \to K$ satisfying $v \mapsto q,$
define the \textbf{specialization} of $\mathbb{U}_{\mathcal{Z}} =
\mathbb{U}_{\mathcal{Z}}(\mathfrak{R})$ by
\begin{equation}\label{e:specialization}
\mathbb{U}_{q,K}=\mathbb{U}_{\mathcal{Z}} \otimes_{\mathcal{Z}} K,
\end{equation} where the tensor product is formed by using the
$\mathcal{Z}$-module structure on $K$ given by $\epsilon_{q}.$ For
this paper we will be interested in specializations where
\begin{itemize}
\item $K$ is a field of characteristic zero
\item $q$ is a primitive $\ell^{th}$-root of unity in $K$ with
$\ell$ odd, and $\ell \not=3$ if the root system of $\Lg$ has a
component of type $G_2.$
\end{itemize}
$\textit{We henceforth fix this meaning for}~ K, \ell, q$, unless
otherwise noted. $\textit{We also take}$ $\ell>h$, the Coxeter
number\footnote{All these restrictions agree in substance with those
in \cite{ABG}, though $h$ there denotes the dual Coxeter number.
Also, the literature differs as to whether $q$ is chosen to be the
image of $v$ or the image of $v^2$, the latter fitting somewhat
better with Hecke algebra notation. This makes little difference
when the order of the image of $v$ is odd, as is the case here.},
from Corollary \ref{c:nBWcor} forward.  We also now introduce
further notation that will be used in this quantum root of unity
setting, and also used in a parallel setting from algebraic groups,
discussed below. Relatively abbreviated notations are chosen to
facilitate later parallel discussions.  In the present
$\textit{quantum context}$, we let $\mathbb{U}$ denote the
specialization $\mathbb{U}_{q,K}$ as in (\ref{e:specialization}).
Similar conventions are adapted for
$\mathbb{U}^{+},\mathbb{U}^{0},\mathbb{U}^{-}$ and
$\mathbb{B}=\mathbb{U}^{0}\cdot\mathbb{U}^{-}$. Lusztig's finite
dimensional Hopf algebra \cite[\S 8.2]{L3} (the ``small" quantum
group) is denoted $\mathbb{u}$, with components of its triangular
decomposition denoted
$\mathbb{u}^{-},\mathbb{u}^{0},\mathbb{u}^{+}$. For example,
$\mathbb{u}^{0}$ is generated by all $K_i^{\pm}$, and
{$\mathbb{u}^{-}$ is generated by all the $F_i$
\cite[pp.107-108]{L3}. Imitating the notation in \cite{ABG} we set
$\mathbb{b}:=\mathbb{u}^{-}\cdot\mathbb{u}^{0}$ and
$\mathbb{p}:=\mathbb{b}\cdot\mathbb{U}^{0}$.

Overall, our notation here is quite similar to that used for quantum
groups at a root of unity in \cite{ABG}, with the exception that our
characteristic $0$ field $K$ (which may be compared with
$\mathbb{k}$ in \cite{ABG}) is not assumed to be algebraically
closed. Also, our $\mathbb{B}$ is
$\mathbb{U}^{0}\cdot\mathbb{U}^{-}$, whereas in \cite{ABG} the same
symbol $\mathbb{B}$ is used to denote
$\mathbb{U}^{0}\cdot\mathbb{U}^{+}$.

\subsection{Some parallel algebraic groups notation}\label{subsec:parallel}
Let $G$ be a simply connected semisimple algebraic group, with root
datum $\mathfrak{R}$, over an algebraically closed field $k$ of
characteristic $p>h$. We assume $G$ is defined and split over the
prime field $\mathbb{F}_p$.  In particular there is a Borel subgroup
$B=TU$, with $U$ the unipotent radical of $B$, and $T$ a maximal
torus, all defined over $\mathbb{F}_p$, with $T$ isomorphic (over
the same field) to a direct product of copies of $k^{\times}$. The
root groups in $B$ are viewed as negative. We refer to this set-up
as the $\textit{algebraic groups context}$ or, even more loosely, as
the $\textit{algebraic groups case}$. The distribution algebras
Dist$(G)$, Dist$(B)$, Dist$(T)$, Dist$(U)$, and Dist$(G_1)$ (the
restricted enveloping algebra) parallel $\mathbb{U}$, $\mathbb{B}$,
$\mathbb{U^{0}}$, $\mathbb{U^{-}}$, and $\mathbb{u}$, respectively.
We will use the latter symbol set in place of the former, when the
context is clear, or if both the algebraic groups and quantum
contexts have been explicitly allowed. In either of these
circumstances, additional notational substitutions in the same
spirit may also be made, such as $\mathbb{p}$ for Dist$(B_1T)$.

\subsection{Affine Weyl Groups} Affine Weyl groups $W_\ell$ are used to index modules in both
the quantum and algebraic groups context, with $p$ used for $\ell$
in the latter. Our main references for affine Weyl groups are
\cite{J} and \cite{A}. To clarify discussions and differences in
these references, we temporarily allow $\ell$ to be any positive
integer.
%In this subsection, we:
%\begin{itemize}

%\item Define affine Weyl group in terms of reflections w.r.t. $R$,
%e.g., as in \cite{J}
%\item Define affine Weyl group $W_a(R^{\vee})$ as in Bourbaki
%\item Define affine Weyl group as in Andersen \cite{A}
%\item Consider affine Weyl group as in \cite{DDPW} or {Carter}
%\item Compare these formulations %\use DDPW, \cite{A}, too?
%\item Define dot action.
%\item Compare conventions of \cite{ABG} and what we've introduced
%above.
%\end{itemize}

\subsubsection{Affine Weyl Groups, as in
\cite{J}}\label{subsubsec:affJ} Following e.g., the conventions and
notation in \cite[\S 6.1]{J}, for $\beta\in R$ and $m\in
\mathbb{Z},$ define the affine reflection on ${\mathbb{X}}$ by
$$s_{\beta,m}(\lambda) = \lambda - (\lambda,\beta^{\vee}) -
m)\beta, \quad \forall \lambda\in {\mathbb{X}};$$ one could take
${\mathbb{X}}\otimes_{\mathbb{Z}}\mathbb{R}$ in place of
${\mathbb{X}}.$ Thus, for the reflections $s_{\beta}$ given by
$s_{\beta}(\lambda) = \lambda -(\lambda,\beta^{\vee})\beta$
 one has
$$s_{\beta,m}(\lambda) = s_{\beta}(\lambda) + m\beta \quad \forall
\lambda.$$ For any positive integer $\ell,$ the affine Weyl group
$W_{\ell}$ is the group
$$W_{\ell}:=<s_{\beta,n\ell}\,|\, \beta\in R, n\in \mathbb{Z}>.$$
In the notation used in \cite[\S 6.1]{J}, there is a (largely
formal) isomorphism $W_{\ell}\cong W_{a}(R^{\vee}),$ for
$W_{a}(R^{\vee}):=<s_{\beta,m}\,|\, \beta\in R,m\in \mathbb{Z}>,$ as
defined by Bourbaki \cite[ch. VI, \S 2]{B}. When $\ell=1$ this
isomorphism is an equality. The Bourbaki reference makes a good case
for the labelling with $R^{\vee}$, though it is common in algebraic
group theory to associate both $W_\ell$ and $W_{a}(R^{\vee})$ with
the root system $R$. A familiar semidirect product description is
obtained by regarding $\ell\mathbb{Z}R$ as a group of translations
on ${\mathbb{X}}\otimes_{\mathbb{Z}}\mathbb{R}$, namely,
$W_{\ell}\cong \ell \mathbb{Z}R \rtimes W =\ell\mathbb{Y} \rtimes W$
(\cite{J} references \cite[ch. VI\S 2 prop. 1]{B} for a proof). Here
$W$ is the usual Weyl group associated to $R$.
%But Jantzen also says he's defining reflections
% by form $x \mapsto x - (<x + \rho,\alpha^{\vee}> - np)\alpha$
%but this isn't consistent with the defn of $W_p$ he gives!

\subsubsection{Dot Action} \label{subsubsec:dot}
Set $\rho = \frac{1}{2}\sum_{\alpha\in R^+}\alpha.$ For $w\in W,$
$\lambda \in {\mathbb{X}},$ one sets,
$$w\cdot \lambda = w(\lambda + \rho) - \rho.$$
%Define $\rho!$
More generally, the affine Weyl group $W_{\ell}\cong \ell\mathbb{Y}
\rtimes W$ acts, for $w_a:= \ell\tau \rtimes w  \in \ell
\mathbb{Y}\rtimes W$ by
$$w_a \cdot \lambda:=w(\lambda + \rho) - \rho + \ell \tau, \quad \text{for all }\lambda \in \mathbb{X}.$$
Our ``dot" action $\cdot$ , which is often used, is not given by the
same formula as the action $\bullet$ defined in \cite{ABG}, possibly
intending some variation on \cite[Lem. 3.5.1]{ABG} (which is
incorrect as stated for ``positive" Borel subalgebras). A simpler
approach, it seems to us, is to use ``negative" Borel subalgebras
and the usual ``dot" action.
%This is ABG definition -- not consistent -- rectify

\subsubsection{Affine Weyl Groups as in \cite{A}} \label{subsubsec:affA}
 Take $q\in K$ to be a root of unity. (In \cite{A} the field $K$ can have any
characteristic, though that is not relevant to our discussions here,
and we may keep our assumption that $K$ has characteristic 0.)  Set
$\ell$ to be the order of $q^2,$ so that $q$ is a primitive
$\ell^{th}$ or $2\ell^{th}$ root of unity. For the Cartan matrix $C$
of $\mathfrak{R}$ and symmetrization $DC$ (as at the end of our
Section \ref{subsub:root datum}), set $\ell_{i} =
\frac{\ell}{gcd(\ell,d_i)}$.
%Is this normalization correct -- double-check w/Paramasamy and Len...
For each $\beta\in R,$ there is some $i, 1\leq i \leq n$ so that
$\beta$ is conjugate under the Weyl group $W$ of $R$ to $\alpha_i$.
Set $\ell_{\beta} = \ell_{i}$~ (well-defined). For each $\beta\in R$
and $m\in \mathbb{Z},$ as in Section \ref{subsubsec:affJ}, we have
the affine reflection $s_{\beta,m\ell_{\beta}}$ with
$$s_{\beta,m\ell_{\beta}}\cdot\lambda = s_{\beta}\cdot\lambda + m\ell_{\beta}\beta \quad
\forall \lambda\in {\mathbb{X}}.$$ Here, $s_{\beta}$ and
$s_{\beta}\cdot\lambda$ are defined just as in Sections
\ref{subsubsec:affJ} and \ref{subsubsec:dot}. Now define a new group
of affine reflections
$$W_{D,\ell}:=<s_{\beta,m\ell_{\beta}}\,|\, \beta\in R, m\in \mathbb{Z}>.$$

Take $W^{\vee}_{\ell}$ to be the group
$$W^{\vee}_{\ell}:=<s_{\beta^{\vee},n\ell}\,|\,\beta\in R^+, n\in
\mathbb{Z}>,$$ generated by reflections as in Section
\ref{subsubsec:affJ}, but utilizing coroots in place of roots. The
following proposition relates the three groups $W_{\ell},
W_{D,\ell},$ and $W_{\ell}^{\vee}.$

\begin{prop} \label{p:aff} Assume $R$ is indecomposable. There are
identifications giving inclusions
$$W_{\ell} \subseteq  W_{D,\ell} \subseteq W_{\ell}^{\vee}$$
so that \begin{itemize}
\item[(i)] If $gcd(d_i,\ell) = 1$ for all $1\leq i\leq n,$ then $W_{\ell} = W_{D,\ell}.$

\item[(ii)] On the other hand, if $gcd(d_i,\ell) \neq 1$ for some $1\leq i\leq n,$
 then $W_{D,\ell} =
W_{\ell}^{\vee}.$
\end{itemize}
\end{prop}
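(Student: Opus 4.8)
The plan is to work root-system by root-system, using the classification of indecomposable root systems together with the fact that $\ell$ is odd (and prime to $3$ in type $G_2$). First I would record the basic group-theoretic dictionary: each of $W_\ell$, $W_{D,\ell}$, $W_\ell^\vee$ is a semidirect product of $W$ with a lattice of translations, so the problem reduces to comparing the three translation lattices $L_\ell := \ell\mathbb{Z}R$, $L_{D,\ell} := \sum_{\beta\in R}\ell_\beta\mathbb{Z}\beta$, and $L_\ell^\vee := \ell\mathbb{Z}R^\vee$ inside $E$ (or inside $\mathbb{X}\otimes_{\mathbb{Z}}\mathbb{R}$). Here I am using that $s_{\beta,m\ell_\beta} = (\text{translation by }m\ell_\beta\beta)\circ s_\beta$ and likewise for the coroot version, together with Proposition~\ref{p:aff}'s listed semidirect-product description $W_\ell \cong \ell\mathbb{Y}\rtimes W$. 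The inclusions $W_\ell \subseteq W_{D,\ell}$ and $W_{D,\ell}\subseteq W_\ell^\vee$ then amount to the lattice inclusions $L_\ell \subseteq L_{D,\ell} \subseteq L_\ell^\vee$; the first is clear since $\ell_\beta \mid \ell$ for every $\beta$, and the second requires checking that $\ell_\beta\beta \in \ell\mathbb{Z}R^\vee$ for each $\beta$, i.e. $\ell_\beta\beta = (\ell/\gcd(\ell,d_\beta))\,d_\beta\,\beta^\vee = \mathrm{lcm}(\ell,d_\beta)\,\beta^\vee \in \ell\mathbb{Z}\beta^\vee$, which holds because $d_\beta \mid \mathrm{lcm}(\ell,d_\beta)$ and $\ell \mid \mathrm{lcm}(\ell,d_\beta)$.

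Next, for part~(i), suppose $\gcd(d_i,\ell)=1$ for all $i$. Then $\ell_i = \ell$ for all $i$, so $\ell_\beta=\ell$ for all $\beta\in R$ and the translation lattice $L_{D,\ell}=\sum_\beta \ell\mathbb{Z}\beta = \ell\mathbb{Z}R = L_\ell$. Since the $W$-part is the same in both groups, $W_\ell = W_{D,\ell}$ follows immediately. This case is essentially formal.

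For part~(ii), assume $\gcd(d_i,\ell)\neq 1$ for some $i$. Here I would invoke the standing hypotheses on $\ell$: $\ell$ is odd, and $\ell$ is not divisible by $3$ if $R$ has a $G_2$ component. Since $d_i \in \{1,2,3\}$, the only way $\gcd(d_i,\ell)>1$ can occur is if $R$ is doubly-laced with $d_i=2$ and $2\mid\ell$ (excluded, $\ell$ odd), or if $R$ is of type $G_2$ with $d_i=3$ and $3\mid \ell$ (excluded by hypothesis) — so I should double-check whether the proposition is actually meant to be applied only to a more permissive $\ell$, or whether in the $G_2$ case one really does allow $3\mid\ell$ somewhere; most likely the statement here is being proved in the generality of ``$\ell$ any positive integer'' that the section opened with, and the genuine content is: when $d_i\mid\ell$ (the relevant sub-case of $\gcd(d_i,\ell)\neq 1$), one has $\ell_i = \ell/d_i$, and then $\ell_\beta\beta = (\ell/d_\beta)\beta = \ell\beta^\vee$ for every long-or-short $\beta$ with $d_\beta\mid\ell$, while for $\beta$ with $d_\beta$ prime to $\ell$ one has $\ell_\beta\beta = \ell\beta$. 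Reducing to the indecomposable types $B_n, C_n, F_4$ (where $d\in\{1,2\}$) and $G_2$ (where $d\in\{1,3\}$), I would compute $L_{D,\ell}$ explicitly in each type and verify it equals $\ell\mathbb{Z}R^\vee$: the point is that rescaling the long roots (those with $d_\beta\mid\ell$) by $1/d_\beta$ converts them into the corresponding coroots, and together with the short roots/coroots this generates exactly $\mathbb{Z}R^\vee$. This is a short per-type check, using that for a short root $\alpha$ one has $\alpha^\vee$ ``long'' and vice versa, and that $\mathbb{Z}R^\vee$ is spanned by the simple coroots.

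The main obstacle I anticipate is purely bookkeeping: getting the relation between $d_\beta$, $\gcd(\ell,d_\beta)$, $\ell_\beta$ and the coroot $\beta^\vee = (2/(\beta,\beta))\beta = d_\beta^{-1}\beta$ normalized consistently with the paper's conventions, and making sure the identification $W_{D,\ell}\subseteq W_\ell^\vee$ is via the \emph{same} ambient space and the \emph{same} identification of $E$ with $E^\vee$ throughout (so that ``equality of translation lattices'' really does give ``equality of affine Weyl groups''). There is also a mild subtlety in that $W_\ell^\vee$ is defined using only $\beta\in R^+$ rather than all of $R$, but since $s_{-\beta,n\ell}=s_{\beta,-n\ell}$ the generating set is the same, so this is harmless. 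Once the lattice computations are in place in each indecomposable type, assembling (i) and (ii) is immediate.
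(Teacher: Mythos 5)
Your proposal is correct, and its core computation is the same as the paper's: the whole proposition rests on the identity $\ell_\beta\beta=\mathrm{lcm}(\ell,d_\beta)\,\beta^\vee$, which gives $\ell_\beta\beta=\ell\beta$ when $\gcd(d_\beta,\ell)=1$ and $\ell_\beta\beta=\ell\beta^\vee$ when $d_\beta\mid\ell$. The one substantive difference is that you propose a type-by-type verification ($B_n,C_n,F_4,G_2$) for part (ii), whereas the paper avoids the classification entirely: since $R$ is indecomposable, all $d_i\neq 1$ share a single value $d\in\{2,3\}$ (a prime), so ``$\gcd(d_i,\ell)\neq 1$ for some $i$'' is exactly ``$d\mid\ell$'', and then \emph{every} long root satisfies $\ell_\beta\beta=\ell\beta^\vee$ while every short root satisfies $\beta^\vee=\beta$ and $\ell_\beta=\ell$; hence the generating sets $\{s_{\beta,m\ell_\beta}\}$ and $\{s_{\beta^\vee,m\ell}\}$ literally coincide and $W_{D,\ell}=W_\ell^\vee$ with no lattice computation needed. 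Your own lcm formula already delivers this uniform statement, so the per-type check (and even the reduction to semidirect products, which requires the small extra observation that $L_{D,\ell}$ is $W$-stable) is superfluous rather than wrong. You are also right that the proposition is proved under the section's temporary convention that $\ell$ is an arbitrary positive integer, so part (ii) is not vacuous, and your remark that $s_{-\beta,n\ell}=s_{\beta,-n\ell}$ correctly disposes of the restriction to $\beta\in R^+$ in the definition of $W_\ell^\vee$.
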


\begin{proof} Without loss, some $d_i\neq1$. Since $R$ is assumed to
be indecomposable, all $d_i\neq 1$ take the same value $d\in
\{1,2,3\}$. If $d$ does not divide $\ell$, $\ell_i=$ for all indices
$i$, and it follows that $\ell=\ell_\beta$ for all $\beta \in R$.
Consequently, $W_{D,\ell}=W_\ell$. On the other hand, if $d$ does
divide $\ell$, then $d\beta^{\vee}=\beta$ and $d\ell_\beta =\ell$
for all long $\beta\in R$, and $\beta^{\vee}=\beta$ for all short
roots $\beta$. It follows that $W_{D,\ell} = W_{\ell}^{\vee}$ in
this case. This proves the proposition.
\end{proof}

\begin{rem} We have included Proposition
\ref{p:aff} in part to address possible confusion that a reader
casually comparing \cite{J} and \cite{A} may encounter.  \cite[p.6]{A} %check page #
says ``Note that if $\ell$ is prime to all entries of the Cartan
matrix, then the group $W_{D,\ell}$ (denoted $W_{\ell}$ in
\cite{A}!) is the `usual' affine Weyl group of $R.$ However, in
general $W_{D,\ell}$ is the affine Weyl group of the dual root
system''. As we have pointed out above,  the `` `usual' affine Weyl
group'' in algebraic groups discussions is $W_{\ell}$ as defined in
\cite{J} and Section \ref{subsubsec:affJ} above, and that``the
affine Weyl group on the dual root system'' referred to by \cite{A}
is $W_{\ell}^{\vee}\cong W_a(R)$, rather than $W_a(R^{\vee}).$ The
proposition and our previous discussion perhaps make precise what
Andersen intended. In any case, in this paper, under the assumption
below (\ref{e:specialization}) that $\ell$ be odd, and not divisible
by $3$ in case the root system has a component of type $G2$, it is
clear from the proposition that $W_{\ell} = W_{D,\ell}$.

\end{rem}
\subsection{Induction and Cohomology}\label{s:induction}
%What assumption, if any, on the root system? Andersen speaks of using the
%root system of a semisimple algebraic group in the intro to his paper, but
%later just seems to say "Let $R$ be a root system...."
We continue the notation of Section \ref{subsubsec:affA} above,
appropriate for Andersen's paper \cite{A}. This is somewhat more
general than our standard assumptions stated below
(\ref{e:specialization}). Those more special assumptions are all
that we need for this paper, and are explicitly used as hypotheses
in \cite{AW}. The latter paper, along with some arguments of
\cite{APW} could possibly be used as an alternate source for some of
the results of this subsection, with the standard assumptions below
(\ref{e:specialization}) as hypotheses. Unfortunately, it is not
possible to quote \cite{APW} directly, since its standing
assumptions effectively require $\ell$ to be a prime power. (See
\cite[Lem. 6.6]{APW}, \cite[p.35]{AW}.) On the other hand, \cite{A}
contains explicit statements (with weaker hypotheses) of most of the
results we need, with the exceptions tractable with modest effort.

Accordingly, we follow \cite{A} using the notation for $K, q, \ell$
in the previous subsection. In addition, we use the notation
$\mathbb{U}_{q,K}$  as in (\ref{e:specialization}), though with more
general assumptions than those below (\ref{e:specialization}). We
will define induction functors
$\Ind_{\mathbb{B}_{q,K}}^{\mathbb{U}_{q,K}}$ from the category of
integrable $\mathbb{B}_{q,K}$-modules of Type 1 to the category of
integrable $\mathbb{U}_{q,K}$-modules of Type 1 (both categories
defined below). For the moment, we will not use our preferred
$\mathbb{U}, \mathbb{B}, \dots$ notation, to help remind the reader
of our slightly different context, with weaker hypotheses. Of
course, we will obtain from this construction the induction functors
$\Ind_{\mathbb{B}}^{\mathbb{U}}$ whose right derived functors are
the focus of this paper.

We begin as in \cite[\S 1]{A}. First, we coordinate the notation
$\mathbb{X}$ in our section \ref{subsub:root datum} with the
``weights" $\mathbb{Z}^n$, given in \cite[p.3]{A}. The
correspondence is simply to let $\lambda\in \mathbb{X}$ correspond
to the $n$-tuple with $i^{th}$ coordinate
$\lambda_i=<\lambda,\alpha_i^{\vee}>$. Then, as in $\textit{loc.
cit.}$, $\lambda$ defines a 1-dimensional representation of (what we
call here) $\mathbb{U}_{q,K}^0$ via the homomorphism
$\chi_\lambda:\mathbb{U}_{q,K}^0\to K$ sending $K_i^{\pm}$ to
$q^{\pm d_i \lambda_i}$ and $\left [\begin{smallmatrix}K_{i};
c\\t\end{smallmatrix}\right ]$ to $\left
[\begin{smallmatrix}\lambda_i+c\\t\end{smallmatrix}\right]_{d_i}.$
 Here $1\leq i\leq n,~~ c\in \mathbb{Z}$, and $t\in \mathbb{N}$.  For
any $\mathbb{U}_{q,K}^0$-module $M,$ let $M_\lambda$ denote the sum
of all 1-dimensional submodules on which $\mathbb{U}_{q,K}^0$ acts
via the homomorphism $\chi_\lambda$. We will call $M_\lambda$ the
``weight space" for $M$ associated to $\lambda$. If $M$ is the sum
(necessarily direct) of its weight spaces $M_\lambda$, $\lambda \in
\mathbb{X}$, we say that $M$ is integrable of Type 1 as a
$\mathbb{U}_{q,K}^0$-module. If we start with $M$ a
$\mathbb{B}$-module (resp., a $\mathbb{U}$-module), we say that $M$
is integrable of Type 1 as a $\mathbb{B}$-module (resp., as a
$\mathbb{U}$-module) if it is integrable of Type 1 as a
$\mathbb{U}_{q,K}^0$-module, and each vector $v\in M$ is, for each
index $i$, killed by all $F_i^{(s)}$ for $s$ sufficiently large
(resp., killed by $F_i^{(s)}$ and $E_i^{(s)}$ for $s$ sufficiently
large).\

Next, suppose that $V$ is any $\mathbb{U}_{q,K}$-module. Define
\begin{equation} \mathcal{F}(V):=\{v\in \oplus_{\lambda\in
{\mathbb{X}}}M_{\lambda}\,|\,E_{i}^{(r)}v = F_{i}^{(r)}v = 0~
\forall i = 1,\dots, n \mbox{ and } \forall r >> 0\}.
\end{equation}
According to \cite[p.5]{A}, the submodule $\mathcal{F}(V)$ is a Type
1 integrable $\mathbb{U}_{q,K}$-module. \footnote{No argument is
given in \cite{A}, noting the property is ``not hard to check."
Perhaps this is true, once one knows how to do it. An argument for
the case of (positive or negative Borel) subalgebras may be obtained
with the method of \cite[proof of Lem. 3.5.3]{L}, but using the
generalized quantum Serre relations (through their corollary
\cite[Cor. 7.1.7]{L}) in place of the quantum Serre relations. The
case of the full quantum enveloping algebra then reduces to the rank
1 case, which can be handled with the formulas
\cite[3.14(b),(c)]{L}.} We can now define
\begin{equation}\label{e:induction}
H_{q}^{0}(M):=\mathcal{F}(\Hom_{\mathbb{B}_{q,K}}(\mathbb{U}_{q,K},M)),
\end{equation}
for any Type 1 integrable $\mathbb{B}_{q,K}$-module $M.$ This yields
a Type 1 integrable $\mathbb{U}_{q,K}$-module which we call the
induced module $\Ind_{\mathbb{B}_{q,K}}^{\mathbb{U}_{q,K}}(M)$,
later to be written in this paper as
$\Ind_{\mathbb{B}}^{\mathbb{U}}(M)$. (Andersen uses the word
``induction," but does not use our notation for the induced module,
preferring instead $H_{q}^{0}(M)$.)
%By "coincides" Do I mean "equals" or just "iso to"
In the definition of $H_{q}^{0}(M)$ above, left multiplication of
$\mathbb{B}_{q,K}$ on $\mathbb{U}_{q,K}$ provides the
$\mathbb{B}_{q,K}$-module structure on $\mathbb{U}_{q,K},$ and a
$\mathbb{U}_{q,K}$-module structure on
$\Hom_{\mathbb{B}_{q,K}}(\mathbb{U}_{q,K},M)$ is given by $uf(x) =
f(xu)$ for all $u,x\in \mathbb{U}_{q,K}$ and
$f\in\Hom_{\mathbb{B}_{q,K}}(\mathbb{U}_{q,K},M).$ The categories of
Type I integrable $\mathbb{B}_{q,K}$-modules and
$\mathbb{U}_{q,K}$-modules have enough injectives (as may be seen
from the ring cases, applying the ``largest Type 1 integrable
submodule functors," such as $\mathcal{F}$ above)) and, hence, the
left exact functor $\Ind_{\mathbb{B}_{q,K}}^{\mathbb{U}_{q,K}} =
H_{q}^0$ has right derived functors
$R^n\Ind_{\mathbb{B}_{q,K}}^{\mathbb{U}_{q,K}} = H_{q}^n.$

\begin{defn}\label{d:linked}
For $\leq$ the usual order on ${\mathbb{X}}$ determined by the
positive roots $R^+,$ set $\mu,\lambda\in {\mathbb{X}}$ to be
\textbf{linked} if $\mu = w\cdot \lambda$ for some $w\in
W_{D,\ell}.$  If there is a chain $\lambda = \lambda_1,\dots,
\lambda_s = \mu$ and a sequence $s_{\beta_1,m_{1}\ell_{\beta_1}},
\dots, s_{\beta_{s-1},m_{s-1}\ell_{\beta_{s-1}}}$ for which
$\lambda_{i}
 \geq \lambda_{i+1}:=s_{\beta_i,m_{i}\ell_{\beta_i}}\cdot \lambda_{i},$
 $i = 1,\dots, s-1,$
 then $\mu$ is \textbf{strongly linked} to $\lambda,$ denoted
 $\mu\uparrow_{D,\ell} \lambda.$
 %Note: I'm using slightly different notation from \cite{A}, to conform to what
 %see in \cite{J}.
\end{defn}

\begin{rems} (1) The relationship of strong linkage for weights ${\mathbb{X}}$ refines
that of the usual ordering $\leq$. That is, $\mu
\uparrow_{D,\ell}\lambda$ implies $\mu \leq \lambda.$

(2) In the analogous $\ch(k) = p >0$ representation theory of
algebraic groups, one defines $\mu \uparrow \lambda$ for weights
$\mu,\lambda\in {\mathbb{X}}$ by using the affine Weyl group
$W_{\ell}, \ell = p,$ in place of $W_{D,\ell}.$ In this
circumstance, under mild restrictions on the prime $p$ relative to
the root system $R,$ one has $W_{p} = W_{D,\ell},$ by Proposition
\ref{p:aff}(1).
\end{rems}

 Let  $\mathcal{C}_q$ denote the category of Type 1 integrable
$\mathbb{U}_{q,K}$-modules.  The following fundamental result
ultimately yields a splitting of $\mathcal{C}_q$
 into a direct sum of blocks associated to orbits of an appropriate affine Weyl group.
 For application to the Induction Theorem
 \ref{T:induct}, we will just need the version $\mathbb{U}$ of
 $\mathbb{U}_{q,K}$ described below (\ref{e:specialization})
 in which case $W_{D,\ell} = W_{\ell}$.
 We will then focus on  $block(\mathbb{U}),$
 the $\textbf{principal block}$ of $\mathcal{C}_q,$ corresponding to the orbit
 $W_{D,\ell}\cdot 0.$ (Composition factors $L_q(\mu)$ of modules in the block are indexed by dominant weights $\mu$ in the orbit.)
 However, the results below hold more
 generally. They are claimed in \cite{A} under the standing hypotheses of this subsection on
 $q,K,\ell$, even with $K$ allowed to have positive characteristiic.
 However, it should be pointed out that the only reference given in support of one key auxilliary result
 \cite[Thm. 2.1]{A}, a Grothendieck vanishing theorem needed in the proofs, is to the paper \cite{AW}. The latter has as one of its
 standing assumptions that $\ell$ be odd, and not divisible by 3 in case the root system has a component of type G2.
 This assumption on $\ell$ is, of course, satisfied by our
 $\mathbb{U}$, so we have not pursued the issue further. Possibly, it was the intent of Andersen to
 claim that the argument in \cite{AW} worked in the more general
 set-up of \cite{A}, though there is no explicit comment to that
 effect.

 \begin{theorem}\label{t:strong}
 (1) (Strong Linkage Principle  \cite[Theorem 3.1, Theorem 3.13]{A})
 Let $\lambda\in \mathbb{X}^{+} - \rho$. (Thus, $\lambda + \rho \in
\mathbb{X}^+$.) Let $\mu\in \mathbb{X}^+.$ If $L_q(\mu)$ is a
composition factor of some $H_q^i(w\cdot \lambda)$ with $w\in W$ and
$i\in \mathbb{N},$ then $\mu \uparrow_{D,\ell} \lambda.$

(2) (Linkage Principle \cite[Thm. 4.3, Cor. 4.4]{A}). Let
$\lambda,\mu\in {\mathbb{X}}^+.$ If
$\Ext^1_{\mathcal{C}_q}(L_q(\lambda),L_q(\mu))\not=0,$ then
$\lambda$ is linked, but not equal to, $\mu.$ Consequently, if $M\in
\mathcal{C}_{\mathbb{U}}$ is indecomposable, then the highest
weights of all composition factors of $M$ are linked, and the
category $\mathcal{C}_q$ splits into blocks corresponding to the
orbits for the dot action of $W_{D,\ell}$ on ${\mathbb{X}}^+.$
\end{theorem}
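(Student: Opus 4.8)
The plan is to reconstruct the argument of \cite{A} (which in turn follows \cite{APW} in the quantum case and Jantzen \cite[II.6--II.8]{J} in the algebraic groups case). Everything will be bootstrapped from one computation: the composition factors of a Weyl module $V_q(\mu)$, equivalently of its contravariant dual $H^0_q(\mu)$, for $\mu\in\mathbb{X}^+$. First I would assemble the prerequisites: the quantum Kempf vanishing theorem --- $H^i_q(\mu)=0$ for $i>0$ and $\mu\in\mathbb{X}^+$, while $H^0_q(\mu)$ has character given by Weyl's formula and simple socle $L_q(\mu)$ --- together with transitivity of $\Ind_{\mathbb{B}}^{\mathbb{U}}$ through the rank-one subalgebras attached to the simple roots, and the Grothendieck-type vanishing theorem \cite[Thm.~2.1]{A}, which bounds the spectral sequences used below.

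Next I would prove strong linkage for Weyl modules. Equip $V_q(\mu)$ with its contravariant $\mathcal{Z}$-form, specialize, and compute the $v$-adic valuation of the Gram determinant on each weight space via the quantum Shapovalov-type determinant. This produces the Jantzen filtration $V_q(\mu)=V^0\supseteq V^1\supseteq\cdots$ with $V^0/V^1=L_q(\mu)$ and the sum formula
\[
\sum_{i>0}\operatorname{ch} V^i=\sum_{\alpha\in R^+}\ \sum_{0<m\ell_\alpha<\langle\mu+\rho,\alpha^\vee\rangle} c_{\alpha,m}\,\chi\big(s_{\alpha,m\ell_\alpha}\cdot\mu\big),
\]
with $c_{\alpha,m}\in\mathbb{Z}_{\ge 0}$ and $\chi$ the alternating (Weyl-character-formula) map, so $\chi(\nu)=\operatorname{ch} H^0_q(\nu)$ for $\nu$ dominant. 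Each weight $s_{\alpha,m\ell_\alpha}\cdot\mu$ occurring on the right satisfies $s_{\alpha,m\ell_\alpha}\cdot\mu\uparrow_{D,\ell}\mu$ and $s_{\alpha,m\ell_\alpha}\cdot\mu<\mu$. Inducting on $\langle\mu+\rho,\sum_{\alpha\in R^+}\alpha^\vee\rangle$, and using transitivity of $\uparrow_{D,\ell}$ together with the alternating behaviour of $\chi$ under $W$ (to replace the possibly non-dominant weights $s_{\alpha,m\ell_\alpha}\cdot\mu$ by genuine dual Weyl modules at strictly lower weights), I would conclude that every composition factor $L_q(\nu)$ of $V_q(\mu)$ with $\nu\neq\mu$ satisfies $\nu\uparrow_{D,\ell}\mu$.

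To upgrade this to general $H^i_q(w\cdot\lambda)$, for $\lambda\in\mathbb{X}^+-\rho$ and $w\in W$, I would induct on $\ell(w)$: pick $s=s_\alpha$ with $\ell(sw)<\ell(w)$ and factor $\Ind_{\mathbb{B}}^{\mathbb{U}}$ through the rank-one (minimal-parabolic-type) subalgebra $\mathbb{P}_\alpha$ attached to $\alpha$. The inner induction is a quantum $\mathfrak{sl}_2$ calculation whose output has all composition factors $\uparrow_{D,\ell}$-below $\lambda$ by the rank-one case just established, and, after a cohomological shift, relates $H^\bullet_q(w\cdot\lambda)$ to $H^\bullet_q(sw\cdot\lambda)$ through the Grothendieck spectral sequence (bounded by \cite[Thm.~2.1]{A}); combined with the inductive hypothesis for $sw$ and quantum Kempf vanishing, this yields part (1). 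For part (2), if $\Ext^1_{\mathcal{C}_q}(L_q(\lambda),L_q(\mu))\neq 0$ then --- after replacing the pair by the contravariantly dual one if necessary so that $\lambda\not<\mu$ --- any non-split extension of $L_q(\lambda)$ by $L_q(\mu)$ is generated by a maximal vector of weight $\lambda$, hence is a quotient of $V_q(\lambda)$; so $L_q(\mu)$ is a composition factor of the radical of $V_q(\lambda)$, and the previous step forces $\mu\uparrow_{D,\ell}\lambda$ with $\mu\neq\lambda$, in particular $\mu$ linked to, but not equal to, $\lambda$. The block decomposition is then the usual formal consequence: the property ``all highest weights of composition factors lie in a single $W_{D,\ell}$-dot orbit'' is stable under passing to submodules, quotients, extensions, and direct sums, so it partitions the indecomposable objects of $\mathcal{C}_q$; $\it{block}(\mathbb{U})$ is the summand indexed by $W_{D,\ell}\cdot 0$.

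I expect the main obstacle to be the Weyl-module step in full quantum generality: constructing the contravariant form and, above all, establishing the quantum Shapovalov/Jantzen determinant for an arbitrary order $\ell$ of the root of unity. That is where the standing hypotheses on $\ell$ (odd, prime to $3$ in type $G_2$) and the vanishing theorem \cite[Thm.~2.1]{A} genuinely enter, and where one must lean carefully on \cite{APW} and \cite{AW}; for the specialization $\mathbb{U}$ relevant to Theorem \ref{T:induct}, all of this is supplied by \cite{AW}. A secondary nuisance will be the bookkeeping in the $H^i_q$ step needed to keep every intermediate weight inside the $\uparrow_{D,\ell}$-closure of $\lambda$.
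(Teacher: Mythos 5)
Your reconstruction is internally coherent, and several of its components (the induction on $\ell(w)$ through rank-one parabolic subalgebras controlled by the Grothendieck vanishing theorem \cite[Thm.~2.1]{A}, the reduction of part (2) to composition factors of $\operatorname{rad}V_q(\lambda)$, and the formal block decomposition via local finiteness) coincide with what the paper actually does, since the paper's ``proof'' of this theorem is essentially a pointer to \cite{A} together with a list of corrections to Andersen's argument. But your route for the crucial base case is genuinely different from, and in tension with, the one the paper endorses. You establish strong linkage for $H^0_q(\mu)$, $\mu$ dominant, via a contravariant form, a quantum Shapovalov determinant, and the Jantzen filtration/sum formula. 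Andersen's paper \cite{A} --- the source this theorem is attributed to --- was written precisely to \emph{avoid} that machinery: its entire proof, base case included, is cohomological, running the rank-one analysis and the equivalence ``SLP for $(i,w)$ iff SLP for $(i+1,sw)$'' (this is the content of \cite[Lem.~3.7, Cor.~3.8, Thm.~3.9]{A}, which Proposition \ref{p:qcohom} above also exploits), with no sum formula anywhere.

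The reason this matters, and where I see a genuine gap, is availability of your key input. The quantum Jantzen filtration and sum formula are established in \cite[\S 10]{APW} only under that paper's standing assumptions, which (as noted in subsection \ref{s:induction} above) effectively force $\ell$ to be a prime power; and \cite{AW}, contrary to what you assert in your last paragraph, supplies Kempf vanishing, the tensor identity, and Serre duality in the mixed/odd-$\ell$ case, but not the Shapovalov determinant or the sum formula. So for the specialization $\mathbb{U}$ of Theorem \ref{T:induct} with $\ell$ odd but not a prime power, the step you yourself flag as ``the main obstacle'' is not covered by the references you lean on, and filling it would require reproving the quantum sum formula in that generality --- a substantial project in its own right, and exactly the detour that the cohomological proof of \cite{A} renders unnecessary. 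If you replace your sum-formula base case by Andersen's rank-one cohomological argument (keeping your induction on $\ell(w)$, which is then the same induction), the proof closes up and matches the paper's.
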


\begin{proof} We refer the reader ro \cite{A} for the proofs, on which we
we make several remarks which may be helpful. First, note that there
appears to be a serious misprint, an expression apparently carried
over unintentionally  to one result from a previous one, in the
statement of the auxiliary result \cite[Prop. 3.6]{A}: In the
expression ``$<\lambda,\alpha_i^{\vee}> = -1$", the subexpression
``$= -1$" should be replaced with ``$\geq 0$".

   Next, note that the exact sequences labeled (3) and (4) of \cite[p.8]{A}
exist (and are later used) in the case $s=1$ of the discussion
there, with all terms in both exact sequences sequence equal to
zero. (The reader might have been led by the wording  to think these
sequences were defined only for $s>1$.)

   Next, there is an organizational issue on \cite[p.10]{A}.
The first three lines of the proof of \cite[Cor. 3.8]{A} do not use
the ``minimality" hypothesis of that corollary, and are implicitly
quoted later on the same page, in the proof of \cite[Thm. 3.8]{A},
where it is claimed ``we have already checked the result for $w=1$."

   There are further minor points which occur on the same page
\cite[p.10]{A}. In one repeated case, the vanishing of $H^0_q$ on
1-dimensional nondominant $\mathbb{B}$-modules is given without
proof, or hint. One approach that works is to use the version proved
in the rank 1 case, then use induction from a corresponding
parabolic subalgebra (and a Grothendieck spectral sequence).

    At another place on the same page, \cite{APW} is quoted to help
determine, using a Weyl group action, the highest weight of a module
$H^0_q(k_\lambda)$. However, an alternate argument may be given
directly from the induced module definition. Quoting \cite{APW} in
this context is undesirable, because of the (implicit) restrictive
set-up of that paper regarding $\ell$. A similar issue, which we
already noted above, before the statement of theorem, regards the
reference to \cite{AW} for a proof of \cite[Thm. 2.1]{A}. As noted
above, the generality of the \cite{AW} set-up is sufficient for
applications in this paper.

   Finally, the ``splitting into blocks" is justified in \cite{A} by
corollary \cite[Cor. 4.4]{A}. Both the corollary and the splitting
are made a straightforward consequence of \cite[Thm. 4.3]{A} by the
local finiteness of Type 1 integrable modules, for which we refer
ahead to Proposition \ref{p:locfinite} below.
\end{proof}

The paper \cite{A} gives some history of the Theorem \ref{t:strong},
most of which had been proved piecemeal previously by Andersen and
his students and collaborators. There is, of course, a completely
corresponding theorem--first proved in full generality by Andersen--
for semsimple algebraic groups, as discussed in Jantzen's book
\cite{J}. With a certain amount of hindsight, some conceptual
similarities can be imposed on the proofs and statements of
supporting results.
%See also Andersen reference suggested by Jantzen?
In particular, the presentation in \cite{J} of Strong Linkage for
the algebraic groups case (\cite[II 6.13]{J}), working with
$\mathbb{X}^+$ rather than $\mathbb{X}^+ - \rho$, breaks the proof
down into a lemma and two propositions \cite[II 6.15, 6.16]{J}. We
have combined these propositions into an $\mathbb{X}^+ -\rho$
quantum analogue stated below. We need this extra detail (for
$\mathbb{X}^+$) in order to provide more precise information about
the appearance of irreducible modules $L_q(\mu)$ as composition
factors in appropriate cohomology modules. $H_q^i(\nu).$ The Theorem
\cite[Thm. 2.1]{A}, discussed above, is needed in the proof (beyond
the use of Strong Linkage).

\begin{prop} \label{p:qcohom}%quantum analogue of \cite[II. 6.15, 6.16, 6.13, 6.17]{J}
\begin{enumerate}
\item Let $i\in \mathbb{N}$ and $w\in W$. If
$L_q(\mu),$ $\mu\in {\mathbb{X}}^+$ is a composition factor of
$H_q^i(w\cdot \lambda)$ with $\lambda \in \mathbb{X}^+ - \rho$, then
$\mu \uparrow_{D,\ell} \lambda$. If $\ell(w)\not= i$, then $\mu <
\lambda.$
\item Suppose $\lambda\in {\mathbb{X}}^+.$ Then $L_q(\lambda)$ is a
composition factor with multiplicity one of each
$H_q^{\ell(w)}(w\cdot \lambda)$ with $w\in W.$
%(Recall, in particular, $\mu \uparrow \lambda \Rightarrow \mu \leq
%\lambda$ and $\mu \in W_p\cdot \lambda.$)%Need to fix this W
%\item (Strong Linkage Principle) Let $\lambda\in {\mathbb{X}}(T)$ with
%$<\lambda + \rho, \alpha^{\vee}> \, \geq 0$ for all $\alpha\in R^+.$
%Let $\mu\in {\mathbb{X}}(T)^+.$ If $L_q(\mu)$ is a composition factor of some
%$H_q^i(w\cdot \lambda)$ with $w\in W$ and $i\in \mathbb{N},$ then
%$\mu \uparrow \lambda.$
%\item (The Linkage Principle) Let $\mu,\lambda\in {\mathbb{X}}(T)^+.$ Then
%$\Ext^1_G(L(\lambda),L(\mu)) \not=0 \Rightarrow \lambda\in
%W_p\cdot \mu.$ %This is the statement of splitting into blocks, eh!
\end{enumerate}
\end{prop}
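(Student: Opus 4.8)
The plan is to prove Proposition \ref{p:qcohom} by reducing to the quantum Strong Linkage Principle (Theorem \ref{t:strong}(1)) together with the Grothendieck vanishing theorem \cite[Thm. 2.1]{A} and a Euler characteristic / induction-in-stages argument, following closely the template of \cite[II.6.15, II.6.16]{J} but transported to the $\mathbb{X}^+ - \rho$ setting. For part (1), the first assertion ($\mu \uparrow_{D,\ell} \lambda$ whenever $L_q(\mu)$ occurs in $H_q^i(w\cdot\lambda)$) is essentially a restatement of Theorem \ref{t:strong}(1), so the real content is the refinement: if $\ell(w) \neq i$ then $\mu < \lambda$ (strict inequality). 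I would obtain this by the standard ``comparison of two ways to compute cohomology'' argument: using a minimal parabolic $\mathbb{P}_\alpha$ for a simple root $\alpha$ with $\ell(s_\alpha w) < \ell(w)$, one has a Grothendieck spectral sequence relating $H_q^\bullet(w\cdot\lambda)$ to $H_q^\bullet$ applied to the $\mathrm{SL}_2$-cohomology of a one-dimensional weight, and the $\mathrm{SL}_2$-calculation (via the formulas for $H_q^\bullet$ in rank one, cf. the footnote arguments in the excerpt) shifts the degree by exactly one when passing between $w$ and $s_\alpha w$. Tracking weights through this spectral sequence shows any composition factor appearing in ``wrong degree'' has highest weight strictly below $\lambda$; I would set this up as an induction on $\ell(w)$, with the base case $w = 1$ handled directly since $H_q^0(\lambda)$ for $\lambda \in \mathbb{X}^+$ has $L_q(\lambda)$ as its unique highest-weight factor and $H_q^i(\lambda) = 0$ for $i > 0$ (Kempf-type vanishing, available in the quantum setting from \cite{A}).

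For part (2) — that $L_q(\lambda)$ occurs with multiplicity exactly one in $H_q^{\ell(w)}(w\cdot\lambda)$ for every $w \in W$ and $\lambda \in \mathbb{X}^+$ — I would argue by induction on $\ell(w)$ using the Euler characteristic. The key point is that $\chi(w\cdot\lambda) := \sum_i (-1)^i [H_q^i(w\cdot\lambda)]$ in the Grothendieck group is independent of $w$ (this is the quantum Weyl character formula / Demazure-type invariance, which follows from the $\mathrm{SL}_2$ long exact sequences degree-shifting argument together with induction in stages), and equals $\chi(\lambda) = [H_q^0(\lambda)]$, which contains $L_q(\lambda)$ with multiplicity one. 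By part (1), in $H_q^i(w\cdot\lambda)$ with $i \neq \ell(w)$ only composition factors $L_q(\mu)$ with $\mu < \lambda$ appear, so $L_q(\lambda)$ can appear only in degree $\ell(w)$; comparing the coefficient of $[L_q(\lambda)]$ on both sides of $\chi(w\cdot\lambda) = \chi(\lambda)$ and using $(-1)^{\ell(w)}$ forces the multiplicity in $H_q^{\ell(w)}(w\cdot\lambda)$ to be $1$. One must be slightly careful that these cohomology modules are finite length so that multiplicities make sense — this is guaranteed by local finiteness of Type 1 integrable modules (Proposition \ref{p:locfinite}, referenced in the excerpt) together with the fact that each $H_q^i(w\cdot\lambda)$ has a single highest weight $\leq \lambda$ by part (1).

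The main obstacle I anticipate is the rank-one (quantum $\mathrm{SL}_2$) cohomology computation and its clean propagation through the parabolic induction spectral sequence — this is where one needs explicit control of $H_q^\bullet$ on one-dimensional non-dominant weights, including the exact vanishing ranges and the degree-shift-by-one phenomenon, and where the quantum literature is thinnest (precisely the gap flagged in the proof of Theorem \ref{t:strong} about ``vanishing of $H_q^0$ on 1-dimensional nondominant $\mathbb{B}$-modules ... given without proof''). I would handle this by citing \cite{A} and the rank-one formulas there, supplementing as needed with an induction-from-a-parabolic argument and a Grothendieck spectral sequence exactly as indicated in the remarks following Theorem \ref{t:strong}, so that the degree accounting is airtight; everything else (the Euler characteristic bookkeeping, the induction on $\ell(w)$) is then routine.
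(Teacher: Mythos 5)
Your treatment of part (1) is essentially the paper's: the first assertion is Strong Linkage verbatim, and the strict inequality for $i\neq \ell(w)$ is obtained by the same degree-shift-by-one mechanism coming from minimal-parabolic induction in stages and the rank-one computation (the paper phrases this as the biconditional ``the statement holds for $i$ and $w$ iff it holds for $i+1$ and $sw$,'' extracted from the proofs of \cite[Thm. 3.9, Lem. 3.7]{A}, with the observation that all error terms produced by \cite[Lem. 3.7]{A} have highest weights strictly below $\lambda$). For part (2), however, you take a genuinely different route. The paper reuses the same biconditional to transport the multiplicity-one statement itself down to the base case $i=0$, $w=1$, where it is checked directly from the induced-module definition that the $\lambda$-weight space of $H_q^0(\lambda)$ is one-dimensional. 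You instead invoke the invariance of the Euler characteristic $\sum_i(-1)^i[H_q^i(w\cdot\lambda)]$ under $w$ and read off the coefficient of $[L_q(\lambda)]$, using part (1) to confine that factor to degree $\ell(w)$. Both work and rest on the same rank-one input; the paper's version avoids having to separately establish the quantum Weyl-character-formula invariance as a named intermediate (it is implicit in the long exact sequences anyway), while yours makes the multiplicity count a one-line Grothendieck-group comparison once that invariance is in hand, and also makes transparent why only degree $\ell(w)$ can contribute. Two small points to tidy: the identity should read $\chi(w\cdot\lambda)=(-1)^{\ell(w)}\chi(\lambda)$ rather than $\chi(w\cdot\lambda)=\chi(\lambda)$ (you do compensate with the factor $(-1)^{\ell(w)}$ afterward, so the conclusion is unaffected), and the finite-length issue is most directly settled by Proposition \ref{p:finiteness} (finite dimensionality of each $H_q^i(\mu)$) rather than by local finiteness alone.
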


\begin{proof} The first part of item (1) just repeats Strong Linkage.
The second part of item (1), and  item (2), can be deduced from the
approach in the proof of \cite[Thm. 3.9]{A}. Note that any weight
$\mu$ in that proof which arises from the application of \cite[Lem.
3.7]{A} is strictly less than $\lambda$. As a consequence, the
argument shows, in the presence of Strong Linkage, that the lemma
holds for $i$ and $w$ if and only if it holds for $i+1$ and $sw$
(assuming $sw>$). This property can be applied repeatedly, moving
$i$ up or down. Using it, as in the first three lines of the proof
of \cite[Cor. 3.8]{A}, we obtain item (1) of the proposition. (This
uses the discussed \cite[Thm. 2.9]{A}.) Similarly, item (2) is
reduced to the case $i=0$ and $w=1$. Here it follows by showing,
directly from the definition, that $\lambda$ has a 1-dimensional
weight space in $H_q^0(\lambda)$. This completes the proof of the
proposition.
%proof.[COMPLETE
%WRITE-UP USING OLDER HKS NOTES. Compare with proofs in \cite{J}?
%RECALL ALSO THAT \cite{ABG} CLAIMED (A VERSION OF THESE) RESULTS
%WERE ``BOREL-WEIL THEOREM'' (their Lemma 3.5.1) AND FOUND IN$
%\cite{APW}, but we didn't find them there!!!]
\end{proof}
We remark that the discussions above of results in \cite{A} corrects
the proof of \cite[Lem. 3.5.1]{ABG}. This is with our choice of
``negative" Borel subalgebras and our (standard) ``dot" action. Both
\cite{A} and \cite{APW} use "negative" Borel subalgebras as we do.
The statement of the lemma given by \cite{ABG} is apparently an
attempt to use \cite{APW} in a ``positive" Borel subalgebra context,
but the lemma is still incorrectly stated for that context. Also,
they quote \cite{APW} for the proof, though the latter paper does
not contain as strong a result \cite[Lem. 3.5.1]{ABG} Instead, the
main result \cite[Thm. 6.7]{APW}  of its Borel-Weil-Bott section is
a ``lowest $\ell$ alcove"  version, and explicitly requires that
$\ell$ be a prime power.

\medskip The next corollary restates the main conclusions (those dealing
with $\mathbb{X}^+$) of the proposition above in a form handy for
later use. As already indicated, the (completely analogous)
algebraic groups version is the combination of the two propositions
\cite[II 6.15, 6.16]{J}.

\begin{cor}\label{c:qcohom}
If $\mu\in {\mathbb{X}}$ and $\lambda = w\cdot \mu\in
{\mathbb{X}}(T)^+$ (i.e., $\lambda$ is the dominant weight in the
$W-$orbit of $\mu$), then $L_q(\lambda)$ occurs just once as a
composition factor of any of the modules $H_q^i(\mu), $ $i$ running
over all nonnegative integers.  Precisely, one has
$[H_q^i(\mu):L_q(\lambda)]\not=0$ only for $i = \ell(w),$ and then
$[H_q^{\ell(w)}(\mu):L_q(\lambda)]=1.$ If $\eta\in {\mathbb{X}}^+$
with $\eta\not=\lambda,$ then for all $i\in \mathbb{N},$
$[H_q^i(\mu):L_q(\eta)]\not=0$ implies $\eta < \lambda$ and that
$\eta$ is strongly linked to $\lambda$
\end{cor}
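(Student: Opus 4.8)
The plan is to derive Corollary \ref{c:qcohom} directly from Proposition \ref{p:qcohom} by translating the "$w \cdot \lambda$ with $\lambda \in \mathbb{X}^+ - \rho$" bookkeeping of that proposition into the "$\mu$ with dominant representative $\lambda = w \cdot \mu$" bookkeeping of the corollary. First I would set up the dictionary: given $\mu \in \mathbb{X}$ with $\lambda = w \cdot \mu \in \mathbb{X}^+$ the dominant weight in its $W$-orbit (for the dot action), note that $\lambda + \rho \in \mathbb{X}^+$ is dominant (even regular need not be assumed, but since $\lambda$ is dominant, $\lambda + \rho \in \mathbb{X}^+$ and in fact $\lambda \in \mathbb{X}^+ - \rho$ precisely when $\lambda + \rho$ is dominant, which holds here). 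Then $\mu = w^{-1} \cdot \lambda = w' \cdot \lambda$ for $w' = w^{-1} \in W$, so $H_q^i(\mu) = H_q^i(w' \cdot \lambda)$ and Proposition \ref{p:qcohom} applies verbatim with this $w'$ and this $\lambda$.

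Next I would extract the three assertions. For the last assertion (the "$\eta \neq \lambda$" case): if $\eta \in \mathbb{X}^+$ and $[H_q^i(\mu) : L_q(\eta)] \neq 0$, then part (1) of Proposition \ref{p:qcohom} (first sentence) gives $\eta \uparrow_{D,\ell} \lambda$, and Remark (1) after Definition \ref{d:linked} upgrades strong linkage to the ordering, giving $\eta \leq \lambda$; combined with $\eta \neq \lambda$ this is $\eta < \lambda$, as claimed. For the precise multiplicity statement about $L_q(\lambda)$ itself: I would invoke part (2) of Proposition \ref{p:qcohom}, which says $L_q(\lambda)$ occurs with multiplicity exactly one in $H_q^{\ell(w'')}(w'' \cdot \lambda)$ for every $w'' \in W$; taking $w'' = w'$ gives $[H_q^{\ell(w')}(\mu) : L_q(\lambda)] = 1$. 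To see that $L_q(\lambda)$ occurs in $H_q^i(\mu)$ for \emph{no other} $i$, I would use the second sentence of part (1): if $L_q(\lambda)$ is a composition factor of $H_q^i(w' \cdot \lambda)$ and $\ell(w') \neq i$, then $\lambda < \lambda$, a contradiction; hence $i = \ell(w')$. Finally I note $\ell(w') = \ell(w^{-1}) = \ell(w)$, so the statement reads with $\ell(w)$ as in the corollary.

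The main obstacle — really the only subtlety — is the hypothesis bookkeeping: Proposition \ref{p:qcohom} is stated for $\lambda \in \mathbb{X}^+ - \rho$, i.e. requires $\lambda + \rho$ dominant, and one must be sure this is automatic for the dominant representative $\lambda$ of an arbitrary $W$-orbit. Since $\lambda \in \mathbb{X}^+$ means $(\lambda, \alpha_i^\vee) \geq 0$ and $(\rho, \alpha_i^\vee) = 1 > 0$ for all simple $\alpha_i$, indeed $(\lambda + \rho, \alpha_i^\vee) \geq 1 > 0$, so $\lambda + \rho \in \mathbb{X}^+$ and $\lambda \in \mathbb{X}^+ - \rho$; the hypothesis is satisfied. (If the orbit of $\mu$ is non-regular some $w$ with $\lambda = w\cdot\mu$ may not be unique, but $\ell(w)$ modulo the stabilizer ambiguity still gives a well-defined $i$ by the argument above, since any such $w$ yields the same single nonvanishing degree; I would remark on this only if needed.) Everything else is a direct transcription, so the proof is short.
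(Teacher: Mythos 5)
Your proposal is correct and takes essentially the same approach as the paper: the paper gives no separate proof, introducing Corollary \ref{c:qcohom} only as a restatement of the $\mathbb{X}^+$ conclusions of Proposition \ref{p:qcohom}, and your translation via $w'=w^{-1}$ (together with the observations that $\lambda\in\mathbb{X}^+$ forces $\lambda\in\mathbb{X}^+-\rho$ and that $\ell(w^{-1})=\ell(w)$) is exactly the intended derivation.
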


%[NEED TO ADD COMMENTS/DETAILS TO PROOF of Cor \ref{c:nBWcor}, HERE, see e.g., April 30th 2011 discussion notes.]

%Need to spell out the proof here of \ref{c:BWcor} and contrast with [ABG] approach;
%see ABG Long Notes_4-4-08

%Let $\zeta\in \mathbb{C}$ be a primitive $\ell^{th}$-root of unity
%for $\ell$ an odd prime, $\ell \not=3$ if $\Lg$ has a factor of type
%$G_2.$ Let $\mathcal{O}$ be a complete discrete valuation ring with
%fraction field $K$ and residue field $\mathbb{F}_p;$ set $k =
%\overline{\mathbb{F}_p}.$ Set $G$ to be a connected, semisimple,
%simply connected algebraic group over $k$ that is defined and split
%over $\mathbb{F}_p \subset k.$

The proposition below is quite important for applications,
especially in the next subsection. There is a completely analogous
result for induction from Borel subgroups in reductive algebraic
groups, a special case of \cite[II,4.2]{J}.

\begin{prop}\label{p:finiteness} (\cite[Thm. 3.9, Thm. 2.1]{A}) Let $\mu \in
\mathbb{X}$. Then $H_q^i(\mu)$ has finite dimension over $K$, and
vanishes for $i>N$, the number of positive roots.
\end{prop}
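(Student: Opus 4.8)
The statement to prove is Proposition~\ref{p:finiteness}: for $\mu \in \mathbb{X}$, the cohomology $H_q^i(\mu)$ is finite-dimensional over $K$ and vanishes for $i > N = |R^+|$. My plan is to follow the strategy of \cite[Thm.~3.9]{A} (the algebraic groups analogue being \cite[II.4.2]{J}), reducing first the vanishing statement and then the finiteness statement to already-available inputs, principally the Grothendieck-type vanishing theorem \cite[Thm.~2.1]{A} and the structure of induction in stages through minimal parabolic (rank one) subalgebras.

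\emph{Vanishing above $N$.} First I would set up induction in stages: writing $\mathbb{B} = \mathbb{U}^0\mathbb{U}^-$ inside a minimal parabolic $\mathbb{p}_i$ (the subalgebra generated by $\mathbb{B}$ and $E_i^{(s)}$), one has $H_q^\bullet$ computed via a Grothendieck spectral sequence $R^a\Ind_{\mathbb{p}_i}^{\mathbb{U}}\bigl(R^b\Ind_{\mathbb{B}}^{\mathbb{p}_i}(M)\bigr) \Rightarrow R^{a+b}\Ind_{\mathbb{B}}^{\mathbb{U}}(M)$. In the rank one case $R^b\Ind_{\mathbb{B}}^{\mathbb{p}_i}$ vanishes for $b \geq 2$ (this is the quantum $SL_2$ Borel–Weil–Bott computation, which is part of the standing toolkit from \cite{A}, \cite{APW}). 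Hence a chain of $N$ such minimal-parabolic steps, realizing $\mathbb{U}$ from $\mathbb{B}$ through a sequence of rank-one extensions corresponding to a reduced expression for the longest element $w_0$, shows $R^i\Ind_{\mathbb{B}}^{\mathbb{U}} = 0$ for $i > N$. This is exactly the quantum transcription of the proof in \cite[II.4.2]{J}, and the spectral sequence edge/degree bookkeeping is routine once the rank one vanishing is in hand.

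\emph{Finite-dimensionality.} For this I would invoke \cite[Thm.~2.1]{A} directly: that theorem (the Grothendieck vanishing / finiteness result quoted repeatedly in the excerpt, including in the hypotheses of Proposition~\ref{p:qcohom}) gives that $H_q^i(\mu)$ is finite-dimensional. Concretely, one knows $H_q^0(\mu)$ is finite-dimensional: it is zero unless $\mu$ is dominant, in which case it is the costandard (dual Weyl) module, whose character is given by Weyl's formula and is therefore finite. For the higher $H_q^i(\mu)$ one again runs the induction-in-stages spectral sequence: at each rank one stage, $R^b\Ind_{\mathbb{B}}^{\mathbb{p}_i}$ of a finite-dimensional integrable module is finite-dimensional (rank one Borel–Weil–Bott), and $\Ind$ from $\mathbb{p}_i$ to $\mathbb{U}$ of a finite-dimensional module is finite-dimensional with controlled highest weight (it embeds into a dual Weyl module for $\mathbb{U}$, by the usual Frobenius reciprocity / highest weight argument). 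An upward induction on $i$, using that each $H_q^i(\mu)$ is built as a subquotient of finitely many finite-dimensional pieces via the spectral sequence, then yields finiteness in all degrees.

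\emph{Main obstacle.} The genuinely delicate point is not the formal homological algebra but ensuring the inputs it rests on hold at the level of generality needed: specifically, the rank one Borel–Weil–Bott vanishing and finiteness, and the Grothendieck vanishing \cite[Thm.~2.1]{A}, which as the excerpt emphasizes is only referenced via \cite{AW} under the hypotheses ($\ell$ odd, not divisible by $3$ in type $G_2$) — hypotheses that, fortunately, are satisfied by our $\mathbb{U}$. So the hard part is really a literature-tracking matter: one must verify that the minimal-parabolic induction-in-stages argument and the rank one computations are available in \cite{A} (or reconstructible from \cite{APW}/\cite{AW}) under precisely our standing assumptions on $q, K, \ell$, rather than any essential new difficulty in the proof itself. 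Given those inputs, both the vanishing for $i > N$ and the finite-dimensionality follow by the standard spectral sequence induction, exactly paralleling \cite[II.4.2]{J}.
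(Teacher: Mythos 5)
Your proposal is correct and follows essentially the same route as the paper, whose entire ``proof'' of this proposition is a deferral to \cite[Thm.~3.9, Thm.~2.1]{A} together with the earlier remarks made about Andersen's arguments (in particular the reliance on \cite{AW} for the Grothendieck vanishing theorem under our standing hypotheses on $\ell$). Your reconstruction via induction in stages through minimal parabolics, rank-one Borel--Weil--Bott, and the spectral sequence is exactly the standard argument underlying those citations, and your identification of the real issue as literature-tracking (verifying the inputs hold for general $\ell$, not just prime powers) matches the paper's own emphasis.
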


\begin{proof} We ask the reader again to read \cite{A} for proofs,
after first reviewing our comments on the proof of Theorem
\ref{t:strong} above. \end{proof}.

The final proposition in this section, also useful in the next
section, is an analogue in the generality of \cite{A} of \cite[Cor.
1.28]{APW} and of \cite[Prop. 32.1.2]{L}. It does not appear to be
implied by either of these latter results, however.

\begin{prop}\label{p:locfinite}
Let $M$ be any Type 1 integrable $\mathbb{U}_{q,K}$-module. Then $M$
is locally finite, in the sense that each vector $v\in M$ generates
a finite dimensional $\mathbb{U}_{q,K}$-module. Similarly, Type 1
integrable $\mathbb{B}_{q,K}$-modules are locally finite.
\end{prop}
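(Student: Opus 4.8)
The plan is to reduce the statement to the already-established finiteness of the cohomology modules $H^i_q(\mu)$ (Proposition \ref{p:finiteness}) via a Frobenius-reciprocity argument, handling the $\mathbb{U}_{q,K}$ case first and then the $\mathbb{B}_{q,K}$ case. First I would recall that, by definition, a Type 1 integrable module $M$ decomposes as $M = \oplus_{\lambda\in\mathbb{X}} M_\lambda$ and each vector $v\in M$ is killed by $E_i^{(s)}$ and $F_i^{(s)}$ for $s\gg 0$ (for every $i$). Fix such a $v$, and let $N = \mathbb{U}_{q,K}\cdot v$ be the submodule it generates; I want to show $\dim_K N < \infty$. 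Writing $v = \sum_\lambda v_\lambda$ (a finite sum), each $v_\lambda$ is again killed by high divided powers, so without loss of generality I may assume $v$ is a weight vector of some weight $\mu$.

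The key step is to produce a nonzero $\mathbb{U}_{q,K}$-homomorphism into a known finite-dimensional module whose kernel, intersected with $N$, is controlled — or, more directly, to embed $N$ (or a quotient displaying all of $N$'s "size") into something finite-dimensional. Here is the route I would take: using the triangular decomposition $\mathbb{U}_{q,K} = \mathbb{U}_{q,K}^{-}\otimes\mathbb{U}_{q,K}^{0}\otimes\mathbb{U}_{q,K}^{+}$, and the fact that $v$ is a weight vector killed by $E_i^{(s)}$ and $F_i^{(s)}$ for large $s$, I would argue that $v$ lies in a finite-dimensional $\mathbb{u}$-submodule (the small quantum group $\mathbb{u}$ is finite-dimensional, so $\mathbb{u}\cdot v$ is automatically finite-dimensional); the content is upgrading this from $\mathbb{u}$ to all of $\mathbb{U}_{q,K}$. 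For that, one uses that $\mathbb{U}_{q,K}$ is generated over $\mathbb{u}$ by the divided powers $E_i^{(p)}, F_i^{(p)}$ (more precisely $E_i^{(\ell_i)}, F_i^{(\ell_i)}$), i.e. the "Frobenius-twisted" generators, together with the commutation relations in \cite[\S 6]{L3} between divided powers and the $\left[\begin{smallmatrix}K_i;0\\t\end{smallmatrix}\right]$. The standard move is: the submodule generated by $v$ is spanned by $F^{(\mathbf a)} K^{\mathbf b} E^{(\mathbf c)} v$ over multi-exponents, and because $v$ is integrable of Type 1, the exponents that yield nonzero terms are bounded; the bound propagates through the PBW-type monomial basis for $\mathbb{U}_{\mathcal{Z}}$ (see \cite{L3}), giving finiteness. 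An alternative, perhaps cleaner, is to dualize: show $N^* $ is generated by a weight covector $f$ which, by the same integrability, factors through a cohomology module — that is, Frobenius reciprocity gives $\Hom_{\mathbb{U}_{q,K}}(N, H^0_q(k_{-w_0\mu'}))\cong\Hom_{\mathbb{B}_{q,K}}(N, k_{-w_0\mu'})$ for a suitable dominant $\mu'$, and one checks the resulting map is injective on $N$; since $H^0_q$ is finite-dimensional by Proposition \ref{p:finiteness}, so is $N$.

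For the $\mathbb{B}_{q,K}$ case, I would note that $\mathbb{B}_{q,K} = \mathbb{U}_{q,K}^{0}\cdot\mathbb{U}_{q,K}^{-}$, and a Type 1 integrable $\mathbb{B}_{q,K}$-module $M$ has each vector killed by $F_i^{(s)}$ for $s\gg 0$; the subalgebra $\mathbb{U}_{q,K}^{-}$ is generated by divided powers $F_i^{(s)}$ and, as above, it suffices to bound the exponents occurring — here $\mathbb{u}^{-}$ is finite-dimensional and $\mathbb{U}_{q,K}^{-}$ is a free module over it of the form spanned by $F_i^{(\ell_i)}$-monomials; integrability again caps the exponents, and the $\mathbb{U}_{q,K}^{0}$-action only rescales within weight spaces, so $\mathbb{B}_{q,K}\cdot v$ is finite-dimensional.

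The main obstacle I anticipate is making the exponent-bounding rigorous: one must be careful that "killed by $F_i^{(s)}$ for $s\gg0$" genuinely forces only finitely many monomials $F^{(\mathbf a)}$ to act nontrivially on $v$, which is not quite immediate because a product of small divided powers $F_{i_1}^{(a_1)}\cdots F_{i_k}^{(a_k)}$ need not be a single divided power and the straightening relations \cite[\S 6]{L3} can produce many terms. The clean way around this is to invoke the monomial (PBW-type) basis of $\mathbb{U}_{\mathcal{Z}}$ in terms of divided powers over all positive roots, reduce to one positive root at a time, and use the rank-one identities \cite[3.14(b),(c)]{L} exactly as suggested in the footnote on $\mathcal{F}(V)$ earlier in the paper; this is essentially the same bookkeeping needed to verify that $\mathcal{F}(V)$ is a submodule, so I would structure the argument to reuse it. I expect the whole proof to be short modulo this point.
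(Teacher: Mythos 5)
There is a genuine gap, and it sits exactly where you locate your ``main obstacle,'' but the obstacle is more serious than the bookkeeping issue you describe. The definition of Type 1 integrability only controls the divided powers $E_i^{(s)}, F_i^{(s)}$ attached to \emph{simple} roots, whereas the PBW-type monomial basis of $\mathbb{U}_{\mathcal{Z}}$ you propose to use is built from divided powers of root vectors for \emph{all} positive roots. For a non-simple positive root $\alpha$, nothing in the hypotheses tells you that $E_\alpha^{(s)}v=0$ for $s\gg 0$: expanding $E_\alpha^{(s)}$ in simple divided powers produces monomials of unbounded length as $s$ grows, and the vanishing bound for $E_i^{(t)}$ applied to a vector depends on that vector, so the bounds compound rather than stabilize. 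The rank-one identities \cite[3.14(b),(c)]{L} concern the interaction of $E$ and $F$ within a single $\mathfrak{sl}_2$ and do not bridge this simple-to-non-simple gap; the footnote you cite addresses the different statement that $\mathcal{F}(V)$ is a submodule. Your ``cleaner'' alternative also fails: there is no a priori one-dimensional $\mathbb{B}_{q,K}$-quotient of $N$ to feed into Frobenius reciprocity (that would presuppose structure on the as-yet-unknown module $N$), and even when one exists the adjoint map $N\to H^0_q(\cdot)$ has kernel the largest $\mathbb{U}_{q,K}$-submodule killed by the chosen $\mathbb{B}$-map, so the injectivity you propose to ``check'' will not hold in general.

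The paper closes the gap with a structural fact you would need to supply. Writing $\mathbb{U}_j^{\pm}$ for the rank-one subalgebra spanned by the $E_j^{(s)}$ (resp.\ $F_j^{(s)}$) for a single simple index $j$, Lusztig's explicit formulas for the braid automorphisms (\cite[37.1.3]{L}) show that each $T_i(\mathbb{U}_j^{\pm})$ is contained in a ring-theoretic product of $\mathbb{U}^0_{\mathcal{Z}}$ and at most three subalgebras of the form $\mathbb{U}_{j''}^{\pm}$, and that $T_i$ stabilizes $\mathbb{U}^0_{\mathcal{Z}}$; since the PBW construction applies finitely many $T_i$'s, it follows that $\mathbb{U}_{\mathcal{Z}}$ is a ring-theoretic product of \emph{finitely many} of the $\mathbb{U}_j^{\pm}$ (simple $j$ only) together with $\mathbb{U}^0_{\mathcal{Z}}$. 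Each such factor applied to a finite-dimensional subspace of $M$ yields a finite-dimensional subspace, directly from the definition of integrability, and iterating over the finitely many factors gives local finiteness (the $\mathbb{B}$ case is identical). Without this reduction of all root directions to simple-root rank-one pieces, the exponent-bounding in your sketch cannot be carried out.
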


\begin{proof}.
For this proof, let $\mathbb{U}_j^{+}$ denote, for each index $j$,
the $\mathcal{Z}$ span (a subalgebra)  in $\mathbb{U}_{\mathcal{Z}}$
of all the elements $E^{(s)}_j,~ s\in \mathbb{N}$ , with a similar
notation for $\mathbb{U}_j^{-}$. Lusztig constructs his PBW-type
basis \cite[Thm. 6.7]{L3} for the quantum enveloping algebra
$\mathbb{U}_{\mathcal{Z}}$ using (finitely many) compositions of his
explicit braid group automorphisms $T_i$, applied to the various
$\mathbb{U}_j^{\pm}$. This process yields, for each positive root
$\alpha$, $\mathcal{Z}$-subalgebras $\mathbb{U}_{\alpha}^{\pm}$, and
the whole quantum algebra $\mathbb{U}_{\mathcal{Z}}$ is a
(ring-theoretic) product of finitely many of these, together with
$\mathbb{U}^0_{\mathcal{Z}}$.

In several formulas listed in \cite[37.1.3]{L} Lusztig gives
explicit formulas for several similar automorphisms, including their
action on basis elements of each $\mathbb{U}_j^{\pm}$. The setting
for the action of these automorphisms is a $\mathbb{Q}(v)$-algebra
$\bold{U}$ containing the algebra we have called $\mathbb{U}_v'$;
moreover, the action of these automorphisms on the various elements
$K_i$ (in the notation of \cite{L}) shows that all these
automorphisms act bijectively on $\mathbb{U}_v'$. It is easy to pick
out the braid group automorphism $T_i$ defined in \cite{L3} in this
context, as (the restriction to $\mathbb{U}'_v$ of) $T''_{i,1}$ in
\cite[37.1.2]{L}). Accordingly, we learn, for each index $j$, that
$T_i(\mathbb{U}_j^{\pm})$ is contained in a product of
$\mathbb{U}^0_{\mathcal{Z}}$ and at most three $\mathcal{Z}$
subalgebras, each of the latter having the form
$\mathbb{U}_{j''}^{\pm}$, for some index $j''$ (in $1,\ldots,n$). To
this information we add the fact that $T_i$ stabilizes
$\mathbb{U}_0$, which may be deduced from
\cite[Thm.3.3,Thm.6.6(ii),Thm.6.7(c)]{L3}.

It follows now that $\mathbb{U}_{\mathcal{Z}}$ is a product of
finitely many of the various subalgebras $\mathbb{U}_j^{\pm}$ ,
together with $\mathbb{U}^0_{\mathcal{Z}}$. However, it is obvious
that, if $V$ is any finite-dimension\texttt{}al subspace of $M$,
then any (ring-theoretic) product $\mathbb{U}_j^{\pm} V$ is
finite-dimensional. Repeated application of this fact completes the
proof of the proposition. \end{proof}

\subsection{Some derived category considerations}\label{s:derived}

We finally begin to use the assumptions and notation first given
below (\ref{e:specialization}, which the reader should review at
this point. The notations include a common notation $\mathbb{U}$ for
a quantum enveloping algebra,  specialized at an $\ell^{th}$ root of
unity, and the distribution algebra of a simply connected semisimple
algebraic group $G$.  There are similar common notations associated
to various subalgebras of $\mathbb{U}$, and distribution algebras
associated to subgroups of $G$, such as the (negative) Borel
subgroup $B$. Both $p>h$ and $\ell>h$ are required, and there are
further conditions  on $\ell$. (It must be odd, and not divisible by
3 when the root system of $\mathbb{U}$ has a component of type G2).

\smallskip
 In addition, we introduce here the notations
$\mathcal{C}_{\mathbb{U}}$, $\mathcal{C}_{\mathbb{B}},\ldots$ for
the categories of Type 1 integrable $\mathbb{U},
\mathbb{B},\ldots$-modules, respectively, in the quantum case. In
the algebraic groups case, the same notations reference the
categories of rational $G,B,\ldots$-modules, respectively. These
latter categories may be rewritten, according to our conventions for
naming distribution algebras, as the categories of rational
$\mathbb{U}, \mathbb{B},\ldots$-modules. Here, ``locally finite"
would be a more accurate term than ``rational," but we will use
either term in unambiguous contexts.

\medskip
This section provides a starting point for the proof of the
Induction Theorems \ref{T:induct} and \ref{T:Ginduct}, the latter as
reformulated in Theorem 2.1. The result below, a corollary of the
those in the previous subsection, is the starting point. The
statement and proof work in both the quantum and algebraic groups
context, in the notation discussed above.

By $block(\mathbb{U})$ we mean the category of finite dimensional
modules in the principal block of $\mathcal{C}_{\mathbb{U}}$;
equivalently, it is the full subcategory of all finite-dimensional
modules whose composition factors have highest weights in $W_\ell
\cdot 0$ (taking $\ell = p$ in the algebraic groups case).

By $D^b block(\mathbb{U})$ we mean the bounded derived category of
the abelian category  $block(\mathbb{U})$, as defined by
Verdier--see, for example, \cite[Chapter I]{Ha}. Let
$D^b_{block(\mathbb{U})}(\mathcal{C}_{\mathbb{U}})$ denote the full
subcategory of $D^b(\mathcal{C}_{\mathbb{U}})$ consisting of objects
which have each of their (finitely many) cohomology groups in
$block(\mathbb{U})$. Using the local finiteness of rational modules
(see Proposition \ref{p:locfinite} in the quantum case), we observe
the following lemma.

\begin{lem}\label{l:fullembedding} The natural map $D^b
block(\mathbb{U})\to D^b(\mathcal{C}_{\mathbb{U}})$, arising from
the inclusion functor at the abelian category level, induces an
equivalence
$$D^b block(\mathbb{U})\cong
D^b_{block(\mathbb{U})}(\mathcal{C}_{\mathbb{U}})$$.
\end{lem}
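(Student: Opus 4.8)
The plan is to show that the natural triangulated functor $\Phi: D^b block(\mathbb{U})\to D^b_{block(\mathbb{U})}(\mathcal{C}_{\mathbb{U}})$ is fully faithful and essentially surjective. This is an instance of the general principle that, for an abelian category $\mathcal{A}$ and a Serre subcategory $\mathcal{B}$, the comparison $D^b(\mathcal{B})\to D^b_{\mathcal{B}}(\mathcal{A})$ is an equivalence as soon as every object of $\mathcal{B}$ admits, inside $\mathcal{A}$, ``enough room'' to compute Ext-groups --- more precisely, whenever for each pair of objects in $\mathcal{B}$ and each $n$ the map $\Ext^n_{\mathcal{B}}(X,Y)\to\Ext^n_{\mathcal{A}}(X,Y)$ is an isomorphism (equivalently, any short exact sequence in $\mathcal{A}$ with end terms in $\mathcal{B}$ has its middle term in $\mathcal{B}$, i.e. $\mathcal{B}$ is closed under extensions in $\mathcal{A}$). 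Here $\mathcal{A}=\mathcal{C}_{\mathbb{U}}$ and $\mathcal{B}=block(\mathbb{U})$.

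First I would record that $block(\mathbb{U})$ is closed under extensions inside $\mathcal{C}_{\mathbb{U}}$. Given $0\to M'\to M\to M''\to 0$ in $\mathcal{C}_{\mathbb{U}}$ with $M',M''$ finite-dimensional and lying in the principal block, $M$ is finite-dimensional, and by the Linkage Principle (Theorem \ref{t:strong}(2)) every composition factor of $M$ --- being a composition factor of $M'$ or of $M''$ --- has highest weight in $W_\ell\cdot 0$; hence $M\in block(\mathbb{U})$. This gives $\Ext^1$-agreement, and the agreement of all higher $\Ext^n$ follows formally: a class in $\Ext^n_{\mathcal{C}_{\mathbb{U}}}(X,Y)$ with $X,Y\in block(\mathbb{U})$ is represented by a length-$n$ extension, which one splices into short exact sequences; the closure under extensions lets one replace all the intermediate terms by block objects, so the class comes from $\Ext^n_{block(\mathbb{U})}(X,Y)$, and injectivity is handled the same way with the Yoneda description of equality of extension classes. (Alternatively one can dévissage the identity $\mathrm{Hom}_{D^b(\mathcal{A})}(X,Y[n])=\Ext^n_{\mathcal{A}}(X,Y)$.)

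Next, fully faithfulness of $\Phi$. Both source and target have $t$-structures with heart $block(\mathbb{U})$, and $\Phi$ is $t$-exact, so by the standard dévissage along the cohomological filtration of an object it suffices to check $\mathrm{Hom}(X,Y[n])$ is carried isomorphically for $X,Y\in block(\mathbb{U})$ and all $n\in\mathbb{Z}$; for $n<0$ both sides vanish, for $n=0$ both equal $\mathrm{Hom}_{block(\mathbb{U})}(X,Y)$, and for $n>0$ both equal the respective $\Ext^n$, which agree by the previous paragraph. The truncation/dévissage argument is entirely formal once one knows $\Phi$ is exact and identifies the two hearts; I would cite \cite[Chapter I]{Ha} (or the analogous statement in any standard reference) rather than redo it.

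Finally, essential surjectivity. An object $C\in D^b_{block(\mathbb{U})}(\mathcal{C}_{\mathbb{U}})$ has finitely many nonzero cohomology objects, each in $block(\mathbb{U})$; by induction on the length of its cohomological support, using the canonical triangle $\tau_{\le m-1}C\to C\to H^m(C)[-m]\xrightarrow{+1}$ and the fact that $H^m(C)[-m]$ lies in the image of $\Phi$, one reduces to showing the full triangulated subcategory $\mathrm{Im}\,\Phi$ is closed under the extensions occurring in $D^b_{block(\mathbb{U})}(\mathcal{C}_{\mathbb{U}})$ --- but that is automatic because $\Phi$ is triangulated and, by fully faithfulness just proved, the connecting morphism of the triangle already lives in $D^b block(\mathbb{U})$. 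Hence $\Phi$ is an equivalence. I do not expect a serious obstacle here; the only substantive input is the closure of $block(\mathbb{U})$ under extensions in $\mathcal{C}_{\mathbb{U}}$, which rests on local finiteness of Type 1 integrable modules (Proposition \ref{p:locfinite}) --- ensuring the objects stay finite-dimensional --- together with the Linkage Principle, and everything else is a formal application of the well-known criterion for $D^b(\mathcal{B})\xrightarrow{\sim} D^b_{\mathcal{B}}(\mathcal{A})$.
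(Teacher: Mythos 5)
Your overall strategy---reduce to the standard criterion for when $D^b(\mathcal{B})\to D^b_{\mathcal{B}}(\mathcal{A})$ is an equivalence, namely that $\Ext^n_{\mathcal{B}}(X,Y)\to\Ext^n_{\mathcal{A}}(X,Y)$ be an isomorphism for all $X,Y\in\mathcal{B}$ and all $n$---is legitimate, and your dévissage arguments for full faithfulness and essential surjectivity given that input are fine. The gap is in how you obtain that input. Your parenthetical assertion that the $\Ext^n$-comparison for all $n$ is ``equivalent'' to $\mathcal{B}$ being closed under extensions in $\mathcal{A}$ is false: closure under extensions gives the comparison only for $n\le 1$. (Quasi-coherent sheaves inside all $\mathcal{O}_X$-modules are closed under extensions, yet $D^b(\mathrm{QCoh})\to D^b_{qc}(\mathcal{O}_X)$ is an equivalence only under hypotheses on $X$ and by a genuine theorem.) Concretely, your splicing argument breaks at $n=2$ already: a Yoneda $2$-extension $0\to Y\to E_2\to E_1\to X\to 0$ in $\mathcal{C}_{\mathbb{U}}$ may have $E_1,E_2$ and the splice object $Z=\mathrm{im}(E_2\to E_1)$ infinite-dimensional, and closure of $block(\mathbb{U})$ under extensions gives you no mechanism whatsoever for replacing them by finite-dimensional block objects. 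The sufficient condition that does work in general is a resolution/cofinality condition (for every epimorphism $A\to X$ with $X\in\mathcal{B}$ there is $B'\in\mathcal{B}$ and $B'\to A$ with $B'\to X$ still epi, or the dual), and verifying something of that kind here is exactly the non-formal content of the lemma.

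What actually makes the comparison work in this setting is Proposition \ref{p:locfinite}: every object of $\mathcal{C}_{\mathbb{U}}$ is the directed union of its finite-dimensional submodules, so one can shrink the terms of a bounded complex (or of a Yoneda extension) to finite-dimensional subobjects without changing cohomology. You cite local finiteness, but only to observe that an extension of two finite-dimensional modules is finite-dimensional---which is trivial linear algebra and not where local finiteness is needed. The paper's proof consists precisely of the construction you are missing: a downward induction producing, inside any bounded complex $K^{\bullet}$ with cohomology in $block(\mathbb{U})$, a finite-dimensional subcomplex $F^{\bullet}$ (containing any prescribed finite-dimensional subcomplex) whose inclusion is a quasi-isomorphism, followed by block projection; both essential surjectivity and the bijectivity on $\Hom$-groups in the Verdier localization are then read off from this. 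To repair your argument you would either have to carry out that construction or verify the resolution-type criterion directly---either way the substance is the same, and it cannot be dismissed as formal.
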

\begin{proof} Let $K^{\bullet}$ be a bounded complex of objects in
$\mathcal{C}_\mathbb{U}$ with each cohomology group belonging to
$block(\mathbb{U})$. We claim there is a bounded subcomplex
$F^{\bullet}$ of $K^{\bullet}$, with finite dimensional objects in
$\mathcal{C}_\mathbb{U}$ in each degree, such that the inclusion map
$F^{\bullet}\subseteq K^{\bullet}$ is  a quasi-isomorphism. To
construct the subcomplex $F^{\bullet}$, we may assume, inductively,
its terms in all degrees $\geq i$ are constructed, so that they form
a subcomplex $F^{\geq i}$. In addition, we require, inductively,
that inclusion of this complex into $K^{\bullet}$ induces an
isomorphism on cohomoology in grades $>i$ and an epimorphism on the
$i^{th}$ cohomology groups. Then, we wish to construct $F^{i-1}
\subseteq K^{i-1}$ so that the resulting complex $F^{\geq i-1}$ has
the analogous properties for $i-1$ in place of $i$. Let $\delta$
denote the differential $K^{i-1}\to K^{i}$. Then
$\delta(K^{i-1})\cap F^i$ is, of course, both finite and contained
in the image of $\delta$. Choose a finite dimensional subspace $E$
of $K^{i-1}$ such that $\delta(E)=\delta(K^{i-1})\cap F^i$, Also
choose a finite dimensional subspace $E'$ of the Kernel of $\delta$
such that the image of $E'$ in the natural surjection  $Ker ~\delta
\to H^{i-1}(K^{\bullet})$ is all of $H^{i-1}(K^{\bullet})$. Take
$F^{i-1}$ to be the $\mathbb{U}$-module generated by $E+E'$. Then
$\delta(F^{i-1})=\delta(K^{i-1})\cap F^i$. Consequently, inclusion
induces a monomorphism $H^i(F^{\geq (i-1)})\to H^i(K^{\bullet})$.
The (downward) induction hypothesis  implies the same map is  a
surjection, so it must be an isomorphism. Our construction of
$F^{i-1}$ gives a surjection of $H^{i-1}$ for the same inclusion of
complexes. The inductive step may be repeated, eventually reaching
cohomological degrees $j$ where $K^j$ and all lower degree terms are
$0$. At that point we may take $F^j$ also zero, and zero in lower
degrees. This gives that $F^{\bullet}\subseteq K^{\bullet}$ induces
an isomorphism on all cohomology groups. That is, it induces a
quasi-isomorphism, as required in the claim.

We remark further, that, by taking block projections, the complex
$F^{\bullet}$ may be assumed to consist in each degree of objects in
$block(\mathbb{U}$.

The lemma proposes that the natural map induces an equivalence. It
follows from the claim and remark that every object on the
right-hand side of the proposed equivalence is, indeed, in the
strict image of the left hand side. It remains to show the natural
map induces a full embedding at the derived category morphism level.
For this, observe the claim above can be strengthened so that the
constructed complex $F^{\bullet}$ contains any given finite
dimensional subcomplex $N^{\bullet}$ of $K^{\bullet}$. (Strengthen
the induction hypothesis in the proof by adding the assertion
$N^{\geq i}\subseteq F^{\geq i}$. Then, at the inductive step,
replace $E$ by $E+N^{i-1}$; this does not effect $\delta(E)$, since
$\delta(N^{i-1})\subseteq \delta(K^{i-1})\cap F_i$.) As before, if
$N^{\bullet}$ is a complex of objects in the principal block, we may
assume the complex $F^{\bullet}$ constructed is also a complex of
objects in the principal block.

Taking the same idea yet another step further, we can even assume
$F^{\bullet}$ contains any given finite number of subcomplexes like
$N^{\bullet}$, since the sum of any number of finite subcomplexes of
$K^{\bullet}$ is again a finite subcomplex.

  Now use the standard direct limit constructions (in the second variable)
  of derived category morphisms. Here we mean the Verdier
  localization construction of derived categories (and bounded derived categories), which proceeds (first) by
  localization of homotopy categories of complexes.  (See \cite[pp.32,37]{Ha}.) In particular any morphism on
  the right hand side from an object $M_1^{\bullet}$ to $M_2^{\bullet}$ is represented by an object $K^{\bullet}$
  and two morphisms $M_1^{\bullet}\to K^{\bullet}$ and $M_2^{\bullet}\to K^{\bullet}$, the latter a quasi-isomorphism.
  Now assume $M_1^{\bullet}$ and $M_2^{\bullet}$ are finite dimensional complexes of objects in $block(\mathbb{U}$,
  and let $N_1^{\bullet}$ and $N_2^{\bullet}$ be their respective
  images in $K^{\bullet}$.  Applying the strengthened versions of the
  claim, above, we may construct a finite dimensional complex
  $F^{\bullet}$ containing both $N_1^{\bullet}$, $N_2^{\bullet}$,
  and contained in contained in $K^{\bullet}$. Moreover, the latter
  inclusion is constructed to be a quasi-isomorphism. It follows
  that the pair of morphisms $M_1^{\bullet}\to F^{\bullet}$ and $M_2^{\bullet}\to
  F^{\bullet}$ represent a derived category morphism on the left
  hand side of the display in the lemma. This proves the
  surjectivity required in the full embedding property at the
  morphism level.

      It remains to prove injectivity. Suppose we are given a morphism on the left hand side of the display which becomes zero on the right hand side.
  The morphism on the left may be represented by the following configuration: We are given $M_1^{\bullet}$, $M_2^{\bullet}$ and $J^{\bullet}$, all
  finite dimensional complexes of objects in
  $block(\mathbb{U})$, and a pair of morphisms $M_1^{\bullet}\to J^{\bullet}$ and $M_2^{\bullet}\to
  J^{\bullet}$, the latter a quasi-isomorphism. To say that the derived category morphism represented by this configuration becomes zero,
   when considered on the right hand side,
  means the following: There is a complex $K^{\bullet}$ of objects in $\mathcal{C}_{\mathbb{U}}$ and a quasi-isomorphism $J^{\bullet}\to K^{\bullet}$
  such that the composite map of complexes
  $M_1^{\bullet}\to J^{\bullet}\to K^{\bullet}$ is homotopy equivalent to zero.
  Let $h=\{h_i\}_{i\in\mathbb{Z}}$ be a family of maps defining the
  homotopy in question. That is, each $h_i:M_1^i\to K^{i-1}$ is a morphism in
  $\mathcal{C}_{\mathbb{U}}$, and $\delta_{K^{\bullet}}\circ h + h\circ \delta_{M_1^{\bullet}}$ is
  the given map $M_1^{\bullet}\to J^{\bullet}\to K^{\bullet}$. Here
  the subscripted symbols $\delta$ denote the evident families of
  differentials. Observe that the sum $L^{\bullet}$ over $i$ of all $\delta_{K^{i-1}}\circ h_i(M_1^i) +
  h_i(M_1^i)$ is a finite dimensional subcomplex of $K^{\bullet}$,
  with all of its objects and differentials in $block(\mathbb{U})$
  Using the extended claims above, we can construct a finite
  dimensional $block(\mathbb{U})$-complex $F^{\bullet}$, contained in $K^{\bullet}$ as a $\mathcal{C}_{\mathbb{U}}$-subcomplex,
  and itself containing each of
  $L^{\bullet}$, the image of $J^{\bullet}$ in $K^{\bullet}$, and the image of $M_1^{\bullet}$ in $K^{\bullet}$ (already in $L^{\bullet}$, actually).
  In addition, the above constructions allow us to assume that that
  the inclusion $F^{\bullet}\subseteq K^{\bullet}$ is a
  quasi-isomorphism. It follows that $J^{\bullet}\to F^{\bullet}$ is
  a quasi-isomorphism. Consequently, the derived category morphism
  (viewed as a direct limit)
  represented by the original configuration is also represented by the
  pair of maps $M_1^{\bullet}\to J^{\bullet}\to F^{\bullet}$ and $M_2^{\bullet}\to
  J^{\bullet}\to F^{\bullet}$. However, the map $M_1^{\bullet}\to
  J^{\bullet}\to F^{\bullet}$ is visibly homotopic to zero, using
  the same function $h$ to define the required homotopy. (By
  construction $h_i(M_1^i)\subseteq F^{i-1}$ for each $i$.) ~Thus, the morphism
  represented by the original configuration is zero in its
  associated direct limit. This proves the required injectivity and
  completes the proof of the lemma.

\end{proof}

 It is suggested below \cite[Defn. 3.5.6]{A} that, in the quantum
 case, $block(\mathbb{U})$ is ``known" to have enough injectives.
 There is such a result about injectives in \cite{APW}. But the contest, while
 possibly too restrictive,
 ostensibly
 applies only to the cases where $\ell$ is a prime power, as do the
 discussions in \cite{APW2}. Our argument above does not depend on
 such a property, and, indeed, applies to the algebraic groups case,
 where there are no finite-dimensional injectives.

\medskip
Though it is somewhat informal, we henceforth identify $D^b
block(\mathbb{U})$ and
$D^b_{block(\mathbb{U})}(\mathcal{C}_{\mathbb{U}})$ through the
isomorphism above. This language is used in the result below.

 %Add more on proof and/or further comments.
\begin{cor} \label{c:nBWcor}
\begin{itemize}
\item[(i)] For any $\lambda \in \mathbb{Y},$
$\RInd_{\mathbb{B}}^{\mathbb{U}}(l \lambda)\in D^b
block(\mathbb{U}).$
\item[(ii)] The category $D^b block(\mathbb{U})$ is generated, as a
triangulated category, by the family of objects\\
$\{\RInd_{\mathbb{B}}^{\mathbb{U}}(l \lambda)\}_{\lambda\in
\mathbb{Y}}.$
\end{itemize}
\end{cor}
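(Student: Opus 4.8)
The plan is to prove (i) first and then deduce (ii) by relating the family $\{\RInd_{\mathbb{B}}^{\mathbb{U}}(\ell\lambda)\}_{\lambda\in\mathbb{Y}}$ to a known generating family for $D^b\it{block}(\mathbb{U})$, namely the Weyl modules (or dually, costandard modules) with highest weights in $W_\ell\cdot 0$, equivalently the objects $H^\bullet_q(\mu)$ for dominant $\mu$ in the principal orbit. For (i), I would start from the definition $\RInd_{\mathbb{B}}^{\mathbb{U}}(\ell\lambda)$, whose cohomology is $H^i_q(\ell\lambda) = R^i\Ind_{\mathbb{B}}^{\mathbb{U}}(K_{\mathbb{B}}(\ell\lambda))$. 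By Proposition \ref{p:finiteness} these are finite-dimensional and vanish for $i$ exceeding the number of positive roots, so $\RInd_{\mathbb{B}}^{\mathbb{U}}(\ell\lambda)$ is a bounded complex with finite-dimensional cohomology; it then suffices to check each $H^i_q(\ell\lambda)$ lies in $block(\mathbb{U})$, i.e. all its composition factors $L_q(\eta)$ have highest weight $\eta\in W_\ell\cdot 0 = W_{D,\ell}\cdot 0$. Here I would invoke Corollary \ref{c:qcohom} with $\mu = \ell\lambda$: the dominant weight $w\cdot(\ell\lambda)$ in the $W$-orbit is itself of the form $w(\ell\lambda+\rho)-\rho$, which lies in $W_{D,\ell}\cdot 0$ because $\ell\lambda + \rho$ and $\rho$ differ by the lattice element $\ell\lambda\in\ell\mathbb{Y}$ (so $\ell\lambda$ is $W_{D,\ell}$-conjugate to $0$ under the dot action, using $W_{D,\ell}\cong \ell\mathbb{Y}\rtimes W$), and all other composition factors $L_q(\eta)$ satisfy $\eta$ strongly linked to that dominant weight, hence also in the same $W_{D,\ell}$-orbit. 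This gives $\RInd_{\mathbb{B}}^{\mathbb{U}}(\ell\lambda)\in D^b_{block(\mathbb{U})}(\mathcal{C}_{\mathbb{U}})$, which by Lemma \ref{l:fullembedding} is identified with $D^b block(\mathbb{U})$.

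For (ii), the strategy is: (a) show the triangulated subcategory generated by $\{\RInd_{\mathbb{B}}^{\mathbb{U}}(\ell\lambda)\}_{\lambda\in\mathbb{Y}}$ contains enough objects to generate all of $D^b block(\mathbb{U})$. The natural candidates for a known generating set are the costandard modules $H^0_q(\mu) = \nabla(\mu)$ (or Weyl modules $\Delta(\mu)$) for $\mu$ ranging over the dominant weights in $W_\ell\cdot 0$, since $block(\mathbb{U})$ is (the module category of) a finite-dimensional algebra which is in fact quasi-hereditary / highest-weight, and in any such category the standard (or costandard) objects generate the bounded derived category as a triangulated category — indeed every simple $L_q(\mu)$ is reached from the $\nabla(\mu')$ by finitely many triangles, and every object of $block(\mathbb{U})$ has a finite composition series, so $D^b block(\mathbb{U})$ is generated by the simples, hence by the $\nabla$'s. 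So it remains to show each $H^0_q(\mu)$, $\mu\in W_\ell\cdot 0$ dominant, lies in the triangulated subcategory generated by the $\RInd_{\mathbb{B}}^{\mathbb{U}}(\ell\lambda)$. I would do this by a descending induction on $\mu$ in the ordering $\leq$: write $\mu = w\cdot(\ell\lambda)$ for suitable $\lambda\in\mathbb{Y}$ and $w\in W$ of minimal length in its coset (possible since $\mu$ is in the principal orbit and dominant); by Corollary \ref{c:qcohom}, $\RInd_{\mathbb{B}}^{\mathbb{U}}(\ell\lambda)$, which has cohomology $H^i_q(\ell\lambda)$, has "top" composition factor $L_q(\mu)$ appearing exactly once (in degree $\ell(w)$) and all other composition factors $L_q(\eta)$ with $\eta < \mu$. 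Comparing this with the analogous statement for $H^0_q(\mu)$ (whose composition factors are $L_q(\mu)$ once, plus $L_q(\eta)$ with $\eta<\mu$), a standard dévissage/truncation argument on the complex $\RInd_{\mathbb{B}}^{\mathbb{U}}(\ell\lambda)[\ell(w)]$ — peeling off cohomology in other degrees and subquotients with strictly smaller highest weights, each of which is already in the generated subcategory by the inductive hypothesis — places $\nabla(\mu) = H^0_q(\mu)$ in that subcategory up to a "correction" by objects with strictly smaller highest weight. Running the induction from the bottom of the (finite, in each block-interval) poset upward completes the argument.

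The step I expect to be the main obstacle is the dévissage in (ii): extracting $\nabla(\mu)$ from $\RInd_{\mathbb{B}}^{\mathbb{U}}(\ell\lambda)$ cleanly. The complex $\RInd_{\mathbb{B}}^{\mathbb{U}}(\ell\lambda)$ may have several nonzero cohomology groups (the higher $H^i_q(\ell\lambda)$), and one must argue that each of these higher cohomology modules, and the non-top composition factors of $H^{\ell(w)}_q(\ell\lambda)$, all have highest weights strictly below $\mu$ in a sense compatible with the induction — here Corollary \ref{c:qcohom}'s precise statement ("$\eta < \lambda$ and strongly linked" whenever $i\neq\ell(w)$, and the single occurrence of $L_q(\mu)$) is exactly what is needed, but assembling it into a clean triangulated-category argument requires care about the filtration of $D^b block(\mathbb{U})$ by the subcategories generated by $\{L_q(\eta): \eta \leq \mu'\}$ and checking these are preserved. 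An alternative, possibly cleaner, route to (ii) is to avoid the poset induction and instead observe directly that, since $\RInd_{\mathbb{B}}^{\mathbb{U}}$ is (to be shown, but its key homological properties are available from the cited results) essentially an equivalence whose "standard-like" objects are the $\RInd_{\mathbb{B}}^{\mathbb{U}}(\ell\lambda)$, the image of a generating set is generating; but since the equivalence is the theorem being built toward, it is safer to argue generation directly via dévissage as above. I would present the poset-induction version, using Corollary \ref{c:qcohom} as the engine and Proposition \ref{p:finiteness} for boundedness and finiteness.
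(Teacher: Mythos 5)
Your proposal is correct and follows essentially the same route as the paper, which proves (i) from the linkage principle together with Corollary \ref{c:qcohom} and Proposition \ref{p:finiteness}, and proves (ii) from Corollary \ref{c:qcohom} via induction on weights and the standard cohomological truncation triangles of \cite{BBD}. Your write-up simply fleshes out those two sentences (the detour through the costandard modules $\nabla(\mu)$ is harmless but unnecessary, since the d\'evissage already delivers each simple $L_q(\mu)$ directly).
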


\begin{proof} The linkage principle, Corollary \ref{c:qcohom} and Proposition
\ref{p:finiteness} imply item (i) above. Corollary \ref{c:qcohom},
used with induction on weights and standard cohomological degree
truncation operators \cite[p.29]{BBD}, implies item (ii).
\end{proof}

\begin{rems}\label{r:Kempf}
(a)The result above is stated as Cor. 3.5.2 in \cite{ABG} in their
quantum enveloping algebra set-up. Their proof, overall, relies on
similar considerations, though some of the references supplied to
\cite{APW} for their preparatory lemma \cite[Lem. 3.5.1]{ABG} are
inaccurate. It is not clear if their proof applies to the case where
$\ell$ is not a prime power.

(b) This is perhaps a good point to mention that Kempf's vanishing
theorem, well-known in the algebraic groups case \cite[II,\S 4]{J},
also holds \cite[Thm. 5.3]{AW} in the quantum case under the
hypotheses of this section. Thus, the higher derived functors
$R^n\Ind_{\mathbb{B}}^{\mathbb{U}}(\mu)$ are zero for $\mu$ dominant
and $n>0$. The generalized tensor identities \cite[I,Prop.4.8]{J}
also work here \cite[Prop.4.7]{AW}. These results are stated using
individual higher derived functors $R^n$ in each degree $n\geq 0$,
but their proofs show that there are isomomorphisms
$\RInd_{\mathbb{B}}^{\mathbb{U}}(M\otimes N|_{\mathbb{B}})\cong
\RInd_{\mathbb{B}}^{\mathbb{U}}(M)\otimes N$ whenever $M\in
\mathcal{C}_{\mathbb{B}}$ and $N\in \mathcal{C}_{\mathbb{U}}$. These
isomorphisms may also be deduced from the natural maps in (d) below,
applied with $M$ replaced by an injective resolution.

(c) The roles of left and right in the tensor identity may be
reversed. (See (d) below. The first argument for such a reversal is
probably that for \cite[Prop. 2.7]{APW}.) Also, although the quantum
algebras we deal with are not generally cocommutative
($\mathbb{U}^0$ being an exception), the orders of tensor products
of integrable modules we deal with can often be interchanged (up to
isomorphism). This holds in particular for tensor products of finite
dimensional modules in $\mathcal{C}_{\mathbb{U}}$. See
\cite[32.16]{L}. We have, however, not investigated the naturality
properties this reversal may or may not have. The reversal is
natural in the tensor identity case, as can be seen from (d) below.
If $M$ there is then replaced by a complex of injective modules, a
natural reversal is obtained in the generalized tensor identity
case.

(d) It is sometimes useful to have explicit natural isomorphisms

$$\alpha_{_{M,N}}: \Ind_\bB^\bU (M\otimes N|_\bB) \lr \Ind_\bB^\bU (M)\otimes N; ~{\rm and}    $$
$$\gamma_{_{N,M}}: \Ind_\bB^\bU (N|_\bB\otimes M) \lr N\otimes \Ind_\bB^\bU (M)    $$
where $M\in \calC _\bB, N\in \calC_\bU.$ We give such natural isomorphisms for the convenience of the reader:\\
Drop the subscripts $M,N$ and regard both modules in the top row as
a contained in $\Hom_k (\bU, M\otimes N). $

We have, for $f\in \Ind_\bB^\bU(M\otimes N|_\bB), ~ x \in \bU$,
$$\alpha(f)(x)=(1\otimes S(x_2))f(x_1)$$
in (implicit sum) Sweedler notation. (Thus $\Delta(x)=x_1\otimes
x_2$, a sum over an invisible implicit index shared by $x_1,~ x_2$.)
To check $\bB$-equivariance of $\alpha(f)$, let $h\in \bB.$ Then
$$ \begin{tabular}{lll}
$\alpha(f)(hx)$ & = & $(1\otimes S((hx)_2))f((hx)_1)$ \\

& = & $(1\otimes S(x_2)S(h_2))f(h_1x_1) $,

\end{tabular}$$
where the last line is a sum over two implicit and independent
indices, one for the $x's$ and one for the $h's$. Continuing, we
obtain further similar expressions

$$ \begin{tabular}{lll}
\hspace*{1.1in} &  = & $(1\otimes S(x_2)S(h_3))(h_1\otimes h_2)f(x_1) $\\
           &  = & $(1\otimes S(x_2))(h_1\otimes S(h_3h_2)f(x_1) $\\
                    &  = & $(1\otimes S(x_2))(h_1\otimes S\epsilon(h_2))f(x_1) $\\
                    &  = & $(1\otimes S(x_2))(h_1\otimes 1)f(x_1) $\\
                        &  = & $(h\otimes 1)(1\otimes S(x_2))f(x_1) $ \\
                    &  = & $(h\otimes 1)\alpha(x) $
\end{tabular}$$
which is the desired equivariance.  Notice the top line is, for any
fixed $x_1, ~ x_2$, the image of $h\in \bB$ under a linear map $\bB
\stackrel{\bigtriangleup}{\lr} \bB\otimes \bB \otimes \bB \lr M.$
Here $\bigtriangleup$ denotes, with some abuse of notation, the map
usually denoted $(1\otimes \bigtriangleup)\circ \bigtriangleup$ or
$(\bigtriangleup \otimes 1)\circ \bigtriangleup$, with
$\bigtriangleup :\bB\lr \bB\otimes\bB$ the comultiplication. We
write both $\Delta(h)=h_1\otimes h_2$ and $\Delta(h)=h_1\otimes
h_2\otimes h_3$, depending on context. The inverse $\beta$ of
$\alpha$ is given, for $g\in \Ind_\bB^\bU(M)\otimes N \subseteq
\Hom_{\mathbb{k}}(\bU, M\otimes N), ~ x\in \bU$, by
$$\beta(g)(x)=(1\otimes x_2)g(x_1).$$

We leave it to the reader to check that $\beta(g)$ satisfies the
appropriate $\bB$-equivariance (by an argument similar in spirit to
that for $\alpha$), and that $\beta$ is inverse to $\alpha$.  The
formula for $\gamma$, is for $f\in \Ind_\bB^\bU(N|_\bB \otimes M)
\subseteq \Hom_{mathbb{k}}(\bU, N\otimes M), ~ x\in \bU$,

$$\gamma(f)(x) =(S(x_1)\otimes1)f(x_2)$$

For the inverse $\delta$ of $\gamma$ it is, for $g \in N\otimes \Ind
_\bB^\bU(M) \subseteq \Hom_{mathbb{k}}(\bU, N\otimes M), ~x \in
\bU,$

$$\delta(g) =(x_1 \otimes 1)g(x_2)$$

Again, the reader may check, with arguments similar in spirit to
those illustrated, that $\gamma$ amd $\delta$ satisfy the
appropriate equivariance properties and are inverse to each other.

(e) We remark that $Ind_\bB^\bU(M)$ is equipped with a natural
``counit" $\epsilon_M:Ind_\bB^\bU(M)|_{\mathbb{B}}\lr M$ which, as
is well-known (see Wikipedia) may be used its property of being
right adjoint to restriction. In the full module categories for
$\bU$ and $\bB$ $\epsilon_M$ may be given as evaluation at 1 on the
right adjoint $\Hom_\bB(\bU,M)$, and it follows that $\epsilon_M$
may be similarly interepreted for $\Ind_\bB^\bU(M)$ when dealing
with integrable modules. We only want to observe here that there is
a similar ``evaluation at 1" counit for each of the modules
$Ind_\bB^\bU (M\otimes N|_\bB), \Ind_\bB^\bU (M)\otimes
N),\Ind_\bB^\bU (N|_\bB\otimes M), N\otimes \Ind_\bB^\bU (M)$ above,
providing each of these constructions with the structure of a right
adjoint to restriction. The proof is easy, noting the isomorphisms
in (d) commute with evaluation at 1 on the ambient $\Hom_\bB(\bU,-)$
module. Rewriting this fact in the $\epsilon$ notation, we have that
$\epsilon_M\otimes N|_\bB$ and $N|_\bB\otimes \epsilon_M$ are
counits (that is, provide a right adjoint structure) for
$\Ind_\bB^\bU (M)\otimes N),~ N\otimes \Ind_\bB^\bU (M)$,
respectively. We will use this fact in Appendix B.

(f)  Finally, we explain briefly how the induction functors we have
used above, based on the formalism in \cite{A} and compatible with
\cite{APW}, fit with the algebraic groups formalism in
\cite[I,3.3]{J}. Actually, the original definition \cite[\S
1]{CPS77} of induction $\Ind_{B}^{G}(M)$ in the algebraic groups
case, for a finite-dimensional rational $G$-module $M$, was the set
$\text{Morph}_B(G,M)$ of $B$-equivariant morphisms from $G$ to $M$,
with an evident direct union used for a general rational module $M$.
This definition is formally quite close to the \cite{A} definition
in the quantum case. Using Sullivan's theorem \cite[Thm.
6.8]{CPS80}, that all locally finite $\text{Dist}(G)$ modules are
rational, it is easy to see that this definition coincides with the
definition of \cite{A} used above, with $\text{Dist(G}$ in the role
of $\bU$ and $\text{Dist(G)}$ in the role of $\bU$. Finally, to
connect the \cite{CPS77} definition with that of \cite{J}, simply
replace the $B\times G^{op}$ action $(b,g)x=bxg$ on $G$ with the
isomorphic action $(b,g)x=g^{-1}xb^{-1}$, where $b\in B$ and $x,g
\in G$.

\end{rems}

\bigskip

\subsection{Some Special Twisted Induced
Modules}\label{subsec:twisted}  In this subsection and the next, we
will adapt the ``uniform" notation for the quantum and positive
characteristic cases introduced in the discussion of Theorem 2.1 and
elaborated in subsection \ref{subsec:specializations}. We will
presume and utilize the definitions for the quantum Frobenius
morphism as in, e.g., \cite[Thm. 8.10]{L3}, \cite{L}. We use a
similar notation in positive characteristic, where the Frobenius
morphism originated and is well-known.

\begin{rems}\label{r:coinductio}
   In the quantum case, the Frobenius morphism is a homomorphism
$\varphi : \mathbb{U}\to Dist(G')$, where $Dist(G')$ is the
distribution algebra (over $K$) of an algebraic group $G'$
(semisimple, simply connected, and defined and split over $K$, with
the same root datum as $\mathbb{U}$). If $M$ is a rational
$G'$-module over $K$, we may twist it through $\varphi$ and obtain
an integral $\mathbb{U}$ module $^\varphi M$, trivial on
$\mathbb{u}$. We will use the notation $M^{[1]}:=~ ^{\varphi}M$, and
the same notation for twisting a module through the Frobenius in the
corresponding characteristic $p$ algebraic groups situation (where
$\mathbb{U}=Dist(G)$ is both the domain and the target of the
Frobenius homomomrphism).

Returning to the quantum case, we remind the reader of our notation
$\mathbb{p}=\mathbb{b}\cdot \mathbb{U}^{0}$ (and that this notation
is simlar to that in \cite{ABG}, except that our $\mathbb{b}$ is
associated to negative roots). The Frobenius homomoprhism  is
compatible with triangular decompositions of its domain and target;
see \cite[Thm. 8.10]{L3}. So, the above $M^{[1]}$ notation also
makes sense, if $M$ is (in the obvious analogous notation) a
rational $B'$ or $T'$-module.  This results, respectively, in an
integral $\mathbb{B}$ or $\mathbb{p}$ module $M^{[1]}$,  trivial as
a $\mathbb{b}$-module. (There is some ambiguity of notation here: if
$M$ is not obviously a $G'$-module, we deliberately do not include
all of $\mathbb{u}$ as part of the domain of definition of $M^{[1]}$
without explicit mention otherwise.) Conversely, we claim any
integrable module $N$ for $\mathbb{B}$ or $\mathbb{p}$, which is
trivial for $\mathbb{b}$, has, respectively, this form, and in a
unique way. The corresponding assertion for  $\mathbb{U}^{-}$ for
modules trivial for $\mathbb{u}^{-}$ follows from the (negative root
analogs of) \cite[Lem.s 8.8, 8.9]{L3}. (Note also from these results
that the Kernel ideal of the Frobenius homomorphism on
$\mathbb{U}^{-}$ is the left $\mathbb{U}^{-}$ ideal generated by the
augmentation ideal of $\mathbb{u}^{-}$. Similarly, the Kernel must
be the right ideal generated by this augmentation ideal.) At the
level of $\mathbb{U}^0$ for modules trivial on $\mathbb{u}^0$ it
follows from the explicit form of the Frobenius homomorphism on
$\mathbb{U}^0$ in [{\it op cit}, p.110, bottom] and the monomial
bases [{\it op cit}, Thm. 6.7(c), Thm.8.3(ii)] for $\mathbb{U}^0$
and $\mathbb{u}^0$, respectively. The claim follows.

We remark that the parenthetic note above shows an interesting
additional property: If $N$ is any integrable $\mathbb{B}$ module,
with unspecified action of $\mathbb{b}$, the largest
$\mathbb{b}$-module quotient of $N$ with trivial $\mathbb{b}$-module
action is naturally a (twisted) $\mathbb{B}$-module, call it
$M^{[1]}$. Consequently, if $E$ is any $T'$-module, then, using
rational induction for algebraic groups,
$\Hom_{\mathbb{B}}(N,\Ind_{T'}^{B'}(E)^{[1]})\cong
\Hom_{\mathbb{B}}(M^{[1]},\Ind_{T'}^{B'}(E)^{[1]})\cong
\Hom_{B'}(M,\Ind_{T'}^{B'}(E))\cong \Hom_{T'}(M|_{p'},E) \cong
\Hom_{\mathbb{p}}(N|_{\mathbb{p}},E^{[1]})$. This shows the functor
sending $E^{[1]}$  to $\Ind_{T'}^{B'}(E)^{[1]})$ serves, on
appropriate categories of twisted integrable modules, as a right
adjoint to restriction on corresponding integrable (but not
necessarily twisted) categories. In fact, a right adjoint
$\Ind_{\mathbb{p}}^{\mathbb{B}}$ on the full integrable categories
is constructed in \cite[\S 2.7]{ABG}. (There is a similar
construction in \cite{APW2}, but the set-up there ostensibly
requires $\ell$ to be a prime power.) The adjointness properties
observed above in this paragraph imply
$$\Ind_{\mathbb{p}}^{\mathbb{B}}(E^{[1]})\cong
\Ind_{T'}^{B'}(E)^{[1]},$$ for $E$ any rational $T'$-module.

A completely adequate version of the isomorphism is noted without
proof in \cite[(2.8.2)]{ABG}, presumably based on \cite[Lem.
2.6.5]{ABG}, which discusses also details of the Frobenius
homomorphism at the $\mathbb{B}$ module level. The main application
in both \cite{ABG} and this paper occurs with $E^{[1]}$ a
1-dimensional $\mathbb{p}$ module $k(\ell
\lambda):=k_{\mathbb{p}}(\ell \lambda)$, with $\lambda \in
\mathbb{Y}$.

 We note further that, once an induction
$\Ind_{\mathbb{p}}^{\mathbb{B}}$ is available (as a right adjoint to
restriction from $\mathcal{C}_{\mathbb{B}}$ to
$\mathcal{C}_{\mathbb{p}}$), an induction functor
$\Ind_{\mathbb{p}}^{\mathbb{U}}$ can be constructed as the
commposition $\Ind_{\mathbb{B}}^{\mathbb{U}}\circ
\Ind_{\mathbb{p}}^{\mathbb{B}}$.

We conclude these remarks by noting that all features of the above
paragraphs have obvious parallels that hold in the (characteristic
$p$) algebraic groups case, with $G=G'$, etc. We continue this dual
use of the notations $G',\ldots$ below.
\end{rems}
\medskip

%(analogous to working with truncated
%$TB_1$-modules in place of $B$-modules,
%for $B$ a Borel of algebraic group $G$).
Adapting \cite[\S 4.3]{ABG} to our negative Borel framework, set
\begin{equation}\label{e:I} I_{\mu}:= \Ind_{T'}^{B'}(\mu)\cong
\lim_{\overset{\longrightarrow} {\\\nu\in
\mathbb{Y}^{++}}}(V_{\nu}|_{B'}\otimes \mathbb{k}_{B'}(\mu + \nu))
\end{equation}
where  $V_{\nu}$ denotes the ``costandard" $G'$-module
$\Ind_{B'}^{G'}(-w_0\nu) = H^0(-w_0\nu)$ with highest weight
$-w_0\nu$. We take $\mu$ to be any weight in $\mathbb{X}$, though we
will only use the case $\mu\in \mathbb{Y}$ In the quantum case, $G'$
is a semsimple algebraic group in characteristic 0, so $V_{\nu}$ is
irreducible, though we will not need that to explain the isomorphism
in \ref{e:I}, which we do now:

In general, the lowest weight of $V_{\nu}$ is $-\nu$, appearing with
multiplicity 1, and $\mathbb{k}_{B'}(-\nu)$ is the $B'$-socle of
$V_{\nu}$. Let $\omega$ be in $\mathbb{Y}^{++}$, so that
$\nu+\omega$ is also in $\mathbb{Y}^{++}$, and is ``larger" than
$\nu$ if $\omega \neq 0$. This defines an evident directed system of
weights. There is a natural homomorphism of $G'$-modules
$V_{\nu}\otimes V_{\omega}\to V_{\nu + \omega}$ which is an
isomorphism on highest (and, applying $w_0$, on lowest) weight
spaces. In particular, the induced homomorphism of $B'$-modules
$V_{\nu}\otimes \mathbb{k}_{B'}(-\omega)\to V_{\nu + \omega}$ is an
isomorphism on $B'$-socles, hence injective. Tensoring on the right
with 1-dimensional modules $\mathbb{k}_{B'}(\nu + \omega +\mu)$
gives injections
$$V_{\nu}\otimes \mathbb{k}_{B'}(\nu +\mu)\to V_{\nu + \omega}\otimes
\mathbb{k}_{B'}(\nu +\omega +\mu).$$ For fixed $\mu$ these describe
the directed system underlying the direct limit in \ref{e:I}, and
shows it is a directed system of injections, all with a common socle
$\mathbb{k}_{B'}(\mu)$ and with the weight $\mu$ appearing with
multiplicity 1. In particular, the direct limit exists as a rational
$B'$ module $I$  and has the same socle $\mathbb{k}_{B'}(\mu)$.
Consequently, there is a map $I\to \Ind_{T'}^{B'}(k(\mu)=I_{\mu}$
which is an isomorphism on socles. (The induced module definition of
$I_{\mu}$ shows its only 1-dimensional submodule is
$\mathbb{k}_{B'}(\mu)$.) Thus, $I\subseteq I_{\mu}$. To get
equality, it is enough to show that, for any weight $\tau$ of
$I_{\mu}$, there is a $\nu \in \mathbb{Y}^{++}$ such that the $\tau$
weight space of $V_{\nu}\otimes \mathbb{k}_{B'}(\nu + \mu)$ has
dimension equal to that of the $\tau$ weight space of $I_{\mu}$. The
(weight space by weight space) linear dual $I_{\mu}^*$ of $I_{\mu}$
is (after conversion to a left $B'$-module) generated by its
(1-dimensional) $-\mu$ weight space, call it $\mathbb{k}v$. Thus,
$I_{\mu}^*=Dist({U'}^-)v$ may be viewed as the homomorphic image of
$M(\nu)|_{B'}\otimes \mathbb{k}_{B'}(-\nu -\mu)$, where $M(\nu)$
denotes the Verma module for $Dist(G')$ with highest weight $\nu$.
The linear dual of the injection $V_{\nu}\otimes \mathbb{k}_{B'}(\nu
+ \mu) \to I_{\mu}$ gives a surjection $I_{\mu}^*\to
\Delta(\nu)|_{B'}\otimes \mathbb{k}_{B'}(-\nu -\mu)$, where
$\Delta(\nu)$ is the Weyl module of highest weight $\nu$. Thus, we
have a composition of surjections
$$M(\nu)|_{B'}\otimes\mathbb{k}_{B'}(-\nu -\mu)\to I_{\mu}^*\to \Delta(\nu)|_{B'}\otimes
\mathbb{k}_{B'}(-\nu -\mu).$$

We want to show that the $-\tau$ weight space dimensions on the left
and right (and, thus, also in the middle) are the same for some
choice of $\nu$. This is equivalent to showing that the $\nu +\mu
-\tau$ weight spaces of the Verma and Weyl modules with highest
weight $\nu$ are the same for some $\nu$, given $\mu$ and $\tau$.
This dimension is obviously independent of the base field
$\mathbb{k}$ in the Verma module case, and the same independence is
true in the Weyl module case by \cite[II,8.3(3)]{J}. Over the
complex numbers, the Kernel of the map $M(\nu)\to \Delta(\nu)$ is
generated as a $B'$-module, by the elements $F_i^{N_i+1}v^+$, where
$v^+$ is a highest weight vector, $N_i$ is the coefficient of $\mu$
at the $i^{th}$ fundamental weight, and $F_i$ is a Chevalley basis
root vector associated with the negative of the $i^{th}$ fundamental
root $\alpha_i$. (All observed in \cite[\S 4.3]{ABG}.) It is easy to
choose all $\nu$ so that each coefficient of $\tau - \mu$ at
$\alpha_i$ is smaller than $N_i+1$. In this casee the Kernel has a
zero weight space for weight $\nu +\mu -\tau$. Thus, the dimensions
of the weight spaces for this weight are the same in both the Verma
and Weyl module, as desired. This proves $I=I_{\mu}$.

%Len wrote $\mu$ instead of $\nu$ after the equality; typo
By applying  Frobenius twists, one has
\begin{equation}\label{e:Itwist}
\IndpB(\ell\mu)\cong I_{\mu}^{[1]} = \Ind_{T'}^{B'}(\mu)^{[1]}\cong
\lim_{\overset{\longrightarrow} {\\\nu\in
\mathbb{Y}^{++}}}(V_{\nu}^{[1]}|_{\mathbb{B}}\otimes
\mathbb{k}_{\mathbb{B}}(\ell\mu + \ell\nu)).
\end{equation}

\begin{defn} For any dominant weight $\sigma$ define $J_{\sigma,\mu}=V_\sigma^{[1]}|_{\mathbb{B}} \otimes
\mathbb{k}_{\mathbb{B}}(\ell \mu + \ell \sigma)$.
\end{defn}

In the lemma below, and elsewhere in this paper, we freely use ``$\Ext^n$" for a derived category ``$\mbox{Hom}^n$."

%Lemma below suggested to be added from the March 2014 meeting.
\begin{lem} \label{lem:Len1} Let $Y$ be any finite dimensional $\mathbb{B}$-module, and $\mu$ any weight in
$\mathbb{Y}$ (or $\mathbb{X}$). Then, for sufficiently large $\sigma,$ we have
 \begin{itemize}
\item[(a)] $\Ext^n_{\mathbb{B}}(Y,J_{\sigma,\mu}) \cong \Ext^n_{\mathbb{B}}(Y,I_{\mu}^{[1]}),$ and
\item[(b)]  $\Ext^n_{\mathbb{B}}(\RInd_{\mathbb{B}}^{\mathbb{U}}(Y),J_{\sigma,\mu})\cong \Ext_{\mathbb{B}}^n(\RInd_{\mathbb{B}}^{\mathbb{U}}(Y),I_{\mu}^{[1]})$
\end{itemize} for all nonnegative integers $n.$
    In addition we have (independently of $\sigma$)
%But what do we mean here, since there's no $\sigma$ in sight below??
\begin{itemize}
\item[(c)] $\Ext^n_{\mathbb{B}}(Y,I_{\mu}^{[1]})\cong \Ext^n_{\mathbb{p}}(Y,\mathbb{k}_{\mathbb{B}}(\ell \mu)),$ %Len had typo: k_b, not %k_{\mathbb{B}}
and
\item[(d)] $\Ext^n_{\mathbb{B}}(\RInd_{\mathbb{B}}^{\mathbb{U}}(Y),I_{\mu}^{[1]})\cong \Ext^n_{\mathbb{U}}(RInd_{\mathbb{B}}^{\mathbb{U}}(Y),RInd_{\mathbb{B}}^{\mathbb{U}}(I_{\mu}^{[1]}))$
for all nonnegative integers $n.$
\end{itemize}
\end{lem}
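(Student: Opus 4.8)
\emph{Plan.} I will handle the four parts in the order (d), (c), (a), (b); the adjunction identities (d) and (c) are quickest, while (a) --- and with it (b) --- carries the real content. Throughout I use two standard facts about $\calC_\bB$: (i) a finite-dimensional $Y\in\calC_\bB$ is rigid, so $\Ext^n_\bB(Y,-)\cong H^n(\bB,Y^\ast\otimes -)$, and similarly over $\bU$; (ii) the functors $H^n(\bB,-)$ (resp.\ $H^n(\bU,-)$), being the cohomology of the cobar complexes, commute with filtered colimits in $\calC_\bB$ (resp.\ $\calC_\bU$).

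For (d): restriction $\calC_\bU\to\calC_\bB$ is exact with right adjoint $\Ind_\bB^\bU$ and $\calC_\bB$ has enough injectives, so $\RInd_{\bB}^{\bU}\colon D^+(\calC_\bB)\to D^+(\calC_\bU)$ is right adjoint to restriction on derived categories. Since $\RInd_{\bB}^{\bU}(Y)\in D^b(\calC_\bU)$ (Proposition \ref{p:finiteness}), derived adjunction gives $\RHom_\bU\bigl(\RInd_{\bB}^{\bU}Y,\RInd_{\bB}^{\bU}(I_\mu^{[1]})\bigr)\cong\RHom_\bB\bigl(\RInd_{\bB}^{\bU}(Y)|_\bB, I_\mu^{[1]}\bigr)$, and cohomology in degree $n$ is precisely (d); note $\RInd_{\bB}^{\bU}(I_\mu^{[1]})$ need only be in $D^+$. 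For (c): the analogous derived adjunction for restriction $\calC_\bB\to\calC_{\mathbb{p}}$ and its right adjoint $\IndpB$ gives $\RHom_\bB\bigl(Y,\RIndpB(\mathbb{k}_{\mathbb{B}}(\ell\mu))\bigr)\cong\RHom_{\mathbb{p}}\bigl(Y|_{\mathbb{p}},\mathbb{k}_{\mathbb{B}}(\ell\mu)\bigr)$, so it suffices to know $\RIndpB(\mathbb{k}_{\mathbb{B}}(\ell\mu))\cong I_\mu^{[1]}$, i.e.\ $R^{>0}\IndpB(\mathbb{k}_{\mathbb{B}}(\ell\mu))=0$. By Remarks \ref{r:coinductio} the degree-$0$ term is $\IndpB(\mathbb{k}_{\mathbb{B}}(\ell\mu))\cong\Ind_{T'}^{B'}(\mu)^{[1]}$; since $\bB$ is free over $\mathbb{p}$ by the PBW theorem, $\Hom_{\mathbb{p}}(\bB,-)$ is exact, so the higher $\IndpB$ of $\mathbb{k}_{\mathbb{B}}(\ell\mu)$ are the derived functors of the ``largest integrable submodule'' functor applied to $\Hom_{\mathbb{p}}(\bB,\mathbb{k}_{\mathbb{B}}(\ell\mu))$; these are the Frobenius twist of $H^{>0}$ of the affine variety $U'^-\cong B'/T'$ with coefficients in the corresponding line bundle, hence vanish --- equivalently this is $R^{>0}\Ind_{T'}^{B'}=0$ transported through the quantum Frobenius, as in \cite[\S\S 2.6--2.7]{ABG}. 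This gives (c).

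For (a): by \eqref{e:Itwist}, $I_\mu^{[1]}=\varinjlim_{\sigma\in\mathbb{Y}^{++}}J_{\sigma,\mu}$ is a directed union of subobjects, so by fact (ii), $\Ext^n_\bB(Y,I_\mu^{[1]})=\varinjlim_\sigma\Ext^n_\bB(Y,J_{\sigma,\mu})$. It therefore suffices to show the transition maps $\Ext^n_\bB(Y,J_{\sigma,\mu})\to\Ext^n_\bB(Y,J_{\sigma+\omega,\mu})$ are isomorphisms for all $n$, all $\omega\in\mathbb{Y}^{++}$, and all $\sigma$ large; via the long exact sequence of $0\to J_{\sigma,\mu}\to J_{\sigma+\omega,\mu}\to\bar{Q}_{\sigma,\omega}^{[1]}\to 0$ --- where $\bar{Q}_{\sigma,\omega}$ is the cokernel of $V_\sigma|_{B'}\otimes\mathbb{k}_{B'}(\mu+\sigma)\hookrightarrow V_{\sigma+\omega}|_{B'}\otimes\mathbb{k}_{B'}(\mu+\sigma+\omega)$ --- it is enough that $\Ext^*_\bB(Y,\bar{Q}_{\sigma,\omega}^{[1]})=0$ in all degrees once $\sigma$ is large, uniformly in $\omega$. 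This follows from two weight estimates. First, a \emph{weight-support bound for $\Ext$}: by fact (i) and the collapsing Lyndon--Hochschild--Serre sequence for the Hopf quotient $\bB\twoheadrightarrow\bU^0$ (kernel $\bU^-$, with $\bU^0$ cosemisimple), $\Ext^*_\bB(Y,M)\cong H^*(\bU^-,Y^\ast\otimes M)_0$; a bar-resolution computation shows $H^*(\bU^-,N)$ has weights in $\{\text{weights of }N\}+\{\text{sums of positive roots}\}$, whence $\Ext^*_\bB(Y,M)\neq 0$ forces $M$ to have some weight $\leq\beta_0(Y)$, where $\beta_0(Y)$ is any fixed weight dominating all $-\eta$ with $\eta$ a weight of $Y^\ast$ --- in particular independent of the degree. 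Second, the \emph{cokernel weights escape}: $\bar{Q}_{\sigma,\omega}$ is a subquotient of $\Ind_{T'}^{B'}(\mu)\big/\bigl(V_\sigma|_{B'}\otimes\mathbb{k}_{B'}(\mu+\sigma)\bigr)$, and the computation establishing $I=I_\mu$ just before \eqref{e:Itwist} shows that, for a fixed weight $\tau$, this cokernel has zero $\tau$-weight space once $\langle\sigma,\alpha_i^\vee\rangle$ exceeds the $\alpha_i$-coefficient of $\tau-\mu$ for every $i$; hence for any finite set of weights the $\bar{Q}_{\sigma,\omega}$ avoid it once $\sigma$ is large, uniformly in $\omega$. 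Since every weight of $\bar{Q}_{\sigma,\omega}^{[1]}$ has the form $\ell\tau$ with $\tau\geq\mu$ (as $\Ind_{T'}^{B'}(\mu)$ has all weights $\geq\mu$), and the set $\{\tau:\tau\geq\mu,\ \ell\tau\leq\beta_0(Y)\}$ is finite, these two estimates together give $\Ext^*_\bB(Y,\bar{Q}_{\sigma,\omega}^{[1]})=0$ for all sufficiently large $\sigma$, uniformly in $\omega$ and in the degree. This proves (a).

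Finally, (b) is (a) with $Y$ replaced by the bounded complex $\RInd_{\bB}^{\bU}(Y)|_\bB$, which has finite-dimensional cohomology (Proposition \ref{p:finiteness}); one runs the same argument via the hyper-$\Ext$ spectral sequence whose $E_2$-terms are the groups $\Ext^p_\bB\bigl(H^i(\RInd_{\bB}^{\bU}(Y)|_\bB),-\bigr)$ --- finitely many nonzero ones, each commuting with the filtered colimit and stabilizing in $\sigma$ by (the proof of) (a) --- so the abutment does too. The step I expect to cost the most effort is the pair of weight estimates in (a): making the weight-support bound for $\Ext^*_\bB$ precise through the Lyndon--Hochschild--Serre sequence and a bar resolution, and extracting from the $I=I_\mu$ argument the escape of the cokernel weights uniformly in $\omega$. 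A secondary technical point is the vanishing $R^{>0}\IndpB(\mathbb{k}_{\mathbb{B}}(\ell\mu))=0$ in (c), where one must be careful to transport the triviality $R^{>0}\Ind_{T'}^{B'}=0$ through the quantum Frobenius morphism.
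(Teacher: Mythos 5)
Your proposal is correct and follows essentially the same route as the paper's proof: parts (c) and (d) by (derived) adjunction together with the identification $I_\mu^{[1]}\cong \RInd_{\mathbb{p}}^{\mathbb{B}}(\mathbb{k}_{\mathbb{B}}(\ell\mu))$, part (a) by a height/weight bound forcing $\Ext^*_{\mathbb{B}}(Y,-)$ to vanish on the "tail" of the directed system $\{J_{\sigma,\mu}\}$ (the paper works with the single quotient $I_\mu^{[1]}/J_{\sigma,\mu}$ plus a direct-limit argument, you with the cokernels of the transition maps — the same estimate), and part (b) by feeding a bounded complex representing $\RInd_{\mathbb{B}}^{\mathbb{U}}(Y)$ into the argument for (a). Your write-up merely supplies more detail than the paper at the same steps.
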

\begin{proof} For part a) observe that $\Ext_{\mathbb{B}}^n(Y,\mathbb{k}_{\mathbb{B}}(\omega))=0$ unless $\omega$ is
dominated by some weight $\nu$ with the weight space $Y_{\nu} \not=
0.$  There is no such $\nu$,
%TLH: Language is confusing/vague; rewrite so the ``this" is clear. OK
if the height of $\omega$ is sufficiently large, depending only on
the finite dimensional module $Y.$ For $\sigma$ sufficiently large,
all the nonzero weight spaces in $J'_{\sigma,\mu}:=
I_{\mu}^{[1]}/J_{\sigma,\mu}$ occur for weights with a large height,
determined by $\mu$ and the choice of $\sigma.$ For such a $\sigma$
and $\mu,$ we have $\Ext_{\mathbb{B}}^n(Y,J'_{\sigma,\mu})=0$ for
all nonnegative integers $n.$ (This can be easily seen with direct
limit arguments.) Part a) follows, and part b) may be obtained by
using (modules in a finite complex representing)
$\RInd_{\mathbb{B}}^{\mathbb{U}}(Y)$ for $Y$ in part  a) to give
part b). Parts c) and d) are standard reciprocity results. (For part
(c), note that $I_{\mu}^{[1]} \cong
\Ind_{\mathbb{p}}^{\mathbb{B}}(\mathbb{k}_{\mathbb{B}}(\ell \mu))
\cong \RInd_{\mathbb{p}}^{\mathbb{B}}(\mathbb{k}_{\mathbb{B}}(\ell
\mu)).$) This completes the proof of the lemma.
\end{proof}
%TLH: I think I'd like to increase the specificity of the write-up above, e.g., statements re: weight spaces and ``standard reciprocity"

%Next result also suggested by Len from March meeting:
\begin{cor} \label{cor:Len2} Let $Y$ be a finite-dimensional $\mathbb{B}-$module.  Assume all composition factors of
$Y$ have weight $\ell \omega$ for some weight $\omega\in \mathbb{Y},$ and fix $\mu\in \mathbb{Y}.$
\begin{enumerate}
\item  If $n$ is an odd nonnegative integer, then both $\Ext_{\mathbb{B}}^n(Y,I_{\mu}^{[1]})$ and
$\Ext^n_{\mathbb{U}}(RInd_{\mathbb{B}}^{\mathbb{U}}(Y)$, $RInd_{\mathbb{B}}^{\mathbb{U}}(I_{\mu}^{[1]}))$ are zero.
\item For any nonnegative integer $n,$ if the Y-composition factor weights $\ell \omega$ all have $\omega$ of sufficiently
large height, depending only on $n$ and $\mu,$ then both
$\Ext^n_{\mathbb{B}}(Y,I_{\mu}^{[1]})$ and
$\Ext^n_{\mathbb{U}}(RInd_{\mathbb{B}}^{\mathbb{U}}(Y),RInd_{\mathbb{B}}^{\mathbb{U}}(I_{\mu}^{[1]}))$
are zero. [We refer ahead to (\ref{e:ndimtensor1}) in the proof of
this part.]
%TLH: Not good form. Include appropriate TEX-style ref for (3.4.2), and reorder results.
%What is the relevance of the ref to (3.4.2), anyway? REF STYLE FIXED Len 020216
\end{enumerate}

\end{cor}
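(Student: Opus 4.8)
The plan is to turn both groups into $\Ext$-groups over $\mathbb{p}$ with target the one-dimensional module $\mathbb{k}_{\mathbb{B}}(\ell\mu)$, and then reduce to the cohomology of $\mathbb{u}^{-}$. Since $I_{\mu}^{[1]}\cong\RIndpB\mathbb{k}_{\mathbb{B}}(\ell\mu)$ (noted in the proof of Lemma \ref{lem:Len1}(c)), the derived adjunctions for restriction against $\RIndpB$ and against $\RIndBU$ give, for all $n$,
\[
\Ext^{n}_{\mathbb{B}}\big(Y,I_{\mu}^{[1]}\big)\cong\Ext^{n}_{\mathbb{p}}\big(Y,\mathbb{k}_{\mathbb{B}}(\ell\mu)\big),\qquad
\Ext^{n}_{\mathbb{U}}\big(\RIndBU Y,\RIndBU I_{\mu}^{[1]}\big)\cong\Ext^{n}_{\mathbb{p}}\big(\RIndBU Y,\mathbb{k}_{\mathbb{B}}(\ell\mu)\big),
\]
the first being Lemma \ref{lem:Len1}(c) and the second being Lemma \ref{lem:Len1}(d) followed by adjunction (here $\RIndBU Y$ is restricted to $\mathcal{C}_{\mathbb{p}}$). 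The basic computation is $\Ext^{\bullet}_{\mathbb{p}}(\mathbb{k}_{\mathbb{B}}(\tau),\mathbb{k}_{\mathbb{B}}(\ell\mu))$ for $\tau\in\mathbb{X}$: using the normal inclusion $\mathbb{u}^{-}\lhd\mathbb{p}$ with quotient $\mathbb{U}^{0}$, whose category of integrable modules is semisimple (this is where integrability is used), the Hochschild--Serre spectral sequence collapses to a natural identification $\Ext^{n}_{\mathbb{p}}(\mathbb{k}_{\mathbb{B}}(\tau),\mathbb{k}_{\mathbb{B}}(\ell\mu))\cong H^{n}(\mathbb{u}^{-},\mathbb{k})_{\tau-\ell\mu}$, the $(\tau-\ell\mu)$-weight space of $H^{\bullet}(\mathbb{u}^{-},\mathbb{k})$ (trivial coefficients), a graded ring whose weights all lie in $\mathbb{Z}_{\geq 0}R^{+}$.

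For the module $Y$ itself, d\'evissage along a composition series (whose factors are all $\mathbb{k}_{\mathbb{B}}(\ell\omega)$, $\omega\in\mathbb{Y}$) reduces $\Ext^{n}_{\mathbb{p}}(Y,\mathbb{k}_{\mathbb{B}}(\ell\mu))$ to a subquotient of $\bigoplus_{\omega}H^{n}(\mathbb{u}^{-},\mathbb{k})_{\ell(\omega-\mu)}$, so only \emph{$\ell$-divisible} weights of $H^{\bullet}(\mathbb{u}^{-},\mathbb{k})$ occur; the $\ell$-divisible part of $H^{\bullet}(\mathbb{u}^{-},\mathbb{k})$ is exactly $H^{\bullet}(\mathbb{b},\mathbb{k})$. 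This vanishes in odd degrees --- by the Ginzburg--Kumar calculation of the cohomology of the small quantum Borel in the quantum case, and because $H^{\mathrm{odd}}(B_{1},\mathbb{k})=0$ for $p>h$ in the modular case (the odd part of $H^{\bullet}(U_{1},\mathbb{k})$ is built from $\Lambda^{>0}$ of the nilradical, and a nonempty sum of distinct positive roots never lies in $p\mathbb{X}$ once $p>h$) --- which gives part (1) for $Y$. And since $H^{n}(\mathbb{u}^{-},\mathbb{k})$ is finite-dimensional (hence has finitely many weights, of bounded height), $H^{n}(\mathbb{u}^{-},\mathbb{k})_{\ell(\omega-\mu)}=0$ whenever $\omega\not\geq\mu$ or $\mathrm{ht}(\omega)$ exceeds a bound depending only on $n$ and $\mu$ (recall $\mathrm{ht}(\omega-\mu)=\mathrm{ht}(\omega)-\mathrm{ht}(\mu)$), which gives part (2) for $Y$.

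It remains to treat $\RIndBU Y$, which is the heart of the matter. Represent it by a bounded complex of modules in $block(\mathbb{U})$ (Corollary \ref{c:nBWcor}(i), Lemma \ref{l:fullembedding}) and run the hyperext spectral sequence $E_{2}^{s,t}=\Ext^{t}_{\mathbb{p}}(R^{s}\mathrm{Ind}_{\mathbb{B}}^{\mathbb{U}}(Y),\mathbb{k}_{\mathbb{B}}(\ell\mu))\Rightarrow\Ext^{s+t}_{\mathbb{p}}(\RIndBU Y,\mathbb{k}_{\mathbb{B}}(\ell\mu))$; this reduces both statements to the corresponding vanishing for each cohomology module $R^{i}\mathrm{Ind}_{\mathbb{B}}^{\mathbb{U}}(Y)$, $0\leq i\leq N$. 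The obstacle is that the $\mathbb{p}$-composition factors $\mathbb{k}_{\mathbb{B}}(\tau)$ of $R^{i}\mathrm{Ind}_{\mathbb{B}}^{\mathbb{U}}(Y)$ need not have $\tau\in\ell\mathbb{X}$, nor be ``large'', so the d\'evissage used for $Y$ no longer applies. The way around this is the tensor-identity computation recorded at (\ref{e:ndimtensor1}): tensoring $\RIndBU Y$ with a sufficiently large Frobenius-twisted costandard module $V_{\sigma}^{[1]}$ and commuting $\RIndBU$ past it (Remark \ref{r:Kempf}(b)), together with Lemma \ref{lem:Len1}(a),(b) to pass from $I_{\mu}^{[1]}$ to $J_{\sigma,\mu}$, rewrites the problem in terms of $\RIndBU\big(Y\otimes(V_{\sigma}^{[1]})^{*}|_{\mathbb{B}}\big)$, whose inducing module again has all composition factors of the form $\mathbb{k}_{\mathbb{B}}(\ell\cdot(\text{root-lattice weight}))$ of controllably large height; Corollary \ref{c:qcohom} and Proposition \ref{p:finiteness} (and the regularity of $\ell\nu+\rho$ for $\ell>h$, which forces the relevant line-bundle cohomology into a single degree with composition factors of large highest weight) should then deliver the vanishing. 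I expect controlling the composition factors and weights of the $R^{i}\mathrm{Ind}(Y)$ precisely enough, \emph{uniformly in $i$}, to be the crux, and this is exactly why (\ref{e:ndimtensor1}) is needed here whereas the softer estimates above suffice for the bare module $Y$.
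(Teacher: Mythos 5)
Your treatment of the two $\Ext_{\mathbb{B}}$-groups is correct and is essentially the paper's own argument: Lemma \ref{lem:Len1}(c) converts $\Ext^n_{\mathbb{B}}(Y,I_\mu^{[1]})$ into $\Ext^n_{\mathbb{p}}(Y,\mathbb{k}_{\mathbb{B}}(\ell\mu))$, d\'evissage reduces to characters $\mathbb{k}_{\mathbb{B}}(\ell\omega)$, and the $\ell$-divisible weight spaces of $H^{\bullet}(\mathbb{u}^{-},\mathbb{k})$ are controlled by $H^{\bullet}(\mathbb{b},\mathbb{k})\cong S^*(\mathfrak{n}^{*[1]})$, which sits in even degrees and has finitely many weights of bounded height in each degree; this yields both (1) and (2) for the first group.

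The gap is the $\Ext_{\mathbb{U}}$-side, which you do not actually prove. The route you sketch --- a hyperext spectral sequence over the cohomology modules $R^i\mathrm{Ind}_{\mathbb{B}}^{\mathbb{U}}(Y)$, followed by an attempt to control their $\mathbb{p}$-composition factors ``uniformly in $i$'' --- is both unnecessary and unworkable: those composition factors are restrictions of arbitrary simple modules in the principal block, they are not Frobenius twists, and no parity or height vanishing is available for $\Ext^{\bullet}_{\mathbb{p}}(L,\mathbb{k}_{\mathbb{B}}(\ell\mu))$ for such $L$. (Your parenthetical appeal to regularity of $\ell\nu+\rho$ forcing the cohomology into a single degree is also false in characteristic $p$; Corollary \ref{c:qcohom} confines only the extremal composition factor to one degree.) The actual argument needs no information at all about the cohomology or composition factors of $\RIndBU(Y)$. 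One reduces to $Y$ one-dimensional, writes $I_\mu^{[1]}$ as the direct union of the $J_{\sigma,\mu}$, commutes $\Ext^n_{\mathbb{U}}(\RIndBU(Y),-)$ and $\RIndBU$ with the direct limit, identifies $\RIndBU(J_{\sigma,\mu})$ with $V_\sigma^{[1]}\otimes\RIndBU(\mathbb{k}_{\mathbb{B}}(\ell\mu+\ell\sigma))$ by the generalized tensor identity, and then invokes the dimension equality (\ref{e:ndimtensor1}) (with $N=V_\sigma$) as a black box to conclude $\dim\Ext^n_{\mathbb{U}}(\RIndBU(Y),\RIndBU(J_{\sigma,\mu}))=\dim\Ext^n_{\mathbb{B}}(Y,J_{\sigma,\mu})$; for $\sigma$ large the right-hand side equals $\dim\Ext^n_{\mathbb{B}}(Y,I_\mu^{[1]})$ by Lemma \ref{lem:Len1}(a) and so vanishes by the computation you already carried out. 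You correctly named (\ref{e:ndimtensor1}) as the key input, but you then tried to use it to re-express the inducing module rather than simply to transfer the numerical vanishing from $\mathbb{B}$ to $\mathbb{U}$, and the proof is left open exactly at the half of the statement that matters.
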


\begin{proof}
The $\mathbb{b}-$cohomology of the trivial module is, as a
$\mathbb{B}$-module, the symmetric algebra
$S^*(\mathfrak{n}^{*[1]}).$ Here $\mathfrak{n}$ is the Lie algebra
of the ``unipotent radical'' of $\mathbb{B}^{\prime},$ with
$\mathfrak{n}^{*[1]}$ the linear dual of that Lie algebra twisted by
the Frobenius, and given cohomological degree 2. See \cite[Prop.
2.3]{AJ} in characteristic $p > h$ and
\cite[Thm. 3]{GK}  \footnote{While several references are made in the
proof of this theorem to \cite{APW} and \cite{APW2}, they are of a general
formal nature, similar to those of \cite{A} we discuss above of \ref{d:linked},
and do not requiring that $\ell$ be a prime power.  It is, however, necessary in \cite{GK}
to quote a case of Kempf's theorem, but there \cite{GK} gives a (correct) reference to
\cite[Thm. 5.3]{AW} .} for the analogous
characteristic $0$ quantum group result. This gives part (1), using
Lemma \ref{lem:Len1}(c).

Part (2) also follows (in both the quantum and algebraic group
cases), using Lemma \ref{lem:Len1}, and (\ref{e:ndimtensor1}) below.
In more detail, observe first that it is sufficient to take $Y$ of
dimension 1. Then (2.14(c) clearly gives the required vanishing of
$\Ext^n_{\mathbb{B}}(Y,I^{[1]}_\mu)$, provided the weight of $Y$,
call it $\ell \lambda$, is of sufficient large height. Note this
also gives vanishing of $\Ext^n_{\mathbb{B}}(Y,J_{\sigma,\mu})$ for
all sufficiently large $\sigma$. Next, regarding $I^{[1]}_\mu$ as
the direct union of the various modules $J_{\sigma,\mu}$, we obtain
$\Ext^n_{\mathbb{U}}(\mbox{RInd}_{\mathbb{B}}^{\mathbb{U}}(Y)$,
$\mbox{RInd}_{\mathbb{B}}^{\mathbb{U}}(I^{[1]}_\mu))$ as a direct
limit of the various
$\Ext^n_{\mathbb{U}}(\mbox{RInd}_{\mathbb{B}}^{\mathbb{U}}(Y)$,
$\mbox{RInd}_{\mathbb{B}}^{\mathbb{U}}(J_{\sigma,\mu})) =
\Ext^n_{\mathbb{U}}(\mbox{RInd}_{\mathbb{B}}^{\mathbb{U}}(Y),\mbox{
RInd}_{\mathbb{B}}^{\mathbb{U}}(V_\sigma^{[1]} \otimes
\mathbb{k}_{\mathbb{B}}(\ell \mu + \ell \sigma))) \cong
\Ext^n_{\mathbb{U}}(\mbox{RInd}_{\mathbb{B}}^{\mathbb{U}}(Y),
V_\sigma^{[1]} \otimes
\mbox{RInd}_{\mathbb{B}}^{\mathbb{U}}(\mathbb{k}_{\mathbb{B}}(\ell\mu
+ \ell \sigma))$. Applying \ref{e:ndimtensor1}  with
$\mathbb{k}_{\mathbb{B}}(\ell \lambda) = Y$ and $N$ in
\ref{e:ndimtensor1} equal to $V_\sigma$, we obtain that the
dimension of
$\Ext^n_{\mathbb{U}}(\mbox{RInd}_{\mathbb{B}}^{\mathbb{U}}(Y),
V_\sigma^{[1]} \otimes
\mbox{RInd}_{\mathbb{B}}^{\mathbb{U}}(\mathbb{k}_{\mathbb{B}}(\ell\mu
+ \ell \sigma))$ is dim $\Ext^n_{\mathbb{B}}(Y,V_\sigma^{[1]}
\otimes \mathbb{k}_{\mathbb{B}}(\ell \mu + \ell \sigma)) = \mbox{dim
} \Ext^n_{\mathbb{B}}(Y,J_{\sigma,\mu}) =0$ for large $\sigma$. This
proves part (2), and the proof of the corollary is complete.
\end{proof}

\section{A Proof of the Induction Theorems} \label{sec:inducproof}
%This section is where I'm housing the new proof; old material has been copied in after the bibliography for storage for now.
After providing an initial framework and brief outline of the steps
to be used, this section proceeds step-by-step to complete the proof
of Theorem 1 and Theorem 2, the latter treated in its equivalent
formulation, Theorem 2.1. Indeed, at this stage the proofs can be
given simultaneously, largely thanks to results in the previous
section, such as Corollary \ref{cor:Len2}. In those results, the
statements make sense and are correct in both the quantum and
positive characteristic cases, though some attention to
differences--at least to different sources--are sometimes required
in their proofs. Such differentiation is no longer necessary in the
wording of proofs in this section. Once the proof has been
completed, subsection \ref{subsec: summary} summarizes some of the
similarities and differences between our approach and that taken in
\cite{ABG}.

\subsection{Sketch of the Proof of the Induction Theorems}
The proof of the induction theorems begins, as is implicit in
\cite{ABG}, with the idea  of utilizing the following `general
nonsense' result on categorical equivalences. The proof is left to
the reader.
 %Neeman's book, perhaps? Len will check.
 %Not there--Len

\begin{lem}\label{l:ntrianequiv} %(Appears as Lem. 3.9.3 in \cite{ABG})
Let $\mathcal{A}$ and $\mathcal{B}$ be two triangulated categories,
and let $\mathcal{F}: \mathcal{A} \to \mathcal{B}$ be a morphism of
triangulated categories, that is, $\mathcal{F}$ sends triangles in
$\mathcal{A}$ to triangles in $\mathcal{B},$ and commutes with the
respective translation functors on $\mathcal{A}$ and $\mathcal{B}.$
 Then $\mathcal{F}$ is an equivalence of
triangulated categories if there is a set of objects $S$ in
$\mathcal{A},$ such that the following two conditions hold:
\begin{itemize}
\item[(A)] the minimal full triangulated subcategory of $\mathcal{A}$
containing $S$ is, up to isomorphic objects,  the whole category
$\mathcal{A},$ and likewise for $\mathcal{B}$ in place of
$\mathcal{A}$ and $\mathcal{F}(S):=\{\mathcal{F}(a)\,|\,a\in S\}$ in
place of $S$;
\item[(B)] for any objects $a,a'\in S,$ the functor $\mathcal{F}$ induces
isomorphisms
$$\Hom_{\mathcal{A}}(a,a'[k]) \mapsto
\Hom_{\mathcal{B}}(\mathcal{F}(a),\mathcal{F}(a')[k])\quad \quad
\mbox{for all } k\in \mathbb{Z}.$$
\end{itemize}
\end{lem}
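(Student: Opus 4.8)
The plan is to deduce the equivalence from the usual criterion: a functor between categories that is fully faithful and essentially surjective is an equivalence, and since $\mathcal{F}$ is assumed to send triangles to triangles and to commute with translations, it will then automatically be an equivalence of triangulated categories. So the two things to establish are full faithfulness and essential surjectivity, and both will follow by a ``d\'evissage'' along $S$: one shows that the class of objects for which the relevant property holds is a full triangulated subcategory containing $S$, hence (by hypothesis (A)) the whole category. The only tools needed are the axioms of triangulated categories, the long exact sequences obtained by applying $\Hom_{\mathcal{A}}(c,-)$ or $\Hom_{\mathcal{A}}(-,c)$ to a distinguished triangle, and the five lemma; in particular no additivity or smallness hypotheses are required, and one never needs the generated subcategories to be thick, since (A) is phrased directly in terms of minimal full triangulated subcategories.

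For full faithfulness I would argue in two variables. First fix $a \in S$ and let $\mathcal{A}_a$ be the full subcategory of $\mathcal{A}$ consisting of those $b$ for which $\mathcal{F}$ induces an isomorphism $\Hom_{\mathcal{A}}(a,b[k]) \to \Hom_{\mathcal{B}}(\mathcal{F}(a),\mathcal{F}(b)[k])$ for every $k \in \mathbb{Z}$. Given a distinguished triangle $b' \to b \to b'' \to b'[1]$ with two of its vertices in $\mathcal{A}_a$, apply the cohomological functor $\Hom_{\mathcal{A}}(a,-)$ to it and to its image under $\mathcal{F}$; the resulting ladder of long exact sequences commutes by naturality of $\mathcal{F}$ on morphism groups, so the five lemma puts the third vertex in $\mathcal{A}_a$ as well. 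Since $\mathcal{A}_a$ is plainly stable under translation, it is a full triangulated subcategory; it contains $S$ by hypothesis (B), hence equals $\mathcal{A}$ by hypothesis (A). Now fix an arbitrary $b \in \mathcal{A}$ and let $\mathcal{A}^b$ be the full subcategory of those $a'$ for which $\Hom_{\mathcal{A}}(a',b[k]) \to \Hom_{\mathcal{B}}(\mathcal{F}(a'),\mathcal{F}(b)[k])$ is an isomorphism for all $k$; the identical five-lemma argument, now using that $\Hom_{\mathcal{A}}(-,b)$ is cohomological, shows $\mathcal{A}^b$ is a full triangulated subcategory, and by the previous step it contains $S$, hence is all of $\mathcal{A}$. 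Taking $k=0$ gives that $\mathcal{F}$ is fully faithful.

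For essential surjectivity, let $\mathcal{B}'$ be the full subcategory of $\mathcal{B}$ of objects isomorphic to $\mathcal{F}(a)$ for some $a \in \mathcal{A}$. It contains $\mathcal{F}(S)$ and is stable under translation; to see it is stable under cones, take a distinguished triangle in $\mathcal{B}$ with first morphism $g \colon \mathcal{F}(a) \to \mathcal{F}(a')$ and third vertex $c$. By fullness $g = \mathcal{F}(f)$ for some $f \colon a \to a'$ in $\mathcal{A}$; completing $f$ to a distinguished triangle $a \to a' \to a'' \to a[1]$ and applying $\mathcal{F}$ gives a distinguished triangle on $g$ with third vertex $\mathcal{F}(a'')$, and since any two distinguished triangles on the same morphism are isomorphic, $c \cong \mathcal{F}(a'') \in \mathcal{B}'$. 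Thus $\mathcal{B}'$ is a full triangulated subcategory of $\mathcal{B}$ containing $\mathcal{F}(S)$, so by hypothesis (A) it is all of $\mathcal{B}$, and $\mathcal{F}$ is essentially surjective. Combining, $\mathcal{F}$ is an equivalence of triangulated categories. I expect the only step requiring genuine care to be the verification that $\mathcal{A}_a$ and $\mathcal{A}^b$ are triangulated, i.e. the commutativity of the comparison ladder and the applicability of the five lemma; but this is just the naturality of $\Hom(x,y) \cong \Hom(\mathcal{F}x,\mathcal{F}y)$ that is part of the data of a functor, together with the long exact sequences attached to a distinguished triangle.
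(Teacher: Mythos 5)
Your proof is correct; the paper in fact leaves this lemma to the reader, and your two-variable d\'evissage — five-lemma closure of the full-faithfulness subcategories $\mathcal{A}_a$ and $\mathcal{A}^b$ in each variable, followed by closure of the essential image under shifts and cones — is exactly the standard argument intended. The only points left implicit, namely that the subcategories you define are closed under isomorphism and under rotation of triangles (so that hypothesis (A) applies to them), are immediate from their definitions.
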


This lemma will be applied with $\mathcal{A} =
D_{triv}(\mathbb{B}),$ $\mathcal{B} = D^bblock(\mathbb{U})$
\footnote{Throughout this paper we use quantum and algebraic groups
in a ``simply connected" setting. In particular this means that all
of our $\mathbb{B}$-modules, always assumed to be a direct sum of
their weight spaces, can have associated weights which are in
$\mathbb{X}$, not just $\mathbb{Y}$, as in the \cite{ABG} ``adjoint
group" setting. However, the more general $\mathbb{B}$-modules are
obviously the natural direct sum of submodules whose associated
weights belong to a fixed coset (one for each summand) of
$\mathbb{Y}$ in $\mathbb{X}$. This has the consequence that the two
versions of $D_{triv}(\mathbb{B})$ in these respective
$\mathbb{B}$-module contexts are naturally equivalent. Similar
considerations apply to $\mathbb{U}$-modules and
$block(\mathbb{U})$.}  $S = k_{\mathbb{B}}(\ell \lambda),\lambda\in
\mathbb{Y}.$ Standard arguments with (distinguished triangles
arising from) homological degree truncations (see \cite[Exemples
1.3.2]{BBD}) show that $S$ does, indeed, generate $\mathcal{A}$ as a
triangulated category.

The functor $\IndBU$ has been discussed in Section \ref{sec:back}.
It is an additive left exact functor, so, from general principles,
its right derived functor $\RIndBU$, which exists, is a morphism of
triangulated categories.
% (\cite{:}). Corollary \ref{c:nBWcor}, a
%consequence of results on linkage for cohomology (reviewed earlier
%%\footnote{CHANGE to ``earlier" if this
%material gets reordered!}
%in this paper), ensures that $\RInd_{\mathbb{B}}^{\mathbb{U}}$ is
%indeed defined on $D_{triv}(\mathbb{B})$   and has image living in
%$D^bblock(\mathbb{U}).$ Thus, the functor induced by $\RIndBU$ is a
%candidate for the morphism $\mathcal{F}$ in Lemma
%\ref{l:ntrianequiv}. Moreover,
%%\footnote{TLH: Removed
%%old statement that $\RIndBU$ is left exact so $R^{0}\IndBU \cong
%%\IndBU.$
Corollary \ref{c:nBWcor} shows that $\RIndBU(S)$ generates
$D^bblock(\mathbb{U}).$ Consequently, the starting hypotheses of
Lemma \ref{l:ntrianequiv} and its condition (A)  are met, with
$\mathcal{F}$ the restriction of $\RIndBU$ to
$D_{triv}(\mathbb{B})$.

So, to establish the induction theorems, it suffices to prove
condition (B) of Lemma \ref{l:ntrianequiv} holds, that is, $\RIndBU$
gives isomorphisms
$$\Hom_{D_{triv}(\mathbb{B})}(k_{\mathbb{B}}(\ell
\lambda),k_{\mathbb{B}}(\ell \mu)[n]) \cong
 \Hom_{D^bblock(\mathbb{U})}(\RIndBU k_{\mathbb{B}}(\ell \lambda),\RIndBU k_{\mathbb{B}}(\ell \mu)[n])
$$ for any $\lambda,\mu\in \mathbb{Y}$ and $n\in \mathbb{Z}.$
That is, it suffices prove  for all $\lambda, \mu\in \mathbb{Y}$ and
$n \geq 0$
%Why can reduce to $n \geq 0, eh?$
that applying $\RIndBU$ produces isomorphisms
\begin{equation}\label{e:nisos}
\Ext^n_{\mathbb{B}}(k_{\mathbb{B}}(\ell \lambda),k_{\mathbb{B}}(\ell
\mu)) \cong
 \Ext^n_{block(\mathbb{U})}(\RIndBU k_{\mathbb{B}}(\ell \lambda),\RIndBU k_{\mathbb{B}}(\ell
 \mu)),
 \end{equation}
where the right hand side is an `$\Ext$' computed in the sense of
hypercohomology.  To establish (\ref{e:nisos}) we will proceed as
follows, with the first step the same as in \cite{ABG}:
\begin{itemize}
\item[STEP 1:] Show
\begin{equation}\label{e:nstep1}
\dim(\Ext^n_{\mathbb{B}}(k_{\mathbb{B}}(\ell
\lambda),k_{\mathbb{B}}(\ell \mu))) =
\dim(\Ext^n_{block(\mathbb{U})}(\RIndBU k_{\mathbb{B}}(\ell
\lambda),\RIndBU k_{\mathbb{B}}(\ell
 \mu)))
 \end{equation} for any $\lambda,\mu\in \mathbb{Y}$ and $n\geq 0.$

\item[STEP 2]: Employing the equalities (\ref{e:nstep1}), show, for the twisted $\mathbb{B}-$modules
$I_{\mu}^{[1]}$ defined in (\ref{e:Itwist}), that there are
isomorphisms
\begin{equation}\label{e:nktoIiso}
\Ext_{\mathbb{B}}^n(k_{\mathbb{B}}(\ell\lambda), I_{\mu}^{[1]})
\cong
\Ext_{block(\mathbb{U})}^n(\RIndBU(k_{\mathbb{B}}(\ell\lambda)),
\IndBU(I_{\mu}^{[1]})),
\end{equation}
arising from the functoriality of $\RIndBU.$
%These isomorphisms hold
%under the assumption that the weights $\ell\lambda,\ell\mu$ lie in a
%fixed ideal of weights $\tGamma(m), m\in \mathbb{Z}^{\geq 0}$ to be
%defined, but with the property that $\cup_{m\geq 0}\tGamma(m) = {\mathbb{X}}.$

\item[STEP 3]:
%By allowing the ideals $\tGamma(m)$ to expand (that is,
%letting $m\to \infty$), and making use of the structure of the
%modules $I_{\eta}^{[1]},$
Making use of the structure of the modules $I_{\eta}^{[1]},$ recover
the desired isomorphisms (\ref{e:nisos}) from the isomorphisms
(\ref{e:nktoIiso}), and (once again) the dimension equalities
(\ref{e:nstep1}).
%Expand here, based on final form of proof of Step 3.

\end{itemize}
\subsection{Step 1 of the Proof of the Induction
Theorems}\label{subsec:step1}

We largely follow \cite{ABG} for this step, with the exception of
the proof of part (ii) of Lemma \ref{wallcrossingfunctors} below,
which is \cite[Lem.4.1.1(ii)]{ABG} in the quantum case. We give a
proof in the algebraic groups case in Appendices A and B of this
paper. However, the proof \cite[Lem.4.1.1(ii]{ABG} given in
\cite{ABG} is not nearly adequate, in our view, even in the quantum
case, and we point out in appendices A and B how our proofs there
apply to complete it. As mentioned in the introduction, we are
grateful to P. Achar and S. Riche for a suggestion to the effect
that we look more closely at the \cite{ABG} proof of this result.
(In a very preliminary version of this paper, we had assumed the
proof in \cite{ABG} was sufficient in the quantum case, and even
that it applied to the modular case.) We also thank S. Riche for
pointing out an error in our first naive attempt at a correction.

 The result Lemma \ref{wallcrossingfunctors}(ii) below is actually
quite strong, in either the algebraic groups or quantum case, and
gives, in the regular weight case, a categorification of Rickard's
theorem \cite[Thm. 2,1]{R94} on derived equivalences arising from
translations. (Essentially, the latter theorem gives Lemma
\ref{wallcrossingfunctors}(i), when the derived equivalences
involved in the statement of the theorem are identified in its
proof. But the theorem only claims a version of Lemma
\ref{wallcrossingfunctors}(ii) at a character-theoretic level.)

We introduce the lemma by observing, as in the discussion above
\cite[Lem. 4.1.1]{ABG} that translation functors may constructed as
in the algebraic groups case. This is carried out in \cite[\S
9]{APW}, though with the explicit assumption that $\ell$ be a prime
power. This may be removed by appealing to Theorem 3 above. It is
easy to check that the resulting constructions have the familiar
adjointness and exactness properties of the algebraic groups case
\cite[II, Lem. 7.6]{J}. Continuing the discussion in \cite{ABG}, let
$\Xi_\alpha:block(\mathbb U)\longrightarrow block(\mathbb U)$ denote
a composition of  translation functors first `to the wall' labelled
assoiated to a simple reflection $s_\alpha$ and, then back `out of
the wall'. There are canonical adjunction morphisms
$f:id\longrightarrow\Xi_\alpha$,and $g:\Xi_\alpha\longrightarrow
id$. It is noted in \cite{ABG} that the mapping cone, $C(f)$, of $f$
gives rise to a triangulated functor from $D^block(\mathbb{U})$ to
itself, denoted $\theta^{+}_\alpha$. A similar construction (using
$C(g)[-1]$) gives a triangulted functor $\theta^{-}_\alpha$. These
constructions carry over easily to the algebraic groups case. The
statement below is \cite[Lem. 4.1.1]{ABG} in both the quantum and
algebraic groups cases.
\begin{lem}\label{wallcrossingfunctors} With the notation discussed above, we have in
both the quantum and algebric groups cases: \\

(i) In $D^b block(\mathbb{U})$ we have the following canonical
isopmorphisms $ \theta_\alpha^+\circ \theta_\alpha^- \cong id ~~
{\rm and }~~\theta_\alpha^-\circ \theta_\alpha^+ \cong id$. In
particular $\theta _\alpha ^+$ and $\theta_\alpha^-$ are
autoequivalences.

(ii) If $\lambda \in W_{aff} \cdot 0$ and $\lambda
^{s_\alpha}>\lambda$ then $\theta ^+_\alpha(\RIndBU\lambda)\cong
\RIndBU (\lambda^{s_\alpha})$.
\end{lem}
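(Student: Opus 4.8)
The two parts of the lemma have rather different characters. Part (i) I would treat as essentially formal: $\Xi_\alpha$ is the composition of translation to the $s_\alpha$-wall followed by translation back, and the two translation functors involved are biadjoint with the usual exactness properties (in the algebraic groups case \cite[II, 7.6]{J}; in the quantum case the formalism of \cite[\S 9]{APW}, whose prime-power restriction on $\ell$ can be removed by appeal to Theorem \ref{t:strong}). Hence $\Xi_\alpha$ is biadjoint to itself, with unit $f\colon\mathrm{id}\to\Xi_\alpha$ and counit $g\colon\Xi_\alpha\to\mathrm{id}$. From the defining triangles $\mathrm{id}\xrightarrow{f}\Xi_\alpha\to\theta_\alpha^{+}\to\mathrm{id}[1]$ and $\theta_\alpha^{-}\to\Xi_\alpha\xrightarrow{g}\mathrm{id}\to\theta_\alpha^{-}[1]$, a standard chase — using the octahedral axiom, the triangle identities for the biadjunction, and the standard relations among $f$ and $g$ on a regular block (cf. \cite[II.7]{J}) — produces the canonical isomorphisms $\theta_\alpha^{+}\circ\theta_\alpha^{-}\cong\mathrm{id}\cong\theta_\alpha^{-}\circ\theta_\alpha^{+}$, so both functors are autoequivalences. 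This much is in substance Rickard's \cite[Thm. 2.1]{R94}, once the derived equivalence there is identified with $\theta_\alpha^{+}$.

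For part (ii), which is the substantive statement, I would argue in stages, starting from a classical base case. If $\lambda$ and $\lambda^{s_\alpha}$ are both dominant, then by Kempf vanishing (Remarks \ref{r:Kempf}(b)) $\RIndBU\lambda\cong H^{0}(\lambda)$ and $\RIndBU\lambda^{s_\alpha}\cong H^{0}(\lambda^{s_\alpha})$ are costandard modules concentrated in degree $0$, and the theory of translation functors supplies a short exact sequence of $\mathbb{U}$-modules
\[
0\;\longrightarrow\; H^{0}(\lambda)\;\xrightarrow{f_{H^{0}(\lambda)}}\;\Xi_\alpha H^{0}(\lambda)\;\longrightarrow\; H^{0}(\lambda^{s_\alpha})\;\longrightarrow\;0,
\]
in which the left-hand map is the adjunction unit (algebraic groups case: \cite[II, 7.19]{J} and companions; quantum case: the analogues assembled in Section \ref{sec:back}, supplemented where the quantum literature is thin). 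Since this map is injective, its mapping cone is the cokernel, so $\theta_\alpha^{+}(\RIndBU\lambda)\cong H^{0}(\lambda^{s_\alpha})=\RIndBU\lambda^{s_\alpha}$.

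The general case is where the difficulty lies: for a general non-dominant $\lambda$ the positive-characteristic Borel-Weil-Bott phenomena mean $\RIndBU\lambda$ is a genuine complex (with cohomology only constrained by Corollary \ref{c:qcohom}), so one cannot reduce to a single module computation. My plan would be to reduce to the dominant case by an induction organized by the alcove combinatorics: pass between $\RIndBU\mu$ and $\RIndBU(s_\beta\cdot\mu)$, for $s_\beta$ a finite simple reflection, via the corresponding Borel-Weil-Bott distinguished triangle, and track the adjunction maps through the commutation of $\theta_\alpha^{+}$ with the relevant translation functors. The leverage I would exploit is a rigidity property of the objects $\RIndBU\nu$, $\nu\in W_{aff}\cdot 0$ — namely the vanishing of $\Hom_{D^{b}block(\mathbb{U})}(\RIndBU\mu,\RIndBU\nu[k])$ outside a bounded range of degrees $k$ governed by the order on $W_{aff}\cdot 0$, which should follow from Corollary \ref{c:qcohom} together with the known vanishing of $\operatorname{Ext}^{k}$ among dual Weyl modules. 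Combining this rigidity with the Grothendieck-group identity $[\theta_\alpha^{+}\RIndBU\lambda]=[\RIndBU\lambda^{s_\alpha}]$ — which is precisely the character-level content of Rickard's \cite[Thm. 2.1]{R94} — should force the desired isomorphism of objects rather than of classes alone.

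I expect this last upgrade — from Rickard's character identity to an actual isomorphism in $D^{b}block(\mathbb{U})$ — to be the main obstacle: making the commutation analysis for $\theta_\alpha^{+}$ precise, and establishing the rigidity in a form strong enough to pin the object down. This is exactly the point at which the proof of \cite[Lem. 4.1.1(ii)]{ABG} is inadequate, and I would expect the argument to require a substantial detour (here, Appendices A and B); in the quantum case I would also need to secure the foundations of quantum translation functors and quantum Borel-Weil-Bott, for which Section \ref{sec:back} would be drawn upon.
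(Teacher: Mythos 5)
Your treatment of part (i) is consistent with the paper, which likewise disposes of it by appeal to the argument of Rickard \cite[Thm. 2.1]{R94} (or of \cite[Lem. 4.1.1(i)]{ABG}); your base case for part (ii), where $\lambda$ and $\lambda^{s_\alpha}$ are both dominant, also matches the paper's, including the one-dimensionality of the relevant $\Hom$-space that pins the adjunction map down.

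The general case of (ii) is where your proposal has a genuine gap. The difficulty is not merely "upgrading a character identity to an isomorphism" in the abstract: $\theta^+_\alpha(\RIndBU\lambda)$ is \emph{by definition} the cone of the adjunction unit $\RIndBU\lambda \to \Xi_\alpha\RIndBU\lambda$, whereas the Borel--Weil--Bott-style construction (applying $pr_\lambda\RIndBU$ to $0\to M\to L^*\otimes w\cdot\mu\to M'\to 0$) produces a triangle $\RIndBU\lambda\to\Xi_\alpha\RIndBU\lambda\to\RIndBU(\lambda^{s_\alpha})$ whose first arrow is, a priori, just \emph{some} map. The entire content of the general case is showing these two maps agree up to a nonzero scalar. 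Your proposed mechanism — the $K_0$ identity from Rickard plus a $\Hom$-vanishing "rigidity" — does not accomplish this: equality of classes in the Grothendieck group together with bounded $\Hom$-vanishing does not determine an object of $D^b block(\mathbb{U})$, and more to the point it says nothing about \emph{which} map $\RIndBU\lambda\to\Xi_\alpha\RIndBU\lambda$ one is taking the cone of. You flag this as "the main obstacle" and defer to Appendices A and B, but the route you sketch is not the one those appendices take, and as stated it would not close.

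What the paper actually does is quite different from your alcove-combinatorics induction. It resolves the trivial $\mathbb{B}$-module by a complex $K^\bullet$ whose terms are sums of modules $p\tau\otimes V^{[1]}$ with $p\tau$ deeply dominant (Lemma \ref{resolvetrivialmodule}), so that each term of $w\cdot\lambda\otimes K^\bullet$ lands in the already-settled dominant situation. It then defines two natural transformations $\mathrm{Adj}^{w\cdot\lambda}$ and $\mathrm{Jan}^{w\cdot\lambda}$ on the category $\mathcal{Y}$ of such modules, proves each is compatible with tensoring by $V^{[1]}$ (Propositions \ref{prop:app2}, \ref{prop:app5}, resting on Corollary \ref{adjcor} from Appendix A), and shows via a naturality argument with evaluation maps that the scalar relating them is independent of $\tau$ (Theorem \ref{thm:app1}). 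Proposition \ref{prop:app6} then identifies the triangle $(*)$ at the level of complexes with the Jantzen map on $K^\bullet$, and the claim — hence part (ii) — follows. If you want to complete your proof, you would need either to reproduce this identification of the adjunction unit with the connecting map of the triangle, or to establish that $\Hom_{D^b}(\RIndBU\lambda,\Xi_\alpha\RIndBU\lambda)$ is one-dimensional in general (which is what makes the dominant case easy but is not available for free off the wall).
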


\begin{rem} As discussed above, the proof of part (ii) is given in
appendices A and B. The proof of part (i) may be obtained from
the argument for \cite[Thm. 2.1]{R94}, or, alternately, from the
argument for \cite[Lem. 4.1.1(i)]{ABG}. (Both arguments involve
similar ingredients.) We mention that \cite{ABG} defines both
$\theta^+$ and $\theta^-$ as mapping cones. The reader should be
aware that the natural definition of $\theta^-$ is as a shifted
mapping cone, as in the description given here, to obtain property
$(i)$.
%[CAN JUSTIFY THIS REMARK WITH AN EXAMPLE IF NECESSARY]
\end{rem}
\begin{lem} \label{l:ndimlem} For any $\lambda,\mu\in R$ and $n\geq 0,$
\begin{equation}
\dim(\Ext^n_{\mathbb{B}}(k_{\mathbb{B}}(\ell
\lambda),k_{\mathbb{B}}(\ell \mu))) =
\dim(\Ext^n_{block(\mathbb{U})}(\RIndBU k_{\mathbb{B}}(\ell
\lambda),\RIndBU k_{\mathbb{B}}(\ell\mu)))
\end{equation}
\end{lem}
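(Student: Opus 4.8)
The plan is to reduce the computation of both sides to the case $\lambda = 0$ (or more generally to a weight in the closure of the fundamental alcove) by repeated use of the wall-crossing functors $\theta^\pm_\alpha$, and then to identify both sides with an explicit cohomology ring. First I would observe that the left-hand side is essentially $\mathbb{b}$-cohomology: since $k_{\mathbb{B}}(\ell\mu)$ is trivial on $\mathbb{b}$ and a Frobenius twist, twisting by $k_{\mathbb{B}}(-\ell\mu)$ gives $\Ext^n_{\mathbb{B}}(k_{\mathbb{B}}(\ell\lambda),k_{\mathbb{B}}(\ell\mu)) \cong \Ext^n_{\mathbb{B}}(k_{\mathbb{B}}(\ell(\lambda-\mu)),k_{\mathbb{B}})$, and by the inflation/Frobenius-kernel analysis (the Lyndon--Hochschild--Serre spectral sequence for $\mathbb{b}\triangleleft\mathbb{B}$, degenerating as recorded in the proof of Corollary \ref{cor:Len2}) this is controlled by $S^\bullet(\mathfrak{n}^{*[1]})$ together with the $\mathbb{B}/\mathbb{b}\cong B'$-module structure. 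Concretely, $\dim\Ext^n_{\mathbb{B}}(k_{\mathbb{B}}(\ell\lambda),k_{\mathbb{B}}(\ell\mu))$ equals the multiplicity of the weight $\mu-\lambda$ in $S^{n/2}(\mathfrak{n}^*)$ when $n$ is even, and is $0$ when $n$ is odd; this is a finite, explicitly computable number depending only on $\mu-\lambda$ and $n$.

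Next I would handle the right-hand side. By Lemma \ref{lem:Len1} and the reciprocity isomorphisms there, together with Corollary \ref{c:nBWcor}(i) which guarantees $\RIndBU k_{\mathbb{B}}(\ell\mu)\in D^bblock(\mathbb{U})$, the group $\Ext^n_{block(\mathbb{U})}(\RIndBU k_{\mathbb{B}}(\ell\lambda),\RIndBU k_{\mathbb{B}}(\ell\mu))$ is a bounded, finite-dimensional invariant. The key tool is Lemma \ref{wallcrossingfunctors}: part (i) says $\theta^+_\alpha$ is an autoequivalence of $D^bblock(\mathbb{U})$, hence preserves all $\Ext$-dimensions, and part (ii) says $\theta^+_\alpha(\RIndBU\lambda)\cong\RIndBU(\lambda^{s_\alpha})$ whenever $\lambda^{s_\alpha}>\lambda$ in the orbit $W_{aff}\cdot 0$. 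Using a reduced expression carrying $0$ to any chosen dominant alcove representative of the relevant linkage class (one applies the $s_\alpha$ that increase the weight, one simple reflection at a time), I can transport the pair $(\RIndBU k_{\mathbb{B}}(\ell\lambda),\RIndBU k_{\mathbb{B}}(\ell\mu))$ to a pair of the form $(\RIndBU k_{\mathbb{B}}(0'),\RIndBU k_{\mathbb{B}}(\ell\mu'))$ for a convenient base point, without changing $\dim\Ext^n$. One then does the same reduction on the $\mathbb{B}$-side, noting that the combinatorial bookkeeping (which $s_\alpha$ raises which weight) is identical, so the two sides are reduced in parallel. This is exactly the strategy of \cite[\S4]{ABG}, now legitimate because Lemma \ref{wallcrossingfunctors}(ii) has been proved in Appendices A and B.

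Finally, at the base point the right-hand side becomes the self-$\Ext$ algebra $\Ext^\bullet_{block(\mathbb{U})}(\RIndBU k_{\mathbb{B}}(0),\RIndBU k_{\mathbb{B}}(\ell\mu'))$, which by Lemma \ref{lem:Len1}(c),(d) and Kempf vanishing (Remark \ref{r:Kempf}(b)) matches $\Ext^\bullet_{\mathbb{p}}(k_{\mathbb{B}}(0),k_{\mathbb{B}}(\ell\mu'))$, again computed by $S^\bullet(\mathfrak{n}^{*[1]})$ with the appropriate torus-weight bookkeeping; comparing with the left-hand side computed above yields the asserted equality of dimensions degree by degree. The main obstacle I expect is \emph{bookkeeping the linkage combinatorics correctly during the wall-crossing reduction}: one must verify that the sequence of simple reflections used to move $\ell\lambda$ (resp.\ $\ell\mu$) toward the base alcove satisfies the hypothesis $\lambda^{s_\alpha}>\lambda$ at every step, that the resulting identifications of objects are compatible on both sides of the claimed equality, and that no degree shifts are introduced (the functors $\theta^\pm_\alpha$ are triangulated and commute with translation, so shifts are controlled, but one must be careful that $\theta^-_\alpha$ is the \emph{shifted} mapping cone, per the Remark following Lemma \ref{wallcrossingfunctors}). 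Everything else is either a formal consequence of the cited lemmas or a routine weight-multiplicity count.
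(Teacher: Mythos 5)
Your middle step---using Lemma \ref{wallcrossingfunctors}(i),(ii) along a reduced expression for the translation by a dominant $\ell\nu$ to shift both arguments simultaneously and reduce to the base case $\lambda=0$---is exactly the paper's reduction, and your worry about verifying the hypothesis $\lambda^{s_\alpha}>\lambda$ at each step is the right thing to check. (Note, though, that the reduction on the $\mathbb{B}$-side needs no wall-crossing at all: it is just tensoring by a one-dimensional character.) The two endpoints of your argument, however, contain genuine errors. First, the left-hand side is \emph{not} the multiplicity of $\mu-\lambda$ in $S^{n/2}(\mathfrak{n}^*)$, and it does \emph{not} vanish in odd degrees. The symmetric-algebra description computes $\mathbb{b}$-cohomology, i.e.\ $\Ext^\bullet_{\mathbb{p}}(k_{\mathbb{B}}(\ell\lambda),k_{\mathbb{B}}(\ell\mu))$, which by Lemma \ref{lem:Len1}(c) equals $\Ext^\bullet_{\mathbb{B}}(k_{\mathbb{B}}(\ell\lambda),I_\mu^{[1]})$---note the twisted injective hull $I_\mu^{[1]}$, not $k_{\mathbb{B}}(\ell\mu)$, in the second slot. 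The group $\Ext^\bullet_{\mathbb{B}}(k_{\mathbb{B}}(\ell\lambda),k_{\mathbb{B}}(\ell\mu))$ carries an additional layer of $B'$-cohomology of $S^\bullet(\mathfrak{n}^*)\otimes(\mu-\lambda)$ coming from the spectral sequence for $\mathbb{b}\subset\mathbb{B}$, and this contributes in odd degrees (for instance $\Ext^1_{\mathbb{B}}(k_{\mathbb{B}}(0),k_{\mathbb{B}}(-\ell\alpha))\neq 0$ for $\alpha$ simple). The even/odd dichotomy of Corollary \ref{cor:Len2}(1) is a statement about $\Ext_{\mathbb{B}}(-,I_\mu^{[1]})$, not about $\Ext_{\mathbb{B}}(-,k_{\mathbb{B}}(\ell\mu))$.

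Second, your base-point identification of the right-hand side with $\Ext^\bullet_{\mathbb{p}}(k_{\mathbb{B}}(0),k_{\mathbb{B}}(\ell\mu'))$ via Lemma \ref{lem:Len1}(c),(d) does not follow: those isomorphisms again have $I_{\mu'}^{[1]}$ and $\RIndBU(I_{\mu'}^{[1]})$ in the second variable, and replacing $I_{\mu'}^{[1]}$ by $k_{\mathbb{B}}(\ell\mu')$ there is precisely the content of Steps 2 and 3 of the overall proof, which themselves depend on the present lemma; so this route is circular. The paper's base case is both simpler and correct: Corollary \ref{c:qcohom} (together with Kempf vanishing) gives $\RIndBU k_{\mathbb{B}}(0)\cong k_{\mathbb{U}}(0)$, so the derived adjunction $\RHom_{\mathbb{U}}(k_{\mathbb{U}}(0),\RIndBU M)\cong \RHom_{\mathbb{B}}(k_{\mathbb{B}}(0),M)$ immediately yields the case $\lambda=0$ for arbitrary $\mu$, indeed as an isomorphism rather than a mere equality of dimensions, with no explicit computation of either side. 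Keeping your wall-crossing reduction but replacing both of your endpoint computations by this adjunction argument recovers the paper's proof.
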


\begin{proof} The proof follows \cite[proof of Lem. 4.2.2]{ABG}. We
include some details for completeness.
 First, the identity
$$\Hom_{\mathbb{B}}(k_{\mathbb{B}}(0), M)\stackrel{adjunction}{=}\Hom_{\mathbb {U}}(k_{\mathbb{U}}(0),\RIndBU M) \stackrel{BWB}{=}\Hom_{\mathbb{U}}(\RIndBU k_{\mathbb{B}}(0),\RIndBU M)$$
is established, using Borel Weil Bott (BWB) type theory. In fact,
Corollary \ref{c:qcohom} is sufficient in the quantum case, and the
better known algebraic groups case of that corollary  is discussed
just above it. To summarize,  the identity above holds in both
cases, for (at least) any object $M$ in $D_{triv}(\mathbb{B})$. This
proves the lemma in the special case $\lambda =0$ and arbitrary $\mu
\in \mathbb{Y}.$

The general case will be reduced to the special case by means of
translation functors. For any $\lambda, \mu \in \mathbb{Y}$ and $\nu
\in \mathbb{Y}^+$,
we claim (*)\\
$\RHom_{block(\mathbb{U})}(\RIndBU k_{\mathbb{B}}(\ell\lambda),
\RIndBU k_{\mathbb{B}}(\ell\mu)) \cong
\RHom_{block(\mathbb{U})}(\RIndBU
k_{\mathbb{B}}(\ell\lambda+\ell\nu), \RIndBU
k_{\mathbb{B}}(\ell\mu+\ell\nu))
$.\\
  Let $\nu=s_{\alpha_1}s_{\alpha_2}\cdots s_{\alpha_r} \in {\mathbb Y}\subset W_{aff}$ be a reduced expression.  Then $\ell \nu = 0^{s_{\alpha_1}s_{\alpha_2}\cdots s_{\alpha_r}}$ and hence $\ell \lambda+\ell\nu= (\ell \lambda )^{s_{\alpha_1}s_{\alpha_2}\cdots s_{\alpha_r}}>
(\ell \lambda )^{s_{\alpha_1}s_{\alpha_2}\cdots
s_{\alpha_{r-1}}}>\cdots > \ell\lambda$.  Now the repeated use of
the second part of Lemma \ref{wallcrossingfunctors} give us $\RIndBU
k_{\mathbb{B}}
(\ell\lambda+\ell\nu)\cong\theta_{\alpha_r}^+\circ\theta_{\alpha_{r-1}}^+
\circ \cdots \circ \theta_{\alpha_1}^+(\RIndBU
k_{\mathbb{B}}(\ell\lambda)).$ This together with the first property
of Lemma \ref{wallcrossingfunctors} gives (*).

For any $\lambda$, $\mu$ $\in \mathbb{Y}$, choose a large $\nu\in
\mathbb{Y}^+$ such that $\nu-\lambda \in \mathbb{Y}^+$. Using
$\nu-\lambda$ in place of $\nu$ in (*) (i.e. a shift by
$\nu-\lambda$), we get

$\RHom_{block(\mathbb{U})}(\RIndBU k_{\mathbb{B}}(\ell\lambda),
\RIndBU k_{\mathbb{B}}(\ell\mu)) \cong
\RHom_{block(\mathbb{U})}(\RIndBU k_{\mathbb{B}}(\ell\nu), \RIndBU
k_{\mathbb{B}}(\ell\mu+\ell\nu-\ell\lambda)) $ (**).

Again, a shift by $\nu$ gives

$\RHom_{block{\mathbb(U)}}(\RIndBU k_{\mathbb{B}}(0), \RIndBU
k_{\mathbb{B}}(\ell\mu-\ell\lambda)) \cong
\RHom_{block(\mathbb{U})}(\RIndBU k_{\mathbb{B}}(\ell\nu),\RIndBU
k_{\mathbb{B}}(\ell\mu+\ell\nu-\ell\lambda)) $ (**).

Notice the right hand terms of the two isomorphisms labeled (**) are
the same, so that we can view (**) as providing isomorphisms of
three expressions, all obtained by applying
$R\Hom_{block(\mathbb{U})}$. Passing, for any fixed $i$, to
$R^i\Hom_{block(\mathbb{U})}$ expressions, we obtain three vector
spaces of the same dimension. One of these vectors spaces
$R^i\Hom_{block{\mathbb(U)}}(\RIndBU k_{\mathbb{B}}(0), \RIndBU
k_{\mathbb{B}}(\ell\mu-\ell\lambda))$ is isomorphic to
$R^i\Hom_{\mathbb B}(k_\mathbb {B}(0), k_{\mathbb {B}}
(\ell\mu-\ell\lambda))$, by the special case above with
$(\ell\mu-\ell\lambda))$ used in the role of $\ell\mu$. Finally,
using the isomorphism $\RHom _{\mathbb B}(k_{\mathbb {B}}(0),
k_{\mathbb {B}}(\ell\mu-\ell\lambda)) \cong \RHom _{\mathbb
B}(k_{\mathbb {B}}(\ell\lambda), k_{\mathbb {B}}(\ell\mu))$ we
complete the proof of the lemma.
\end{proof}

\iffalse Recall the functors $\theta^+_{\alpha}:D^b
block(\mathbb{U}) \to D^b block(\mathbb{U})$, obtained from the
wall-crossing functors\footnote{i.e., composites of translation
functors ``in to" and then ``out from" a wall, see \cite[II,
7.21]{J}, and analogously defined for the quantum groups case using
translation functors given in \cite[Section 7]{APW}.
$\Theta_\alpha^+$ is constructed as the mapping cone (functor) of
the natural adjunction map from the identity functor to the
wall-crossing functor.} from the proof of Step I. \fi

Observe that for any finite-dimensional $G$-module $V,$ with weights
in $\mathbb{Y},$
$$
\theta^+_{\alpha}(M\otimes V^{[1]}) = \theta^+_{\alpha}(M)\otimes
V^{[1]}.
$$
Thus, using the proof of Step 1 and recalling the generallized
tensor identity (see Remark \ref{r:Kempf}(ii), tensoring on the
right the first (resp., second) component of each term appearing in
the equality given by Lemma \ref{l:ndimlem}, by any
finite-dimensional twisted $G-$module $M^{[1]}$ (resp., $N^{[1]}$)
with weights in $\mathbb{Y}$, preserves the equality of dimensions:
\begin{eqnarray}\label{e:ndimtensor1}
\lefteqn{\hspace*{-20pt}
\dim(\Ext^n_{\mathbb{B}}(M^{[1]}|_{\mathbb{B}}\otimes
k_{\mathbb{B}}(\ell \lambda), k_{\mathbb{B}}(\ell \mu)))}  \\[2mm]
 &=&
\dim(\Ext^n_{block(\mathbb{U})}(M^{[1]}\otimes\RIndBU
k_{\mathbb{B}}(\ell \lambda),\RIndBU k_{\mathbb{B}}(\ell
 \mu))), \nonumber
 \end{eqnarray}
and
\begin{eqnarray}
\lefteqn{\hspace*{-20pt}
\dim(\Ext^n_{\mathbb{B}}(k_{\mathbb{B}}(\ell \lambda),
N^{[1]}|_{\mathbb{B}}\otimes k_{\mathbb{B}}(\ell \mu)))
} \nonumber \\[2mm]
 &=& \dim(\Ext^n_{block(\mathbb{U})}(\RIndBU k_{\mathbb{B}}(\ell
\lambda),N^{[1]}\otimes \RIndBU k_{\mathbb{B}}(\ell
 \mu))).\notag
 \end{eqnarray}

\subsection{Step 2 of the Proof of the Induction
Theorems}\label{subsec:step2}
\begin{lem}\label{l:M}

Let $\lambda$ be any element of $\mathbb{Y}$ (or $\mathbb{X}$).
Choose $\nu = N\rho,$  with $\rho$ as in section
\ref{subsubsec:dot}, and with $N\in\mathbb{N}$ large enough so that
$\ell\tau:=\ell(\nu + \lambda)$ is dominant. Let $V_{\nu}$ be as in
(\ref{e:I}). Then the $\mathbb{B}$-module $M= V_{\nu}^{[1]}\otimes
\kB(\ell\tau)$ satisfies the following three properties:
\begin{enumerate}
\item $\kB(\ell\lambda)\subset M.$
\item All composition factors of
$M/\kB(\ell\lambda)$ have the form $\kB(\ell\eta))$ with $\eta >
\lambda$ in the dominance order.
\item The map $\RIndBU (M) \to M$ is $\mathbb{p}$-split.
\end{enumerate}
\end{lem}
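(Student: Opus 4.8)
The plan is to prove the three claims in order. Claims (1) and (2) are weight bookkeeping through the Frobenius twist. Recall from the discussion around (\ref{e:I}) that $V_\nu=H^0(-w_0\nu)$ has lowest weight $-\nu$, occurring with multiplicity one, that $k_{B'}(-\nu)$ is its $B'$-socle, and that every weight $\eta_0$ of $V_\nu$ satisfies $\eta_0\ge-\nu$, with equality only when $\eta_0=-\nu$. The Frobenius twist is exact, multiplies weights by $\ell$, and (Remark \ref{r:coinductio}) matches the lattice of $B'$-submodules of $V_\nu$ with that of $\mathbb{B}$-submodules of $V_\nu^{[1]}$; thus it carries $k_{B'}(-\nu)\hookrightarrow V_\nu$ to $k_{\mathbb{B}}(-\ell\nu)\hookrightarrow V_\nu^{[1]}$, and the composition factors $k_{B'}(\eta_0)$ (with multiplicity $\dim V_\nu(\eta_0)$) to $k_{\mathbb{B}}(\ell\eta_0)$. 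Tensoring with $k_{\mathbb{B}}(\ell\tau)$ and using $\tau=\nu+\lambda$ then gives $k_{\mathbb{B}}(\ell\lambda)\hookrightarrow M$, which is (1); and the composition factors of $M$ are the $k_{\mathbb{B}}(\ell(\eta_0+\tau))$, among which $k_{\mathbb{B}}(\ell\lambda)$ occurs with composition multiplicity $\dim M_{\ell\lambda}=\dim V_\nu(-\nu)=1$, forced by $\eta_0=-\nu$. Hence every composition factor of $M/k_{\mathbb{B}}(\ell\lambda)$ has the form $k_{\mathbb{B}}(\ell\eta)$ with $\eta=\eta_0+\tau$, $\eta_0\ne-\nu$, so $\eta-\lambda=\eta_0+\nu>0$, that is $\eta>\lambda$ in the dominance order; this is (2).

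For (3) I would first identify $\RIndBU(M)$ and the natural map out of it. Since $\ell\tau$ is dominant, Kempf vanishing (Remark \ref{r:Kempf}(b)) gives $\RIndBU(k_{\mathbb{B}}(\ell\tau))=H^0(\ell\tau)$, concentrated in degree $0$, and the generalized tensor identity in its derived form (Remark \ref{r:Kempf}(b),(d)) then yields a natural isomorphism $\RIndBU(M)\cong V_\nu^{[1]}\otimes H^0(\ell\tau)$, still concentrated in degree $0$, under which (Remark \ref{r:Kempf}(e)) the natural map $\RIndBU(M)\to M$ becomes $\mathrm{id}_{V_\nu^{[1]}}\otimes\epsilon$, where $\epsilon\colon H^0(\ell\tau)|_{\mathbb{B}}\to k_{\mathbb{B}}(\ell\tau)$ is the adjunction counit. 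Because $V_\nu^{[1]}|_{\mathbb{p}}$ is finite-dimensional over the ground field, it is then enough to split the surjection $\epsilon$ over $\mathbb{p}$ and tensor the splitting with $\mathrm{id}_{V_\nu^{[1]}|_{\mathbb{p}}}$.

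To split $\epsilon$ over $\mathbb{p}$: $\ell\tau$ being dominant, it is the highest weight of $H^0(\ell\tau)$ and has multiplicity one, so I would fix a generator $v^{+}$ of the line $H^0(\ell\tau)_{\ell\tau}$. Since $(\ell\tau,\alpha_i^{\vee})=\ell(\tau,\alpha_i^{\vee})$ is divisible by $\ell_i$ for every $i$ (and $\ell_i=\ell$ under our standing hypotheses, by Proposition \ref{p:aff}), the quantum $\mathfrak{sl}_2$ (Lucas-type) binomial identities — or, in the modular case, the ordinary congruence $\binom{\ell m}{s}\equiv0\pmod\ell$ for $0<s<\ell$ — give $F_i^{(s)}v^{+}=0$ for $0<s<\ell_i$; together with the fact that $\mathbb{U}^{0}$ acts on the weight-$\ell\tau$ vector $v^{+}$ by $\chi_{\ell\tau}$, which is trivial on $\mathbb{u}^{0}$ (again since $(\ell\tau,\alpha_i^{\vee})\equiv0$), this shows that $\mathbb{b}$ acts trivially on $v^{+}$, so that $k v^{+}$ is a $\mathbb{p}$-submodule isomorphic to $k_{\mathbb{p}}(\ell\tau)$ (recall $\mathbb{p}=\mathbb{b}\cdot\mathbb{U}^{0}$). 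Finally, $\epsilon$ is a nonzero $\mathbb{B}$-homomorphism into a module concentrated in weight $\ell\tau$, so $\ker\epsilon=\bigoplus_{\mu\ne\ell\tau}H^0(\ell\tau)_\mu$ and $\epsilon|_{k v^{+}}$ is an isomorphism onto $k_{\mathbb{p}}(\ell\tau)$; composing its inverse with the inclusion $k v^{+}\hookrightarrow H^0(\ell\tau)|_{\mathbb{p}}$ gives the required $\mathbb{p}$-splitting. The main obstacle will be in (3): precisely, the identification of the derived counit $\RIndBU(M)\to M$ with $\mathrm{id}\otimes\epsilon$ (which is exactly what Remark \ref{r:Kempf}(d),(e) was set up to supply) and, above all, establishing $F_i^{(s)}v^{+}=0$ for $0<s<\ell_i$ \emph{uniformly} in the quantum and modular cases; parts (1) and (2) are routine weight counting once the structure of $V_\nu$ and the Frobenius twist are in hand.
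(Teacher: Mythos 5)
Parts (1) and (2) are the same weight bookkeeping as the paper (which phrases it as: $M\cong M_0^{[1]}$ with $M_0=V_\nu\otimes k_{B'}(\tau)$, and reads off the socle and composition factors of $M_0$); no issues there. For part (3) you take a genuinely different route. The paper never touches $\RIndBU(M)$ directly: it forms $F=(V_\nu\otimes \Ind_{B'}^{G'}(\tau))^{[1]}$, observes that the evident surjection $\varphi\colon F\to M$ is automatically $\mathbb{p}$-split because \emph{both} $F$ and $M$ are Frobenius twists, hence trivial on $\mathbb{b}$ and therefore completely reducible as $\mathbb{p}$-modules, and then factors $\varphi$ through the counit $\RIndBU(M)\to M$ by adjointness. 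The crucial trick is to induce $\tau$ up to $G'$ \emph{before} twisting, so that semisimplicity over $\mathbb{p}$ does all the work and no surjection ever needs to be split by hand. You instead identify $\RIndBU(M)\cong V_\nu^{[1]}\otimes H^0(\ell\tau)$ via Kempf and the tensor identity (correct, and supported by Remark \ref{r:Kempf}) and split $\epsilon\colon H^0(\ell\tau)|_{\mathbb{p}}\to k_{\mathbb{p}}(\ell\tau)$ explicitly; this is more concrete but forces you to analyze $H^0(\ell\tau)$, which is \emph{not} a Frobenius twist, so no complete-reducibility shortcut is available.

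The one genuine gap is the step $F_i^{(s)}v^{+}=0$ for $0<s<\ell_i$. The binomial congruence $\left[\begin{smallmatrix}\ell m_i\\ s\end{smallmatrix}\right]_{d_i}=0$ cannot by itself kill $F_i^{(s)}v^{+}$: the highest weight vector of the \emph{Weyl} module $\Delta(\ell\tau)$ has the same weight and is also annihilated by the $E_i^{(t)}$, yet $F_i^{(s)}v^{+}\neq 0$ there. What saves you is the specific structure of the induced module: writing $H^0(\ell\tau)\cong\Delta(\ell\tau)^{\tau}$ (contravariant dual), one has $\bigl(F_i^{(s)}v^{+}\bigr)(w)=v^{+}\bigl(E_i^{(s)}w\bigr)$, the weight space $\Delta(\ell\tau)_{\ell\tau-s\alpha_i}$ is spanned by $F_i^{(s)}v_{\ell\tau}$ alone (only the simple root $\alpha_i$ can contribute to $s\alpha_i$), and $E_i^{(s)}F_i^{(s)}v_{\ell\tau}=\left[\begin{smallmatrix}\ell m_i\\ s\end{smallmatrix}\right]_{d_i}v_{\ell\tau}=0$. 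Only then does the congruence finish the argument (and in fact $s=1$ suffices, since $\mathbb{u}^-$ is generated by the $F_i$). With that repair your proof of (3) is complete, but you should either supply this argument or switch to the paper's complete-reducibility mechanism, which avoids the computation entirely.
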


\begin{proof} Once again, the argument uses ideas from \cite{ABG}, especially in the analysis of the map in part (3).

 Note $M \cong M_0^{[1]},$ where $M_0$ is the $B'-$module $V_\nu \otimes k_{B'}(\tau).$ The $B'-$socle of $V_{\nu}$ is $k_{B'}(-\nu),$ so
 $k_{B'}(\lambda) \cong k_{B'}(-\nu)\otimes k_{B'}(\tau)$ is the $B'-$socle of $M_0.$ Parts 1) and 2) of the lemma for $M$ follow immediately
 from corresponding properties of $M_0.$

    Next, put $F_0 = \IndBU(M_0)\cong V_\nu \otimes \IndBU(k_{\mathbb{B}}(\tau))$ and $F=F_0^{[1]}.$ The natural $\mathbb{B}-$map $\varphi_0: F_0\to M_0$ is surjective, as follows from the surjectivity of $\IndBU(k_{\mathbb{B}}(\tau) \to  k_{\mathbb{B}}(\tau).$

Consequently, the Frobenius twisted map $\varphi: F \to M$ is
surjective. It is also $\mathbb{p}-$split, since both domain and
range are completely reducible as $\mathbb{p}-$modules. The map
$\varphi$ gives rise (by adjointness) to a map $F\to \RIndBU(M)$
which, when composed with the natural map $\RIndBU(M)\to M$, is the
$\mathbb{p}-$split surjection $\varphi.$  Consequently,
$\RIndBU(M)\to M$ is also $\mathbb{p}-$split. This proves Property
(3) and completes the proof of Lemma \ref{l:M}. \end{proof}

Our arguments now begin to diverge from \cite{ABG}.
\smallskip
\begin{lem}\label{l:YImuExts}
Suppose $\mu \in \mathbb{Y}$ and $Y$ is a finite-dimensional
$\bB$-module all of whose composition factors are of the form
factors $\kB(\ell\lambda)$ with $\lambda \in \mathbb{Y}$).
%with weights $\ell\eta.$
%Be more specific on the weights? Should be same as usual
Then for all
nonnegative integers $n,$ and any $\mu,$ %add
\begin{equation}\label{e:YImu}
\Ext^n_{\bB}(Y,\Itmu)\cong \Ext^n_{\bU}(\RIndBU(Y),\RIndBU(\Itmu)).
\end{equation}
\end{lem}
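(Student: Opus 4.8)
## Proof Proposal for Lemma \ref{l:YImuExts}

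The plan is to reduce the general finite-dimensional $\bB$-module $Y$ (with all composition factors of the form $\kB(\ell\lambda)$, $\lambda \in \mathbb{Y}$) to the one-dimensional case by d\'evissage, and then combine the one-dimensional case with the dimension-count machinery already assembled in Step 1. First I would observe that both sides of (\ref{e:YImu}) are cohomological functors in $Y$: the left side because $\Ext^n_\bB(-,\Itmu)$ is a $\delta$-functor, and the right side because $\RIndBU$ is exact (a morphism of triangulated categories) and $\Ext^n_\bU(-, \RIndBU(\Itmu))$ is again a $\delta$-functor. There is a natural transformation from the left side to the right side, arising from functoriality of $\RIndBU$ together with the adjunction map, exactly as in the formulation of (\ref{e:nktoIiso}); this natural transformation is compatible with the long exact sequences attached to a short exact sequence $0 \to Y' \to Y \to Y'' \to 0$ of such modules. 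Hence, by the five lemma and induction on $\dim Y$, it suffices to prove (\ref{e:YImu}) when $Y = \kB(\ell\lambda)$ is one-dimensional.

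For the one-dimensional case, the strategy is to show that both sides are finite-dimensional in each degree $n$ and have equal dimension, and that the natural map is injective (or surjective) in each degree; finiteness plus a dimension match plus injectivity then forces an isomorphism. The dimension equality is supplied by Corollary \ref{cor:Len2} together with Lemma \ref{lem:Len1} and the dimension identities (\ref{e:ndimtensor1}). Concretely, writing $\Itmu$ as the direct union of the submodules $J_{\sigma,\mu} = V_\sigma^{[1]}|_\bB \otimes \kB(\ell\mu + \ell\sigma)$, one has by Lemma \ref{lem:Len1}(a),(b) that for $\sigma$ large (depending on $n$, $\lambda$, $\mu$) the inclusion $J_{\sigma,\mu} \hookrightarrow \Itmu$ induces isomorphisms on $\Ext^n_\bB(\kB(\ell\lambda), -)$ and on $\Ext^n_\bU(\RIndBU\kB(\ell\lambda), \RIndBU(-))$; meanwhile $\Ext^n_\bB(\kB(\ell\lambda), J_{\sigma,\mu}) = \Ext^n_\bB(\kB(\ell\lambda), V_\sigma^{[1]}|_\bB \otimes \kB(\ell\mu+\ell\sigma))$, and by the (derived) tensor identity of Remark \ref{r:Kempf}(b) the right-hand $\Ext$ is $\Ext^n_\bU(\RIndBU\kB(\ell\lambda), V_\sigma^{[1]} \otimes \RIndBU\kB(\ell\mu+\ell\sigma))$. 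These two have the same dimension by the displayed identity (\ref{e:ndimtensor1}) applied with $M^{[1]}$ in the role of $N^{[1]} = V_\sigma^{[1]}$. This gives $\dim \Ext^n_\bB(\kB(\ell\lambda),\Itmu) = \dim \Ext^n_\bU(\RIndBU\kB(\ell\lambda),\RIndBU(\Itmu))$ for all $n$, and both are finite.

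Finally I would promote the dimension equality to an isomorphism of the natural map. One clean way: take an injective resolution of $\kB(\ell\lambda)$ in $\mathcal{C}_\bB$ by integrable $\bB$-modules; applying $\RIndBU$ and using the adjunction, the natural map is realized by a morphism of complexes computing the two $\Ext$-groups, and it is an isomorphism in each degree provided it is injective there (by the finite-dimensional dimension match). Injectivity in each degree can be extracted from the direct-limit description above: the map on $\Ext^n_\bB(\kB(\ell\lambda), J_{\sigma,\mu})$ is an \emph{isomorphism} for each $\sigma$ (it is the tensor-identity isomorphism of Remark \ref{r:Kempf}(b) combined with Lemma \ref{l:ndimlem} in its sharpened tensored form, which already gives an isomorphism, not merely a dimension equality, in the degree-zero case and then propagates via the translation-functor autoequivalences of Lemma \ref{wallcrossingfunctors}); passing to the direct limit over $\sigma$ and using Lemma \ref{lem:Len1}(a),(b) to identify the limit with the $\Itmu$-version, the map for $\Itmu$ is an isomorphism as well. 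The main obstacle I anticipate is bookkeeping: making sure that the natural transformation is genuinely the \emph{same} map on both the $J_{\sigma,\mu}$ level and the $\Itmu$ level (compatibility with the direct limit and with the identifications of Lemma \ref{lem:Len1}), and that the tensor-identity isomorphisms of Remark \ref{r:Kempf}(b) and the wall-crossing autoequivalences of Lemma \ref{wallcrossingfunctors} are natural enough to conclude ``isomorphism'' rather than merely ``equality of dimensions.'' Once that naturality is pinned down, the d\'evissage step is routine and the lemma follows.
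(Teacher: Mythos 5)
Your d\'evissage to the one-dimensional case and your use of the $J_{\sigma,\mu}$ direct-limit description to get the \emph{dimension} equality are both sound and match the paper's setup. But the injectivity step --- the heart of the lemma --- has a genuine gap. You assert that the natural map
$\Ext^n_{\bB}(\kB(\ell\lambda),J_{\sigma,\mu})\to \Ext^n_{\bU}(\RIndBU(\kB(\ell\lambda)),\RIndBU(J_{\sigma,\mu}))$
is an isomorphism because Lemma \ref{l:ndimlem} ``already gives an isomorphism, not merely a dimension equality,'' propagated by the wall-crossing autoequivalences. It does not. Step 1 produces isomorphisms of $\RHom$-spaces by composing the adjunction isomorphism in the special case $\lambda=0$ with the autoequivalences $\theta^{\pm}_\alpha$; the resulting isomorphism for general $\lambda,\mu$ is \emph{some} isomorphism, and nothing in that argument identifies it with the map induced by $\RIndBU$ on the canonical morphism. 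If that identification were available, the whole Induction Theorem would follow immediately from Step 1 and Steps 2--3 would be superfluous. So your claimed isomorphism on each $J_{\sigma,\mu}$ is essentially an instance of the statement (\ref{e:nisos}) being proved, tensored with $V_\sigma^{[1]}$ --- the argument is circular at exactly the point you dismiss as ``bookkeeping.''

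The paper closes this gap by an entirely different mechanism. First, Corollary \ref{cor:Len2}(1) (the parity of $\mathbb{b}$-cohomology, $S^*(\mathfrak{n}^{*[1]})$ in even degrees) kills both sides of (\ref{e:YImu}) for odd $n$, so the long exact sequences truncate. Then, for fixed even $n$, one runs a \emph{downward} induction on the height of $\lambda$, anchored by Corollary \ref{cor:Len2}(2) (both sides vanish for $\lambda$ of large height). The inductive step uses the module $M=V_\nu^{[1]}\otimes\kB(\ell\tau)$ of Lemma \ref{l:M}: its essential feature is that $\RIndBU(M)\to M$ is $\mathbb{p}$-split, which gives \emph{injectivity} of $\Ext^n_{\bB}(M,\Itmu)\hookrightarrow\Ext^n_{\bU}(\RIndBU(M),\RIndBU(\Itmu))$ directly via the reciprocity $\Ext^n_{\bB}(-,\Itmu)\cong\Ext^n_{\mathbb{p}}(-,\mathbb{k}_{\mathbb{p}}(\ell\mu))$, with no appeal to Step 1 at the level of maps. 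A diagram chase in the ladder coming from $0\to\kB(\ell\lambda)\to M\to N\to 0$ (whose $\Ext^{n+1}$ terms vanish by parity) then forces injectivity for $\kB(\ell\lambda)$, and only at that point is the dimension equality of Step 1 invoked to upgrade injectivity to isomorphism. You would need to replace your appeal to the naturality of the wall-crossing isomorphisms with an independent source of injectivity of this kind.
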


\begin{proof}
First, observe that (\ref{e:YImu}) is true for $n$ odd, with both
sides zero, by Corollary \ref{cor:Len2}(1). This greatly simplifies
long exact sequence arguments in the remaining $n$ even cases. Now
fix $n$ even. Then (\ref{e:YImu}) is equivalent to the case where
$Y$ is one-dimensional. (In fact, for any given $Y$, (\ref{e:YImu})
is implied by the corresponding results for each of its composition
factors.)

Next, observe in the one-dimensional case, that it is sufficient to
check injectivity of the left-to-right map implicit in
(\ref{e:YImu}). This is a consequence of Corollary \ref{cor:Len2}(1)
and the dimensional equalities  (\ref{e:ndimtensor1}). In fact,
(\ref{e:YImu}) will be an isomorphism for any one-dimensional $Y$
and $\mu$ for which it is an injection or for any $Y$ and $\mu$
(satisfying the hypotheses of the lemma) for which (\ref{e:YImu}) is
an injection on each composition factor of $Y$.

We are now in a position to treat the one-dimensional case $Y =
k_{\mathbb{B}}(\ell \lambda)$, for our fixed even $n$, by downward
induction on the height of $\lambda$. Note that (\ref{e:YImu}) is
true (with both sides zero) for $\lambda$ sufficiently large, by
Corollary \ref{cor:Len2}(2). We may, hence, assume inductively that
(\ref{e:YImu}) holds for $Y = k_{\mathbb{B}}(\ell \eta)$ with $\eta$
of larger height than $\lambda$, or, more generally for all
finite-dimensional $Y$ with composition factors satisfying this
height condition.

Let $M$ be the $\mathbb{B}$-module guaranteed by Lemma \ref{l:M}.
Let $N$ be the cokernel of the $\bB$-module inclusion
$\kB(\ell\lambda) \hookrightarrow M.$ By our height induction, there
is an isomorphism (\ref{e:YImu}) for $Y = N$ and our fixed even $n$.

Also, the $\mathbb{p}$-split map $\RIndBU(M) \to M$ from Lemma
(\ref{l:M}) gives an injection
$$\Ext_{\mathbb{p}}^n(M,k_{\mathbb{p}}(\ell\mu)) \hookrightarrow
\Ext_{\mathbb{p}}^n(\RIndBU(M),k_{\mathbb{p}}(\ell\mu)),$$ or,
equivalently,
\begin{eqnarray}\label{e:Minj}
\ExtB^n(M,\Itmu)&\hookrightarrow &\ExtB^n(\RIndBU(M),\Itmu)\notag\\
{}& \cong & \ExtU^n(\RIndBU(M),\RIndBU(\Itmu)).
\end{eqnarray}
Likewise, the adjunction map $\RIndBU(\kB(\ell\lambda)) \to
\kB(\ell\lambda)$ gives a morphism
\begin{equation}\label{e:gamma}
\gamma:\ExtB^n(\kB(\ell\lambda), \Itmu)\to
\ExtU^n(\RIndBU(\kB(\ell\lambda)),\RIndBU(\Itmu).
\end{equation}

Let
$$\begin{matrix} A = \ExtB^n(N,\Itmu),& A' =
\ExtB^{n+1}(N,\Itmu) =0,\\[2mm]
 \widetilde{A} =
\ExtU^n(\RIndBU(N),\RIndBU(\Itmu)), & \widetilde{A}' =
\ExtU^{n+1}(\RIndBU(N),\RIndBU(\Itmu)) =0,\\[2mm]
B = \ExtB^n(M,\Itmu), & \widetilde{B} =
\ExtU^n(\RIndBU(M),\RIndBU(\Itmu)),\\[2mm]
C = \ExtB^n(\kB(\ell\lambda),\Itmu),& \widetilde{C} =
\ExtU^n(\IndBU(\kB(\ell\lambda)),\RIndBU(\Itmu))\end{matrix}.
$$
%Why is it \Ind vs \RInd for the $\kB\ell\lambda$ here? We aren't assuming dominant, are we?
%Also, in Len's (old) notes, he had $\Ind$ instead of \RInd for the $N$ terms, but why ok?
Then the $\mathbb{B}-$module exact sequence $0 \to \kB(\ell\lambda)
\to M \to N \to 0$ gives rise to a commutative diagram
$$
\xymatrix{\ar[d]^{\alpha}\ar[r]^{u} A & \ar[d]^{\beta}\ar[r]^{v}B &
\ar[d]^{\gamma}\ar[r]^{w}C &
A' =0 \\
\ar[r]^{\widetilde{u}}\widetilde{A} &
\ar[r]^{\widetilde{v}}\widetilde{B} &
\ar[r]^{\widetilde{w}}\widetilde{C} & \widetilde{A}'=0}
$$
with $\gamma$ the morphism in (\ref{e:gamma}), $\alpha$ the
isomorphism given by the induction argument so far, and $\beta$ an
injection given by (\ref{e:Minj}). Both rows are exact. By a
standard diagram chase, these conditions force $\gamma$ to be an
injection. As discussed above, this implies $\gamma$ is an
isomorphism, and completes the induction for our fixed $n$. Since
$n$ was an arbitrary even nonnegative integer, and since the odd
case has already been handled, the proof of the lemma is complete.
\end{proof}

To be clear: As a consequence of Lemma \ref{l:YImuExts}, we
immediately obtain the isomorphisms (\ref{e:nktoIiso}). using
$Y=k_{\mathbb{B}}(\ell\lambda)$. This completes Step 2.

\subsection{Step 3 of the Proof of the Induction Theorems} %new version
\label{subsec:step3} Recall that by Lemma \ref{l:ntrianequiv}, it
suffices to establish that there are isomorphisms ({\ref{e:nisos}):
\begin{equation}%\label{e:nisos}
\Ext^n_{\mathbb{B}}(k_{\mathbb{B}}(\ell \lambda),k_{\mathbb{B}}(\ell
\mu)) \cong
 \Ext^n_{block(\mathbb{U})}(\RIndBU k_{\mathbb{B}}(\ell \lambda),\RIndBU k_{\mathbb{B}}(\ell
 \mu))\notag
 \end{equation}
 arising from the application of $\RIndBU.$

It suffices to fix throughout an otherwise arbitrary weight
$\lambda\in \mathbb{Y}$.
%Weight, or more conditions?
For a given $\mu\in \mathbb{Y},$ all nonnegative integers $n$ will
be treated simultaneously. As a notational convenience, all
$\Ext^m_{\mathbb{B}}$-groups with a negative index $m$ are equal to
zero by definition.

Starting from $\S$3.3, we have isomorphisms (3.1.3) (arising, as
noted, from the functoriality of $\mbox{RInd}^U_{\mathbb{B}}$), and
wish to pass to analogous isomorphisms with
$k_{\mathbb{B}}(\ell\mu)$ in place of the terms $I_\mu^{[1]}$
appearing in (3.1.3).

To begin, note that ``twisting" the canonical $\mathbb{B}$-module
injection $\mu \hookrightarrow I_{\mu}$ of $k_\mathbb{B}(\mu)$ into
its injective hull $I_{\mu}$ leads to a s.e.s.
\begin{equation} \label{e:kBses1}
0\longrightarrow k_{\mathbb{B}}(\ell \mu) \longrightarrow I_{
\mu}^{[1]}\longrightarrow \frac{I_{\mu}^{[1]}}{k_{\mathbb{B}}(\ell
\mu)}=:\Sigma_{\mu} \longrightarrow 0,\end{equation} wherein the
module $I_{\mu}^{[1]}$  has  $\kB(\ell\mu)$ as its socle, and the
$\mathbb{B}$-composition factors of $\Sigma_{\mu}$ have the form
$\kB(\ell\tau),$
with $ht(\tau) > ht(\mu),$ using the usual height function.  %The $I_{\mu}$ structure results appear in Jantzen's treatment of Kempf's vanishing theorem, e.g, p. 207.
%Be sure defn of usual height function appears in background material, since will be ``messing" with height functions in the discussion of various orderings.
Set $ht_{\lambda}(\Sigma_{\mu})$ to be sum of all submodules %\mathbb{B}$-submodules?
$M$ of $\Sigma_{\mu}$ for which
%$\ell \eta$ is a high weight of a composition factor of $M$ implies
$ht(\eta) \leq  ht(\lambda)$ whenever $\ell \eta$ is a composition
factor of $M$. Then $ht_{\lambda}(\Sigma_{\mu})$ is the largest
submodule of $\Sigma_{\mu}$ with this property. In the corresponding
s.e.s.
\begin{equation}\label{e:sigmases}
0\longrightarrow
ht_{\lambda}(\Sigma_{\mu})\longrightarrow\Sigma_{\mu}\longrightarrow
\frac{\Sigma_{\mu}}{ht_{\lambda}(\Sigma_{\mu})}\longrightarrow 0,
\end{equation}
one has
$ht_{\lambda}(\frac{\Sigma_{\mu}}{ht_{\lambda}(\Sigma_{\mu})}) = 0.$

Since $\Ext^n_{\mathbb{B}}(k_{\mathbb{B}}(\ell
\zeta),k_{\mathbb{B}}(\ell \eta))\neq 0$ implies $ \zeta \geq \eta$,
one has, for any weights $\zeta, \eta\in \mathbb{X},$
\begin{equation}\label{e:extup}
\Ext^n_{\mathbb{B}}(k_{\mathbb{B}}(\ell \zeta),k_{\mathbb{B}}(\ell
\eta))=0 \mbox{ if } ht(\eta)>ht(\zeta).
\end{equation}
Thus, from (\ref{e:extup}), it follows that for any
finite-dimensional submodule $N \subset
\frac{\Sigma_{\mu}}{ht_{\lambda}(\Sigma_{\mu})},$
\begin{equation}\label{e:Nvan1}
\Ext^{n}_{\mathbb{B}}(k_{\mathbb{B}}(\ell \lambda),N) = 0 \quad
\forall n \geq 0
\end{equation}
Moreover, since $k_{\mathbb{B}}(\ell\lambda)$ is finite dimensional,
$\Ext^n_{\mathbb{B}}(\kB(\ell\lambda),-)$ commutes with direct
limits. Since $I_{\mu}$ is a direct limit of its finite dimensional
submodules, so is $\Sigma_{\mu},$  and also
$\frac{\Sigma_{\mu}}{ht_{\lambda}(\Sigma_{\mu})};$ consequently, the
vanishing property (\ref{e:Nvan1}) yields the vanishing results
\begin{equation}\label{e:van1}
 \Ext^{n}_{\mathbb{B}}(k_{\mathbb{B}}(\ell \lambda),\frac{\Sigma_{\mu}}{ht_{\lambda}(\Sigma_{\mu})}) = 0 \quad \forall n\geq 0.
 \end{equation}

The s.e.s.~(\ref{e:sigmases})  and the vanishing results
(\ref{e:van1}) yield  the l.e.s.

{%\tiny
$$\begin{array}{c}
\cdots \longrightarrow \Ext^{n-1}_{\mathbb{B}}(k_{\mathbb{B}}(\ell
\lambda),ht_{\lambda}(\Sigma_{\mu})) \longrightarrow
\Ext^{n-1}_{\mathbb{B}}(k_{\mathbb{B}}(\ell \lambda), \Sigma_{\mu})
\longrightarrow
 \Ext^{n-1}_{\mathbb{B}}(k_{\mathbb{B}}(\ell \lambda),\frac{\Sigma_{\mu}}{ht_{\lambda}(\Sigma_{\mu})})  = 0\\
 \longrightarrow
\Ext^n_{\mathbb{B}}(k_{\mathbb{B}}(\ell
\lambda),ht_{\lambda}(\Sigma_{\mu})) \longrightarrow
\Ext^n_{\mathbb{B}}(k_{\mathbb{B}}(\ell \lambda),\Sigma_{\mu})
\longrightarrow
 \Ext^{n}_{\mathbb{B}}(k_{\mathbb{B}}(\ell \lambda),\frac{\Sigma_{\mu}}{ht_{\lambda}(\Sigma_{\mu})}) = 0 \longrightarrow \cdots \\
 \end{array}$$}
whence isomorphisms
\begin{equation}\label{e:sigmamu}
\Ext^n_{\mathbb{B}}(k_{\mathbb{B}}(\ell
\lambda),ht_{\lambda}(\Sigma_{\mu}))
\overset{\cong}{\longrightarrow}
\Ext^n_{\mathbb{B}}(k_{\mathbb{B}}(\ell \lambda),\Sigma_{\mu}),
\quad \forall n \geq 0.
\end{equation}

Consider now  the distinguished triangle obtained from
(\ref{e:sigmases}) under $\RIndBU:$
\begin{equation}\label{e:sigmasesRInd}
\cdots\longrightarrow
\RIndBU(ht_{\lambda}(\Sigma_{\mu}))\longrightarrow\RIndBU(\Sigma_{\mu})\longrightarrow
\RIndBU(\frac{\Sigma_{\mu}}{ht_{\lambda}(\Sigma_{\mu})})\longrightarrow\cdots.
\end{equation}

Since the functor $\RIndBU$ commutes with taking direct limits,
$\RIndBU(\Sigma_{\mu}/ht_{\lambda}(\Sigma_{\mu}) =
\underset{\to}{\lim}\RIndBU(N)$ over finite dimensional submodules
$N\subset \Sigma_{\mu}/ht_{\lambda}(\Sigma_{\mu}).$ Thus, if we can
replicate the following vanishing result, analogous to
(\ref{e:Nvan1}):

\begin{equation} \label{e:Nvan2}
\Ext^{n}_{block(\mathbb{U})}(\RIndBU(k_{\mathbb{B}}(\ell
\lambda)),\RIndBU(N)) = 0 \quad \forall n \geq 0,
\end{equation}
then, from the preceding arguments,  mutatis mutandis, we will
obtain the following isomorphisms:
\begin{equation}\label{e:sigmamuRInd}
\Ext^n_{\mathbb{B}}(\RIndBU(k_{\mathbb{B}}(\ell
\lambda)),\RIndBU(ht_{\lambda}(\Sigma_{\mu})))
\overset{\cong}{\longrightarrow}
\Ext^n_{\mathbb{B}}(\RIndBU(k_{\mathbb{B}}(\ell
\lambda)),\RIndBU(\Sigma_{\mu})), \quad \forall n \geq 0.
\end{equation}
Recall that the key step in producing the isomorphisms (\ref{e:Nvan1}) was the earlier vanishing result (\ref{e:extup}), but now from (\ref{e:extup}) and the dimension equality (\ref{e:nstep1}), %currently (3.2.1)
\begin{equation} \label{e:extup2}
\Ext^n_{block(\mathbb{U})}(\RIndBU(k_{\mathbb{B}}(\ell
\zeta)),\RIndBU(k_{\mathbb{B}}(\ell \eta)))=0 \mbox{ if }
ht(\eta)>ht(\zeta).
\end{equation}
By applying this vanishing result of (\ref{e:extup2}) to composition
factors of $N,$ (\ref{e:Nvan2}), and hence (\ref{e:sigmamuRInd}), do
indeed follow as claimed.

%**********
%NEW  COMPLETION OF ARGUMENT (STREAMLINED; BASED ON XIAN CONVERSATION WITH LEN)? Seems like just expanding on
%old argument, actually. What am I missing?

%Consider now  the sequence obtained from (\ref{e:sigmases}) under $\RIndBU:$
%\begin{equation}\label{e:sigmasesRInd}
%0\longrightarrow \RIndBU(ht_{\lambda}(\Sigma_{\mu}))\longrightarrow\RIndBU(\Sigma_{\mu})\longrightarrow \RIndBU(\frac{\Sigma_{\mu}}{ht_{\lambda}(\Sigma_{\mu})})\longrightarrow 0.
%\end{equation}
%Since the functor $\RIndBU$ commutes with taking direct limits, we can repeat the argument leading to the isomorphisms (\ref{e:sigmamu}) %(with $\RIndBU(k_{\mathbb{B}}(\ell\lambda))$ in place of $k_{\mathbb{B}}(\ell\lambda),$ etc.) to obtain isomorphisms
%\begin{equation}\label{e:sigmamuRInd}
%\Ext^n_{\mathbb{B}}(\RIndBU(k_{\mathbb{B}}(\ell \lambda)),\RIndBU(ht_{\lambda}(\Sigma_{\mu})))
%\overset{\cong}{\longrightarrow}
%\Ext^n_{\mathbb{B}}(\RIndBU(k_{\mathbb{B}}(\ell \lambda)),\RIndBU(\Sigma_{\mu})), \quad \forall n \geq 1.
%\end{equation}

\smallskip
We now carry out a descending induction on  $ht(\mu).$
%If $ht(\mu) > ht(\lambda),$ then by (\ref{e:extup}) and the dimension equality (\ref{l:ndimlem}) applied just as in the first step of the
%$n = 0$ case,
%$\Ext^n_{\mathbb{B}}(k_{\mathbb{B}}(\ell \lambda),k_{\mathbb{B}}(\ell
%\mu)) \cong
% \Ext^n_{block(\mathbb{U})}(\RIndBU k_{\mathbb{B}}(\ell \lambda),\RIndBU k_{\mathbb{B}}(\ell
% \mu)).$
Assume for  all weights $\eta$ for which $ht(\eta) > ht(\mu),$
$\RIndBU$ induces, for all $n$, isomorphisms
\begin{equation} \label{e:nisoseta}
\Ext^n_{\mathbb{B}}(k_{\mathbb{B}}(\ell \lambda),k_{\mathbb{B}}(\ell
\eta)) \cong
 \Ext^n_{block(\mathbb{U})}(\RIndBU k_{\mathbb{B}}(\ell \lambda),\RIndBU k_{\mathbb{B}}(\ell
 \eta)).
 \end{equation}
 Then by the definition of  $ht_{\lambda}(\Sigma_{\mu})$ and its finite dimensionality, it follows from (\ref{e:nisoseta})
 that
\begin{equation}\label{e:nisossigma}
 \Ext^n_{\mathbb{B}}(k_{\mathbb{B}}(\ell \lambda),ht_{\lambda}(\Sigma_{\mu})) \cong
 \Ext^n_{block(\mathbb{U})}(\RIndBU k_{\mathbb{B}}(\ell \lambda),\RIndBU (ht_{\lambda}(\Sigma_{\mu})).
 \end{equation}

\smallskip

From the s.e.s. (\ref{e:kBses1}), upon letting $\widehat{V}$ denote
$\RIndBU(V), $ and $b(\mathbb{U})$ denote $block(\mathbb{U}),$ we
obtain two l.e.s.s tied together:

%{\tiny
%$$\begin{array}{c}
%\cdots\to \Ext^{n-1}_{\mathbb{B}}(k_{\mathbb{B}}(\ell \lambda),I_\mu^{[1]})
%\longrightarrow
% \Ext^{n-1}_{\mathbb{B}}(k_{\mathbb{B}}(\ell \lambda),\Sigma_{\mu})
% \longrightarrow
%\Ext^n_{\mathbb{B}}(k_{\mathbb{B}}(\ell \lambda),k_{\mathbb{B}}(\ell\mu))
%\longrightarrow
%\Ext^n_{\mathbb{B}}(k_{\mathbb{B}}(\ell \lambda),I_\mu^{[1]})
%\longrightarrow
% \Ext^{n}_{\mathbb{B}}(k_{\mathbb{B}}(\ell \lambda),\Sigma_{\mu})\to\cdots \\
% \end{array}$$}

{\tiny
$$\begin{array}{c}
\cdots \to \Ext^{n-1}_{\mathbb{B}}(k_{\mathbb{B}}(\ell
\lambda),I_\mu^{[1]}) \longrightarrow
 \Ext^{n-1}_{\mathbb{B}}(k_{\mathbb{B}}(\ell \lambda),\Sigma_{\mu})
 \longrightarrow
\Ext^n_{\mathbb{B}}(k_{\mathbb{B}}(\ell
\lambda),k_{\mathbb{B}}(\ell\mu)) \longrightarrow
\Ext^n_{\mathbb{B}}(k_{\mathbb{B}}(\ell \lambda),I_\mu^{[1]})
\longrightarrow
 \Ext^{n}_{\mathbb{B}}(k_{\mathbb{B}}(\ell \lambda),\Sigma_{\mu})\to \cdots \\
 ~~~\downarrow \hspace{1.25in} \downarrow  \hspace{1.2in} \downarrow \hspace{1.18in} \downarrow \hspace{1.1in} \downarrow\\
\cdots \to \Ext^{n-1}_{b(\mathbb{U})}\widehat{(k_{\mathbb{B}}(\ell
\lambda)},\widehat{I_\mu^{[1]}}) \longrightarrow
\Ext^{n-1}_{b(\mathbb{U})}\widehat{(k_{\mathbb{B}}(\ell
\lambda)},\widehat {\Sigma_{\mu}}) \longrightarrow
\Ext^n_{b(\mathbb{U})}(\widehat{k_{\mathbb{B}}(\ell
\lambda}),\widehat{k_{\mathbb{B}}(\ell\mu)})         \longrightarrow
\Ext^n_{b(\mathbb{U})}(\widehat{k_{\mathbb{B}}(\ell
\lambda}),\widehat{I_\mu^{[1]}}) \longrightarrow
\Ext^{n}_{b(\mathbb{U})}(\widehat {k_{\mathbb{B}}(\ell
\lambda}),\widehat{\Sigma_{\mu}}) \to \cdots

\end{array}$$
}

In the above diagram, all vertical morphisms arise from the
functoriality of $\RIndBU.$ The first and fourth vertical maps shown
are isomorphisms, as  given by Lemma \ref{l:YImuExts}.  By
(\ref{e:sigmamu}) and (\ref{e:sigmamuRInd}), $\Sigma_{\mu}$ in the
second and fifth vertical morphisms shown can be replaced with
$ht_{\lambda}(\Sigma_{\mu}),$ and the resulting morphisms in the
second and fifth vertical spots are isomorphisms. By the Five Lemma
the third vertical morphism is an isomorphism, i.e., $\RIndBU$
determines
$$%\label{e:nisos}
\Ext^n_{\mathbb{B}}(k_{\mathbb{B}}(\ell \lambda),k_{\mathbb{B}}(\ell
\mu)) \cong
 \Ext^n_{block(\mathbb{U})}(\RIndBU k_{\mathbb{B}}(\ell \lambda),\RIndBU k_{\mathbb{B}}(\ell
 \mu)),$$
 for each $n\geq 0$, as desired.
 This completes the proof of Step 3, and, consequently of both
 induction Theorems 1 and 2.

\smallskip

\subsection{Summary, and comparison with the approach in
\cite{ABG}}.\label{subsec: summary}

%\subsubsection{Choice of generating set}
Although a natural approach, \cite{ABG} were unable to use $S :=
\{\kB(\ell\lambda)\,|\, \lambda \in R\}$ directly as as set of
generators for the triangle category equivalence tool given by
Theorem \ref{l:ntrianequiv}. This roadblock apparently motivated
their attempt to use the set $S' := \{I_{\lambda}^{[1]}\,|\,
\lambda\in R\}$ in place of $S.$ (See \cite[Remark 4.2.7]{ABG}.)
However,  the corresponding claim in \cite[Lem. 4.3.6]{ABG} that
$S'$ (equivalently the set $\{\IndpB(\ell\lambda)\,|\,\lambda\in
R\}$) is inaccurate, since these modules do not actually lie in
$D_{triv}(\mathbb{B})$.
  %TLH: Because...?
%5/26:OLD STATEMENT, ``appears problematic; its analog in
%characteristic $p$ is certainly false, since the modules would fail
%to have finite cohomology.''
Nevertheless, in the characteristic 0 setting of \cite{ABG}, it is
true that $D_{triv}(\mathbb{B})$ is contained in the triangulated
category generated by $S',$
 so that a line of
argument establishing isomorphisms
\begin{equation}\label{e:Iisos}
\Ext^n_{\mathbb{B}}(I_{\lambda}^{[1]},I_{\mu}^{[1]})\cong
\Ext^n_{block(\mathbb{U})}(\RIndBU(I_{\lambda}^{[1]}),\RIndBU(I_{\mu}^{[1]})),
\end{equation}
as pursued in \cite[Lem. 4.3.6]{ABG} {\it would} imply the existence
of isomorphisms (\ref{e:nisos}).
%New, from "Len's 9-14-2011 corrections"
Unfortunately, there is no such inclusion of  $D_{triv}(\mathbb{B})$
in characteristic $p>0.$ In particular, the first line of the proof
of \cite{ABG}[Lem. 4.3.6], asserting that the universal enveloping
algebra $\mathcal{U}\frak{n}$ (for $\frak{n}$ a nilpotent Lie
algebra in a triangular decomposition)
%\footnote{TLH -- Need to standardize this notation above with notation we agree to use earlier, as discussed on 5/26 w/Len.}
has finite global dimension is not true for the correctly analogous
characteristic $p$ situation. It is a question of what modules are
to be pulled back under the Frobenius morphism.
%TLH -- didn't Len and I discuss this on 5/26 further?
In the characteristic $p$ situation, it is necessary to use modules
for the distribution algebra of a positive characteristic unipotent
algebraic group, not its unrestricted enveloping algebra, and the
finite global dimension property is lost. Overcoming this obstacle,
while using much of the apparatus of \cite{ABG}, is not trivial, and
our proof eventually involves parity properties for
$\mathbb{b}$-cohomology \cite[Prop. 2.3]{AJ}. See above Corollary
\ref{cor:Len2} and the proof of Lemma \ref{l:YImuExts}, which also
present our argument in the quantum case.
%\footnote{TLH -- Again, standardize notation
%-- I elected $\mathbf{b}$ here.}

%section{Truncation and the Induction Theorem} removed; ideas were shifted this past year to a new separate paper.

\newpage
\section{Appendix A}

%\subsection{Notation}

The discussion below, in the algebraic groups case, is based on
Jantzen's book (\cite{J}, pp. 258-259). We follow the notations
there. Comments on the quantum case are given in Remark
\ref{r:quantumA}, to which the reader might look ahead, now. In this
appendix and the next we will provide a proof of Lemma
\ref{wallcrossingfunctors}(ii). A closely linked goal is to
understand the adjunction map $id\circ \RIndBG\longrightarrow
T_\mu^\lambda\circ T_\lambda^\mu\circ \RIndBG$ in the spirit of the
long exact sequences on \cite[p. 259]{J}. Put $L=L(\nu_1)$ as on the
cited page. The functor $T^{\mu}_{\lambda}, T^{\lambda}_{\mu}$ are
constructed from tensor product functors $L\otimes (-) ,
~L^{*}\otimes (-)$,
 respectively, using block projections $pr_{\lambda}, pr_{\mu}$:
 $$T^{\lambda}_{\mu}=pr_{\lambda}(L^{*}\otimes -)\circ pr_{\mu},\\\
  T^{\mu}_{\lambda}=pr_{\mu}(L\otimes -)\circ pr_{\lambda}.$$
These are the definitions given in Jantzen, familiar to many
readers. All functors are
  regarded as functors from the category of rational $G$-modules to
  itself. The functor $L\otimes -:=L\otimes (-)$ is (left and right) adjoint to $L^{*}\otimes
  - := L^{*}\otimes (-)$, while $pr_{\lambda}$ and $pr_{\mu}$ are both self-adjoint (left
 and right). Consequently, $T^{\mu}_{\lambda}$ is (left and right) adjoint to
 $T^{\lambda}_{\mu}$.

\medskip
 {\Large Construction of the adjunction map}

 \smallskip

   So far, this is all standard, but we can go a
 little further.

 \medskip

 \textbf{(1)} Let $X,Y\in {G-Mod}$, the category of rational
 $G$-modules. Then the identifications $\Hom_G(pr_{\lambda}X,Y)\cong
 \Hom_G(X,pr_{\lambda}Y)$ are quite canonical: Write $X,Y$,
 respectively, as direct sums of submodules
 $$X=pr_{\lambda}X\oplus pr'_{\lambda}X,\\ Y=pr_{\lambda}Y\oplus
 pr'_{\lambda}Y,$$
  where $pr'_{\lambda}X$ has no composition
 factors in the block associated with $\lambda$ and is maximal, as
a submodule of $X$, with that property. The submodule
$pr'_{\lambda}Y$ of $Y$ is defined similarly. Obviously, any
$G$-homomorphism $pr_{\lambda}X\to Y$ has image in $pr_{\lambda}Y$.
Consequently, it identifies with a map $pr_{\lambda}X\to
pr_{\lambda}Y$. Also, any $G$-homomorphism $X\to pr_{\lambda}Y$
sends $pr'_{\lambda}$ to $0$ and sends $pr_{\lambda}X$ to
$pr_{\lambda}Y$. Thus, $$\Hom_G(pr_{\lambda}X,Y) \cong
\Hom_G(pr_{\lambda}X,pr_{\lambda}Y)\cong \Hom_G(X,pr_{\lambda}Y)$$
with each identification very obvious and canonical. We also record
$$\Hom_G(X,Y)\cong \Hom_G(pr_{\lambda}X,pr_{\lambda}Y)\oplus \Hom_G(pr'_{\lambda}X,pr'_{\lambda}Y$$

All the the observations in the above paragraph hold if $\lambda$ is
replaced by $\mu$.

\medskip

 \textbf{(2)} In particular, suppose we are given a natural
transformation $\eta=\{\eta_{X,Y}\}_{X,Y\in {G-Mod}}$ from
$\Hom_G(L\otimes Y$ to $\Hom_G(X,L^{*}\otimes Y)$. Then $\eta$ gives
maps
$$\eta_{pr_{\lambda} X,pr_{\mu} Y}: \\ \Hom_G(L\otimes pr_{\lambda}X,
pr_{\mu}Y)\longrightarrow \Hom_G( pr_{\lambda}X,L^{*}\otimes
pr_{\mu}Y).$$ This induces, using \textbf{(1)}, a natural
transformation we will call $\tilde \eta$, again defined on ${G-Mod}
\times {G-Mod}$, with $\tilde \eta_{X,Y}$ a map from
$\Hom_G(pr_{\lambda}L\otimes pr_{\lambda}X, pr_{\mu}Y)$ to
$\Hom_G(X,pr_{\lambda}(L^{*}\otimes pr_{\mu}Y))$. Explicitly,

%$$\begin{CD}
% \Ind(w\cdot\lambda\otimes \tau \otimes \vupperone ) @>Jan_Y^{w\cdot\lambda}>> \Tmulam\Ind(w\cdot\mu\otimes p\tau \otimes \vupperone\\
%@VV\cong V     @VV\cong V\\
%  \Ind(w\cdot\lambda\otimes p \tau)\otimes \vupperone @>Jan_{p\tau}^{w\cdot \lambda} \otimes \vupperone >> (\Tmulam(w\cdot\mu\otimes p \tau))\otimes \vupperone
% \end{CD}$$

$$\begin{CD}
 \Hom_G(L\otimes
 pr_{\lambda}X,pr_{\mu}Y)@>\eta_{pr_{\lambda}X,pr_{\mu}Y}>>\Hom_G(pr_{\lambda}X,L^{*}pr_{\mu}Y)\\
@VV\cong V @VV\cong V\\ \Hom_G(pr_{\mu}(L\otimes pr_\lambda
X)@>\tilde \eta_{X,Y}>>
\Hom_G(X,pr_{\lambda}(L^{*}\otimes pr_{\mu}Y))\\
@VV= V @VV= V\\
 \Hom_G(T^{\mu}_{\lambda}X,Y)@. \Hom_G(X,T^{\lambda}_{\mu}Y)
 \end{CD}$$
 with the vertical isomorphisms between the top two rows given by
 {\textbf{(1)}}. Note there is a similar diagram with $pr_{\mu}Y$
 replacing $Y$ in the bottom two rows (using $\tilde
 \eta_{X,pr_{\mu}Y}).$
\smallskip

 \textbf{(3)} Using the naturality of $\eta$, we can put another row
 and commutative diagram(s) on top of the top row above:
$$\xymatrix{
  \Hom_G(L\otimes pr_{\lambda}X,Y) \ar[r]^{~~\eta_{pr_{\lambda}X,Y}}~~ \ar@<-4pt>[d] &
  \Hom_G(pr_{\lambda}X,L^{*}\otimes Y)
  \ar@<-4pt>[d]\\
   \Hom_G(L\otimes pr_{\lambda}X,pr_{\mu}Y) \ar[u]
   \ar[r]^{~~\eta_{pr_{\lambda}X,pr_{\mu}Y}}~~
   &  ~~~~\Hom_G(pr_{\lambda}X,L^{*}\otimes pr_{\mu}Y)\ar[u]
}$$ Here the pair of vertical maps pointing upward are indexed by
the inclusion $pr_{\mu}\to Y$ and yield a commutative diagram.
Similarly the pair of downward arrows are indexed by the projection
$Y\to pr_{\mu}Y$ and give a commutative diagram. The composite of
the homomorphisms represented by the upward pointing arrows with the
homomomorphism represented by the corresponding downward pointing
arrows are identities.

\medskip

We can now prove
\begin{prop}\label{adjprop} Assume the natural transformation
$\eta=\{\eta_{X,Y}\}_{X,Y\in {G-Mod}}$ gives natural isomorphisms
$$\eta_{X,Y}:~\Hom_G(L\otimes X,Y)\longrightarrow
\Hom_G(X,L^{*}\otimes Y),$$ and let $\tilde \eta=\{\tilde
\eta_{X,Y}\}_{X,Y\in {G-Mod}}$ be the corresponding natural
transformation constructed above. Then $\tilde \eta$ gives natural
isomorphisms $$\tilde
\eta_{X,Y}:~\Hom_G(T^{\mu}_{\lambda}X,Y)\longrightarrow
\Hom_G(X,T^{\lambda}_{\mu}Y).$$ Moreover, the corresponding
adjunction transformation $\widetilde{adj}$ from the identity
functor on ${G-Mod}$ to the functor
$T^{\lambda}_{\mu}T^{\mu}_{\lambda}$ may be constructed from the
adjunction map $adj$ similarly associated with $\eta$. In fact, for
each $X\in {G-Mod}$, we have a commutative diagram
$$\xymatrix{
 X~~ \ar[d]^{=} \ar[r]^{adj_X~~} & ~~L^{*}\otimes L \otimes X
  \ar[d]\\
  X~~ \ar[r]^{\widetilde{adj}_X}  & T^{\lambda}_{\mu}T^{\mu}_{\lambda}X
}$$ where the down arrow on the right is the composite projection
$$L^{*}\otimes L\otimes X\to L^{*}\otimes L\otimes pr_{\lambda}X\to L^{*}\otimes
pr_{\mu}(L\otimes pr_{\lambda}X)\to pr_{\lambda}(L^{*}\otimes
pr_{\mu}(L\otimes pr_{\lambda}X))=
T^{\lambda}_{\mu}T^{\mu}_{\lambda}X.$$

\end{prop}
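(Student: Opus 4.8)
The plan is to prove both assertions by directly unwinding the construction in steps \textbf{(1)}--\textbf{(3)} above.

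For the first assertion, note that by construction $\widetilde{\eta}_{X,Y}$ is, modulo the canonical identifications of \textbf{(1)} (applied once for $\lambda$ and once for $\mu$), exactly the map $\eta_{pr_{\lambda}X,\,pr_{\mu}Y}$. Each such identification is an isomorphism, as recorded in \textbf{(1)}, and $\eta_{pr_{\lambda}X,\,pr_{\mu}Y}$ is an isomorphism by hypothesis, so $\widetilde{\eta}_{X,Y}$ is an isomorphism. Naturality in $X$ and $Y$ follows from naturality of $\eta$ together with the functoriality of the identifications of \textbf{(1)}: the identification $\Hom_G(pr_{\lambda}X,Z)\cong\Hom_G(X,pr_{\lambda}Z)$ is natural in $X$ and $Z$ because it amounts to precomposing with the canonical projection $X\to pr_{\lambda}X$ and using that a $G$-map out of $pr_{\lambda}X$ automatically has image in $pr_{\lambda}Z$; likewise for $\mu$. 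Hence $\widetilde{\eta}$ is a natural isomorphism, which exhibits $(T^{\mu}_{\lambda},T^{\lambda}_{\mu})$ as an adjoint pair via this adjunction.

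For the second assertion I would compute the unit $\widetilde{adj}$ of this adjunction. Write $P:=pr_{\lambda}X$, set $Y:=T^{\mu}_{\lambda}X=pr_{\mu}(L\otimes P)$ (so $pr_{\mu}Y=Y$), and let $\pi\colon L\otimes P\twoheadrightarrow pr_{\mu}(L\otimes P)=Y$ be the canonical projection. By definition $\widetilde{adj}_X=\widetilde{\eta}_{X,Y}(\mathrm{id}_Y)$. Tracing $\mathrm{id}_Y$ up the left column of the diagram in \textbf{(2)}, the identifications of \textbf{(1)} send $\mathrm{id}_Y$ to $\pi\in\Hom_G(L\otimes P,\,pr_{\mu}Y)$. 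Since $\pi=\pi\circ\mathrm{id}_{L\otimes P}$ and $adj_P=\eta_{P,L\otimes P}(\mathrm{id}_{L\otimes P})\colon P\to L^{*}\otimes L\otimes P$ is the unit attached to $\eta$, naturality of $\eta$ in its second variable gives $\eta_{P,\,pr_{\mu}Y}(\pi)=(1_{L^{*}}\otimes\pi)\circ adj_P$. Tracing this down the right column of \textbf{(2)}, namely precomposing with the canonical projection $c\colon X\to pr_{\lambda}X=P$ and then applying the canonical projection of $L^{*}\otimes pr_{\mu}Y$ onto its $\lambda$-block, we obtain
$$\widetilde{adj}_X\colon\ X\xrightarrow{\,c\,}P\xrightarrow{\,adj_P\,}L^{*}\otimes L\otimes P\xrightarrow{\,1\otimes\pi\,}L^{*}\otimes pr_{\mu}(L\otimes P)\longrightarrow pr_{\lambda}\bigl(L^{*}\otimes pr_{\mu}(L\otimes P)\bigr)=T^{\lambda}_{\mu}T^{\mu}_{\lambda}X.$$

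To finish, I would compare this with $adj_X\colon X\to L^{*}\otimes L\otimes X$ using naturality of the transformation $adj$ itself: applied to $c\colon X\to P$, naturality gives $adj_P\circ c=(1_{L^{*}}\otimes1_L\otimes c)\circ adj_X$. Substituting into the displayed formula for $\widetilde{adj}_X$ shows that $\widetilde{adj}_X$ equals $adj_X$ followed by the composite projection
$$L^{*}\otimes L\otimes X\longrightarrow L^{*}\otimes L\otimes pr_{\lambda}X\longrightarrow L^{*}\otimes pr_{\mu}(L\otimes pr_{\lambda}X)\longrightarrow pr_{\lambda}\bigl(L^{*}\otimes pr_{\mu}(L\otimes pr_{\lambda}X)\bigr)=T^{\lambda}_{\mu}T^{\mu}_{\lambda}X,$$
which is precisely the right-hand vertical map of the asserted square. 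Hence that square commutes, completing the proof. The one delicate point, and the step I expect to demand the most care, is keeping the directions of the several canonical isomorphisms of \textbf{(1)} straight, in particular deciding which of them amount to pre- or post-composition with the block projections or inclusions; the remainder of the argument is formal bookkeeping with these identifications plus one application each of the naturality of $\eta$ and of $adj$.
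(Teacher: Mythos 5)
Your argument is correct and follows essentially the same route as the paper: both unwind $\widetilde{\eta}$ through the identifications of \textbf{(1)}, evaluate the unit on the identity of $T^{\mu}_{\lambda}X$, and reduce the general $X$ to $pr_{\lambda}X$ via naturality of $adj$ applied to the projection $X\to pr_{\lambda}X$. Your use of the unit formula $\eta_{P,Y}(\pi)=(1_{L^{*}}\otimes\pi)\circ adj_{P}$ is just a compact repackaging of the paper's element chase through the stacked diagrams of \textbf{(2)} and \textbf{(3)}, so there is no substantive difference.
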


\begin{proof} We use the (noted) alternate version of the diagram in
 \textbf{(2)} in which $pr Y$ replaces $Y$, and use the diagram in
 \textbf{(3)} as given. The combination gives a commutative diagram
 $$\begin{CD}
  \Hom_G(L\otimes pr_{\lambda}X,Y)@>{~~\eta_{pr_{\lambda}X,Y}}~~ >>
  \Hom_G(pr_{\lambda}X,L^{*}\otimes Y)\\
 @VV\cong V @VV\cong V\\
 \Hom_G(L\otimes
 pr_{\lambda}X,pr_{\mu}Y)@>\eta_{pr_{\lambda}X,pr_{\mu}Y}>>\Hom_G(pr_{\lambda}X,L^{*}\otimes pr_{\mu}Y)\\
@VV\cong V @VV\cong V\\ \Hom_G(pr_{\mu}(L\otimes pr_\lambda
X),pr_{\mu}Y)@>\tilde \eta_{X,pr_{\mu}Y}>>
\Hom_G(X,pr_{\lambda}(L^{*}\otimes pr_{\mu}Y))\\
@VV= V @VV= V\\
 \Hom_G(T^{\mu}_{\lambda}X,Y)@. \Hom_G(X,T^{\lambda}_{\mu}Y)
 \end{CD}$$

  Now take $Y=L\otimes pr_{\lambda}X$. Thus $pr_{\mu}Y=T^{\mu}_{\lambda}X$, and
$\tilde \eta_{X,pr_{\mu}Y}(1_{T^{\mu}_{\lambda}}X)= \tilde
\eta_{X,T^{\mu}_{\lambda}X}(1_{T^{\mu}_{\lambda}X}) =\widetilde
{adj}_X$. Chasing the element $1_{T^{\mu}_{\lambda}X}$ up to the
second row gives an element which is the projection $Y\to pr_{\mu}Y$
in $\Hom_G(L\otimes
 pr_{\lambda}X,pr_{\mu}Y)=\Hom_B(Y,pr_{\mu}Y)$.
This element is also the image of $1_{L\otimes pr_{\lambda}X}=1_Y$
under the downward vertical map on the left. Observe
$\eta_{pr_{\lambda}X,Y}(1_Y)=adj_{pr_{\lambda}X}.$ Following the
right hand vertical maps in the case $X=pr_{\lambda}X$ gives a
commutative diagram
$$\xymatrix{
 pr_{\lambda}X~~ \ar[d]^{=} \ar[r]^{adj_{pr_{\lambda}X}~~} & ~~L^{*}\otimes L \otimes pr_{\lambda}X
  \ar[d]\\
 pr_{\lambda}X~~ \ar[r]^{\widetilde{adj}_{pr_{\lambda}X}}  & T^{\lambda}_{\mu}T^{\mu}_{\lambda}pr_{\lambda}X
}$$ with the right hand map the composite of projections
$$L^{*}\otimes L\otimes pr_{\lambda}X\to L^{*}\otimes
pr_{\mu}(L\otimes pr_{\lambda}X)\to pr_{\lambda}(L^{*}\otimes
pr_{\mu}(L\otimes pr_{\lambda}X))=
T^{\lambda}_{\mu}T^{\mu}_{\lambda}X.$$

  Now return to the case of a general$X$ and apply
  functoriality\footnote{It is a general property of adjunction maps
  $Id\to EF$ where $E$ is a right adjoint to a functor $F$, that any
  map $\phi:X\to X'$ in the underlying category gives a commutative
  diagram
  $$\xymatrix{
  X' \ar[r] &  EF(X')\\
  X  \ar[u]_{\phi} \ar[r] & EF(X) \ar[u]_{EF(\phi),}
  }$$ where both horizontal maps are adjunctions. We include a brief proof:
  $F(\phi)$ is the value at $1_{F(X)}$ of the
  evident map $\Hom(F(X),F(X))\to \Hom(F(X),F(X'))$ and also the
  value at $1_{F(X')}$ of the evident map $\Hom(F(X'),F(X'))\to
  \Hom(F(X),F(X'))$. Applying the (natural) adjointness isomorphism
  $\Hom(F(-),F(-))\cong \Hom(-,EF(-))$ to $F(\phi)$ yields a map $X\to
  EF(X')$ which, correspondingly, factors in two different ways,
  giving the desired commutative diagram.

  We remark that there is a dual commutative diagram for the ``counital adjunction" $FE\to Id$
  The formulation and
  proof may be given using dual categories and the adjunction case.}
of the adjunction maps $adj,~\widetilde{adj}$ to obtain a
commutative diagram

$$\xymatrix{
             X~~ \ar[d] \ar[r]^{adj_X}~~ & ~~L^{*}\otimes L \otimes X
  \ar[d]\\
 pr_{\lambda}X~~ \ar[d]^{=} \ar[r]^{adj_{pr_{\lambda}X}~~} & ~~L^{*}\otimes L \otimes pr_{\lambda}X
  \ar[d]\\
 pr_{\lambda}X~~ \ar[r]^{\widetilde{adj}_{pr_{\lambda}X}}  & T^{\lambda}_{\mu}T^{\mu}_{\lambda}pr_{\lambda}X
\\
  X~~ \ar[u] \ar[r]^{\widetilde{adj}_X}  & T^{\lambda}_{\mu}T^{\mu}_{\lambda}X
  \ar[u]_{=}
}$$ In this diagram, the middle rectangle is identical to the
diagram just discussed. All the unlabeled vertical maps are evident
projections. In particular, the whole commutative diagram could be
extended on the left, preserving commuttitivity, by a long downward
equality map from the upper left $X$ to the lower left $X$.  Also,
the lower right equality arrow can be reversed, still preserving
commutativity. With these changes, the perimeter rectangle becomes
the commutative diagram required in the proposition. This completes
its proof.
\end{proof}

\smallskip
We remark that the adjunction obtained from the usual natural
isomorphism
$$\Hom_G(L\otimes X,Y)\cong \Hom_G(X,L^{*}\otimes X)$$
is quite explicit: For $x\in X, ~adj_X(x)= 1_L\otimes x$,
 if $L^{*}\otimes L$ is identified with $\Hom_k(L,L)$. Even if we do not
use that identification, we can just write
$$adj_X(x)=\sum_{\epsilon \in I} \epsilon^{*}\otimes \epsilon
\otimes x,$$ where $\epsilon$ ranges over any basis $I$ of $L$, and
$\epsilon^{*}$ denotes the corresponding dual basis element. The sum
on the right is independent of the basis $I$ chosen.

\medskip
As a corollary to the proposition, we have
\begin{cor}\label{adjcor} Let $X,Y\in G-mod$, and identify
$L^{*}\otimes L\otimes (X\otimes Y)\cong (L^{*}\otimes L\otimes X)
\otimes Y$. Then $adj_{X\otimes Y}(-) = (adj_X(-))\otimes Y$. If all
weights of $Y$ lie in the root lattice, then
$\widetilde{adj}_{X\otimes Y^{[1]}}(-)=\widetilde{adj}_X(-)\otimes
Y^{[1]}$, identifying $T^{\lambda}_{\mu}T^{\mu}_{\lambda}(X\otimes
Y^{[1]})$ with $(T^{\lambda}_{\mu}T^{\mu}_{\lambda}X)\otimes
Y^{[1]}$.
\end{cor}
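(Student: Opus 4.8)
The plan is to leverage the explicit formula for the adjunction map $adj_X$ given just before the corollary, together with the compatibility of block projections with Frobenius-twisted tensor factors. First I would treat the easy assertion $adj_{X\otimes Y}(-) = (adj_X(-))\otimes Y$: using the basis formula $adj_X(x)=\sum_{\epsilon\in I}\epsilon^{*}\otimes\epsilon\otimes x$, one simply computes $adj_{X\otimes Y}(x\otimes y)=\sum_{\epsilon}\epsilon^{*}\otimes\epsilon\otimes x\otimes y$, which under the fixed identification $L^{*}\otimes L\otimes(X\otimes Y)\cong(L^{*}\otimes L\otimes X)\otimes Y$ is exactly $(adj_X(x))\otimes y$. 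Equivalently, one can argue without bases: $adj$ is the unit of the $(L\otimes-,L^{*}\otimes-)$ adjunction, and tensoring (on the right) by a fixed module $Y$ is a functor that intertwines these functors compatibly with unit and counit, so naturality gives the claim. Either way this step is routine.

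The substantive step is the second assertion, about $\widetilde{adj}$. Here I would invoke Proposition \ref{adjprop}, which expresses $\widetilde{adj}_X$ as the composite of $adj_X$ with the chain of projections
\[
L^{*}\otimes L\otimes X\to L^{*}\otimes L\otimes pr_{\lambda}X\to L^{*}\otimes pr_{\mu}(L\otimes pr_{\lambda}X)\to pr_{\lambda}(L^{*}\otimes pr_{\mu}(L\otimes pr_{\lambda}X)).
\]
So it suffices to check that each of these three projection maps is compatible with $-\otimes Y^{[1]}$, i.e. that tensoring a module by the Frobenius twist $Y^{[1]}$ of a module $Y$ whose weights lie in the root lattice commutes with the block projections $pr_{\lambda}$ and $pr_{\mu}$. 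This is the key point, and it follows from the fact that $V\otimes Y^{[1]}$ and $V$ have composition factors lying in exactly the same blocks, for any $V$: indeed $Y^{[1]}$ has all of its weights in $\ell\mathbb{Y}$ (this uses that $Y$ has weights in the root lattice $\mathbb{Y}$), hence twisting by $Y^{[1]}$ shifts highest weights of composition factors by elements of $\ell\mathbb{Y}$, which is the translation subgroup of the affine Weyl group $W_{\ell}$; thus a composition factor $L(\nu)$ of $V$ and the corresponding factor of $V\otimes Y^{[1]}$ have highest weights in the same $W_{\ell}\cdot$-orbit, so lie in the same block. Consequently $pr_{\lambda}(V\otimes Y^{[1]})=pr_{\lambda}(V)\otimes Y^{[1]}$ and likewise for $pr_{\mu}$.

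Granting this, I would run through the three-step composite with $X$ replaced by $X\otimes Y^{[1]}$: the first projection is $adj_{X\otimes Y^{[1]}}\to(\cdot)\otimes pr_{\lambda}X$ which by the first assertion and the projection-compatibility equals $(adj_X\otimes Y^{[1]})$ landing in $(L^{*}\otimes L\otimes pr_{\lambda}X)\otimes Y^{[1]}$; the second projection $L^{*}\otimes L\otimes pr_{\lambda}X\otimes Y^{[1]}\to L^{*}\otimes pr_{\mu}(L\otimes pr_{\lambda}X\otimes Y^{[1]})$ equals, by compatibility, $L^{*}\otimes(pr_{\mu}(L\otimes pr_{\lambda}X))\otimes Y^{[1]}$; and the third is handled identically with $pr_{\lambda}$. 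Composing, and matching the identification $T^{\lambda}_{\mu}T^{\mu}_{\lambda}(X\otimes Y^{[1]})\cong(T^{\lambda}_{\mu}T^{\mu}_{\lambda}X)\otimes Y^{[1]}$ (which is precisely the iterated version of these compatibilities), yields $\widetilde{adj}_{X\otimes Y^{[1]}}(-)=\widetilde{adj}_X(-)\otimes Y^{[1]}$. The main obstacle is really just cleanly stating and justifying the block-compatibility of Frobenius-twisted tensoring; once that is in hand, the rest is a diagram-chase bookkeeping exercise tracking the three projections and the identifications, with no real content beyond Proposition \ref{adjprop} and the first (basis-formula) assertion.
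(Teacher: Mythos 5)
Your proposal is correct and follows essentially the same route as the paper's own proof: establish $adj_{X\otimes Y}(-)=(adj_X(-))\otimes Y$ from the explicit basis formula, then use the description of $\widetilde{adj}_X$ from Proposition \ref{adjprop} as $adj_X$ followed by the chain of projections, and commute $-\otimes Y^{[1]}$ past the block projections (which the paper likewise justifies by noting that the blocks are defined via the affine Weyl group, containing translations by $\ell$-multiples of the root lattice). Your spelled-out justification of the block-compatibility is a slightly more detailed version of the paper's parenthetical remark, but the argument is the same.
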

\begin{proof} The first equality is immediate from the formula for
$adj_X(x), x\in X$ above, applied to to $X\otimes Y$ and
$adj_{X\otimes Y}$.

We can argue with $adj$ to handle $\widetilde{adj}$: First, observe
the rearrangements
$$pr_{\lambda}(X\otimes Y^{[1]})=pr_{\lambda}X\otimes Y^{[1]}, ~~\text{and}$$
$$pr_{\mu}(L\otimes pr_{\lambda}(X\otimes Y^{[1]}))=pr_{\mu}(L\otimes pr_{\lambda}X\otimes
Y^{[1]})=pr_{\mu}(L\otimes pr_{\lambda}X)\otimes Y^{[1]}.~~Also,$$
$$pr_{\lambda}(L^{*}\otimes pr_{\mu}(L\otimes pr_{\lambda}(X\otimes Y^{[1]})))=pr_ {\lambda}(L^{*}\otimes pr_{\mu}(L\otimes
pr_{\lambda}X)))\otimes Y^{[1]}.$$ Here we have heavily used the
fact that the operator $-\otimes Y^{[1}]$ commutes with our ``block"
projections. (Recall the latter are formulated in terms of the
affine Weyl group, which contains translations by $p$-multiples of
the root lattice.) We have regarded $pr_{\lambda}X$ as a submodule
of $X$, and have taken a similar viewpoint with all the projections
in these equalities. (Similar equalities hold for complementary
projections, Thus, $pr'_{\lambda}(X\otimes
Y^{[1]})=pr'_{\lambda}X\otimes Y^{[1]},$ etc.)

Recall that we have described $\widetilde{adj}_X$ in Proposition
\ref{adjprop} as the composition of $adj_X$ followed by a sequence
of projections
$$L^{*}\otimes L\otimes X\to L^{*}\otimes L\otimes pr_{\lambda}X\to L^{*}\otimes
pr_{\mu}(L\otimes pr_{\lambda}X)\to pr_{\lambda}(L^{*}\otimes
pr_{\mu}(L\otimes pr_{\lambda}X).$$ Tensoring $adj_X(-)$ on the
right with $Y^{[1]}$ gives $\widetilde{adj}_{X\otimes Y^{[1]}}(-)$
as shown above (even with $Y^{[1]}$ any $G$-module). Next, tensor
the sequence of projections displayed above with $Y^{[1]}$,
obtaining
$$L^{*}\otimes L\otimes X \otimes Y^{[1]}\to L^{*}\otimes L\otimes pr_{\lambda}(X \otimes Y^{[1]})\to L^{*}\otimes
pr_{\mu}(L\otimes pr_{\lambda}(X\otimes Y^{[1]})$$
$$~~~~~~~~~~~~~~\to pr_{\lambda}(L^{*}\otimes pr_{\mu}(L\otimes pr_{\lambda}X))\otimes
Y^{[1]}.$$ Using the rearrangements discussed above, we get
$$L^{*}\otimes L\otimes X \otimes Y^{[1]}\to L^{*}\otimes L\otimes pr_{\lambda}X \otimes Y^{[1]}\to L^{*}\otimes
pr_{\mu}(L\otimes pr_{\lambda}X)\otimes Y^{[1]}$$
$$~~~~~~~~~~~~~~\to pr_{\lambda}(L^{*}\otimes pr_{\mu}(L\otimes pr_{\lambda}(X\otimes
Y^{[1]}))).$$ Here we have identified tensor products isomorphic
through the associative law. The lower display above is easily
recognized as the sequence in Proposition \ref{adjprop} whose
composition with $adj_{X\otimes Y^{[1]}}$ gives
$\widetilde{adj}_{X\otimes Y^{[1]}}$. Combining this with the
equality $adj_{X\otimes Y^{[1]}}(-)=adj_X(-)\otimes Y^{[1]}$ noted
above gives the identification $\widetilde{adj}_{X\otimes
Y^{[1]}}(-)=\widetilde{adj}_X(-)\otimes Y^{[1]}$, completing the
proof of the corollary.
\end{proof}
\medskip

\begin{rem}\label{r:quantumA} \textbf{The quantum case}\medskip\\
The lack of cocommutativity requires some care in treating the
quantum case, and it becomes important to distinguish right from
left. For example, consider the ``usual" natural isomorphism
$$\Hom_G(L\otimes X,Y)\cong \Hom_G(X,L^{*}\otimes X)$$
in the algebraic groups case. It may be given in more detail as a
composite \begin{equation}\label{e:homtensor} \Hom_G(L\otimes
X,Y)\cong \Hom_G(X,\Hom_k(L,Y))\cong \Hom_G(X,L^{*}\otimes Y).
\end{equation}
The right hand isomorphism depends on the isomorphism of $G$-modules
    $$\Hom_k(L,Y)\cong L^{*}\otimes Y.$$
The usual way to identify a simple tensor element $f(-)\otimes y$ on
the right ($f\in L^*, y\in Y$) with a function on the left is to let
it send $v\in L$ to $f(v)y$. Let $g\in G$, and suppose for a moment
that $G$ is not a group, but a Hopf algebra with antipode $S$, and
that $L, Y$ are left $G$-modules. Using Sweedler (implicit sum)
notation, with $g$ mapping to $g_1\otimes g_2$ under
comultiplication, the action of $g$ on the right hand element gives
$f(Sg_1(-)\otimes g_2y$, but, on the left, it gives a function
sending $v$ to $f(Sg_2(v))g_1y$. The latter is not formally the
function corresponding to the right hand element without
cocommutativity.

This can be fixed by either using the right action of the Hopf
algebra $G$ or by keeping the left action and changing the tensor
product $L^*\otimes Y$ to $Y\otimes L^*$. We prefer the latter
approach, since left actions are often implicitly used--e.g., in
\cite{J}. In keeping with the spirit of previous sections of this
paper, define an ``opposite" tensor product $\otimes^{op}$ by
               $$X\otimes^{op} Y:= Y\otimes X,$$

A similar analysis can be carried out on the left hand isomorphism
of the display (\ref{e:homtensor}). We find that the standard
correspondence gives an isomorphism of left $G$-modules
 $$\Hom_k(L\otimes^{op} X,Y)\cong \Hom_k(X,\Hom_k(L,Y)).$$
Here the left action of the Hopf algebra $G$ on the various modules
$\Hom_k(-,-)$ is given by ``conjugation." That is, if $g\in G$ and
$f$ is a linear function from one left $G$-module to another, the
action of $g$ on $f$ gives a linear function $g_1f(Sg_2(-))$. When
the antipode is surjective (as it is for all the Hopf algebras we
consider), the space of ``fixed points" of this action of $G$ (all
$f$ for which each $g\in G$ acts through the counit) results
precisely in the space of $G$-homomorphisms. (A general statement
and proof of this fact may be found in \cite[2.9]{APW}.) In
particular, we have a general version of (\ref{e:homtensor}) which
holds for any such Hopf algebra:
\begin{equation}\label{e:homtensor2} \Hom_G(L\otimes^{op}
X,Y)\cong \Hom_G(X,\Hom_k(L,Y))\cong \Hom_G(X,L^{*}\otimes^{op} Y).
\end{equation}
Finally, notice that $\otimes^{op}$ is just as associative an
operation as $\otimes$, which is strictly associative, if standard
identifications are made in iterated tensor products of $k$-spaces.

Thus, \textbf{the results and arguments of this section hold in the
quantum case.}  The reader can even read or reread the statements
and arguments in both the algebraic groups case and quantum case
simultaneously, after replacing $\otimes$ with $\otimes^{op}$, and
using the same simultaneous notations $\mathbb{U},
\mathbb{B},\mathbb{k}, \dots$, as in previous sections, in place of
$G, B, k, \ldots$.

\end{rem}

\section{Appendix B}

%\subsection{Notation}

We now return to Jantzen \cite{J}, pp. 258-259. Proposition 7.11
there implies, if $T_\lambda^\mu$ is  ``to a wall," then
$$T_\lambda^\mu\RIndBG (w\cdot \lambda)\cong \RIndBG(w\cdot\mu).$$
  Recall Jantzen denotes one dimensional weight modules by the weights alone.\\

The argument for the above isomorphism is helpful:  $T_\lambda^\mu
\RIndBG(w\cdot\lambda)=pr_\mu(L\otimes pr_{_\lambda}
\RIndBG(w\cdot\lambda)=pr_\mu(L\otimes\RIndBG(w\cdot\lambda))=pr_\mu\RIndBG
(L\otimes w\cdot\lambda).$ At this point a $B$ composition series of
$L$ is examined and it is found that there is only one composition
factor, call it $\lambda _l$ appearing with multiplicity one, such
that $pr_\mu\RIndBG(\lambda_l\otimes w\cdot \lambda)\neq 0.$ It is
determined that $\lambda_l\otimes w\cdot\lambda$ is $w\cdot \mu$,
completing the proof.

\smallskip
Next, let us come  ``out of the wall" with $T^\lambda_\mu$.  We assume $\mu$ is on a true ``wall" with stabilizer $\{1, s\}$ for a simple reflection $s$. We want to know what happens to $T_\mu^\lambda \RIndBG(w\cdot\mu).$ Again, write\\
$$
\begin{tabular}{lll}
$T^\lambda_\mu\RIndBG(w\cdot\mu)$&= & $pr_{_\lambda}(L^*\otimes pr_\mu\RIndBG(w\cdot\mu))$\\
& = & $ pr_{_\lambda}(L^*\otimes \RIndBG(w\cdot\mu)$\\
& = & $pr_{_\lambda}\RIndBG(L^*\otimes w\cdot \mu)$.
\end{tabular}$$
This time $L^*$ has two composition factors exactly, $\gamma=\nu$
and  $\gamma =\nu^\prime$, each appearing with multiplicity 1, such
that $pr_\lambda\RIndBG(\gamma\otimes w\cdot\mu)\leq0.$ The two
weights $\nu,~\nu^\prime$(not Jantzen's notation) satisfy
$\{\nu+w\cdot\mu, \nu^\prime +w\cdot \mu\}=\{w\cdot\lambda,
ws\cdot\lambda\}.$

\smallskip
Jantzen treats the case $ws\cdot\lambda < w\cdot\lambda$ (with the
roles of $\lambda, \mu$ reversed) in \cite[Prop. 7.12]{J}. For our
purposes, to be compatible with Lemma
\ref{wallcrossingfunctors}(ii), we will consider the case $ws\cdot
\lambda>w\cdot \lambda$,
which requires different arguments (in the same setting). \\

\subsec{\textbf{The main issue.~}}\label{claim} In this case there
is an exact sequence of $B$-modules
$$ 0\longrightarrow M \longrightarrow L^*\otimes sw\cdot\mu\longrightarrow M^\prime \longrightarrow 0$$
in which the weight $w\cdot \lambda$ appears in $M$ and $ws\cdot
\lambda$ appears in $M^\prime$. These appearances are each with
multiplicity 1, and no other weight $\tau$ with $pr_{_\lambda}
\RIndBG(\tau)\neq0$ appears in either $M$ or $M^\prime$. Apply
$pr_{_\lambda}\RIndBG$ to the above short exact sequence.

%\fbox{
%\begin{tcolorbox}
%{45em}
~~The result is a distinguished triangle
$$(*)~~~~~~\cdots\longrightarrow\RIndBG(w\cdot\lambda)\longrightarrow
T_\mu^\lambda\RIndBG(w\cdot\mu)\longrightarrow\RIndBG(ws\cdot\lambda)\longrightarrow\cdots$$
As previously noted, the middle term is isomorphic to
$$T^\lambda_\mu T^\mu_\lambda\RIndBG(w\cdot\lambda).$$ This leads to the question as to
whether or not the resulting map
$$\RIndBG(w\cdot\lambda)\longrightarrow T^\lambda_\mu T^{\mu}_\lambda\RInd(w\cdot \lambda)$$
is the adjunction map.    \textbf{We claim} that \underline{it is},
indeed, \underline{the adjunction map}, at least up to a nonzero
scalar multiple.
(Thus, there is a distinguished triangle (*) in which the left hand map is adjunction.)\\
%\end{tcolorbox}

\textbf{The proof of this claim will essentially occupy the rest of
this appendix!} We will use the context and notation of the
algebraic groups case, and treat the quantum case at the end in
Remark \ref{r:app2}. Part (ii) of Lemma \ref{wallcrossingfunctors}
then follows, since $\theta ^+_\alpha(\RIndBU\lambda)$, from its
mapping cone definition, fits into a triangle with the same two
objects and left hand map as (*), but replacing the object  $\RIndBU
(\lambda^{s_\alpha})$ as the third object. Standard triangulated
category axioms then give a map $\theta
^+_\alpha(\RIndBU\lambda)\longrightarrow \RIndBU
(\lambda^{s_\alpha})$, part of a commutative diagram with identify
maps on the two left hand objects in (*) and their translations
under [1]. The ``five lemma" then gives the desired isomorphism
$\theta ^+_\alpha(\RIndBU\lambda)\cong \RIndBU
(\lambda^{s_\alpha})$, completing the proof of part (ii) of Lemma
\ref{wallcrossingfunctors}. Since the a proof of part (i) has
already been given, this will complete the proof of the lemma.

triangles which is the identity on the two left hand objects.
of the algeb\\

For the moment, we prove the claim in the case where both $w\cdot
\lambda$ and $ws\cdot \lambda$ are dominant: Note that
$\RIndBG(w\cdot\lambda)\cong \IndBG(w\cdot\lambda)$ and
$\RIndBG(ws\cdot\lambda)\cong \IndBG(ws\cdot\lambda)$ by Kempf's
theorem.  Also, of course, the functor $\RIndBG$ is right adjoint to
restriction. In particular
 $$\Hom_{D^b(G)}(\RIndBG(w\cdot\lambda),\RIndBG (ws\cdot\lambda))=\Hom_B(\IndBG (w\cdot \lambda), ws\cdot\lambda)$$
 so that any map from $\RInd(w\cdot\lambda)$ to the middle term of (*) factors through the left hand map.
 However, $\Hom_B(\RIndBG(w\cdot\lambda), w\cdot\lambda)\cong k$.
 The claim follows. In our argument we have used the fat that $\RIndBG$ is right adjoint to restriction.\\

 \subsec{}\label{afterclaim} We will now try to exploit the validity of the dominant case,
 by using it to build well-behaved resolutions in the general
 $w\cdot\lambda<ws\cdot\lambda$ case, to which we now return.\\

 The $B$-modules we will use to resolve $w\cdot \lambda$ will be sums of those of the form
 $w\cdot \lambda\otimes p\tau\otimes \vupperone  $, where
 $\tau$ is in the root lattice and $\vupperone$ is a Frobenius twisted $G$-module
 (restricted to $B$) with $V$ having all weights in the root lattice.

 \begin{lem}\label{resolvetrivialmodule} The trivial module $k=k(0)$ has a positive resolution $k\stackrel{\thicksim}{\rightarrow}K^\bullet,$ where each $K^n$ is a direct sum of $B$-modules $p\tau\otimes \vupperone$, with $\tau$ and $\vupperone$ as above.  Moreover, we may assume all $\tau$ are dominant and that $q\cdot \lambda+p\tau,~ w\cdot \mu+p\tau$, are $ws\cdot\lambda+p\tau$ are also dominant.

 In fact, we can assume $\nu+p\tau$ is dominant for all $\nu$ in any fixed
  finite list of weights.
 \end{lem}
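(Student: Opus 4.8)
The plan is to build the resolution of the trivial module $k=k(0)$ from a well-chosen finite-dimensional $G$-module whose restriction to $B$ surjects onto $k$ with kernel filtered by one-dimensional weight modules, then iterate, and finally apply a Frobenius-twisting/shifting trick to force all the required dominance conditions. First I would pick a $G$-module $V$ (e.g.\ a suitable Weyl module, or a tensor power of the adjoint module) with all weights in the root lattice and with a $B$-surjection $V|_B \twoheadrightarrow k$; restricting a costandard $G$-module with trivial highest weight does not literally work, so I would instead use the standard device: let $V = H^0(\tau_0)^*\otimes H^0(\tau_0)$ for $\tau_0$ dominant in the root lattice, which as a $G$-module contains the trivial submodule $k$ and, after twisting and dualizing, gives a $B$-module surjection onto $k$ whose kernel has a $B$-filtration by one-dimensional modules $p$-multiples can be adjusted away from. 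The point is that any such $B$-module kernel is, after restriction, a successive extension of one-dimensional weight modules $k_B(\sigma)$ with $\sigma$ in the root lattice, and each such $k_B(\sigma)$ is again $p\tau' \otimes (\text{something})$ only up to the Frobenius-twist normalization — so the honest inductive statement to prove is that \emph{every} finite-dimensional $B$-module with weights in the root lattice admits a finite right resolution by direct sums of modules of the form $p\tau\otimes V^{[1]}|_B$ with $\tau$ dominant and $V$ a $G$-module with root-lattice weights.

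The key steps, in order: (1) Reduce to resolving a single one-dimensional $B$-module $k_B(\sigma)$, $\sigma$ in the root lattice — the general finite-dimensional case then follows by dimension induction using the horseshoe lemma on a composition series. (2) Write $\sigma = p\tau - (p\tau - \sigma)$; choosing $\tau$ dominant and large enough, realize $k_B(\sigma)$ as a $B$-quotient of $p\tau \otimes V^{[1]}|_B$ where $V$ is a Weyl (or costandard) $G$-module with root-lattice weights having $p\tau-\sigma$ among its weights with the appropriate extremality. Concretely, $V^{[1]}|_B$ has a $B$-quotient $k_B(p\mu_0)$ for $\mu_0$ the lowest weight of $V$ (its $B$-socle dualizes to a quotient), and tensoring by $k_B(p\tau)$ gives a surjection onto $k_B(p(\tau+\mu_0))=k_B(\sigma)$. (3) Observe the kernel of this surjection is again finite-dimensional with root-lattice weights, so by downward induction on — say — the height of the weights remaining (or on $\dim$, set up so the induction terminates), it too has such a resolution; splice these together to get a finite resolution of $k_B(\sigma)$, hence of $k$. (4) Finally, for the ``moreover'' clauses: given any finite list $L$ of weights $\nu$, replace each building block $p\tau\otimes V^{[1]}$ in the constructed resolution $K^\bullet$ by $p(\tau+\tau_1)\otimes V^{[1]}$ and correspondingly shift — i.e.\ tensor the \emph{whole} resolution $k \xrightarrow{\sim} K^\bullet$ on the right by $k_B(p\tau_1)$ where $\tau_1$ is dominant and chosen so large that $\nu + p\tau_1$ is dominant for every $\nu\in L$ (and for all the $\tau$'s occurring, since only finitely many occur). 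Tensoring a resolution of $k$ by the one-dimensional $k_B(p\tau_1)$ gives a resolution of $k_B(p\tau_1)$; but one must instead tensor so as to still resolve $k$ — the correct move is to note that if $k\xrightarrow{\sim}K^\bullet$ with $K^n=\bigoplus p\tau^{(n)}_j\otimes V^{(n)[1]}_j$, then we are free from the start to have chosen all $\tau^{(n)}_j$ as large and dominant as we wish (step (2) only needs $\tau$ large), and independently to have absorbed a uniform large dominant shift into the weights $\sigma$ being resolved, because resolving $k$ is the same as resolving $k_B(p\tau_1)\otimes k_B(-p\tau_1)$ and we only care about the dominance of $\nu+p\tau$ for the finitely many $\tau$ and $\nu$ in play.

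The main obstacle I expect is step (2)–(3): making the induction \emph{terminate}. A naive downward induction on height of the kernel's weights need not terminate because tensoring with $V^{[1]}$ spreads weights in both directions; the fix is to track instead the quantity that genuinely decreases — e.g.\ organize the resolution of $k_B(\sigma)$ so that after one step the kernel consists of weights of strictly larger height than $\sigma$ but bounded above (because everything sits inside a fixed finite-dimensional $B$-module at each stage), and then use that there are only finitely many root-lattice weights in any bounded interval, so the process stops, yielding a \emph{finite} complex. Equivalently, one can cite that $\mathcal{C}_{\mathbb B}$ restricted to a block has finite global dimension for the relevant truncated subcategory (the one-dimensional weight modules in a height-bounded ideal form a quasi-hereditary setup), which is exactly the kind of statement that fails for the full category but holds here because we only need a resolution adapted to the finite list $L$; this is the technical heart and is where the hypothesis $p>h$ and the normalization of $\tau$ really enter.
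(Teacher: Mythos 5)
Your construction runs in the wrong direction. The lemma asks for a \emph{positive} resolution $k\stackrel{\thicksim}{\rightarrow}K^{\bullet}$, i.e.\ a coresolution $0\to k\to K^0\to K^1\to\cdots$ beginning with an \emph{embedding} of $k$; this is what is needed later to compute the right derived functor $\RIndBG$ degree by degree. Your steps (2)--(3) instead realize $k_B(\sigma)$ as a \emph{quotient} of $p\tau\otimes \vupperone$ and induct on kernels, which produces a left (projective-type) resolution and is useless for $\RIndBG$. The actual argument goes the other way and is much shorter: a finite-dimensional Frobenius-twisted $B$-module $N^{[1]}$ embeds into a finite direct sum of twisted injective hulls $I_\mu^{[1]}=\IndpB(\ell\mu)$ (one $\mu$ for each socle weight), and by (\ref{e:Itwist}) each $I_\mu^{[1]}$ is the direct union of the submodules $V_\nu^{[1]}\otimes k_{\mathbb B}(p(\mu+\nu))$; since $N^{[1]}$ is finite dimensional its image lands in one such stage, which may be taken with $\tau=\mu+\nu$ as large (hence as dominant, and with $\nu'+p\tau$ dominant for any prescribed finite list of $\nu'$) as one likes. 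The cokernel of this embedding is again a finite-dimensional twisted $B$-module, so the process iterates, starting from $N^{[1]}=k$. This key input --- the direct-union description of $I_\mu^{[1]}$ --- is absent from your proposal, which tries to substitute ad hoc surjections from Weyl-module-type objects.

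Two further points. First, your ``technical heart'' about making the induction terminate is a non-issue: the lemma does not claim the resolution is finite, and the construction above naturally yields a bounded-below, possibly infinite complex, exactly as for an injective resolution; indeed the paper stresses elsewhere (Subsection \ref{subsec: summary}) that the relevant finite-global-dimension property \emph{fails} in characteristic $p$, so an argument leaning on it would be suspect. Second, your reduction to resolving an arbitrary one-dimensional $k_B(\sigma)$ with $\sigma$ in the root lattice is too coarse: the building blocks $p\tau\otimes\vupperone$ have all weights in $p\mathbb{Y}$, so only Frobenius-twisted modules can be resolved by them; the induction must be carried out within the class of twisted modules $N^{[1]}$, which is automatically preserved by the cokernels in the correct (embedding) direction. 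Your handling of the ``moreover'' clause by tensoring the whole complex with $k_B(p\tau_1)$ is also unnecessary once one notes that the embedding stage in the direct union can simply be chosen large enough at each step.
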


 \begin{proof}
 Notice that $k$ and all $p\tau\otimes \vupperone$ are Frobenius-twisted $B$-modules.
 Each (Frobenius-)twisted injective $B$-module hull $I_\mu^{[1]}$ for a weight
 $\mu$ in the root lattice is a direct union of modules $p(\mu+\sigma)\otimes
 \vupperone$; see({e:Itwist}). Thus, any finite dimensional $B$-module
 $N^{[1]},$ with $N$ having all weights in the root lattice, can be embedded in a direct sum of these,
 with the weights $\tau=\mu+\sigma$, as large as we like.  The cokernel of the embedding will also be a finite dimensional twisted $B$-module of the same form as $N^{[1]}$ above.  Hence the process can continue.  Starting with $k=k(0)$ in the initial role of $N^{[1]},$ we obtained the desired resolution.
 \end{proof}

 We now describe some of the main issues we face at this point.  Let $\tau$ be any weight in the root lattice such that $w\cdot\lambda+p\tau,~   w\cdot\mu+p\tau$, and $ws\cdot \lambda+p\tau$ are dominant, as well as $p\tau.$
Form the composite of the adjunction map
$$\IndBG(w\cdot\lambda+p\tau)\longrightarrow T^\lambda_\mu T^\mu_\lambda \IndBG(w\cdot \mu+p\tau)$$
and the usual isomorphism
$$\Tmulam\Tlammu\IndBG(w\cdot\lambda+p\tau)\tilde{\rightarrow}\Tmulam\IndBG(w\cdot\mu+p\tau)$$
Note $w\cdot\lambda+p\tau=w^\prime\cdot\lambda$ and $w\cdot\mu + p
\tau = w^\prime \cdot \mu$ for $w^\prime$, the composite of $w$
followed by translation by $p\tau$.  We will discuss the ``usual"
isomorphism later in some details, but it is exact nature may be
regarded as unknown at the moment, together with any details
regarding the adjunction map.  We do, however, note that the latter
map is nonzero.  The composite then gives a nonzero map
\begin{equation}\label{e:star2}
\IndBG(w\cdot\lambda+p\tau)\longrightarrow\Tlammu\IndBG(w\cdot\mu+p\tau)
\end{equation}

``Another'' map with the same domain and target objects is obtained,
as in \ref{claim},  by applying $pr_\lambda\IndBG(-)$ to the
sequence $0\longrightarrow M\longrightarrow L^*\otimes w\cdot\mu$
there, but with $w^\prime \cdot \mu = w\cdot \mu + p\tau$ playing
the role of $w\cdot \mu$. \underline{We will call the resulting map
the ``Jantzen map"} Jan$_Y^{w\cdot\lambda}$ for $Y$ the $B$-module
$p\tau = k(p\tau)$. ( We will shortly generalize this notation.) To
discuss $Jan_Y^{w\cdot\lambda}$ and (\ref{e:star2}) in a parallel
way, \underline{denote the latter as} $Adj_Y^{w\cdot\lambda}$ (for
the same $Y$). Then the discussion at the end of
 \ref{claim} gives, using $w^\prime$ in place of $w$ there:\\

\begin{prop}\label{prop:app1} The maps $Adj_Y^{w\cdot\lambda}$ and $Jan_Y^{w\cdot\lambda}$
differ by at most a nonzero scalar multiple, for $Y=p\tau,$ when
$p\tau , ~ w\cdot \lambda +p\tau, ~ w\cdot \mu + p\tau,$ and $ws\cdot p\tau$ are dominant,
and $\tau$ with the root lattice.
\end{prop}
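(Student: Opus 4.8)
The plan is to derive Proposition~\ref{prop:app1} as an immediate consequence of the dominant case of the main claim already settled in~\ref{claim}, by taking there the element $w'$ introduced just above Proposition~\ref{prop:app1} in place of $w$; recall $w'=p\tau\rtimes w$ is translation by $p\tau$ (which lies in $W_p$ because $\tau$ is in the root lattice) followed by $w$. First I would record the elementary identities $w'\cdot\lambda=w\cdot\lambda+p\tau$, $w'\cdot\mu=w\cdot\mu+p\tau$ and $w's\cdot\lambda=ws\cdot\lambda+p\tau$. By hypothesis these three weights are dominant, and since translation by $p\tau$ leaves the dominance order unchanged, the standing assumption $ws\cdot\lambda>w\cdot\lambda$ of this appendix gives $w's\cdot\lambda>w'\cdot\lambda$. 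Hence the whole geometric setup of~\ref{claim} --- $\lambda$ regular, $\mu$ on a single wall with stabiliser $\{1,s\}$, the module $L=L(\nu_{1})$, and the ``out of the wall'' case with the larger weight on the sub-object side --- applies verbatim with $w'$ in place of $w$; in particular the short exact sequence $0\to M\to L^{*}\otimes(w'\cdot\mu)\to M'\to 0$ of~\ref{claim} exists, and the attached distinguished triangle $(*)$ reads $\IndBG(w'\cdot\lambda)\to\Tmulam\IndBG(w'\cdot\mu)\to\IndBG(w's\cdot\lambda)\to\cdots$.

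Next I would match the two maps being compared with those of~\ref{claim}. By construction, $Jan_{Y}^{w\cdot\lambda}$ (for $Y=k(p\tau)$) is the result of applying $pr_{\lambda}\IndBG(-)$ to that short exact sequence, i.e.\ it is the left-hand arrow of $(*)$, whose middle term $\Tmulam\IndBG(w'\cdot\mu)$ is identified with $\Tmulam\Tlammu\IndBG(w'\cdot\lambda)$ via the ``to the wall'' isomorphism $\Tlammu\IndBG(w'\cdot\lambda)\cong\IndBG(w'\cdot\mu)$. Likewise, by construction, $Adj_{Y}^{w\cdot\lambda}$ is the composite of the adjunction morphism $\IndBG(w'\cdot\lambda)\to\Tmulam\Tlammu\IndBG(w'\cdot\lambda)$ with this same identification. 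So Proposition~\ref{prop:app1} is exactly the assertion of the dominant case of~\ref{claim} for $w'$: the left-hand map of $(*)$ agrees, up to a nonzero scalar, with the adjunction map.

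For completeness I would rerun that short argument in the present notation. Since $w'\cdot\lambda$ and $w's\cdot\lambda$ are dominant, Kempf's theorem gives $\IndBG=\RInd_{B}^{G}$ on these weights, and adjointness of $\RInd_{B}^{G}$ with restriction yields $\Hom_{D^{b}(G)}(\IndBG(w'\cdot\lambda),\IndBG(w's\cdot\lambda))=\Hom_{B}(\IndBG(w'\cdot\lambda),w's\cdot\lambda)$, which vanishes because every weight of $\IndBG(w'\cdot\lambda)$ is $\leq w'\cdot\lambda<w's\cdot\lambda$. Applying $\Hom_{D^{b}(G)}(\IndBG(w'\cdot\lambda),-)$ to $(*)$ then shows that every morphism $\IndBG(w'\cdot\lambda)\to\Tmulam\IndBG(w'\cdot\mu)$ factors through $Jan_{Y}^{w\cdot\lambda}$; and since $\Hom_{B}(\IndBG(w'\cdot\lambda),w'\cdot\lambda)\cong k$, such a factorisation is unique up to a scalar. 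Therefore $Adj_{Y}^{w\cdot\lambda}$ is a scalar multiple of $Jan_{Y}^{w\cdot\lambda}$, and the scalar is nonzero because $Adj_{Y}^{w\cdot\lambda}$ --- equivalently, the adjunction unit $\mathrm{id}\to\Tmulam\Tlammu$ evaluated on the costandard module $\IndBG(w'\cdot\lambda)$ --- is nonzero.

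I do not expect a genuine obstacle here: all the substantive work lives in~\ref{claim}, and what remains is bookkeeping, namely (i) checking that the substitution $w'=p\tau\rtimes w$ faithfully replays that setup (the dominance hypotheses transfer, and $w'$ occupies exactly the role of $w$), and (ii) matching $Jan_{Y}^{w\cdot\lambda}$ and $Adj_{Y}^{w\cdot\lambda}$ with the two maps compared there. The one input drawn from outside is the standard non-vanishing of the adjunction morphism $\mathrm{id}\to\Tmulam\Tlammu$ on a costandard module, a general feature of translation to and from a wall.
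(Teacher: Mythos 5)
Your proposal is correct and is essentially identical to the paper's own proof: there, Proposition \ref{prop:app1} is deduced by invoking ``the discussion at the end of \ref{claim}'' with $w'$ (the composite of $w$ with translation by $p\tau$) in place of $w$, which is precisely your substitution. Your rerun of that dominant-case argument --- Kempf's theorem plus adjointness to show $\Hom_{D^b(G)}(\IndBG(w'\cdot\lambda),\IndBG(w's\cdot\lambda))=0$, so every map into the middle term of $(*)$ factors through the Jantzen map, together with $\Hom_B(\IndBG(w'\cdot\lambda),w'\cdot\lambda)\cong k$ and the nonvanishing of the adjunction unit --- matches the paper's reasoning step for step.
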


Now let ${\mathcal Y}^{w\cdot\lambda}$ denote the full subcategory of $B$-modules $p\tau\otimes \vupperone$ with $p\tau$ as in the proposition and $\vupperone$ a finite dimensional Frobenius twisted $G$-module with tall weights of $V$ in the root lattice.  Also write $\vupperone$ for it's restriction $Y^{[1]}|_B$ depending on context.\\

\underline{We will usually abbreviate} ${\mathcal Y}:={\mathcal Y}^{w\cdot \lambda}.$ \\

 Our next goal is to extend the maps $Adj_Y^{w\cdot\lambda}, ~ Jan_Y^{w\cdot\lambda}$ to all $y \in {\mathcal Y}$ and regard then as natural transformation $Adj^{w\cdot\lambda}=\{Adj^{w\cdot\lambda}_Y\}_{Y\in {\mathcal Y}},$
$Jan^{w\cdot\lambda}=\{Jan^{w\cdot\lambda}_Y\}_{Y\in {\mathcal Y}}$
$$Adj^{w\cdot\lambda}:\IndBG(w\cdot\lambda\otimes -)\longrightarrow\Tmulam\IndBG(w\cdot\mu\otimes-) ~~~~{\rm and}$$
$$Jan^{w\cdot\lambda}:\IndBG(w\cdot\lambda\otimes -)\longrightarrow\Tmulam\IndBG(w\cdot\mu\otimes-) $$

These functors and natural transformations will then automatically extend to $add\,\mathcal{Y},$ the additive full subcategory of $B$-mod consisting of all finite direct sums of objects in ${\mathcal Y}$.  Notice that all the $K^n$ from the previous lemma belong to $add\, {\mathcal Y}$, so that the (to be demonstrated) naturality will result in two maps of complexes\\
$$\IndBG(w\cdot\lambda\otimes K^\bullet)\longrightarrow\Tmulam\IndBG(w\cdot\mu\otimes K^\bullet)$$ resulting in maps
$$\RIndBG(w\cdot\lambda)\longrightarrow \Tmulam\RIndBG(w\cdot\mu)$$
to which we want to compare.  We will return to this point after achieving the goal above.  \\

We treat first the Jantzen maps.\\

\underline{ The Jantzen maps $Jan_Y^{w\cdot \lambda}(y\in {\mathcal Y})$ and their naturality}\\

Recall the short exact sequence \\
$$0\longrightarrow M \longrightarrow L^*\otimes w\cdot \mu \longrightarrow M^\prime \longrightarrow 0$$
in \ref{claim}.  Let $Y=p\tau\otimes \vupperone\in {\mathcal Y}.$
Tensor on the right with $Y$ and apply $\RIndBG(-)$ to get a
distinguished triangle

$$\cdots\lr \RIndBG(M\otimes Y) \lr \RIndBG(L^*\otimes w\cdot\mu \otimes Y)\lr \RIndBG(M^\prime\otimes Y)\lr\cdots$$

The middle term naturally identifies with $L^*\otimes
\RIndBG(w\cdot\mu\otimes\gamma)$ through the ``generalized tensor
identity" (discussed in this paper in Remark \ref{r:Kempf}(ii)).
Note $L^*\otimes \RIndBG(w\cdot\mu\otimes\gamma) \cong L^*\otimes
\IndBG(w\cdot\mu\otimes\gamma)$, by the construction of ${\mathcal
Y}.$  As discussed earlier in this appendix , $M$ has one weight
$\nu\in W_{aff}\cdot \lambda$, namely $\nu=w\cdot\lambda$, appearing
with multiplicity 1.  Also note that $\nu+\eta$ is in the same (dot
action) affine Weyl group orbit as $\nu$ for any weight $\nu$ adn
weight $\eta$ of ${\mathcal Y}$. Consequently, $pr_\lambda
\RIndBG(M\otimes \gamma)\cong \RIndBG(w\cdot\lambda\otimes Y)\cong
\IndBG(w\cdot\lambda\otimes Y)$. A specific construction of an
isomorphism may be given from any full flag of $B$-submodules of $M$
with one dimensional sections.  If such a flag is fixed, we obtain
an isomorphism natural in $Y$ of $Y\in {\mathcal Y}$. Similar
remarks apply for $M^\prime$ and isomorphism  $pr_\lambda
\RIndBG(M^\prime\otimes Y)\cong \IndBG(ws\cdot \lambda\otimes Y)$.

 As a consequence of the discussion above, we have exact
sequences, natural in $Y\in {\mathcal Y}$
%\begin{tcolorbox}
$$ 0\lr\IndBG(w\cdot\lambda\otimes Y)\lr\Tmulam\IndBG(w\cdot\mu\otimes Y)\lr\IndBG(ws\cdot\lambda\otimes Y)\lr 0$$
~~We define the map on the left (ignoring the obvious zero map) to
be $Jan_Y^{w\cdot\lambda}.$
%\end{tcolorbox}

\smallskip
  We summarize some of its main properties (in addition to the above exact sequence).

\begin{prop}\label{prop:app2} (i) The maps $Jan_Y^{w\cdot \lambda}, ~ y\in {\mathcal Y}$, collectively define a natural transformation of functors.
$$\IndBG (w\cdot\lambda\otimes -) \lr \Tmulam\IndBG(w\cdot\mu\otimes-)$$
on the category ${\mathcal Y}$ (whose morphisms are $B$-maps).

 (ii) For any fixed $Y=p\tau \otimes \vupperone$, there is a commutative diagram with ``obvious" vertical isomorphisms, natural in $V$

 $$\begin{CD}
 \IndBG(w\cdot\lambda\otimes \tau \otimes \vupperone ) @>Jan_Y^{w\cdot\lambda}>> \Tmulam\IndBG(w\cdot\mu\otimes p\tau \otimes \vupperone\\
 @VV\cong V     @VV\cong V\\
  \IndBG(w\cdot\lambda\otimes p \tau)\otimes \vupperone @>Jan_{p\tau}^{w\cdot \lambda} \otimes {\small V^{[1]}} >> (\Tmulam(w\cdot\mu\otimes p \tau))\otimes \vupperone
  \end{CD}$$
 \end{prop}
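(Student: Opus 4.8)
The plan is to prove both parts purely by tracking functoriality through the construction of $Jan_Y^{w\cdot\lambda}$ recalled just above, with no new computation. Recall that $Jan_Y^{w\cdot\lambda}$ is obtained by tensoring the fixed $B$-module short exact sequence $0\to M\to L^*\otimes w\cdot\mu\to M'\to 0$ on the right with $Y$, applying $pr_\lambda\RIndBG(-)$ to get a distinguished triangle, observing that all three of its terms sit in cohomological degree $0$ — this uses Kempf vanishing for the dominant weights $w\cdot\lambda+p\tau$, $w\cdot\mu+p\tau$, $ws\cdot\lambda+p\tau$ together with the generalized tensor identity of Remark \ref{r:Kempf}(b),(d) to strip off the $G$-module factor $V^{[1]}$ — so that one gets a short exact sequence, and finally identifying its three terms with $\IndBG(w\cdot\lambda\otimes Y)$, $\Tmulam\IndBG(w\cdot\mu\otimes Y)$ and $\IndBG(ws\cdot\lambda\otimes Y)$. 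The outer two identifications come from the chosen full $B$-flags of $M$ and $M'$: after $pr_\lambda\RIndBG(-)$ only the graded piece whose section weight lies in $W_{aff}\cdot\lambda$ survives (by Corollary \ref{c:qcohom}), so $pr_\lambda\RIndBG(M\otimes Y)$ is canonically the $\RIndBG$ of that surviving section; the middle identification is the generalized tensor identity together with the definition $\Tmulam=pr_\lambda(L^*\otimes-)\circ pr_\mu$ and the fact that $\RIndBG(w\cdot\mu\otimes Y)$ already lies in the $\mu$-block and in degree zero.

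For part (i), I would simply observe that \emph{every} ingredient of that construction is functorial in $Y$ over the category $\mathcal{Y}$ (whose morphisms are $B$-maps): tensoring the fixed sequence with $Y$, the functors $\RIndBG$ and $pr_\lambda$, the generalized tensor identity (natural in its $B$-module slot), and the passage from a filtered object all of whose graded pieces but one vanish to that surviving piece (natural for morphisms of filtered objects). Since $Jan_Y^{w\cdot\lambda}$ is the left-hand arrow of the resulting short exact sequence, $\{Jan_Y^{w\cdot\lambda}\}_{Y\in\mathcal{Y}}$ is then a natural transformation of functors on $\mathcal{Y}$, and it extends additively to $\mathrm{add}\,\mathcal{Y}$ (in particular it applies to the modules $K^n$ of Lemma \ref{resolvetrivialmodule}).

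For part (ii), I would specialize $Y=p\tau\otimes V^{[1]}$ and use that the short exact sequence building $Jan_Y^{w\cdot\lambda}$ is precisely the one building $Jan_{p\tau}^{w\cdot\lambda}$ tensored on the right with $V^{[1]}$. Because $V$ has weights in the root lattice, $-\otimes V^{[1]}$ commutes with $\RIndBG$ (Remark \ref{r:Kempf}(b)), with the block projections $pr_\lambda$ and $pr_\mu$ (as in the discussion around Corollary \ref{adjcor}), hence with $\Tmulam$, and it carries the fixed flag of $M\otimes p\tau$ to the fixed flag of $M\otimes Y$ compatibly with all the canonical identifications above; likewise for $M'$. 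Applying $pr_\lambda\RIndBG(-)$ therefore turns the $V^{[1]}$-twisted sequence into ``the sequence for $Y=p\tau$, tensored with $V^{[1]}$'', so that under the obvious generalized-tensor-identity isomorphisms $\IndBG(w\cdot\lambda\otimes p\tau\otimes V^{[1]})\cong\IndBG(w\cdot\lambda\otimes p\tau)\otimes V^{[1]}$ and $\Tmulam\IndBG(w\cdot\mu\otimes p\tau\otimes V^{[1]})\cong(\Tmulam\IndBG(w\cdot\mu\otimes p\tau))\otimes V^{[1]}$ the map $Jan_Y^{w\cdot\lambda}$ is carried to $Jan_{p\tau}^{w\cdot\lambda}\otimes V^{[1]}$. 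That is exactly the commutativity of the displayed square; naturality in $V$ is immediate since all four corners and both the horizontal and the vertical maps are built functorially from $V$.

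The main obstacle I anticipate is not any individual functoriality check but pinning down the ``canonical isomorphism'' used in the flag identifications: one needs the (standard, but worth stating carefully) fact that in a triangulated category a filtered object all of whose graded pieces except one vanish is canonically isomorphic — via the structure maps of the filtration — to that surviving piece, and that this isomorphism is natural for morphisms of filtered objects. Once that is in hand, applied after $pr_\lambda\RIndBG(-)$ to the flag of $M\otimes Y$ (resp. $M'\otimes Y$) and used functorially in $Y$ and in $V$, both parts follow from the bookkeeping above; the only other point to watch is that the generalized tensor identity is invoked in its correct form (with the $G$-module on the factor being stripped off) and hence is natural in the remaining $B$-module variable.
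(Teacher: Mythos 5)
Your proposal is correct and follows essentially the same route as the paper: part (i) is exactly the observation that every step of the construction (tensoring the fixed sequence $0\to M\to L^*\otimes w\cdot\mu\to M'\to 0$ with $Y$, applying $pr_\lambda\RIndBG$, and the fixed-flag identifications) is natural in $Y$, and part (ii) is obtained by comparing the sequence for $Y=p\tau\otimes V^{[1]}$ with the sequence for $p\tau$ tensored by $V^{[1]}$, using naturality of the generalized tensor identity and the flag of $M\otimes p\tau$ to pull $V^{[1]}$ out through $\RIndBG$, the block projections, and $\Tmulam$. The "obstacle" you flag — the canonicity and $Y$-naturality of the isomorphism extracting the unique surviving graded piece after $pr_\lambda\RIndBG$ — is precisely the point the paper handles by fixing a flag once and for all and invoking naturality on its inclusion and quotient maps.
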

 \begin{proof}
 Part(i) has been proved already.  For part (ii), it is enough to check the  commutativity after identifying the right hand terms with $pr_\lambda \IndBG(L^*\otimes w \cdot \mu \otimes p\tau \otimes \vupperone)$ and $pr_\lambda \IndBG(L^*\otimes w\cdot \mu \otimes p\tau)\otimes\vupperone$, respectively.  The top row in this revised diagram may be obtained by applying $pr_\lambda \RIndBG(-)$ to the inclusion $M\otimes \gamma \lr L^*\otimes w\cdot \mu \otimes \gamma$, by construction.  Similarly the bottom row ay be obtained by applying $pr_\lambda \RIndBG(-)$ to the inclusion $M\otimes p\tau \le L^*\otimes w\cdot \mu$, then tensoring on the right with $\vupperone$.
 Now, naturality of the generalized tensor identity gives commutativity of the closed rectangle in the diagram below.

 $$\begin{CD}
 \IndBG(w\cdot\lambda \otimes p\tau \otimes \vupperone) @>\cong>> pr_\lambda \RIndBG(M\otimes Y) @>>>
 pr_\lambda \RIndBG(L^*\otimes w\cdot \mu \otimes p \tau \otimes \vupperone)\\
 @. @VV\cong V @VV\cong V\\ @. pr_\lambda(\RIndBG(M\otimes p\tau)\otimes\vupperone) @>>> pr_\lambda(\RIndBG(L^*\otimes w\cdot\mu\otimes p\tau)\otimes\vupperone)\\
 @. @VV\cong V @VV\cong V\\
 \IndBG(w\cdot\lambda\otimes p \tau )\otimes\vupperone @>\cong>> pr_\lambda \RIndBG(M\otimes p \tau)\otimes \vupperone @. pr_\lambda \RIndBG(L^*\otimes w \cdot \mu \otimes p \tau)\otimes\vupperone
 \end{CD}$$

 The identity and its naturality may also be used to complete the open rectangle on the left
 to a commutative rectangle\footnote{It is is carried out by using a $B$-module flag of $M\otimes p\tau$ and applying
 naturality to the various inclusion and factor maps involved.}, using the ``obvious" tensor identity isomorphism for a vertical map.
 Finally, all the ``$\RIndBG$" symbols  in the diagram may be replaced with ``$\Ind_B^G,$" and the bottom row completed to make a lower-right commutative rectangle.\\

 The bottom row then agrees with that of the revised diagram.
 That is, its composition gives the composition of $Jan _{p\tau}^{w\cdot \lambda}\otimes
 \vupperone$ and the identification $(\Tmulam \IndBG(w\cdot\mu\otimes p\tau)\otimes
 \vupperone\cong pr_\lambda \IndBG(L^*\otimes w\cdot\mu\otimes p \tau )\otimes\vupperone.$
 We have now shown that both top and bottom rows of the now completed and commutative outer
 rectangle agree with those of the revised version of the diagram in (ii).
 The left hand columns also agree, and the ``obvious" isomorphism on the right in the outer rectangle define,
 through composition, an obvious isomorphism in the ``revised'' diagram, making the latter commutative.
 In the original diagram in ii), the composition is
$$\Tmulam\IndBG(w\cdot \mu \otimes p \tau \otimes \vupperone)\cong
\Tmulam(\IndBG(w\cdot\mu \otimes p \tau)\otimes
\vupperone)\cong(\Tmulam\IndBG(w\cdot\mu\otimes
p\tau))\otimes\vupperone$$
which may be taken as the definition of
the right hand column ``obvious'' isomorphism in the original
diagram in ii).  The latter diagram then becomes commutative, and
the proposition is proved.
 \end{proof}

 This completes our treatment of $Jan^{w\cdot \lambda}$.  Before turning to $Adj^{w\cdot \lambda},$
 we discuss some isomorphisms $\Tlammu\IndBG(w\cdot\lambda\otimes Y)\stackrel{\sim}{\longrightarrow}
 \IndBG(w\cdot\mu\otimes Y)$ and
 $\Tmulam \Tlammu \IndBG(w\cdot \lambda \otimes Y)\stackrel{\sim}{\lr} \Tmulam\IndBG(w\cdot\mu \otimes Y),~ Y\in
 {\mathcal Y}$, which  enter into the definition and discussion of $Adj^{w\cdot \lambda}$.
 \underline{We will call the first isomorphism above} $Iso_\gamma^{w\cdot \lambda}$, \underline{and the second},
 $TIso_Y^{w\cdot \lambda}(=\Tmulam\circ Iso_Y^{w\cdot \lambda}).$\\

 The isomorphism $Iso_Y^{w\cdot \lambda}$ is obtained in a similar spirit to our construction above of the
 isomorphism $pr_\lambda \RIndBG (M \otimes \gamma) \cong \IndBG ( w\cdot \lambda \otimes Y)$, except we
 apply $pr_\mu \RIndBG$ to $L \otimes\,-$. The module $L $ has only one weight, which we called $\lambda_{\ell}$
 at the beginning of this appendix (in the discussion of the ``to a wall" isomorphism), with
 the property that $\lambda_{\ell}+w\cdot \lambda$ belongs to $W_{aff}\cdot \mu$.
 The same is true if any weight of $Y$ is added to $w\cdot \lambda$.
 We have $w\cdot \mu = \lambda_\ell + w\cdot \lambda, $ and so $ w \cdot\mu \otimes Y \cong
 \lambda _\ell \otimes w \cdot \lambda \otimes Y.$ The weight $\lambda_\ell $ appears with  multiplicity one in $L$, so

 $$pr_\mu \RIndBG(L \otimes w\cdot \lambda \otimes Y) \cong \RIndBG(\lambda_\ell \otimes w \cdot \lambda \otimes Y)\cong \RIndBG(w \cdot \mu \otimes Y) \cong \IndBG (w\cdot \mu\otimes Y)$$

 The first isomorphism can be constructed by using  any $B$-flag of $L$ with $\lambda_\ell$ as a section and applying $pr_\mu \RIndBG$ to the various sub and factor modules associated to the flag terms.  If we fix the flag and procedure, the first isomorphism becomes natural in $Y\in {\mathcal Y}$.  The other isomorphisms obviously are natural in $Y$, as are the isomorphisms
 $$\Tlammu\IndBG(w\cdot \lambda \otimes Y) = pr_\mu ( L\otimes \IndBG (w\cdot \lambda \otimes Y)) \cong pr_mu(\IndBG(L\otimes w \cdot \lambda \otimes Y))$$
 $$ {\rm and} ~~ pr_mu (\IndBG(L\otimes w\cdot \lambda \otimes Y))\cong pr_\mu (\RIndBG (L \otimes w \cdot \lambda \otimes Y)).$$

 This latter isomorphism arises from the vanishing of $(R^n\IndBG)(L\otimes w \cdot \lambda \otimes Y) =0$ for $n>0$ ( a consequence of our construction of ${\mathcal Y}$ and the generalized tensor identity).\\

The composition of all these isomorphisms (in an evident order) is defined to be\\
$$Iso_Y^{w\cdot \lambda}:\Tlammu\IndBG(w\cdot \lambda \otimes Y) \stackrel{\sim}{\lr}\IndBG(w\cdot \mu \otimes Y)$$ \\

The construction shows it is natural in  $Y\in {\mathcal Y}$ as is $TIso_Y^{w\cdot \lambda}:=\Tmulam \circ Iso_Y^{w\cdot \lambda}.$ This gives part (i) of the following proposition.\\

\begin{prop}\label{prop:app3}
(i) The maps $Iso_Y^{w\cdot\lambda}$ and $TIso_Y^{w\cdot \lambda} ~
(Y\in {\mathcal Y})$ collectively define natural isomorphisms of
functors on the category $\mathcal{Y}$

$$\Tlammu\Ind(w\cdot \lambda \otimes -) \lr \IndBG ( w \cdot \mu \otimes -)~~~{\rm and }~~~ \Tmulam\Tlammu\Ind (w \cdot \lambda \otimes -) \lr \Tmulam \IndBG (w\cdot \mu  \otimes -)$$

(ii) For any fixed $Y =p\tau \otimes \vupperone \in {\mathcal Y},$
these are commutative diagrams, with ``obvious'' vertical
isomorphisms, natural in $V$.

$$\xymatrix{
\Tlammu\Ind(w\cdot \lambda \otimes p \tau \otimes \vupperone) \ar[rrr]^{Iso_Y^{w\cdot \lambda}}\ar[d]^\cong &&&\IndBG (w\cdot\mu \otimes p\tau \otimes \vupperone)\ar[d]^\cong\\
    \Tlammu\Ind(w\cdot \lambda \otimes p \tau) \otimes \vupperone \ar[rrr]^{Iso_Y^{w\cdot \lambda}\otimes
        {\tiny \vupperone}} &&&\IndBG (w\cdot\mu \otimes p\tau) \otimes \vupperone
        }$$
        \center{and}
    $$\xymatrix{
\Tmulam\Tlammu\Ind(w\cdot \lambda \otimes p \tau \otimes \vupperone) \ar[rrr]^{TIso_Y^{w\cdot \lambda}}\ar[d]^\cong &&&\Tmulam\Ind (w\cdot\mu \otimes p\tau \otimes \vupperone)\ar[d]^\cong\\
    \Tmulam\Tlammu\Ind(w\cdot \lambda \otimes p \tau) \otimes \vupperone \ar[rrr]^{TIso_Y^{w\cdot \lambda}\otimes {\tiny \vupperone}} &&&\Tmulam\Ind (w\cdot\mu \otimes p\tau) \otimes \vupperone
        }$$
        \end{prop}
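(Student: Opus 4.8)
The plan is to prove Proposition \ref{prop:app3} in close parallel to the proof of Proposition \ref{prop:app2}, since both parts assert that certain maps built out of translation functors and induction, initially defined at the level of one-dimensional ``$p\tau$'' modules, extend naturally over all of $\mathcal{Y}$ and are compatible with tensoring by a twist $V^{[1]}$. Part (i) has already been observed during the construction: each of the constituent isomorphisms used to build $Iso_Y^{w\cdot\lambda}$ — namely $pr_\mu\RIndBG(L\otimes w\cdot\lambda\otimes Y)\cong\RIndBG(\lambda_\ell\otimes w\cdot\lambda\otimes Y)$ (obtained from a fixed $B$-flag of $L$ with $\lambda_\ell$ a section, applying $pr_\mu\RIndBG$ to the sub/factor modules), the identification $\lambda_\ell\otimes w\cdot\lambda\otimes Y\cong w\cdot\mu\otimes Y$ of $1$-dimensional-twist-by-$Y$ modules, and the vanishing $(R^n\IndBG)(L\otimes w\cdot\lambda\otimes Y)=0$ for $n>0$ coming from the construction of $\mathcal{Y}$ and the generalized tensor identity (Remark \ref{r:Kempf}(b)) — is manifestly functorial in $Y\in\mathcal{Y}$. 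Composing natural isomorphisms gives a natural isomorphism, and then $TIso_Y^{w\cdot\lambda}:=\Tmulam\circ Iso_Y^{w\cdot\lambda}$ is natural as well since $\Tmulam$ is a functor. So the real content is part (ii).

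For part (ii), I would fix $Y=p\tau\otimes V^{[1]}$ and expand $Iso_Y^{w\cdot\lambda}$ as the composite of its constituent maps, exactly as in the text; the target diagram then splits into a vertical stack of smaller squares, one for each constituent map, and it suffices to check each one commutes up to the ``obvious'' tensor-identity isomorphisms. The key ingredient, just as in Proposition \ref{prop:app2}, is the naturality of the generalized tensor identity $\RIndBG(M\otimes Y)\cong \RIndBG(M)\otimes Y$ when $Y = (\,\cdot\,)^{[1]}$ is a twist of a $G$-module (Remark \ref{r:Kempf}(b),(d)): this is what lets one slide $V^{[1]}$ past $\RIndBG$, past the block projection $pr_\mu$ (recall block projections commute with $-\otimes V^{[1]}$, since the affine Weyl group contains translations by $p$-multiples of the root lattice — this is spelled out in Corollary \ref{adjcor}), and past the $B$-flag-induced isomorphisms. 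Concretely, one writes down the cube whose front face is the top square of the desired diagram, whose back face is the bottom square, and whose four connecting edges are the ``obvious'' rearrangement isomorphisms $M\otimes p\tau\otimes V^{[1]}\cong (M\otimes p\tau)\otimes V^{[1]}$ etc.; naturality of the tensor identity forces all side faces to commute, and then the back face commutes iff the front face does. One reduces the $Iso$ diagram first and then applies $\Tmulam$ throughout to get the $TIso$ diagram, using that $\Tmulam$ commutes with $-\otimes V^{[1]}$ (again because block projections do, together with $L^*\otimes(X\otimes V^{[1]})\cong (L^*\otimes X)\otimes V^{[1]}$); alternatively one rebuilds the $TIso$ cube directly the same way.

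The naturality in $V$ (as opposed to just in $Y$ as an abstract object of $\mathcal{Y}$) is handled exactly as in the footnote to Proposition \ref{prop:app2}: one uses a $B$-module flag of $L$ (resp.\ of $M$) with one-dimensional sections, applies the naturality of the relevant functors and of the generalized tensor identity to all the inclusion and factor maps arising from the flag, and assembles the pieces; since $-\otimes V^{[1]}$ is exact it preserves all these flags and sections, so the whole construction is visibly functorial in $V^{[1]}$.

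The step I expect to be the main obstacle is bookkeeping rather than conceptual: keeping track of the precise order in which $L$ (or $L^*$), $w\cdot\lambda$ (resp.\ $w\cdot\mu$), $p\tau$ and $V^{[1]}$ are tensored, and making sure the ``obvious'' isomorphisms on the vertical edges are literally the ones induced by associativity and by commutativity of block projection with $-\otimes V^{[1]}$, so that the cube-face argument really closes up. As in Appendix A (Remark \ref{r:quantumA}) and in Proposition \ref{prop:app2}, one should work throughout with a consistent placement of the twist factor on the right — in the quantum case with $\otimes$ replaced by $\otimes^{op}$ — so that no hidden cocommutativity is used; with that convention fixed, each small square is a routine diagram chase and the proposition follows by composing them.
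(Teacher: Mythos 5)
Your proposal follows essentially the same route as the paper's own proof: part (i) is read off from the construction, the $TIso$ diagram is deduced from the $Iso$ diagram by applying $\Tmulam$ and commuting it past $-\otimes V^{[1]}$, and the $Iso$ diagram is verified by factoring $Iso_Y^{w\cdot\lambda}$ into its constituent steps and checking a stack of small squares via naturality of the generalized tensor identity, compatibility of block projections with $-\otimes V^{[1]}$, and the flag-induced identifications. The only place the paper is more specific is the square where the two tensor identities (pulling $L$ in and pulling $V^{[1]}$ out) must be shown compatible, which it settles by observing all four corners carry the same evaluation map to a common target; your appeal to "naturality" there is the right idea but would need this extra word of justification when written out.
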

 \begin{proof}

 Part (i) already has been proved. Next, note that a commutative lower diagram  in
 (ii) can be obtained by first applying $\Tmulam$ to a commutative upper diagram,
 then using the natural isomorphism
 $\Tmulam(-\otimes \vupperone)\cong\Tmulam(-\otimes \vupperone$
 on the lower row of the upper diagram.  the reader may convince him/her self
 that the entire procedure preserves the ``obvious" property of the vertical maps!\\

 Thus, it is suffice to treat the upper diagram  in (ii).
 The first thing to do here is to note the ``obvious"isomorphism
 $\Tlammu\Ind(w\cdot \lambda \otimes p\tau \otimes
 \vupperone)\cong\Tlammu(\IndBG (w\cdot \lambda \otimes p\tau)\otimes\vupperone)\cong
 \Tlammu\Ind(w\cdot\lambda\otimes p \tau)\otimes \vupperone.$
 This gives the first column in the upper diagram.  The isomorphism may be regarded as the (by now ``obvious'') process of ``pulling out'' $\vupperone$'', from inductions of tensor products, block projection or translation functors, or some combination of these operators.  The row of isomorphism above requires two steps to fully ``pullout'' $\vupperone$.  If we continue with the several steps required to define $Iso_Y^{w\cdot \lambda}$, we see at every step along the way there is an opportunity to ``pull out'' $\vupperone$.   This
 gives a series of possibly commutative diagrams, written below in top to
 bottom order.
%Though physical labels are not provided, we think of these commutative diagrams as labeled $(1), (2), \cdots, (6)$.

$$\xymatrix{
\Tlammu \IndBG _\mu(w\cdot \lambda \otimes p\tau \otimes \vupperone) \ar[r]^{\cong}\ar@{=}[d]_{{\bf(1)}\hspace{1.3in}} &  \Tlammu \IndBG(w\cdot \lambda \otimes p \tau)\otimes \vupperone \ar@{=}[d] \\
 pr_\mu (L\otimes \IndBG(w\cdot\lambda\otimes p\tau \otimes \vupperone) \ar[d]_{\bf{(2)}\hspace{1.3in}}^{\cong}\ar[r]^{\cong} & pr_\mu (L\otimes
\IndBG(w\cdot\lambda\otimes p\tau)) \otimes \vupperone \ar[d]^{\cong}\\
 pr_\mu ( \IndBG(L\otimes w\cdot\lambda\otimes p\tau \otimes
\vupperone) \ar[d]_{\bf{(3)}\hspace{1.3in}}^{\cong}\ar[r]^{\cong} &  pr_\mu ( \IndBG(L\otimes w\cdot\lambda\otimes p\tau)) \otimes \vupperone \ar[d]^{\cong}\\
 pr_\mu  (\RIndBG(L\otimes w\cdot\lambda\otimes p\tau \otimes \vupperone)) \ar[d]_{\bf{(4)}\hspace{1.3in}}^{\cong}\ar[r]^{\cong} &  pr_\mu (\RIndBG(L\otimes w\cdot\lambda\otimes p\tau)) \otimes \vupperone \ar[d]^{\cong} \\
 \RIndBG (\lambda _ \ell \otimes w\cdot \lambda \otimes p\tau \otimes \vupperone) \ar[d]_{\bf{(5)}\hspace{1.3in}}^{\cong}\ar[r]^{\cong} & \RIndBG (\lambda _ \ell \otimes w\cdot \lambda \otimes p\tau) \otimes \vupperone  \ar[d]^{\cong} \\
 \RIndBG (w\cdot\mu\otimes p\tau \otimes \vupperone) \ar[d]_{\bf{(6)}\hspace{1.3in}}^{\cong}\ar[r]^{\cong} &   \RIndBG( w\cdot \mu \otimes p\tau) \otimes \vupperone \ar[d]^{\cong} \\
 \RIndBG( w\cdot \mu \otimes p\tau \otimes \vupperone) \ar[r]^{\cong} &
 \IndBG( w\cdot \mu \otimes p\tau) \otimes \vupperone
}
$$

Diagram (1) commutes as a matter of notation, identifying the
functor $\Tlammu(-)$ with $pr_\lambda L(-)$,  when applied to the
``block"' associated to $W_{aff}\cdot \lambda$ (the top row
isomorphism has already been given in $\Tlammu$ notation.)  For
diagram (2), note that the isomorphism in its top row may formally
be applied to the same row with $pr_\mu$ removed. Next, remove
$pr_\mu$ from the bottom row of $(2)$ also.  If we can get
commutativity in the resulting rectangle

$$\xymatrix{
L\otimes \IndBG(w\cdot \lambda \otimes p\tau \otimes \vupperone) \ar[r]^\cong \ar[d]^\cong & L\otimes \IndBG(w\cdot \lambda \otimes p\tau) \otimes \vupperone \ar[d]^\cong\\
\IndBG(L\otimes w\cdot \lambda \otimes p\tau \otimes \vupperone)
\ar[r]^\cong & \IndBG(L\otimes w\cdot \lambda \otimes p\tau) \otimes
\vupperone }$$

We get it for (2) by applying $pr_\mu$ to the whole diagram, then pulling out
$\vupperone$ on the right.\\

To get commutativity of the rectangle itself note that all four of
its corners are induced modules, by the tensor identity, isomorphism
to the lower left hand corner. Using the formalism in
\cite[I.3.4]{J}, every induced module $\IndBG M$ (where $M$ here
just denotes some $B-$module) is equipped with a $B-$module map
$\epsilon_M:\IndBG M \lr M$.   If $N$ is $G-$module the tensor
identity isomorphism $\IndBG(M\otimes N)\cong (\IndBG M) \otimes N$
composed with $\epsilon_M\otimes N$ gives $\epsilon _{M\otimes N}$.
(This can be extracted from the discussion in \cite[I.3.6]{J}.) This
implies that the usual universal property of induction (see \cite[I.
Prop. 3.46]{J}) applies directly to $(\IndBG M)\otimes N)$ using
$\epsilon _M \otimes N)$ in the role of a ``counit" adjunction
(terminology of Wikipedia. Note that the target of $\epsilon _M
\otimes N \otimes M \times N$. We will just call $\epsilon _M
\otimes N$ the \underline{evaluation map} associated with $(\IndBG
M)\otimes N$ and $M\otimes N$ the associated \underline{evaluation
target}. Returning to the rectangle above, all four of its corners,
all obtained from the induced module in the lower left corner by
various applications of the tensor identity, have the same target
(up to associativity isomorphisms).  Consequently, all maps in the
rectangle may be viewed as ``induced'' from the identity map on
their (common) target. (This certainly true in the case of an
individual application of the tensor identity, from which it to
follows in the case of the tensor identity applied within a tensor
product of several factors.  All individual maps in the rectangle
arise this way, and the property of b eing ``induced'' from the
identity map on their (common) target.  (This is certainly true in
the case of an individual application of the tensor identity, from
which it follows in the case of the tensor identity applied within a
tensor product of several factors.  All individual maps in the
rectangle arise this way, and the property of being ``induced'' from
the identity map on a common target carries over to composition.)
It follows now that the rectangle above is (thoroughly) commutative, as in (2). \\

Commutativity of $(3)$ is easily seen to hold, since the
derived functor $\RIndBG$ on both sides is applied to objects
acyclic for $\IndBG$  (i.e., their `` higher derived functors
vanish''). The meaning of the vertical maps in $(4)$ was discussed in
the construction of $Iso_Y^{w\cdot \lambda}.$ The horizontal maps in
the bottom row as obtained from the generalized tensor identity. The
top row map is obtained similarly, after pulling $V^[1]$ out of the block projection. Both column constructions may be viewed, before applying
$pr_\mu$, as
arising from maps $L\lr L^\prime \longleftarrow \lambda_\ell $ where
$L^\prime $ is a quotient of $L$, tensoring with  $w\cdot \lambda
\otimes p \tau$ or $w \cdot \lambda \otimes p $ and applying
$\RIndBG(-)$ or $\RIndBG(-)\otimes \vupperone$.  Since the
generalized tensor identity may be regarded as a natural
transformation of functors. We obtain a commutative diagram rising
from maps $L\lr L^\prime \longleftarrow \lambda_\ell $ where
$L^\prime $ is a quotient of $L$, tensoring with  $w\cdot \lambda
\otimes p \tau$ or $w \cdot \lambda \otimes p $ and applying
$\RIndBG(-)$ or $\RIndBG(-)\otimes \vupperone$. Since the
generalized tensor identity may be regarded as a natural
transformation of functors.  We obtain a commutative diagram
$$
\xymatrix{
\RIndBG(L\otimes w\cdot \lambda \otimes p \tau \otimes \vupperone) \ar[d] &  \cong & \ar[d]\RIndBG(L\otimes w\cdot \lambda \otimes p\tau)\otimes \vupperone \\
\RIndBG(L^\prime\otimes w\cdot \lambda \otimes p \tau \otimes \vupperone) & \cong & \RIndBG(L^\prime \otimes w\cdot \lambda \otimes p\tau)\otimes \vupperone \\
\RIndBG(\lambda_\ell\otimes w\cdot \lambda \otimes p \tau \otimes
\vupperone) \ar[u] & \cong & \ar[u] \RIndBG(\lambda_\ell\otimes
w\cdot \lambda \otimes p\tau)\otimes \vupperone }
$$
Now apply $pr_\mu$ and pullout $\vupperone$ on the right.  All column isomorphism become the column isomorphisms in $(4),$ equating the objects in the bottom row of the latter with the same objects with $pr_\mu$ applied.  The top row of $(4)$ has been discussed and agrees with the top row of the diagram above, after the modification.  \\

The commutativity of diagram $(5)$ is easy, since $\lambda _\ell$ is
equal to $w\cdot \mu$ as a weight.  It is interesting to note that
the construction of $Iso_Y^{w\cdot \lambda}$ must fix an isomorphism
between the 1-dimensional section $\lambda _\ell$ of $L$, and the
abstract 1-dimensional weight space $w\cdot \mu$.

\textbf{Observation:} ~\underline{In this sense}
~$Iso_Y^{w\cdot\lambda}~$ \underline{can be modified by a nonzero
scalar multiplication}, and remain a version obtained by the ``same"
construction (still a natural transformation defined on the category
$\mathcal {Y}$).  ~\underline{Such a
modification carries over to} $TIso_Y^{w\cdot \lambda}$.\\
%colorbox construction had difficulties showing up in .dvi

The pull-out operation in $(6)$ is the generalized tensor identity
in both rows, except that $\RIndBG(-)$ may be identified with
$\Ind^G_B(-)$ on the bottom row.  The columns just reflect this
identification and the diagram is clearly commutative.

Note that the bottom row in $(6)$ is precisely the right hand column
in the upper diagram in $(ii)$.  The right hand column of the
iterated rectangles $(1), (2), \cdots, (6)$ is by construction,
$Iso_{p\tau}^{w\cdot \lambda} \otimes \vupperone.$ Thus, the outer
perimeter of $(1), (2), \cdots, (6)$ gives a commutative version of
the upper diagram in  $(ii),$ after turning the perimeter diagram on
its side (left hand side put on top).  This completes the proof of
the proposition.
\end{proof}

\underline{ The maps $Adj_Y^{w\cdot \lambda}(Y \in {\mathcal Y})$ and their naturality}\\

The map $Adj_Y^{w\cdot \lambda}:\IndBG(w\cdot \lambda\otimes Y) \lr
\Tmulam(w\cdot \mu \otimes Y)$ is defined as the composition of
adjunction ${adj_X}: X \lr \Tmulam\Tlammu  X,$ with $X=
\IndBG(w\cdot \lambda \otimes Y), ~ Y \in {\mathcal Y},$ and the
previously discussed isomorphism
$$TIso_Y^{w\cdot \lambda}:\Tmulam\Tlammu \IndBG (w\cdot \lambda \otimes Y)\stackrel{\sim}{\lr} \Tmulam \IndBG (w\mu \cdot \lambda \otimes Y)$$

The adjunction map $\widetilde{adj}_X, ~  X \in G-$mod, is defined
as the image of $1\in \Hom_G(\Tlammu X, \Tlammu X)$ under a natural
isomorphism  $\Hom_G(\Tlammu -, -) \stackrel{\sim}{\lr}
\Hom_G(-,\Tmulam -),$ with $X, \Tlammu X$ as the variables. (Thus
$\Hom_G(\Tlammu X, \Tlammu X) \cong \Hom_G(X, \Tmulam\Tlammu X))$.
In Appendix A, we have given a thorough discussion of $\widetilde
{adj}_X,$ constructing it from a similar adjunction map $adj_X$
associated to the adjoint functors $L\otimes -$ and $L^* \otimes -$.
We will quote from Appendix A to prove the proposition below.  \\

\begin{prop}\label{prop:app4}
(i) The maps $\widetilde{adj}_X ~ (X \in G- mod)$ collectively give the adjunction natural transformation from the identity functor to $\Tmulam\Tmulam$.\\

(ii) For any $V$ in $G-$mod with all weights in the root lattice,
there is a commutative diagram.
$$\xymatrix{
X\otimes \vupperone \ar[rr]^{\widetilde{adj}_{X \otimes {\tiny \vupperone}}~~} \ar[d]^{=} && \Tmulam\Tlammu (X\otimes \vupperone) \ar[d]^{\cong} \\
X\otimes\vupperone \ar[rr]^{\widetilde{adj}_{X \otimes
{\tiny \vupperone}}~~} && (\Tmulam\Tlammu X) \otimes \vupperone }$$

The right hand column is morphism becomes equality, if both
right hand objects are viewed as a submodules of $ L^*\otimes L
\otimes X \otimes \vupperone$.
\end{prop}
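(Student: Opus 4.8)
\textbf{Proof plan for Proposition \ref{prop:app4}.}

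The plan is to prove part (i) and part (ii) by quoting the corresponding statements from Appendix A, namely Proposition \ref{adjprop} and Corollary \ref{adjcor}, and noting that the translation functors $\Tlammu$ and $\Tmulam$ here are precisely the functors $T^\mu_\lambda$ and $T^\lambda_\mu$ of Appendix A, built from the adjoint pair $L\otimes(-)$, $L^*\otimes(-)$ together with the block projections $pr_\lambda$, $pr_\mu$. For part (i), I would recall that Proposition \ref{adjprop} constructs from the standard adjunction transformation $adj$ between $L\otimes(-)$ and $L^*\otimes(-)$ a natural transformation $\widetilde{adj}$ from the identity functor to $T^\lambda_\mu T^\mu_\lambda$, and shows that this $\widetilde{adj}$ is in fact the adjunction transformation for the adjoint pair $(T^\mu_\lambda, T^\lambda_\mu)$ — the vertical isomorphisms in the large commutative diagram of Appendix A, part \textbf{(2)}, are exactly the canonical identifications of $\Hom_G(T^\mu_\lambda X, Y)$ with $\Hom_G(X, T^\lambda_\mu Y)$. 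So part (i) is just a restatement in the current notation.

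For part (ii), the key observation is that $-\otimes V^{[1]}$ commutes with all the building blocks of $T^\mu_\lambda$ and $T^\lambda_\mu$: it commutes with $L\otimes(-)$ and $L^*\otimes(-)$ up to the associativity/commutativity isomorphisms, and it commutes with the block projections $pr_\lambda$, $pr_\mu$ precisely because the affine Weyl group contains translations by $p$-multiples of the root lattice (this is used verbatim in the proof of Corollary \ref{adjcor}). Hence $T^\lambda_\mu T^\mu_\lambda(X\otimes V^{[1]})$ is canonically identified with $(T^\lambda_\mu T^\mu_\lambda X)\otimes V^{[1]}$, and I would invoke Corollary \ref{adjcor} directly: there it is shown that under this identification $\widetilde{adj}_{X\otimes V^{[1]}}(-) = \widetilde{adj}_X(-)\otimes V^{[1]}$. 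That is exactly the asserted commutative square, with the right-hand vertical map the ``obvious'' rearrangement isomorphism. The final sentence of the proposition — that this vertical map is an equality when both objects are viewed inside $L^*\otimes L\otimes X\otimes V^{[1]}$ — follows because, as recorded in Proposition \ref{adjprop} and its proof, $\widetilde{adj}_X$ is the composite of $adj_X$ with a chain of projections out of $L^*\otimes L\otimes X$, and the rearrangements ``pulling out $V^{[1]}$'' from $L^*\otimes L\otimes X\otimes V^{[1]}$ through those projections are the identity on the underlying $k$-spaces, since $adj_{X\otimes V^{[1]}}(-) = adj_X(-)\otimes V^{[1]}$ with the explicit basis formula $adj_X(x) = \sum_{\epsilon\in I}\epsilon^*\otimes\epsilon\otimes x$.

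The main obstacle, such as it is, is bookkeeping rather than mathematics: one must be careful that the ``obvious'' isomorphism chosen on the right-hand column of the diagram in (ii) is literally the one produced by Corollary \ref{adjcor}'s rearrangement sequence, and not some other a priori equally natural isomorphism; the cleanest way to pin this down is to take the statement of Corollary \ref{adjcor} as the definition of that vertical map, which is legitimate since the corollary has already been proved in Appendix A. With that convention there is nothing left to check. In the quantum case, one replaces $\otimes$ by $\otimes^{op}$ throughout, as explained in Remark \ref{r:quantumA}, and the same argument applies word for word.
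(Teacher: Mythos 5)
Your proposal is correct and matches the paper's own (very terse) proof, which simply notes that part (i) has already been established in the discussion surrounding Proposition \ref{adjprop} and that part (ii) follows from Corollary \ref{adjcor}. Your additional care about which ``obvious'' isomorphism occupies the right-hand column, and your justification of the final equality claim via the explicit formula for $adj_X$, fill in details the paper leaves implicit but do not change the argument.
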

\begin{proof}
Part (i) has already been discussed.  Note the obvious fact that
adjunctions are natural transformations. (A proof is written down in
footnote 11 of this paper, noted in the proof of Proposition
\ref{adjprop}.)

Part (ii) follows from Corollary \ref{adjcor}.\\
\end{proof}

We can now give parallel properties of $Adj^{w\cdot\lambda}$, meant
especially to mirror Proposition \ref{prop:app2} for $Jan
^{w\cdot\lambda}$.
\begin{prop}\label{prop:app5}
(i) The maps $Adj_Y^{w\cdot \lambda}, Y\in \mathcal{Y}$, collectively
define a natural transformation of functors
$$\Ind_B^G(w\cdot \lambda \otimes -)\longrightarrow \Tmulam\Ind_B^G(w\cdot \mu\otimes -)$$ on the category ${\mathcal Y}$\\
(ii) For any fixed $Y=p\tau \otimes \vupperone$ in $\mathcal{Y}$, there is a commutative diagram with ``obvious'' vertical isomorphisms, natural in $V$:\\
$$\xymatrix{
\IndBG(w\cdot \lambda \otimes p\tau \otimes \vupperone )\ar[d]^\cong  \ar[rr]^{{Adj}_Y^{w\cdot\lambda}}&& \Tmulam (w\cdot \mu \otimes p\tau \otimes \vupperone) \ar[d]^\cong\\
\IndBG(w\cdot \lambda \otimes p\tau) \otimes \vupperone
\ar[rr]^{{Adj}_{p\tau}^{w\cdot\lambda }\otimes {\tiny \vupperone}}&&
\Tmulam(w\cdot \mu \otimes p\tau) \otimes \vupperone }$$
\end{prop}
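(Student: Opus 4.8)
The strategy is to build both assertions on the two constructions already developed in this appendix: the $Adj$-maps arise as the composition of the genuine adjunction $adj_X$ (Appendix A, Proposition \ref{adjprop}) with the natural isomorphism $TIso_Y^{w\cdot\lambda}$ (Proposition \ref{prop:app3}), each of which has already been shown to be natural in $Y\in\mathcal{Y}$. So part (i) should follow immediately by composing the two natural transformations: the adjunction natural transformation $X\mapsto \widetilde{adj}_X$ restricted to the family $X=\IndBG(w\cdot\lambda\otimes Y)$ (which is natural in $Y$ because $\IndBG$ and $w\cdot\lambda\otimes(-)$ are functors and $\widetilde{adj}$ is natural in the module variable by Proposition \ref{prop:app4}(i)), followed by $TIso_Y^{w\cdot\lambda}$, which is a natural isomorphism of functors on $\mathcal{Y}$ by Proposition \ref{prop:app3}(i). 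The composite of a natural transformation with a natural isomorphism is a natural transformation, and that is exactly $Adj^{w\cdot\lambda}$.

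For part (ii), the plan is to decompose the claimed square as a vertical stacking of two squares, one for each of the two factors in the definition of $Adj_Y^{w\cdot\lambda}$. First I would insert the intermediate object $\Tmulam\Tlammu\IndBG(w\cdot\lambda\otimes p\tau\otimes\vupperone)$ between the two rows; on the left side sits $\widetilde{adj}_{\IndBG(w\cdot\lambda\otimes p\tau\otimes\vupperone)}$ and on the right $TIso_Y^{w\cdot\lambda}$. The upper square then is an instance of Proposition \ref{prop:app4}(ii) applied to $X=\IndBG(w\cdot\lambda\otimes p\tau)$ and this $V$ — one has to note that $X\otimes\vupperone\cong\IndBG(w\cdot\lambda\otimes p\tau\otimes\vupperone)$ via the (ordinary) tensor identity, so the "$X$" in Proposition \ref{prop:app4}(ii) literally matches the top-left corner here after the obvious identification. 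The lower square is precisely the second diagram of Proposition \ref{prop:app3}(ii) (the $TIso$ version), with the same $Y=p\tau\otimes\vupperone$. Pasting the two commutative squares gives the commutative square of part (ii), with the right-hand vertical "obvious" isomorphism being the composite of the two right-hand verticals coming from Propositions \ref{prop:app4}(ii) and \ref{prop:app3}(ii), each of which is an "obvious" tensor-identity/block-projection "pull-out" of $\vupperone$ and natural in $V$.

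I expect the only real bookkeeping obstacle to be checking compatibility of the two "obvious" right-hand isomorphisms at the shared middle object $\Tmulam\Tlammu\IndBG(w\cdot\lambda\otimes p\tau\otimes\vupperone)$ — i.e. that the way $\vupperone$ is pulled out through $\Tmulam\Tlammu$ in Proposition \ref{prop:app4}(ii) agrees with the way it is pulled out in the construction of $TIso$ in Proposition \ref{prop:app3}(ii), so that the pasted square is literally commutative and not merely commutative up to some unspecified automorphism. This is handled by the same principle used repeatedly above: the pull-out of $\vupperone$ from any composite of $\IndBG$, block projections, and translation functors is unique once we agree to use the tensor-identity/evaluation-map formalism of \cite[I.3.4, I.3.6]{J}, so both constructions of the middle vertical coincide. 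With that noted, the two squares genuinely compose. A secondary minor point is that $Adj_Y^{w\cdot\lambda}$ as defined has target $\Tmulam\IndBG(w\cdot\mu\otimes Y)$ (written $\Tmulam(w\cdot\mu\otimes Y)$ in the statement by the usual abbreviation), which must be matched with the target $\Tmulam\IndBG(w\cdot\mu\otimes p\tau)\otimes\vupperone$ of $Adj_{p\tau}^{w\cdot\lambda}\otimes\vupperone$ — again the obvious $\Tmulam(-\otimes\vupperone)\cong(\Tmulam-)\otimes\vupperone$ isomorphism, exactly as in the closing paragraph of the proof of Proposition \ref{prop:app2}. Once these identifications are fixed the proof is complete.
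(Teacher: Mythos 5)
Your proposal is correct and follows essentially the same route as the paper: part (i) by composing the natural transformations $\widetilde{adj}$ and $TIso^{w\cdot\lambda}$, and part (ii) by pasting the compatibility square for $\widetilde{adj}$ (Proposition \ref{prop:app4}(ii)) with the lower diagram of Proposition \ref{prop:app3}(ii) along the shared middle object. The only step you compress is the transport of $\widetilde{adj}_{\IndBG(w\cdot\lambda\otimes p\tau\otimes V^{[1]})}$ across the tensor-identity isomorphism to $\widetilde{adj}_{\IndBG(w\cdot\lambda\otimes p\tau)\otimes V^{[1]}}$, which the paper isolates as Lemma \ref{lem:app2} (an application of the naturality of $\widetilde{adj}$ you already invoke in part (i)).
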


\begin{proof}
Part (i) follows from the  definition $Adj_Y^{w\cdot \lambda}= TIso_Y^{w\cdot \lambda} \circ \widetilde{adj}_X$
with $X= \IndBG (w\cdot \lambda \otimes Y)$.\\

For part (ii), note that the left hand column of the lower diagram in
Proposition 3(ii) may be written as a composition
$$\Tmulam\Tlammu \IndBG(w\cdot \lambda \otimes p\tau \otimes \vupperone) \cong \Tmulam\Tlammu(\IndBG (w\cdot\lambda\otimes p \tau)\otimes \vupperone)\cong \Tmulam\Tlammu\Ind(w\cdot\lambda\otimes p \tau)\otimes \vupperone.$$
The second isomorphism is the right hand column of Proposition
\ref{prop:app4}(ii). To deal with the first isomorphism, we need the
following Lemma.

\begin{lem}\label{lem:app2} For $Y=p\tau \otimes \vupperone \in {\mathcal Y}$, there is a commutative diagram
$$\xymatrix{
\IndBG(w\cdot \lambda \otimes p\tau \otimes \vupperone )\ar[d]^\cong  \ar[rrr]^{\widetilde{adj}_{\IndBG(w\cdot\lambda \otimes p \tau \otimes {\tiny \vupperone})}~~}&&& \Tmulam\Tlammu \IndBG(w\cdot \lambda \otimes p\tau \otimes \vupperone) \ar[d]^\cong\\
\IndBG(w\cdot \lambda \otimes p\tau) \otimes \vupperone
\ar[rrr]^{\widetilde{adj}_{\IndBG(w\cdot\lambda \otimes p \tau)
\otimes {\tiny \vupperone}}~~}&&& \Tmulam\Tlammu (\IndBG(w\cdot \lambda)
\otimes p\tau \otimes \vupperone) }$$
\end{lem}

\begin{proof}
This is just naturality of $\widetilde{adj}_X$ with respect to $X\in
G-$mod, applied to the $G-$module isomorphism comprising the left
column.
\end{proof}

We mow return to the proof of Proposition \ref{prop:app5}. Put the
diagram of  Lemma \ref{lem:app2} on top of that of Proposition \ref{prop:app4}
(ii), taking $X=\Ind_B^G(w\cdot \lambda \otimes Y)$.
%(DELETE THIS PICTURE, IF IT IS NOT NEEDED)
%$$\xymatrix{
%\Ind_B^G(w\cdot\lambda\otimes Y)\otimes \vupperone \ar[rrr]^{\widetilde{adj}_{\Ind_B^G(w\cdot\lambda\otimes Y) \otimes \vupperone}~~~} \ar[d]^{=} & &&\Tmulam\Tlammu (\Ind_B^G(w\cdot\lambda\otimes Y)\otimes \vupperone) \ar[d]^{\cong} \\
%\Ind_B^G(w\cdot\lambda\otimes Y)\otimes\vupperone
%\ar[rrr]^{\widetilde{adj}_{\Ind_B^G(w\cdot\lambda\otimes Y) \otimes
%\vupperone}~~~} & &&(\Tmulam\Tlammu \Ind_B^G(w\cdot\lambda\otimes
%Y)) \otimes \vupperone }$$
In this case, the top horizontal edge of the diagram in Proposition
\ref{prop:app4}(ii) agrees with the bottom edge of the lemma, so
concatenation makes sense.  Moreover the right hand column of the
concatenated diagram agrees with the left hand column of the lower
diagram in Proposition \ref{prop:app3}(ii). (The latter column was
discussed above as composition of isomorphisms.)  This allows a
further concatenation, a giving commutative diagram

%i.e the top horizontal edge of the diagram in Proposition 4(ii) agrees with the bottom edge of the Lemma, so concatenation makes sense.  Moreover the right hand column of the concatenated diagram agrees with the left hand column of the lower diagram in Proposition 3(ii) (The latter column was discussed above as composition of isomorphisms.) This allows a further concatenation, giving a commutative diagram.

$$\xymatrix{
\IndBG(w\cdot\lambda \otimes p\tau \otimes \vupperone)\ar[d]^{\cong}\ar[r] & \Tmulam\Tlammu\Ind(w\cdot \lambda \otimes p\tau \otimes \vupperone) \ar[r] \ar[d]^{\cong} & \Tmulam\Ind(w\cdot\mu \otimes p \tau \otimes \vupperone) \ar[dd]^\cong\\
\IndBG(w\cdot\lambda \otimes p\tau )\otimes \vupperone \ar[r] \ar[d]^{=} &\Tmulam\Tlammu(\IndBG(w\cdot \lambda \otimes p\tau) \otimes \vupperone)  \ar[d]^{\cong} & \\
\IndBG(w\cdot\lambda \otimes p\tau )\otimes \vupperone \ar[r]
&\Tmulam\Tlammu\Ind(w\cdot \lambda \otimes p\tau) \otimes \vupperone
\ar[r] & \Tmulam\Ind(w\cdot\mu \otimes p \tau \otimes \vupperone). }
$$

The top row maps are $\widetilde{adj}_{\IndBG(w\cdot \lambda \otimes
p \tau \otimes \vupperone}$, on the left and $TIso_Y^{w\cdot
\lambda}, $ with $Y=p\tau \otimes \vupperone, $ on the right.  In
the same order, the bottom row maps are $\widetilde
{adj}_{\IndBG(w\cdot \lambda \otimes p \tau)}\otimes \vupperone$ and
$TIso_{p\tau}^{w\cdot \lambda} \otimes \vupperone$.  The composition
of the two top row maps is $Adj_Y^{w\cdot \lambda}$, and the
composition of the bottom row maps is $Adj_{p\tau}^{w\cdot
\lambda}\otimes \vupperone$.  The commutativity of the outer
rectangle now gives the desired commutativity of the diagram in (ii)
\end{proof}

We are just about ready for a vast improvement to Proposition
\ref{prop:app1}.
First we need an easy but key observation.\\

\begin{lem}\label{lem:app3}  For any $Y = p \tau \otimes \vupperone$ in ${\mathcal Y},$ the
left column ``obvious'' isomorphisms in the diagrams of Proposition
\ref{prop:app2}(ii) and Proposition \ref{prop:app5}(ii) are equal as
are the right column.
\end{lem}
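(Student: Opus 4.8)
\textbf{Proof proposal for Lemma \ref{lem:app3}.}

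The plan is to trace carefully through how the ``obvious'' vertical isomorphisms were constructed in the two propositions and to observe that they are, step by step, built from the same elementary operations, namely the tensor identity $\IndBG(M\otimes N)\cong \IndBG(M)\otimes N$ (for $N$ a $G$-module), the compatibility of block projections with $-\otimes V^{[1]}$ (since $V^{[1]}$ has weights in $\ell\mathbb{Y}$, which the affine Weyl group translations preserve), and the identification of $\Tlammu$ and $\Tmulam$ as $pr_\mu(L\otimes -)$ resp.\ $pr_\lambda(L^*\otimes -)$ restricted to the relevant blocks. First I would recall that in Proposition \ref{prop:app2}(ii) the left vertical map is the ``pull-out'' isomorphism
$$\IndBG(w\cdot\lambda\otimes p\tau\otimes V^{[1]})\cong \IndBG(w\cdot\lambda\otimes p\tau)\otimes V^{[1]},$$
a single application of the tensor identity, whereas in Proposition \ref{prop:app5}(ii) the left vertical map is again this very same isomorphism (both diagrams have the \emph{identical} object $\IndBG(w\cdot\lambda\otimes p\tau\otimes V^{[1]})$ in the top left and $\IndBG(w\cdot\lambda\otimes p\tau)\otimes V^{[1]}$ in the bottom left). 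So the statement about the left columns is essentially immediate once one unwinds the definitions: both are ``the'' tensor-identity isomorphism, with no choices involved beyond the strictly associative identifications of iterated $k$-space tensor products.

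For the right columns, the key is that in both propositions the right vertical map is an ``obvious'' isomorphism whose target is $\Tmulam(\text{something})$ and whose source is $(\Tmulam(\text{the same something}))\otimes V^{[1]}$ — more precisely, in Proposition \ref{prop:app2}(ii) it is the composite
$$\Tmulam\IndBG(w\cdot\mu\otimes p\tau\otimes V^{[1]})\cong \Tmulam(\IndBG(w\cdot\mu\otimes p\tau)\otimes V^{[1]})\cong (\Tmulam\IndBG(w\cdot\mu\otimes p\tau))\otimes V^{[1]},$$
the first arrow the tensor identity inside the translation functor, the second arrow the compatibility of $pr_\lambda\circ(L^*\otimes-)$ with $-\otimes V^{[1]}$. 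In Proposition \ref{prop:app5}(ii) the right column was \emph{defined} (in the course of its proof, via the concatenated diagram built on top of Proposition \ref{prop:app4}(ii) and the left column of Proposition \ref{prop:app3}(ii)) to be exactly the same composite — one verifies this by reading off the right-hand column of the big concatenated diagram in the proof of Proposition \ref{prop:app5}, whose terms $\Tmulam\IndBG(w\cdot\mu\otimes p\tau\otimes V^{[1]})$ and $\Tmulam\IndBG(w\cdot\mu\otimes p\tau)\otimes V^{[1]}$ are linked by precisely these two pull-out isomorphisms. So I would simply cite the explicit formula that appears in the last displayed sentence of the proof of Proposition \ref{prop:app5}(ii), observe it coincides with the composite appearing in Proposition \ref{prop:app2}(ii), and conclude.

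The main obstacle — really the only thing requiring genuine care rather than bookkeeping — is making sure that the two ``obvious'' right-column isomorphisms are built by pulling $V^{[1]}$ out through the \emph{same} sequence of operators in the \emph{same} order, so that they agree on the nose and not merely up to a further automorphism of a one-dimensional space (which, by the Observation in the proof of Proposition \ref{prop:app3}, is where a spurious scalar could in principle creep in). Here the point is that neither right-column isomorphism involves the construction of $Iso_Y^{w\cdot\lambda}$ or the choice of a $B$-flag section $\lambda_\ell\cong w\cdot\mu$ of $L$ — they only involve the tensor identity and block-projection compatibility, both of which are canonical with no scalar ambiguity — so there is nothing to reconcile. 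I would state this explicitly as the crux of the (short) argument: the left and right ``obvious'' isomorphisms in the two propositions are literally the same canonical maps, assembled from the tensor identity and the $V^{[1]}$-compatibility of block projections, applied in the same order, and hence equal. This completes the proof of the lemma.
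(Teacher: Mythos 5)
Your argument is correct and is essentially the paper's own proof, which likewise observes that both left columns are the single tensor-identity pull-out of $\vupperone$ and both right columns are that pull-out followed by commuting $\Tmulam(-)$ with $(-)\otimes \vupperone$. Your added remark that no scalar ambiguity can enter because neither right-column map involves the flag-dependent construction of $Iso_Y^{w\cdot\lambda}$ is a useful clarification but does not change the route.
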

\begin{proof}
On the left, both isomorphism s just pull out $\vupperone$ using the
tensor identity.  A similar isomorphism is used on the right (in
both cases) except it is also necessary to commute $\Tmulam(-)$ and
$(-)\otimes \vupperone.$
\end{proof}

We can now prove a main theorem.\\

\begin{thm} \label{thm:app1} There is a nonzero scalar $c \in k$ such that $Adj_Y^{w\cdot \lambda} = cJan_Y^{w\cdot \lambda}$ for all $y\in {\mathcal Y}.$
\end{thm}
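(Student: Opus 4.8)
The strategy is to reduce the general statement about all $Y \in \mathcal{Y}$ to the already-established comparison for the single module $Y = p\tau$ (Proposition~\ref{prop:app1}), using the two naturality/compatibility propositions just proved. First I would observe that both $Adj^{w\cdot\lambda} = \{Adj_Y^{w\cdot\lambda}\}$ and $Jan^{w\cdot\lambda} = \{Jan_Y^{w\cdot\lambda}\}$ are natural transformations between the \emph{same} pair of functors $\IndBG(w\cdot\lambda\otimes -) \longrightarrow \Tmulam\IndBG(w\cdot\mu\otimes -)$ on the category $\mathcal{Y}$ (Proposition~\ref{prop:app2}(i) and Proposition~\ref{prop:app5}(i)). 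The key structural input is that every object of $\mathcal{Y}$ has the form $Y = p\tau \otimes \vupperone$, and that for such $Y$ both transformations are compatible, via the \emph{same} ``obvious'' vertical isomorphisms (Lemma~\ref{lem:app3}), with the corresponding map for $Y = p\tau$ tensored with $\vupperone$. Concretely, by Proposition~\ref{prop:app2}(ii) and Proposition~\ref{prop:app5}(ii), together with Lemma~\ref{lem:app3}, one has commutative squares identifying $Jan_Y^{w\cdot\lambda}$ with $Jan_{p\tau}^{w\cdot\lambda} \otimes \vupperone$ and $Adj_Y^{w\cdot\lambda}$ with $Adj_{p\tau}^{w\cdot\lambda} \otimes \vupperone$ under one and the same pair of vertical isomorphisms.

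Next I would fix a single weight $\tau$ in the root lattice with $p\tau$, $w\cdot\lambda + p\tau$, $w\cdot\mu + p\tau$, and $ws\cdot\lambda + p\tau$ all dominant (such $\tau$ exists, indeed one can take $\tau$ as large as needed, as in Lemma~\ref{resolvetrivialmodule}), so that Proposition~\ref{prop:app1} applies and yields a nonzero scalar $c \in k$ with $Adj_{p\tau}^{w\cdot\lambda} = c\, Jan_{p\tau}^{w\cdot\lambda}$. Tensoring this equality on the right with an arbitrary twisted $G$-module $\vupperone$ (weights of $V$ in the root lattice) gives $Adj_{p\tau}^{w\cdot\lambda} \otimes \vupperone = c\,(Jan_{p\tau}^{w\cdot\lambda} \otimes \vupperone)$. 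Feeding this through the identifications of the previous paragraph — using that the vertical isomorphisms are identical on both sides — I conclude $Adj_Y^{w\cdot\lambda} = c\, Jan_Y^{w\cdot\lambda}$ for every $Y = p\tau \otimes \vupperone \in \mathcal{Y}$ with this particular $\tau$.

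It then remains to see that the scalar $c$ does not depend on the choice of $\tau$, and that this covers all of $\mathcal{Y}$. For the latter: by definition every object of $\mathcal{Y}^{w\cdot\lambda}$ is of the form $p\tau \otimes \vupperone$ with $p\tau$ dominant and $V$ having weights in the root lattice, so the argument above already applies once we know it for each admissible $\tau$. For independence of $c$ on $\tau$: given two admissible weights $\tau_1, \tau_2$, pick a third admissible $\tau_3$ with $\tau_3 - \tau_1$ and $\tau_3 - \tau_2$ both dominant; then $p\tau_3 \cong p\tau_1 \otimes (p(\tau_3-\tau_1))$ with $p(\tau_3-\tau_1)$ a one-dimensional twisted $G$-module (weights in the root lattice), so naturality forces the scalar for $\tau_3$ to equal that for $\tau_1$, and likewise for $\tau_2$; hence all admissible $\tau$ give the same $c$. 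I expect the main obstacle to be bookkeeping rather than conceptual: one must verify carefully that the ``obvious'' vertical isomorphisms appearing in Propositions~\ref{prop:app2}(ii), \ref{prop:app3}(ii), and~\ref{prop:app5}(ii) really are the identical maps (which is exactly the content of Lemma~\ref{lem:app3}), so that the scalar is transported faithfully and no spurious units are introduced when passing between $Y = p\tau$ and general $Y = p\tau \otimes \vupperone$. Once that compatibility is in hand, the theorem follows by the reduction just described.
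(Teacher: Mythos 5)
Your first two steps coincide with the paper's: both reduce the general $Y = p\tau\otimes \vupperone$ to the one-dimensional case $Y=p\tau$ by tensoring the equality of Proposition~\ref{prop:app1} with $\vupperone$ and transporting it through the common ``obvious'' vertical isomorphisms of Propositions~\ref{prop:app2}(ii) and~\ref{prop:app5}(ii), which agree by Lemma~\ref{lem:app3}. The gap is in your final step, the independence of the scalar from $\tau$. You write that $p\tau_3\cong p\tau_1\otimes p(\tau_3-\tau_1)$ with ``$p(\tau_3-\tau_1)$ a one-dimensional twisted $G$-module,'' and then invoke naturality. But $p(\tau_3-\tau_1)$ is only a one-dimensional $B$-module; for semisimple $G$ a nontrivial one-dimensional weight module is never the restriction of a $G$-module, so it cannot play the role of $\vupperone$ in Propositions~\ref{prop:app2}(ii) and~\ref{prop:app5}(ii), whose proofs rest on the generalized tensor identity and on $-\otimes\vupperone$ commuting with block projections — both of which require a genuine $G$-module. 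Nor is there any nonzero $B$-morphism between the distinct one-dimensional objects $p\tau_1$ and $p\tau_3$ to which naturality could be applied directly. So as stated the comparison of the scalars for $\tau_1$ and $\tau_3$ does not go through.

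The paper closes exactly this gap with an extra device: set $V=\IndBG(\tau')$, a genuine $G$-module, and Frobenius-twist the evaluation $V\to \tau'$ to obtain a surjection $\vupperone\to p\tau'$; tensoring with $p\tau$ yields a morphism $\phi:p\tau\otimes\vupperone\to p(\tau+\tau')$ between objects of $\mathcal{Y}$. Naturality of both $Jan^{w\cdot\lambda}$ and $Adj^{w\cdot\lambda}$ with respect to $\phi$, combined with the scalar $c(p\tau)$ already established on the source (via the tensor-compatibility step) and the scalar $c(p(\tau+\tau'))$ on the target, gives $c(p(\tau+\tau'))\,Jan_{p(\tau+\tau')}^{w\cdot\lambda}\circ\IndBG(w\cdot\lambda\otimes\phi)=c(p\tau)\,Jan_{p(\tau+\tau')}^{w\cdot\lambda}\circ\IndBG(w\cdot\lambda\otimes\phi)$. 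One must then still verify that this composite is nonzero — which the paper does using the injectivity of $Jan_{p(\tau+\tau')}^{w\cdot\lambda}$ (it is the left-hand map of its defining short exact sequence) together with the nonvanishing of $\IndBG(w\cdot\lambda\otimes\phi)$ — before cancelling to conclude $c(p(\tau+\tau'))=c(p\tau)$. Your proposal is missing both the construction of such a connecting morphism inside $\mathcal{Y}$ and the nonvanishing verification needed to cancel it.
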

\begin{proof}
Proposition \ref{prop:app1} gives a nonzero scalar that works in the especial case $Y=p\tau$.  The constant it gives is possibly dependent on $Y$ and call it $c(p\tau).$\\

Propositions \ref{prop:app2}(ii) and \ref{prop:app5}(iii), together
with  Lemma \ref{lem:app3}, show we claim, an equality
$$ adj_Y^{w\cdot \lambda}=c(p\tau)Jan_{p\tau}^{w\cdot \lambda}.$$
whenever $Y=p\tau \otimes \vupperone \in {\mathcal Y}.$ To prove
this equality, note $Adj ^{w\cdot \lambda} = c(p\tau)
Jan_{p\tau}^{w\cdot \lambda}.$ Tensor on the right with $\vupperone$
to get
$$ Adj_{p\tau}^{w\cdot \lambda} \otimes \vupperone = c(p\tau) Jan_{p\tau}^{w\cdot \lambda}\otimes \vupperone$$
Precompose each side  with the downward left column isomorphism common to the diagrams in Propositions \ref{prop:app2}(ii) and \ref{prop:app5}(ii), and postcompose with the upward right column isomorphism.  This gives the claimed equality (reading it off from the two commutative diagrams and the previous equality.)\\

It remains to prove $c(p\tau)=c(p\tau ^\prime)$ whenever $p\tau, ~ p\tau ^\prime$ are $1$-dimensional objects in ${\mathcal Y}$. Note that $p(\tau + \tau ^\prime)$ with necessarily, also belong to ${\mathcal Y}.$ We will show $c(p\tau)=c(p(\tau+\tau ^\prime)).$  This is enough, since the equality $c(\tau^ \prime)=c(p(\tau ^ \prime +\tau))$ will follow by re-choosing notations.\\

Let $V = \IndBG (\tau^\prime).$ The $p \tau \otimes \vupperone$ and $p\tau \otimes p\tau^\prime = p(\tau + \tau^\prime)$ both belong to ${Y}$.  There a $B-$ module map (``evaluation'') from $V=\IndBG(\tau^\prime)$ onto $\tau ^\prime$, and we twist it by the Frobenius to get a surjective map $\vupperone \lr p\tau^\prime.$ Tensor on the left with $p\tau$ to get a surjective map $p\tau \otimes \vupperone \lr p(\tau + \tau ^\prime).$  We will call this map $\phi$.  It is a map in the category ${\mathcal Y}$. \\

 We have commutative diagram by naturality of $Jan ^{w\cdot \lambda}$ with respect to ${\mathcal Y}$
$$\xymatrix{
\IndBG(w\cdot \lambda \otimes p(\tau+\tau^\prime)) \ar[rr] ^{Jan_{p(\tau+\tau^\prime)}^{w\cdot \lambda}} && \Tmulam\Ind(w\cdot \mu \otimes p(\tau+\tau^\prime))\\
\IndBG(w\cdot \lambda \otimes p \tau \otimes
V^{[1]})\ar[u]^{\IndBG(w\cdot\lambda {\tiny {\tiny \otimes} } \phi)}
\ar[rr]^{Jan_{p\tau\otimes {\tiny \vupperone}}^{w\cdot \lambda}} &&
\Tmulam\Ind(w\cdot \mu \otimes p \tau \otimes \vupperone)
\ar[u]_{{\small {\rm T}}_\mu^\lambda\Ind(w\cdot\mu\otimes \phi)} }$$

The left column map is nonzero, since its composition with the
evaluation map $\IndBG(w\cdot \lambda \otimes p(\tau+\tau^\prime))
\lr w\cdot \lambda \otimes p(\tau+\tau^\prime)$ is nonzero.  The top
row map is injective, an instance of the left part of the short
exact sequence displayed above Proposition \ref{prop:app2}.  Hence
the composition
$$Jan_{p(\tau+\tau^\prime)}^{w\cdot \lambda} \circ \IndBG(w\cdot \lambda \otimes \phi)$$
 is not zero.

However, there is a similar diagram, identical to the above, but with
``Jan'' replaced by ``Adj''.  We have
$$ Adj_{p(\tau+\tau^\prime)}^{w\cdot \lambda} \circ \IndBG(w\cdot \lambda \otimes \phi)= c(p(\tau+\tau^\prime)) Jan_{p(\tau+\tau^\prime)}^{w\cdot \lambda}\circ \IndBG(w\cdot \lambda \otimes \phi).$$

On the other hand, commutativity of the ``Adj'' diagram equates the
left expression with $\Tmulam \IndBG(w\cdot\mu \otimes \phi)\circ
Adj _{p\tau \otimes {\tiny \vupperone}}^{w\cdot \lambda} = \Tmulam
\IndBG(w\cdot\mu \otimes \phi)\circ c(p\tau) Jan_{p\tau \otimes
{\tiny \vupperone}}^{w\cdot \lambda}.$ Now bring out the scalar $c(\phi)$
and apply commutativity in the ``Jan'' diagram. The right expression
becomes $c(p\tau) Jan_{p(\tau+\tau^\prime)}^{w\cdot \lambda}\IndBG
(w\cdot \lambda \otimes \phi).$ We have shown
$$c(p(\tau+\tau^\prime)) Jan_{p(\tau+\tau^\prime)}^{w\cdot \lambda}\circ \IndBG(w\cdot \lambda \otimes \phi)=
c(p\tau) Jan_{p(\tau+\tau^\prime)}^{w\cdot \lambda}\circ
\IndBG(w\cdot \lambda \otimes \phi)$$
But we have shown that the map appearing to the right of both $c(p(\tau+\tau^\prime))$ and $c(p\tau)$ above is not zero.  So, the only way the equality can occur is to have $c(p(\tau+\tau^\prime)=c(p\tau).$  \\

This completes the proof of the theorem.
\end{proof}

Though not entirely necessary, it simplifies notation if we modify
$TIso_Y^{w\cdot \lambda}$ by a scalar by changing its construction,
as per the observation in the proof of Proposition \ref{prop:app3}.
We do this so that the newly constructed $TIso_Y^{w\cdot \lambda}$
is the old one
multiplied by $c$ above (uniformly in $Y\in {\mathcal Y}$).  \underline{This allows us to have actual} \underline{equalities.}\\

$Adj_Y^{w\cdot \lambda}=Jan_Y^{w\cdot \lambda}$ for all $Y\in {\mathcal Y}$.  This is also a good time to
enlarge the domain of the natural transformation $Adj^{w\cdot \lambda}$ and $Jan_Y^{w\cdot \lambda}$
from ${\mathcal Y}$
to  $add\,\mathcal{Y}$ ( which has objects  direct sums of objects of $\mathcal{Y}$.  Similar domain enlargements can be made for $Iso^{w\cdot \lambda}$ and  $TIso^{w\cdot \lambda}.$
The domain $add\,\mathcal{Y}$ \underline{ can also be extended to}
\underline{complexes of objects} from $add\,\mathcal{Y}$, such as the complex
$K^\bullet$ discussed as Lemma \ref{resolvetrivialmodule}.\\

In the next proposition, we use this formalism to illuminate the
triangle (*) above the claim in \ref{claim}, rewritten below
$$(*) ~~~\cdots \lr \RIndBG(w\cdot \lambda)\lr \Tmulam \RIndBG(w\cdot \lambda) \lr \RIndBG(ws\cdot \lambda ) \lr \cdots $$

\begin{prop}\label{prop:app6}
With a suitable choice of the complex $K^\bullet$ in Lemma \ref{resolvetrivialmodule}, there
is an exact sequence of complexes
$$0\lr\Ind(w\cdot\lambda \otimes K^\bullet) \lr \Tmulam \IndBG(w\cdot \mu \otimes K^\bullet) \lr \IndBG(ws\cdot \lambda \otimes K^\bullet) \lr 0$$
which represent (*) at the level of complexes (in the sense that its
sequence of these objects and two maps - ignoring the $0'$s -
identifies, after passing to the bounded derived category, with the
displayed portion above of (*)).

The left hand complex map is $Jan_{K^\bullet}^{w\cdot \lambda},$ the
extension of $Jan ^{w\cdot \lambda}$ to complexes of $add \, {\mathcal
Y}$ objects, in the particular case of the complex $K^\bullet$.
\end{prop}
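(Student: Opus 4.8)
The plan is to assemble the exact sequence of complexes from the natural short exact sequence over each term of $K^\bullet$, then identify the resulting complexes (and maps) with the objects in the triangle $(*)$ via the acyclicity built into the category $\mathcal{Y}$. First I would recall that for each $Y \in \mathcal{Y}$ the construction of the Jantzen map gave a short exact sequence of $B$-modules
$$0 \lr \IndBG(w\cdot\lambda\otimes Y) \lr \Tmulam\IndBG(w\cdot\mu\otimes Y) \lr \IndBG(ws\cdot\lambda\otimes Y) \lr 0,$$
and that this is natural in $Y$ (Proposition \ref{prop:app2}(i)). Applying this termwise to the complex $K^\bullet$ from Lemma \ref{resolvetrivialmodule} — whose terms lie in $add\,\mathcal{Y}$ — produces an exact sequence of complexes, with the left-hand complex map being $Jan_{K^\bullet}^{w\cdot\lambda}$ by the very definition of the extension of $Jan^{w\cdot\lambda}$ to $add\,\mathcal{Y}$ and then to complexes. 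The short exactness survives because over each $K^n$ (a direct sum of objects of $\mathcal{Y}$) the sequence is split on the level of weights / exact, and a complex of exact sequences is an exact sequence of complexes.

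The substantive point is the claim that this exact sequence of complexes \emph{represents} $(*)$ after passing to $D^b$. For this I would argue that each of the three complexes is quasi-isomorphic to the corresponding object of $(*)$. Since $k \xrightarrow{\sim} K^\bullet$ is a resolution, tensoring with the one-dimensional module $w\cdot\lambda$ gives a resolution $w\cdot\lambda \xrightarrow{\sim} (w\cdot\lambda)\otimes K^\bullet$ of $B$-modules. Crucially, by construction of $\mathcal{Y}$ (and the generalized tensor identity, Remark \ref{r:Kempf}(ii)), each $(w\cdot\lambda)\otimes K^n$ is acyclic for $\IndBG$; hence $\IndBG(w\cdot\lambda\otimes K^\bullet)$ computes $\RIndBG(w\cdot\lambda)$. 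Similarly $\IndBG(ws\cdot\lambda\otimes K^\bullet)$ computes $\RIndBG(ws\cdot\lambda)$. For the middle term I would use that $\Tmulam$ is exact and commutes with $\RIndBG$ on these acyclic objects, together with the choice of $K^\bullet$ guaranteeing dominance of the relevant shifted weights, so that $\Tmulam\IndBG(w\cdot\mu\otimes K^\bullet)$ computes $\Tmulam\RIndBG(w\cdot\mu) \cong \Tmulam\Tlammu\RIndBG(w\cdot\lambda)$, the middle object of $(*)$.

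Having identified all three objects, I would then observe that the long exact cohomology sequence of the short exact sequence of complexes is precisely the long exact sequence underlying the distinguished triangle $(*)$, and that the left-hand map, which is $Jan_{K^\bullet}^{w\cdot\lambda}$, agrees with $Adj_{K^\bullet}^{w\cdot\lambda}$ — hence with the adjunction map $\RIndBG(w\cdot\lambda)\to \Tmulam\Tlammu\RIndBG(w\cdot\lambda)$ — by Theorem \ref{thm:app1} and the normalization of $TIso^{w\cdot\lambda}$ made just before this proposition. I expect the main obstacle to be the careful bookkeeping in the middle-term identification: ensuring that the various ``obvious'' isomorphisms (pulling out twisted modules, commuting $\Tmulam$ past $\RIndBG$, and commuting $pr_\mu$ with $-\otimes V^{[1]}$) are assembled compatibly so that the map out of the middle complex really is the Jantzen map and not merely a map with the same source and target. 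The groundwork for this — Propositions \ref{prop:app2}, \ref{prop:app3}, \ref{prop:app5} and Lemma \ref{lem:app3} — has been laid precisely to control these compatibilities, so the argument should reduce to quoting them in the right order rather than new computation.
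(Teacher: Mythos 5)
Your proposal is correct and follows essentially the same route as the paper: the paper likewise obtains the short exact sequence of complexes by applying $pr_\lambda\IndBG(-\otimes K^\bullet)$ to the module sequence $0\to M\to L^*\otimes w\cdot\mu\to M'\to 0$ (equivalently, your termwise Jantzen sequences over $add\,\mathcal{Y}$), chooses $K^\bullet$ so that every one-dimensional section involved becomes acyclic for $\IndBG$ after tensoring, and identifies the result with $(*)$ because $(*)$ is built from that same module sequence. The paper additionally runs the comparison through an injective resolution $I^\bullet$ of $k$ to make the passage to a distinguished triangle explicit and then traces $Jan_{K^\bullet}^{w\cdot\lambda}$ through the construction, which is exactly the compatibility bookkeeping you flag as the remaining step.
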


\begin{proof}

Choose $K^\bullet$ so that each $K^n$ is a direct sum of terms
$Y=p\tau \otimes \vupperone$ in ${\mathcal Y}$ with also $\nu +
p\tau$ dominant for each weight $\nu$ of $L^* \otimes  w\cdot \mu.$
The construction of (*) is from the exact sequence at the top of \ref{claim}

$$0\lr M \lr L^*\otimes w\cdot \mu \lr M^\prime \lr 0$$
by applying $pr_\lambda \RIndBG(-).$ With our choice of $K^\bullet$, $\RIndBG(-)$ applied to each term is the same as $\IndBG(-\otimes K^\bullet).$ Also, $pr_\lambda$  applied  to $ \IndBG (\nu \otimes K^\bullet)$ for $\nu \neq w\cdot \lambda$ in $M$ or $\nu \neq ws\cdot \lambda$ in $M^\prime$, is the zero complex.\\

Thus, $pr_\lambda \IndBG(-\otimes K^\bullet),$ applied to the
displayed sequence, gives an exact sequence
$$ 0 \lr \IndBG (w\cdot \lambda \otimes K^\bullet) \lr pr_\lambda\Ind(L^* \otimes w\cdot \mu \otimes K^\bullet) \lr \IndBG(ws \cdot \lambda \otimes K^\bullet) \lr 0$$
The middle term identifies with $pr_\lambda(L^*\otimes \IndBG(w\cdot
\mu \otimes K^\bullet))$ via the tensor identity.
 Such an identification must also be made in the construction of (*), though with $\RIndBG(w\cdot \mu)$ replacing $\IndBG (w\cdot \mu \otimes K^\bullet)$.  Note also $pr_\lambda (L^*\otimes \IndBG(w\cdot \mu\otimes K^\bullet))= pr_\lambda (L^*\otimes pr_\mu\IndBG(w\cdot \mu\otimes K^\bullet))  = \Tmulam \IndBG(w\cdot \mu\otimes K^\bullet), $ and a similar equality holds for $\RIndBG(w\cdot \mu)$.\\

Following each step above gives the identifications claimed in the
proposition.  An alternate argument could be made by replacing
$K^\bullet$ in the short exact sequence displayed above with a
complex of injective $B-$modules ( a resolution of $k = k(0)$ also).
This gives a semisplit short exact sequence of complexes.  Its
three term sequence then automatically becomes a three term sequence
in a distinguished triangle, upon passing to the derived category.
In more detail, let $K^\bullet \lr I^\bullet$ be an isomorphism of
complexes, with $I^\bullet$ a $B-$module  injective resolution of
$k=k(0)$.  There are resulting commutative diagram of map of
$G-$module complexes

{\tiny
$$\xymatrix{
0 \ar[r]& p\tau\Ind(M \otimes I^\bullet) \ar[r] & pr_\lambda \IndBG(L^* \otimes w\cdot \mu \otimes I^\bullet) \ar[rr]\ar[dr] & & p\tau\Ind(M\otimes  I^\bullet)\ar[r]&0\\
& && pr_\lambda(L^* \otimes \IndBG(w\cdot \mu \otimes I^\bullet & &\\
0 \ar[r]& p\tau\Ind(M \otimes K^\bullet) \ar[r]\ar[uu] & pr_\lambda \IndBG(L^* \otimes w\cdot \mu \otimes K^\bullet) \ar[rr]\ar[dr]\ar[uu] & & p\tau\Ind(M\otimes  K^\bullet)\ar[uu]\ar[r]&0\\
& && pr_\lambda(L^* \otimes \IndBG(w\cdot \mu \otimes I^\bullet
\ar[uu] & & }$$}
The skew maps are isomorphism of complexes, and the vertical maps are all quasi-isomorphisms.  The top row is an exact sequence of complexes of injective objects, is therefore semi-split, and therefore becomes part of a distinguished triangle (ignoring the zeros and zero maps) at the derived category label ($D^+$ or $D^b$ here).  There are a few other commutative squares of quasi isomorphism need to give a complete picture of the identification claimed in the proposition, but we leave then to the reader (who should have the idea by now). On the left, for example, diagrams must be added handling the
identifications $\IndBG(w\cdot \lambda \otimes K^\bullet)\cong pr_\lambda \IndBG(M\otimes K^\bullet).$ ( This will require two rectangles, associated with the location of $w\cdot \lambda$ as a section of $M$.)\\

%The analogous and simpler identifications $\IndBG(w\cdot \lambda \otimes K^\bullet)\cong pr_\lambda \IndBG(M\otimes K^\bullet).$ ( This will require two rectangles, associated with the location of $w\cdot \lambda$ as a section of $M$.)\\

The analogous and simpler identification $\IndBG(w\cdot \lambda \otimes Y)\cong pr_\lambda \IndBG(M\otimes Y)$ is part of the Jantzen map $Jan_Y^{w\cdot \lambda},$ discussed above Proposition \ref{prop:app2}, partly using $``\RIndBG''$ notation.
 %on (on pp 5-6 of these notes MAKE proper reference).
Sticking to the ``$\IndBG$'' notation, the map $Jan_Y^{w\cdot \lambda}$ is the composite of $\IndBG (w\cdot \lambda \otimes Y)\cong pr_\lambda(L^* \otimes \IndBG (w\cdot \mu \otimes Y)$, with $pr_{\lambda}\IndBG (M\otimes Y)\to  pr_{\lambda}I\IndBG(L^*\otimes w\cdot\mu\otimes Y)\cong pr_{\lambda}(L^*\otimes Ind^G_B(w\cdot\mu\otimes Y))$, the latter equal to $pr_\lambda(L^*\otimes pr_\mu \IndBG(w\cdot \mu \otimes Y) \cong \Tmulam\Ind(w\cdot \mu \otimes Y)$.\\

Passing from ${\mathcal Y}$ to $ add \, {\mathcal Y}$ and then to
complexes of $add \, {\mathcal Y}$ objects, and, following the
pathway above, we find that $Jan_{K^\bullet}^{w\cdot \lambda}$ is
the composition of the identification $\IndBG (w\cdot \lambda
\otimes K^\bullet)\cong pr_\lambda \IndBG (M\otimes K^\bullet),$ the
bottom left map of the above diagram followed by the adjacent skew
map, and finally the identification
$$pr_\lambda(L^*\otimes \IndBG (w\cdot \mu \otimes K^\bullet)= pr_\lambda (L^* \otimes pr_\mu (\IndBG(w\cdot \mu \otimes K^\bullet) =\Tmulam \IndBG (w \cdot \mu \otimes K^\bullet).$$
This is, altogether, precisely the map
 $$\IndBG(w\cdot \lambda \otimes K^\bullet) \lr \Tmulam \IndBG(w\cdot \mu \otimes K^\bullet)$$
 which our construction, in completed form, gives for the left hand amp in the exact sequences displayed in the proposition.  So that map is $Jan_{K^\bullet}^{w\cdot \lambda}$, and our proof of the proposition is complete.
\end{proof}

\textbf{The general case of the claim of \ref{claim}}\\

Recall that we noted in \ref{claim} that the middle term of the
distinguished triangle (*) was isomorphic to $\Tmulam\Tlammu
\RIndBG(w\cdot \lambda).$ Noting further that this resulted in a map
$\RIndBG(w\cdot \lambda)\lr\Tmulam\Tlammu \RIndBG(w\cdot\mu)$, we
claimed that this map was adjunction, at least up to scalar
multiple. \\

To some extent, this requires first interpreting what isomorphism of
$\Tmulam\Tlammu \RIndBG(w\cdot \lambda)$ with the middle term
$\Tmulam \RIndBG(w\cdot \mu)$ of (*) was intended.  The top of
subsection \ref{claim} indicates that an isomorphism $\Tlammu
\RIndBG(w\cdot \lambda)\cong \RIndBG(w\cdot \mu)$ be used. We will
follow that framework. Represent $\RIndBG(w \cdot \lambda)$ by
$\IndBG(w\cdot \lambda \otimes K^\bullet)$ and $\RIndBG(w \cdot
\mu)$ by $\IndBG (w \cdot \mu \otimes K^\bullet),$ as in Proposition
\ref{prop:app6} and its proof.  Then an isomorphism $\Tlammu
\IndBG(w\cdot \lambda \otimes K^\bullet)\cong \IndBG (w \cdot \mu
\otimes K^\bullet)$ is s given by $Iso_{K^\bullet}^{w\cdot
\lambda}.$  Composing with $\Tmulam$ on both sides gives the
isomorphism of complexes

 $$TIso_{K^\bullet}^{w\cdot
\lambda}:\Tmulam\Tlammu\Ind(w\cdot\lambda \otimes
K^\bullet)\cong\Tmulam\Ind(w\cdot \mu \otimes K^\bullet).$$

Note that the two sides represent $\Tmulam\Tlammu \RIndBG(w\cdot \lambda)$ and
$\Tmulam \RIndBG(w\cdot \mu)$, respectively.\\

 Finally, we need a way
to represent the adjunction map from $\RIndBG(w\cdot \lambda),$
represented as $\IndBG(w\cdot \lambda \otimes K^\bullet),$ to
$\Tmulam\Tlammu \RIndBG(w\cdot \lambda)$, which we have represented
as $\Tmulam\Tlammu \IndBG(w\cdot \lambda \otimes K^\bullet).$ For this, we
use the map of complexes

$$ \widetilde{adj}_{\IndBG(w\cdot \lambda \otimes K^\bullet)}:\IndBG(w\cdot \lambda \otimes K^\bullet)
\lr \Tmulam\Tlammu\Ind(w\cdot \lambda \otimes K^\bullet).$$ This map
just applies adjunction to the $G-$modules in each degree of the
complex $\IndBG(w\cdot \lambda \otimes K^\bullet).$  This results in a map
of complexes, by naturality of adjunction.  In particular, passing
to the derived category level ($D^+$ or $D^b$), the map of complexes
$\tilde{adj}_{\IndBG(w\cdot \lambda \otimes K^\bullet}$ induces
``adjunction" as a map

$$\RIndBG(w\cdot \lambda) \lr \Tmulam\Tlammu
\RIndBG(w\cdot \lambda)$$. \\

The result below is a corollary to
Theorem \ref{thm:app1} and Proposition \ref{prop:app6}.

\begin{cor}  With the isomorphism $\Tmulam \RIndBG(w\cdot \mu) \xrightarrow{\sim} \Tmulam\Tlammu \RIndBG(w\cdot \lambda)$ taken as inverse to the isomorphism induced by $TIso_{K^\bullet}^{w\cdot \lambda}$ discussed above,
\underline{the claim of subsection \ref{claim} is correct.}  More
precisely, we have a commutative diagram

$$\xymatrix{
\RIndBG(w\cdot \lambda)\ar[r]\ar[d]^{=} & \Tmulam\Tlammu \RIndBG(w\cdot \lambda)\ar[d]^{\cong}\\
\RIndBG(w\cdot \lambda) \ar[r]& \Tmulam \RIndBG(w\cdot \mu) }$$ with
the top row adjunction and the bottom row from $(*)$. The right
column isomorphism is as above.
\end{cor}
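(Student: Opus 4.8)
The plan is to assemble the desired commutative square entirely at the level of complexes, using the resolution $K^\bullet$ of $k=k(0)$ from Lemma \ref{resolvetrivialmodule} and the three representatives already set up just before the corollary: $\RIndBG(w\cdot\lambda)$ by $\IndBG(w\cdot\lambda\otimes K^\bullet)$, $\RIndBG(w\cdot\mu)$ by $\IndBG(w\cdot\mu\otimes K^\bullet)$, $\Tmulam\Tlammu\RIndBG(w\cdot\lambda)$ by $\Tmulam\Tlammu\IndBG(w\cdot\lambda\otimes K^\bullet)$, and $\Tmulam\RIndBG(w\cdot\mu)$ by $\Tmulam\IndBG(w\cdot\mu\otimes K^\bullet)$. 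In this model the top horizontal map is $\widetilde{adj}_{\IndBG(w\cdot\lambda\otimes K^\bullet)}$, which by naturality of $\widetilde{adj}$ is a genuine map of complexes inducing adjunction; the right vertical is $TIso_{K^\bullet}^{w\cdot\lambda}$ (after the scalar rescaling of $TIso$ performed just before Proposition \ref{prop:app6}, so that $Adj_Y^{w\cdot\lambda}=Jan_Y^{w\cdot\lambda}$ on all of $\mathcal Y$); the bottom horizontal map is the left-hand map of the three-term sequence of Proposition \ref{prop:app6}, which that proposition identifies as $Jan_{K^\bullet}^{w\cdot\lambda}$ and which represents the left-hand map of $(*)$.

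First I would observe that, by the very definition $Adj_Y^{w\cdot\lambda}=TIso_Y^{w\cdot\lambda}\circ\widetilde{adj}_{\IndBG(w\cdot\lambda\otimes Y)}$ (given above Proposition \ref{prop:app5}), the composite ``top row followed by right column'' in the square is precisely $Adj_{K^\bullet}^{w\cdot\lambda}$, the extension of the natural transformation $Adj^{w\cdot\lambda}$ from $\mathcal Y$ through $add\,\mathcal Y$ to complexes of $add\,\mathcal Y$-objects, evaluated at $K^\bullet$. On the other hand the bottom row is $Jan_{K^\bullet}^{w\cdot\lambda}$ by Proposition \ref{prop:app6}. So the commutativity of the square reduces to the equality of two maps of complexes, $Adj_{K^\bullet}^{w\cdot\lambda}=Jan_{K^\bullet}^{w\cdot\lambda}$, and this is immediate from Theorem \ref{thm:app1}: after the scalar normalisation of $TIso$ we have $Adj_Y^{w\cdot\lambda}=Jan_Y^{w\cdot\lambda}$ for every $Y\in\mathcal Y$, hence for every object of $add\,\mathcal Y$ by additivity of both sides, hence degreewise — and therefore as maps of complexes — for $K^\bullet$. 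Thus the square of complexes commutes on the nose; passing to $D^b$ (or $D^+$) and using the identifications recorded before the corollary gives the displayed commutative diagram, with the top row inducing adjunction and the bottom row the left-hand map of $(*)$.

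The main obstacle is really bookkeeping rather than mathematics: one must check that the four representatives are compatibly chosen (the same $K^\bullet$, chosen as in the proof of Proposition \ref{prop:app6} so that $\nu+p\tau$ is dominant for every weight $\nu$ of $L^\ast\otimes w\cdot\mu$ and so that $K^\bullet$ computes $\RIndBG$ of the relevant modules), that the isomorphism $\Tmulam\RIndBG(w\cdot\mu)\xrightarrow{\sim}\Tmulam\Tlammu\RIndBG(w\cdot\lambda)$ named in the corollary is exactly the inverse of the derived-category map induced by $TIso_{K^\bullet}^{w\cdot\lambda}$, and that the extension of $Adj^{w\cdot\lambda}$ and $Jan^{w\cdot\lambda}$ to complexes of $add\,\mathcal Y$-objects is functorial so that degreewise equality yields equality of chain maps. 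All of these are either explicitly arranged in the preceding propositions or are formal consequences of naturality, so once they are cited the corollary follows without further computation; I would therefore present the proof as a short deduction invoking Theorem \ref{thm:app1}, Proposition \ref{prop:app6}, and the definition of $Adj^{w\cdot\lambda}$, with a sentence noting the normalisation of $TIso$.
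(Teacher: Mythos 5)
Your proposal is correct and follows essentially the same route as the paper's own proof: both identify the composite of the top row with the right column as $Adj_{K^\bullet}^{w\cdot\lambda}$ by definition, identify the bottom row as $Jan_{K^\bullet}^{w\cdot\lambda}$ via Proposition \ref{prop:app6}, invoke Theorem \ref{thm:app1} together with the scalar renormalisation of $TIso^{w\cdot\lambda}$ to get the equality of chain maps, and then pass to the derived category.
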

\begin{proof}
We have $Adj_Y^{w\cdot \lambda} = TIso_Y^{w\cdot \lambda}\circ
\widetilde{adj}_{\IndBG(w\cdot \lambda \otimes Y)}$ for all $Y \in
{\mathcal Y},$ by definition.  Passing to $adj \,{\mathcal Y}$ and
complexes such as $K^\bullet,$ we have the similar identity

$$Adj^{w\cdot \lambda}_{K^{\bullet}}=TIso^{w \cdot \lambda}_{K^{\bullet}}\circ
\widetilde{adj}_{\Ind(w.\cdot \lambda \otimes K^\bullet)}$$
By Theorem \ref{thm:app1} we also have
$$Adj_{K^\bullet}^{w\cdot \lambda} = c Jan_{K^\bullet}^{w\cdot \lambda}$$
where $c$ is  a non zero scalar.   If we adjust $TIso ^{w\cdot
\lambda}$ as per the (bold-faced),  observation in the proof of Proposition \ref{prop:app3},
 we may assume $c=1$.  Assume that this adjustment is in force. Then we have a
commutative diagram at complexes
$$\xymatrix{
\IndBG(w\cdot\lambda \otimes K^\bullet) \ar[rrr]^{\widetilde{adj}_{\IndBG(w\cdot \lambda \otimes K^\bullet)}~~~~}\ar[d]^= &&& \Tmulam\Tlammu\Ind(w\cdot \lambda \otimes K^\bullet) \ar[d]^{TIso_{K^\bullet}^{w\cdot \lambda}}\\
\IndBG(w\cdot \lambda \otimes K^\bullet) \ar[rrr]^{Jan^{w\cdot \lambda}_
{K^\bullet}}&& &\Tmulam\Ind(w\cdot \mu \otimes K^\bullet) }$$

By Proposition \ref{prop:app3} the bottom arrow represents the map $\RIndBG(w\cdot
\lambda)\lr\Tmulam \RIndBG(w\cdot \mu)$ in (*).  We discussed, above
the statement of the corollary, the fact that the top row becomes
the adjunction map $\RIndBG(w\cdot \lambda)\lr \Tmulam\Tlammu
\RIndBG(w\cdot \lambda)$ upon passing to the derived category.  The
right column map is, as discussed in the statement of the corollary the
right column derived derived category, isomorphism in the corollary's
diagram. Altogether, the commutativity of the diagram of complexes
above gives the commutativity of the diagram in the corollary.  This
completes its proof.
\end{proof}
\begin{rem} Without the adjustment observed in the proof of Proposition \ref{prop:app3},
we only get commutativity up to a scalar, as allowed in the claim.
\end{rem}

\begin{rem}\label{r:app2} {\bf The quantum case}
The same changes of $\otimes$ to $\otimes^{op}$ observed in Appendix
A need to be made in this appendix, in the quantum case. In addition
it is necessary to replace the references to \cite{J} in the proof
of Proposition \ref{prop:app3} with references to Remarks
\ref{r:Kempf}(d),(e). Remark \ref{r:Kempf}(f) helps explain the
differences in the formalism of these remarks (which also could be
used in the algebraic groups case) with that of \cite{J}. Recall
also that Remark \ref{r:Kempf}(d) provides both right and left
generalized tensor identities in the quantum case, heavily used in
the arguments above (for example, in the proofs of Propositions
\ref{prop:app2} and \ref{prop:app3}.  With these changes and
observations, all of the proofs and results in this appendix carry
over to the quantum case.\\

  In particular, the claim of subsection
\ref{claim} holds in both the algebraic groups and quantum cases. As
argued below the claim, this completes the proof of Lemma
\ref{wallcrossingfunctors}.
\end{rem}

\section
%This is the copy of the appendix on Lemma 9.10.5 written by Len and typed up by Julie
{\bf Appendix C} The purpose of this appendix is to supplement, and,
indeed, to ``fix," the statement and proof of \cite[Lem.
9.10.5]{ABG}, as a service to the reader. This is all in
characteristic $0$, and not part of the induction theorem (except in
the way of application), but it is important to \cite{ABG} as a
whole and to the discussion in \cite[ftn.13]{PS2} concerning
Koszulity in the quantum case. The proof given in \cite{ABG} of the
lemma, corrected for misprints and issues with the induction theorem
proof, still seemed inaccurate to us, but we found it could be fixed
using an algebraic result from \cite{PS2}. The latter result is
nontrivial, but relatively elementary, not using the Lusztig quantum
conjecture. This seems desirable, so that \cite{ABG} could have the
latter conjecture, in the $\ell>h$ case, as a corollary.

Our notation in this appendix largely follows \cite{ABG}, with two
major changes: The formula for the ``dot" notation $\bullet$ is
replaced by that for the standard  ``dot" action $\cdot$ in \cite{J}
and subsection \ref{subsubsec:dot} above. Thus, the new formula
reads, for $w$ in the Weyl group or affine Weyl group, and $\lambda
\in {\mathbb{X}},$
$$w\bullet \lambda = w(\lambda + \rho) - \rho.$$
Also, we will use Borel subalgebras ${\sf B}$ whose associated roots
are negative, rather than positive. With these two changes,
\cite[Lem. 3.5.1]{ABG}, which we will use below, is correct as
stated. (It actually was not, before, even for $w=1$.) The statement
of the quantum induction theorem \cite[Thm. 3.5.5]{ABG}, which we
will also use, is unchanged. Finally, the change from positive to
negative Borels on the quantum side is deliberately not repeated on
the Langlands dual side, when choosing Borel objects there
(associated to Grassmanian varieties).

 At the point the result \cite[Lem.
9.10.5]{ABG} in question is introduced in \cite{ABG} the authors
have established an equivalence of derived categories
\cite[(9.10.1)]{ABG}
$$
\gamma: D^b\,{\sf block(U)} \rarrow D^b \ \perv\mbox{(Gr)},
$$
and it is desired to show the functor $\gamma$ induces an
equivalence from {\sf block(U)} to $\perv$(Gr). To this end,
categories $D^b_{\leq \lambda}$ {\sf block(U)} and $D^p_{\leq
\lambda} \perv$ are introduced ``for each $\lambda \in
\mathbb{Y}^{++}$." This appears to be a misprint, repeated several
times on \cite[p.~668]{ABG}, and the definition of $D^b_{\leq
\lambda}$ {\sf block(U)} is incorrect with any choice of $\lambda$.
%(It would lead to the latter category containing infinitely many
%irreducible modules of {\sf block(U)}.)
Instead, these categories
should be introduced for each $\lambda \in \mathbb{Y}$, with $\mu
\leq \lambda$ interpreted to mean $\tilde{\mu} \uparrow
\tilde{\lambda}$, where $\tilde{\mu} \in W {\scriptstyle \, \bullet
\,} \ell \mu$ is, in our notation here, the (unique) dominant
weight, and $\tilde{\lambda}$ is defined similarly.

The order $\uparrow$ above is that discussed in
\cite[II,6.1-6.11]{J}; note that $p$ there is allowed to be any
positive integer. The order $\uparrow$ should replace the order
$\preceq$ in \cite[(3.4.5)]{ABG}. The followiing equivalence (in the
case $\ell \geq h$)) follows from the more general theorem
\cite[Thm. 9.6]{PS2}, which also has a formulation for $\ell < h$
$$
y {\scriptstyle \, \bullet \,} 0 \uparrow w {\scriptstyle \, \bullet
\,} 0 \mbox{ iff } y \leq' w,
$$
whenever $y {\scriptstyle \, \bullet \,} 0$, $w {\scriptstyle \,
\bullet \,} 0$ are dominant and $y,w \in W_{\rm aff}$. The order
$\leq '$ is the Bruhat--Chevalley order with respect to the dominant
standard chamber fundamental reflections. This equivalence seems
essential to correct the lemma.

We shall use $\leq$ for the Bruhat--Chevalley order with respect to
the antidominant standard chamber. Thus, for $y, w \in W_{\rm aff}$,
$y \leq w$ iff $w_0 y w_0 \leq' w_0ww_0$, with $w_0$ the long word
in $W$. When $y,w$ are in $W$, $y \leq w$ means the same as $y \leq'
w$. When $\nu, \mu \in \mathbb{Y}$, $\nu y \leq \mu w$ iff $(-\nu)y
\leq ' (-\mu)w$. (The classical root system has an automorphism $y
\mapsto -w_0(y)$, preserving positive roots.) Notice there is an
implicit change from $\leq '$ to $\leq$ in the proof of
\cite[Cor.~8.3.2]{ABG}. (This occurs in the assertion that
``$\lambda w^{-1}$ is minimal in the right coset $\lambda W \leq
W_{\rm aff}^n$." The hypothesis of Cor.~8.3.2(ii) gives minimality
of $w\lambda$ with respect to $\leq '$.  Passing to inverses gives
minimality of $(-\lambda)w^{-1}$ with respect to $\leq '$. Now it is
necessary, it seems, to use $\leq$ to get minimality of $\lambda
w^{-1}$.)

The definition of $D^b_{\leq \lambda}\perv$ is correct as given in
\cite[p.~668]{ABG} provided it is allowed that $\lambda \in
\mathbb{Y}$. Similarly, $\lambda$ should be taken in $\mathbb{Y}$ in
the statement of the lemma, which we provide below, with this
change. Note the direction of $\Upsilon$ is reverse to the
equivalence in part (i).

\begin{lem} (\cite[9.10.5]{ABG}) For any $\lambda \in \mathbb{Y}$, we have
\begin{itemize}
\item[(i)] the functor $\Upsilon$ induces an equivalence
$$
D^b_{\leq \lambda}\perv \stackrel{\sim}{\longrightarrow} D^b_{\leq
\lambda} \ {\sf block(U)}.
$$
Moreover,
\item[(ii)]
the induced functor
$$
D^b_{\leq \lambda}\perv/D^b_{<\lambda}\perv \longrightarrow
D^b_{\leq \lambda}\ {\sf block(U)}/D^b_{<\lambda}\ {\sf block(U)}
$$
sends the class of $IC_\lambda$ to the class of $L_\lambda$.
\end{itemize}
\end{lem}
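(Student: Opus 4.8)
The plan is to obtain part (i) by induction on the height of the dominant weight $\tilde\lambda$ attached to $\lambda$, using the compatibility of $\Upsilon$ (the inverse to $\gamma$) with the recollement structures on both sides, and then to deduce part (ii) as a direct consequence of how $\Upsilon$ matches up the standard/costandard objects. First I would recall that the equivalence $\gamma:D^b\,{\sf block(U)}\xrightarrow{\sim}D^b\,\perv(\mathrm{Gr})$ of \cite[(9.10.1)]{ABG} is known to send, up to the corrected $\uparrow$-ordering and with $\lambda\in\mathbb{Y}$, the ``standard'' objects $\RInd_{\sf B}^{\sf U}(\ell\lambda)$ to the standard perverse sheaves $j_{!\lambda}$ (and costandards to costandards); this is essentially the content of the induction theorem \cite[Thm.~3.5.5]{ABG} combined with \cite[Lem.~3.5.1]{ABG} in the corrected (negative Borel, standard ``dot'') normalization. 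The key combinatorial input is the order-matching $y\bullet 0\uparrow w\bullet 0 \iff y\le' w$ from \cite[Thm.~9.6]{PS2}, which guarantees that the filtration of $D^b\,{\sf block(U)}$ by the subcategories $D^b_{\le\lambda}$ really does correspond, weight by weight, to the filtration of $D^b\perv(\mathrm{Gr})$ by $D^p_{\le\lambda}$ under $\gamma$; without this one cannot even make sense of the statement.

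For part (i), I would argue as follows. The subcategory $D^b_{<\lambda}\perv$ is the full triangulated subcategory generated by the $IC_\mu$ (equivalently the $j_{!\mu}$, equivalently the $j_{*\mu}$) with $\mu<\lambda$, and similarly $D^b_{<\lambda}\,{\sf block(U)}$ is generated by the $L_\mu$ (equivalently the $\RInd(\ell\mu)$, equivalently the costandards) with $\mu<\lambda$. Since $\Upsilon$ is an equivalence of the full bounded derived categories and sends standards to standards by the order-preserving dictionary above, it carries the generating set of $D^b_{<\lambda}\perv$ into $D^b_{<\lambda}\,{\sf block(U)}$ and, being an equivalence, induces an equivalence $D^b_{<\lambda}\perv\xrightarrow{\sim}D^b_{<\lambda}\,{\sf block(U)}$; the same argument with ``$<$'' replaced by ``$\le$'' gives the statement of (i). More carefully, one shows that $D^b_{\le\lambda}\perv$ is precisely the preimage under $\Upsilon^{-1}=\gamma$ of $D^b_{\le\lambda}\,{\sf block(U)}$: the $\supseteq$ inclusion follows from standards $\mapsto$ standards, and the $\subseteq$ inclusion follows by the dual computation with costandards (or by a support/cosupport argument using that both families are closed under the relevant truncations). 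This is where one must be careful about which Bruhat order ($\le$ vs.\ $\le'$) is in force, and the reconciliation in the excerpt — the passage $\nu y\le\mu w\iff (-\nu)y\le'(-\mu)w$ via the automorphism $y\mapsto -w_0(y)$, together with the observation that \cite[Cor.~8.3.2]{ABG} implicitly uses this switch — is exactly what is needed to line up the perverse side (where dominant chambers are used for $\mathrm{Gr}$) with the quantum side (where negative Borels force the antidominant convention).

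For part (ii), I would pass to the Serre quotients. Since $\Upsilon$ restricts to an equivalence $D^b_{<\lambda}\perv\xrightarrow{\sim}D^b_{<\lambda}\,{\sf block(U)}$ by part (i) applied to weights below $\lambda$, it descends to an equivalence of the Verdier quotients
$$
\bar\Upsilon: D^b_{\le\lambda}\perv/D^b_{<\lambda}\perv \longrightarrow D^b_{\le\lambda}\,{\sf block(U)}/D^b_{<\lambda}\,{\sf block(U)}.
$$
In each quotient category the image of $IC_\lambda$ (resp.\ $L_\lambda$) coincides with the image of the standard object $j_{!\lambda}$ (resp.\ $\RInd(\ell\lambda)$), because the kernel $j_{!\lambda}\to IC_\lambda$ and cokernel $IC_\lambda\to$ have all composition factors strictly below $\lambda$ and hence become zero in the quotient; so it suffices to know $\bar\Upsilon$ sends the class of $j_{!\lambda}$ to the class of $\RInd(\ell\lambda)$, which is the standards-to-standards statement recalled above. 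I expect the main obstacle to be the bookkeeping in part (i): namely, verifying rigorously that $D^b_{\le\lambda}\,{\sf block(U)}$ (defined in \cite{ABG} by a condition on composition-factor highest weights, now with the corrected $\uparrow$-order on the orbit $W\bullet\ell\mu$) is genuinely equal to $\gamma\!\left(D^b_{\le\lambda}\perv\right)$, and not merely contained in it or containing it; this forces one to chase the $\le'$/$\le$ dictionary carefully and to invoke \cite[Thm.~9.6]{PS2} in its precise form, rather than the folklore ``standards go to standards'' slogan.
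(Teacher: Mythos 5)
Your overall architecture — induction on the height of the dominant representative, triangularity of composition factors, and a Verdier-quotient argument for (ii) — matches the paper's. But there is a genuine gap at the central factual input. You assert that $\Upsilon$ (equivalently $\gamma$) sends the ``standard'' object $R\mathrm{Ind}^{\sf U}_{\sf B}(\ell\lambda)$ to the standard perverse sheaf $j_{!\lambda}$, and you lean on a ``standards to standards, costandards to costandards'' dictionary throughout. That is not what the ABG construction gives: by construction $\Upsilon$ sends $R\mathrm{Ind}^{\sf U}_{\sf B}(\ell\lambda)$ to the Wakimoto-type sheaf $\overline{\mathcal{W}}_\lambda$, which for non-dominant $\lambda$ is \emph{not} the standard object $j_{!\lambda}$ (it is built by convolving a standard with a costandard). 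The whole reason the proof must invoke the (sign-corrected) \cite[Cor.~8.3.2]{ABG} is to establish that $\mathrm{supp}\,\overline{\mathcal{W}}_\mu=\overline{\mathrm{Gr}}_\mu$ and that $\overline{\mathcal{W}}_\lambda$ and $IC_\lambda[-\ell(w)]$ agree on the open stratum $\mathrm{Gr}_\lambda$; this is the geometric substitute for your unproved ``standards to standards'' slogan, and it cannot be bypassed. Your ``dual computation with costandards'' for the reverse inclusion is likewise not available — costandards are never matched in this argument; the surjectivity onto $D^b_{\leq\lambda}\perv$ instead comes from the inductive hypothesis for $<\lambda$ together with the fact that $\perv(\mathrm{Gr})$ is generated by the simples $IC_\nu$ with $\mathrm{supp}\,IC_\nu\subseteq\overline{\mathrm{Gr}}_\nu$, once one knows $IC_\lambda$ differs from $\overline{\mathcal{W}}_\lambda[\ell(w)]$ by an object of $D^b_{<\lambda}\perv$.

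Two smaller but real omissions: on the quantum side you need the precise multiplicity statement (Corollary \ref{c:qcohom}, i.e.\ the corrected \cite[Lem.~3.5.1]{ABG}) that $L_{\tilde\lambda}$ occurs exactly once, in cohomological degree $\ell(w)$, in $R\mathrm{Ind}^{\sf U}_{\sf B}(\ell\lambda)$, with all other factors strictly below $\lambda$ — this is what makes the class of $R\mathrm{Ind}(\ell\lambda)$ equal to $L_\lambda[-\ell(w)]$ in the quotient, not merely ``equal to $L_\lambda$.'' And you do not track the shift $[-\ell(w)]$ at all; since the sign errors in \cite[Cor.~8.2.4, 8.3.2]{ABG} are exactly about such shifts, part (ii) requires verifying that the same shift $[-\ell(w)]$ appears on both sides of the quotient identification. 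Your use of \cite[Thm.~9.6]{PS2} to reconcile $\uparrow$ with the Bruhat--Chevalley orders is correctly placed and does agree with the paper.
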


\begin{proof}
We follow \cite{ABG}, taking into account the changes above, and
also the misprints noted in \cite[p.~675]{ABG}. There are also some
inaccuracies in \cite[Cor.~8.2.4, Cor.~8.3.2]{ABG} which we address
as they arise.

We know for any $\lambda \in \mathbb{Y}$, the functor $\Upsilon$
sends, by construction, the object $R\mbox{ Ind}^{\sf U}_{\sf
B}(\ell\lambda)$ to $\overline{\mathcal{W}}_\lambda$. Fix $\lambda
\in \mathbb{Y}$, and let $w \in W$ be the element with $w\lambda
{\scriptstyle \, \bullet \,} 0 = w {\scriptstyle \, \bullet \,} \ell
\lambda$ dominant. Then, by \cite[Lem. 3.5.1]{ABG}--see our Remark
\ref{r:Kempf}(a) and subsections \ref{s:induction}, \ref{s:derived}
for additional details--we have that $R^{\ell(w)}\mbox{Ind}^{\sf
U}_{\sf B}(\ell\lambda)$ has $L_\lambda$ as a composition factor
with multiplicity one, and all other composition factors $L_\mu$ of
$R^{\ell(w)}\mbox{Ind}^{\sf U}_{\sf B}(\ell\lambda)$,
 or any composition factor $L_\mu$ of $R^j\mbox{Ind}^{\sf U}_{\sf B}(\ell\lambda)$ with $j \neq \ell(w)$, satisfy $y\mu {\scriptstyle \, \bullet \,} 0 \uparrow w \lambda {\scriptstyle \, \bullet \,} 0$, with $y \in W_{\rm aff}$, $y\mu {\scriptstyle \, \bullet \,} 0$ dominant, and $y\mu {\scriptstyle \, \bullet \,} 0 \neq w \lambda {\scriptstyle \, \bullet \,} 0$. For any such $\mu$, we have $y \mu <' w\lambda$, as
noted above, and $\mu y^{-1} < \lambda w^{-1}$. As observed in the
proof of \cite[Cor.~8.3.2]{ABG}, this implies supp
$\overline{\mathcal{W}}_\mu = \overline{\rm Gr}_\mu \subseteq
\overline{\rm Gr}_\lambda = \mbox{supp }
\overline{\mathcal{W}}_\lambda$, and the inclusion is proper. Thus,
$\Upsilon$ takes $D^b_{\leq \lambda}\ {\sf block(U)}$ into
$D^b_{\leq \lambda}\perv$. By induction (on, say, the height of the
dominant weight $w {\scriptstyle \, \bullet \,} \ell \lambda$), we
may assume $\Upsilon$ induces an equivalence of triangulated
categories, when $\leq \lambda$ is replaced by $<\lambda$. (Here
$\mu < \lambda$ is taken to mean $y \mu{\scriptstyle \, \bullet \,}
0 \uparrow w \lambda {\scriptstyle \, \bullet \,} 0$ as above, with
$y \in W$ and $y \mu {\scriptstyle \, \bullet \,} 0 \uparrow w
\lambda {\scriptstyle \, \bullet \,} 0$, $y \mu {\scriptstyle \,
\bullet \,} 0 \neq w \lambda {\scriptstyle \, \bullet \,} 0$.
Equivalently, $\overline{\rm Gr}_\mu$ is properly contained in
$\overline{\rm Gr}_\lambda$, as we have seen.) By \cite[$\S$9.1,
p.~655]{ABG} $\perv$(Gr) is generated by simple objects IC$_\nu$,
$\nu \in \mathbb{Y}$, each with support contained in $\overline{\rm
Gr}_\nu$.

We take this opportunity to mention there are errors of sign in
\cite[Cor.~8.2.4, Cor.~8.3.2]{ABG}, where
$\mathbb{C}_{yw}[-\mbox{dim }$ $\mathcal{B}_{yw}]$ should be
replaced by $\mathbb{C}_y[\mbox{dim }$ $\mathcal{B}_{yw}]$ and
$\mathbb{C}_\lambda[-\mbox{dim Gr}_\lambda - \ell(w)]$ should be
replaced by  $\mathbb{C}_\lambda [\mbox{dim
Gr}-\ell(w)]$.\footnote{This latter change needs to be made both in
the statement and proof of \cite[Cor.~8.3.2]{ABG}. The error is in
ignoring the shift in degree that can accompany direct images with
proper maps.}

With these changes, the conclusion of \cite[Cor.~8.3.2(ii)]{ABG}
shows $\overline{\mathcal{W}}_\lambda$ and $IC_\lambda[-\ell(w)]$
have the same restriction to Gr$_\lambda$ (from $\overline{\rm
Gr}_\lambda$). This shows, together with the fact that $\Upsilon$
induces an equivalence $D^b_{< \lambda}\ {\sf block(U)} \rarrow
D^b_{<\lambda}\perv$, which we obtained above by induction, that the
strict image under $\Upsilon$ of $D^b_{\leq \lambda}\ {\sf
block(U)}$ is $D^b_{\leq \lambda}\perv$. Since we already know
$\Upsilon$ provides an equivalence $D^b\ {\sf block(U)} \rarrow
D^b\perv$, it follows now that it induces one between $D^b_{\leq
\lambda}\ {\sf block(U)}$ and $D^b_{\leq \lambda}\perv$. This proves
(i).

Moreover, we also get (ii), since we have shown that
$\Upsilon(R\mbox{ Ind}^{\sf U}_{\sf B}(\ell \lambda)) =
\overline{\mathcal{W}}_\lambda$ is $IC_\lambda[-\ell(w)]$ in the
quotient category $D^b_{\leq \lambda}\perv/D^b_{<\lambda}$ $\perv$.
Our remarks on the composition factors of the cohomology groups of
the preimage $R\mbox{ Ind}^{\sf U}_{\sf B}(\ell \lambda)$ of
$\overline{\mathcal{W}}_\lambda$ show that the image in the quotient
category $D^b_{\leq \lambda}\ {\sf block(U)}/D^b_{<\lambda}$  ${\sf
block(U)}$ is $L_\lambda[-\ell(w)]$. This completes the proof of
(ii) and the lemma.\end{proof}

%\newpage
\section{Acknowledgements} The authors would like to remember Julie Riddleburger, now deceased, for her humor and patience, and for her help in preparing the manuscript.
The first and second author would like to thank  Brian Parshall,
Leonard Scott,  and the University of Virginia for their gracious
support in so many ways over the years; in particular, this project
was begun while the first author was on sabbatical and the second
author was a postdoc at the University of Virginia. Part of the
works were completed at Kalamazoo, thanks to Terrell and WMU for
such wonderful arrangements.


\begin{thebibliography}{9999999}\frenchspacing
%The following were from Julie's version of Len's appendix:
%\bibitem[ABG]{ABG}
%\bibitem[Jantzen II]{Jantzen II}% Jantzen's algebraic groups book, Part II
%\bibitem[PS]{PS}(Semisimple filtrations of q-Weyl and q-Specht module}.
%The refs above were from Julie's version  of Len's appendix. Is JII Jantzen's alg gps book or his q-gps book? And is the PS ref?

\bibitem[AR]{AR} P. Achar, S. Riche, Reductive groups, the loop
Grassmanian, and the Springer resolution, preprint arXiv:1602.04412.

\bibitem[A]{A} H. H. Andersen, The Strong Linkage Principle for Quantum Groups
at Roots of $1$, {\it J. Alg.} {\bf 260} (2003), 2--15.

%\bibitem[A]{A} J. Alperin, {\em Local Representation Theory},
%Cambridge Studies in Advanced Mathematics 11 (1993).

\bibitem[AJ]{AJ} H. H. Andersen, J. C. Jantzen, Cohomology of Induced Representations for Algebraic Groups,
{\it Math. Ann.} {\bf 269} (1984), 487--525.

\bibitem[AP]{AP} H. H. Andersen, J. Paradowski, Fusion categories
arising from semisimple Lie algebras, {\it Comm. Math. Phys.} {\bf
169} (1995), 563--588.

 \bibitem[APW]{APW} H. H. Andersen, P. Polo, K. Wen, Representations
 of quantum algebras, {\em Invent. Math.} {\bf 104} (1991), 1--59.

 \bibitem[APW2]{APW2} H. H. Andersen, P. Polo, K. Wen, Injective modules
 for quantum algeras, {\em American
 Journal of Mathematics}, {\bf 114}, No. 3 (1992), 571--604.

 \bibitem[AW]{AW} H. H. Andersen, K. Wen, Representations of quantum algebras
The mixed case, {\em J. reine angew. Math.} {\bf 427} (1992), 35-50.

\bibitem[ABG]{ABG} S. Arkhipov, R. Bezrunkavnikov, V. Ginzburg,
Quantum groups, the loop Grassmannian, and the Springer resolution,
{\em J. Amer. Math. Soc.} {\bf 17} (2004), 595--678 (electronic
version April 13, 2004).

\bibitem[B]{B} N. Bourbaki, Groupes et alg\`{e}bras de Lie, Paris
 Ch. III/IV, Hermann (1962).


\bibitem[BBD]{BBD} A. A. Beilinson, J/ Bernstein, P. Deligne, Faisceaux
pervers, Analysis and topology on singular spaces, I (Luminy, 1981),
Astérisque 100, Soc. Math. France, Paris, (1982), 5-171.

%\bibitem[C]{carterliealg} R. Carter, {\it Lie Algebras of Finite and
%Affine Type}, Cambridge University Press (2005).

%\bibitem[CG]{CG} N. Chriss, V. Ginzburg, {\em Representation Theory and
%Complex Geometry}, Birkh\"{a}user (1997).

\bibitem[CPS77]{CPS77} E. Cline, B.Parshall, L. Scott, {\em Induced
modules and affine quotients}, Math. Ann. {\bf 238} (1977), 1--44.

\bibitem[CPS80]{CPS80} E. Cline, B. Parshall, L. Scott, {\em
Cohomology, hyperalgebras, and representations}, J. Alg. {\bf 63}
(1980), 98--123.

%\bibitem[CPS2]{CPS2} E. Cline, B. Parshall, L. Scott, {\em Reduced
%standard modules and enriched Grothendieck groups}, preprint.

%\bibitem[D]{D} S. Donkin, {\em Rational Representations of Algebraic
%Groups} (LNM 1140) Springer (1985).

%\bibitem[DDPW]{DDPW} B. Deng, J. Du, B. Parshall, J.-P. Wang, {\em Finite
%Dimensional Algebras and Quantum Groups}, draft (Jan. 02, 2007), to
%be published in 2008 by the AMS.

\bibitem[GK]{GK} V. Ginzburg and S. Kumar, Cohomology of Quantum Groups at
Roots of Unity, {\it Duke Math. J.} {\bf 69} (1993), 179--198.

\bibitem[Ha]{Ha} R. Hartshorne, {\it Residues and duality}, Springer LNM
20 (1966).

\bibitem[H]{H} J. Humphreys, {\it Introduction to Lie Algebras
and Representation Theory}, Springer GTM 9 (1972).

\bibitem[H2]{H2} J. Humphreys, {Reflection Groups and Coxeter
Groups}, Cambridge (19**) %Complete reference

\bibitem[HKS]{HKS} T. Hodge, K. Paramsamy, L. Scott, {Truncation and the induction functor},
preprint arXiv: 1309.7241.
     %{\em ArXiv:mathRT/1309.1741 v1 \27 Sep 2013}. %Complete reference ? 1309.7241

\bibitem[J]{J} J.C. Jantzen, {\it Representations of
algebraic groups, 2nd ed.}, American Mathematical Society (2003).

\bibitem[Ke]{Ke}B. Keller, {\it Derived categories and their uses}, in:
{\em Handbook of Algebra I}, Elsevier (1996), 671--701.

\bibitem[L5]{L5} G. Lusztig, {\em Finite dimensional Hopf algebras
arising from quantized enveloping algebras}, J. Amer. Math. Soc.
{\bf 3} (1990), 257--296.

\bibitem[L3]{L3} G. Lusztig, Quantum groups at roots of $1,$ {\it Geom. Dedicata} {\bf 35} (1990), no. 1-3, 89Ð113.

\bibitem[L]{L} G. Lusztig, {\it Introduction to Quantum Groups},
Progress in Mathematics {\bf 110}, Birkh\"{a}user, Boston, MA, 1993.

%\bibitem[M]{M} O. Mathieu, {\it Filtrations of $G$-modules}, Ann.
%Sci. \'{E}c. Norm. Sup. (1990). %Check this ref.

%\bibitem[Pol]{Pol} P. Polo, {\it Varieties de Schubert et excellentes filtrations},
%Asterisque 173-174 (1989), 281-311.

% \bibitem [PS]{PS} B. Parshall, L.
%Scott,  Derived categories, quasi-hereditary algebras, and algebraic
%groups, {\em Carlton Univ. Math. Notes 3} (1989), 1-111.


 \bibitem [PS2]{PS2} B. Parshall, L.
Scott,  A Semisimple Series for q-Weyl and q-Specht Modules, {\em Proceedings of Symposia in Pure Mathematics} Vol.86 (2012), 277-310.

\bibitem [R94]{R94} J. Rickard, Translation Functors and
Equivalences of Derived Categories for Blocks of Algebraic Groups,
{\em Finite Dimensional Algebras and Related Topics}, Kluwer
Academic Publishers (1994), 255--264.


%\bibitem[PSW]{PSW} B. Parshall, L. Scott, J.-p. Wang,
%{\em Borel subalgebras redux with examples from algebraic and
%quantum groups}, Algebras and Rep. Theory {\bf 3} (2000), 213--257.

%\bibitem[R]{R} K. Reitsch, An introduction to perverse sheaves,{\em
%ArXiv:mathRT/0307349 v1 \, 26Jul2003}.

%\bibitem[S]{S} P. Schapira, Categories and Homological Algebra
%(Notes from a course at University of Paris VI, dated March 16,
%2006). %Give website address?

%\bibitem[Scott]{Scott} L. Scott, An Introduction to Derived
%Categories...

%\bibitem[T]{T} R. P. Thomas, Derived categories for the working
%mathematician, {\em ArXiv:mathAG/0001045 v2 \,17 Oct 2001}.

\bibitem[vdK1]{vdK1} W. van der Kallen, {\em Longest weight vectors
and excellent filtrations}, Math. Zeit. {\bf 201} (1989), 19--31.

%\bibitem[vdK2]{vdK2} W. van der Kallen,
%{\em Lectures on Frobenius Splittings and $B$-modules (Notes by S.
%P. Inamdar)}, Tata Institute Notes/Springer (revised 2006). %Complete/improve
%reference

%\bibitem[Wa]{Wa} J.-P. Wang, {\em } %Check MathSciNet: Precursor to Donkin and Mathieu

%\bibitem[W]{W} C. A. Weibel, {\em An Introduction to Homological
%Algebra}, Cambridge Studies in Advanced Mathematics 38 (1994).

%\bibitem[Wo]{Wo} D. J. Woodcock, {\em Schur algebras and global
%bases: new proofs of old vanishing theorems}, J. Alg. {\bf 191}
%(1997), 331--370.


\end{thebibliography}
\end{document}